
\documentclass[aop]{imsart}

\RequirePackage{amsthm,amsmath,amsfonts,amssymb}
\RequirePackage[numbers,sort&compress]{natbib}
\RequirePackage[colorlinks,citecolor=blue,urlcolor=blue]{hyperref}
\RequirePackage{graphicx}

\usepackage{amscd} 
\usepackage{mathrsfs}
\usepackage[dvipsnames]{xcolor}
\usepackage[normalem]{ulem}
\usepackage{stmaryrd}
\usepackage[refpage,noprefix,intoc]{nomencl}
\usepackage{lipsum}
\usepackage{etoolbox}
\usepackage{tcolorbox}
\usepackage{tikz}

\startlocaldefs
\numberwithin{equation}{section}
\theoremstyle{plain}
\newtheorem{thm}{Theorem}
\newtheorem{lem}[thm]{Lemma}
\newtheorem{prop}[thm]{Proposition}
\newtheorem{cor}[thm]{Corollary}

\numberwithin{thm}{section}
\theoremstyle{definition}
\newtheorem{dfn}[thm]{Definition}


\newcommand{\tmpframe}[1]{\fbox{#1}}

\usetikzlibrary{positioning}
\usetikzlibrary{arrows}
\setlength{\fboxsep}{5pt}

\providecommand{\R}{}
\providecommand{\Z}{}
\providecommand{\N}{}

\renewcommand{\R}{\mathbb{R}}

\renewcommand{\Z}{\mathbb{Z}}
\renewcommand{\N}{{\mathbb N}}

\newcommand{\E}[1]{{\mathbf E}\left[#1\right]}                           

\newcommand{\V}[1]{{\mathbf{Var}}\left\{#1\right\}}
\newcommand{\va}{{\mathbf{Var}}}
\newcommand{\p}[1]{{\mathbf P}\left\{#1\right\}}

\newcommand{\I}[1]{{\mathbf 1}_{[#1]}}
\newcommand{\set}[1]{\left\{ #1 \right\}}
 
\newcommand{\probC}[2]{\mathbf{P}\set{#1 \; \left|  \; #2 \right. }}

\newcommand{\iids}{\textsc{iid}}
\newcommand{\iid}{\iids\ }
\newcommand{\fl}[1]{\ensuremath{\lfloor #1 \rfloor}}
\newcommand{\pD}[1]{\mathbf{P}_{\widehat{D}^n}\left\{#1 \right\}}


\newcommand\cB{\mathcal B}

\newcommand\cE{\mathcal E}
\newcommand\cF{\mathcal F}
\newcommand\cG{\mathcal G}

\newcommand\cL{{\mathcal L}}

\newcommand\cP{\mathcal P}

\newcommand\cS{{\mathcal S}}
\newcommand\cT{{\mathcal T}}
\newcommand\cU{{\mathcal U}}


\newcommand{\bC}{\mathbf{C}} 
\newcommand{\bD}{\mathbf{D}} 
 
\newcommand{\bF}{\mathbf{F}}

\newcommand{\bP}{\mathbf{P}}

\newcommand{\bT}{\mathbf{T}}

\newcommand{\eqdist}{\ensuremath{\stackrel{\mathrm{d}}{=}}}
\newcommand{\convdist}{\ensuremath{\stackrel{\mathrm{d}}{\longrightarrow}}}
\newcommand{\convas}{\ensuremath{\stackrel{\mathrm{a.s.}}{\longrightarrow}}}
\newcommand{\convprob}{\ensuremath{\stackrel{\mathrm{p}}{\rightarrow}}}


\newcommand{\rT}{\mathrm{T}}

\makeatletter
\newcommand{\leqnomode}{\tagsleft@true\let\veqno\@@leqno}
\newcommand{\reqnomode}{\tagsleft@false}
\makeatother

\newcommand{\bd}{\mathrm{d}}

\endlocaldefs

\begin{document}

\begin{frontmatter}
\title{Discrete snakes with globally centered displacements}
\runtitle{Discrete snakes with globally centered displacements}

\begin{aug}
\author[A]{\fnms{Louigi}~\snm{Addario-Berry}},
\author[B]{\fnms{Serte}~\snm{Donderwinkel}},
\author[C]{\fnms{Christina}~\snm{Goldschmidt}}
\and
\author[D]{\fnms{Rivka}~\snm{Mitchell}}

\address[A]{Department of Mathematics and Statistics, McGill University, louigi.addario@mcgill.ca}
\address[B]{Bernoulli Institute for Mathematics, Computer Science and Artificial Intelligence, University of Groningen, s.a.donderwinkel@rug.nl}
\address[C]{Department of Statistics and Lady Margaret Hall, University of Oxford, christina.goldschmidt@stats.ox.ac.uk}
\address[D]{Mathematical Institute, University of Oxford, rivka.mitchell@maths.ox.ac.uk}

\end{aug}

\begin{abstract}
We prove a scaling limit for globally centered discrete snakes on size-conditioned critical Bienaymé trees. More specifically, under a global finite variance condition, we prove convergence in the sense of random finite-dimensional distributions of the head of the discrete snake (suitably rescaled) to the head of the Brownian snake driven by a Brownian excursion. When the third moment of the offspring distribution is finite, we further prove uniform functional convergence under a  necessary tail condition on the displacements. We also consider displacement distributions with heavier tails, for which we instead obtain convergence to a variant of the hairy snake introduced by Janson and Marckert. We further give two applications of our main result. Firstly, we obtain a scaling limit for the difference between the height process and the Łukasiewicz path of a size-conditioned critical Bienaymé tree. Secondly, we obtain a scaling limit for the difference between the height process of a size-conditioned critical Bienaymé tree and the height process of its associated looptree. 
\end{abstract}

\begin{keyword}[class=MSC]
\kwd{60J80}
\kwd{60F17}
\kwd{60C05}
\end{keyword}

\begin{keyword}
\kwd{Discrete snakes}
\kwd{branching random walks}
\kwd{branching processes}
\kwd{random trees}
\end{keyword}

\end{frontmatter}
\tableofcontents


\section{Introduction}
We consider a branching random walk whose genealogy is given by the family tree of a Bienaym\'e branching process (which we refer to as a \emph{Bienaym\'e tree}) conditioned to have $n$ vertices. We assume that the offspring distribution $\mu = (\mu_k)_{k \ge 0}$ is critical and has finite, non-zero variance, so that the genealogical tree has the Brownian continuum random tree as its scaling limit.\footnote{To avoid technicalities, we shall also assume that the support of $\mu$ has greatest common divisor 1, so that the event that the tree has size $n$ has strictly positive probability for all $n$ large enough.} Each vertex of the tree is endowed with a spatial location in $\R$: the root is fixed to be at 0; for every other vertex, its location is obtained via a random displacement away from the location of its parent. The random displacements of children of distinct vertices will always be independent but, in general, the displacements of siblings may be dependent and may, moreover, depend on the vertex degree. For a vertex $v$ with~$k$ children, the distribution of the vector of displacements from $v$ to its ordered children is denoted by $\nu_k$. In the sequel, $Y_k = (Y_{k,1},\dots, Y_{k,k})$ always denotes a random vector with law $\nu_k.$ In this paper, we explore conditions on $\mu$ and $\nu = (\nu_k)_{k \ge 1}$ such that the whole object converges to a Brownian motion indexed by the Brownian tree. 

A convenient formulation is via the notion of a \emph{discrete snake}. We imagine exploring the vertices of the tree one by one in depth-first order (we shall give precise definitions in Section~\ref{sec:background} below) and record a list of the spatial locations of the ancestors of the vertex we are currently visiting. In other words, the snake is a process taking values in the set of finite random walk paths (one should imagine it wiggling around as we explore the tree!). In fact, it turns out to be sufficient for many purposes to track the spatial location of the vertex that we are visiting only: this gives the so-called \emph{head of the discrete snake}, which is our primary object of interest. We aim to prove convergence, after an appropriate rescaling, of the head of the discrete snake to the head of the Brownian snake driven by a normalised Brownian excursion (BSBE), first introduced by Le Gall \cite{le1993class, le1995brownian}. This is a stochastic process $(\mathbf{e}, \mathbf{r}) = (\mathbf{e}_t, \mathbf{r}_t)_{0 \le t \le 1}$ taking values in $\R_+ \times \R$, such that $\mathbf{e}$ is a normalised Brownian excursion and, conditionally on~$\mathbf{e}$, the second coordinate $\mathbf{r}$ is a centered Gaussian process taking values in $\mathbb{R}$ with covariance function 
\begin{equation} \label{eq:BSBE}
\mathrm{cov}\left(\mathbf{r}_s, \mathbf{r}_t\right) = \min_{u\in [s\wedge t, s\vee t]}\mathbf{e}_u.
\end{equation}
Let us give some interpretation. For any pair of vertices in the Brownian tree, encoded by $s,t \in [0,1]$, having heights $\mathbf{e}_s$ and $\mathbf{e}_t$, the spatial locations along their genealogical paths evolve as a common Brownian motion until their most recent common ancestor (which lies at distance $\min_{u\in [s\wedge t, s\vee t]}\mathbf{e}_u$ from the root) is reached, and they evolve as independent Brownian motions thereafter.

The problem of proving convergence of rescaled discrete snakes to the BSBE has been studied by a number of authors, under a wide range of different conditions on $\mu$ and $(\nu_k)_{k \ge 1}$. We shall give a review of the literature after we state our main results.

In order to obtain a Brownian limit for the displacements along a lineage, we require appropriate centering and moment conditions, which we now explain. Let $\xi$ be a random variable with distribution $\mu$ and let $\bar{\xi}$ be a size-biased version, that is, having distribution $\bar\mu := (\bar \mu_k)_{k \ge 1}$, where for all $k \ge 1$,
\[
\bar\mu_k = \frac{k\mu_k}{\E{\xi}} = k\mu_k.
\]
(Recall that the offspring distribution $\mu$ is assumed to be critical, so that $\E{\xi} = 1$.)

Conditionally on $\bar\xi$, let $Y_{\bar\xi}=(Y_{\bar\xi,1},\ldots,Y_{\bar\xi,\bar\xi})$ be $\nu_{\bar\xi}$-distributed and, independently, let $U_{\bar\xi}$ be a Uniform($[\bar\xi]$) random variable (where $[m] := \{1,2,\ldots,m\})$. Then we say that the discrete snake is \emph{globally centered} if 
\[
\E{Y_{\bar\xi, U_{\bar\xi}}} = 0.
\]
In other words, the expected displacement of a uniform child of a vertex with a size-biased number of offspring is 0. We define the \emph{global variance} to be
\[
\beta^2 := \E{Y^2_{\bar\xi, U_{\bar\xi}}},
\]
and will prove our results under the condition that $\beta^2 < \infty$. Since 
distances in the tree scale as $n^{1/2}$, the spatial displacements along a lineage will then scale as  $n^{1/4}$. 

\subsection{Main result}\label{sec:main_result}
 
Denote by $\rT_n$ a Bienaym\'e tree with offspring distribution $\mu$, conditioned to have $n$ vertices. Write $v(\rT_n)$ for the vertex set of $\rT_n$ and $\partial \rT_n$ for its set of leaves. 
 Conditionally given $\rT_n$, let $Y=(Y^{(v)},v \in v(\rT_n)\setminus \partial\rT_n)$ be independent random vectors, such that if $v \in v(\rT_n)\setminus \partial\rT_n$ has $k$ children then $Y^{(v)}$ has distribution $\nu_k$. Endow the vertices of $\rT_n$ with spatial locations using the displacement vectors $Y^{(v)}$ as described above.
 We call the pair $\bT_n=(\rT_n,Y)$ a {\em $(\mu,\nu)$-branching random walk conditioned to have $n$ vertices}, or simply a $(\mu,\nu)$-branching random walk. 
 
 Let $H_n = (H_n(i))_{0 \le i \le n}$ and $\widetilde{H}_n = (\widetilde{H}_n(i))_{0 \le i \le 2n}$ be the height and contour processes of $\rT_n$, respectively. 
 Let $R_n(i)$ be the spatial location of the $i$-th vertex visited in a depth-first exploration of $\bT_n$. We call the process $(H_n, R_n)$ the \textit{head of the discrete snake}  (see Section \ref{sec:background} for a careful description of this).
We may alternatively encode the endpoints of the random walk trajectories using the process $(\widetilde{H}_n, \widetilde{R}_n)$, where $\widetilde{R}_n(i)$ is the spatial location of the $i$-th vertex visited in a contour exploration of $\bT_n$. (Compared to $(H_n, R_n)$, this simply revisits some vertices.) 
We interpolate all of these functions linearly between integer times, which turns $H_n$ and $R_n$ into elements of $\bC([0,n],\R)$ and turns $\widetilde{H}_n$ and $\widetilde{R}_n$ into elements of $\bC([0,2n],\R)$.

\begin{figure}[h!]
    \centering
    \includegraphics[scale = 0.65]{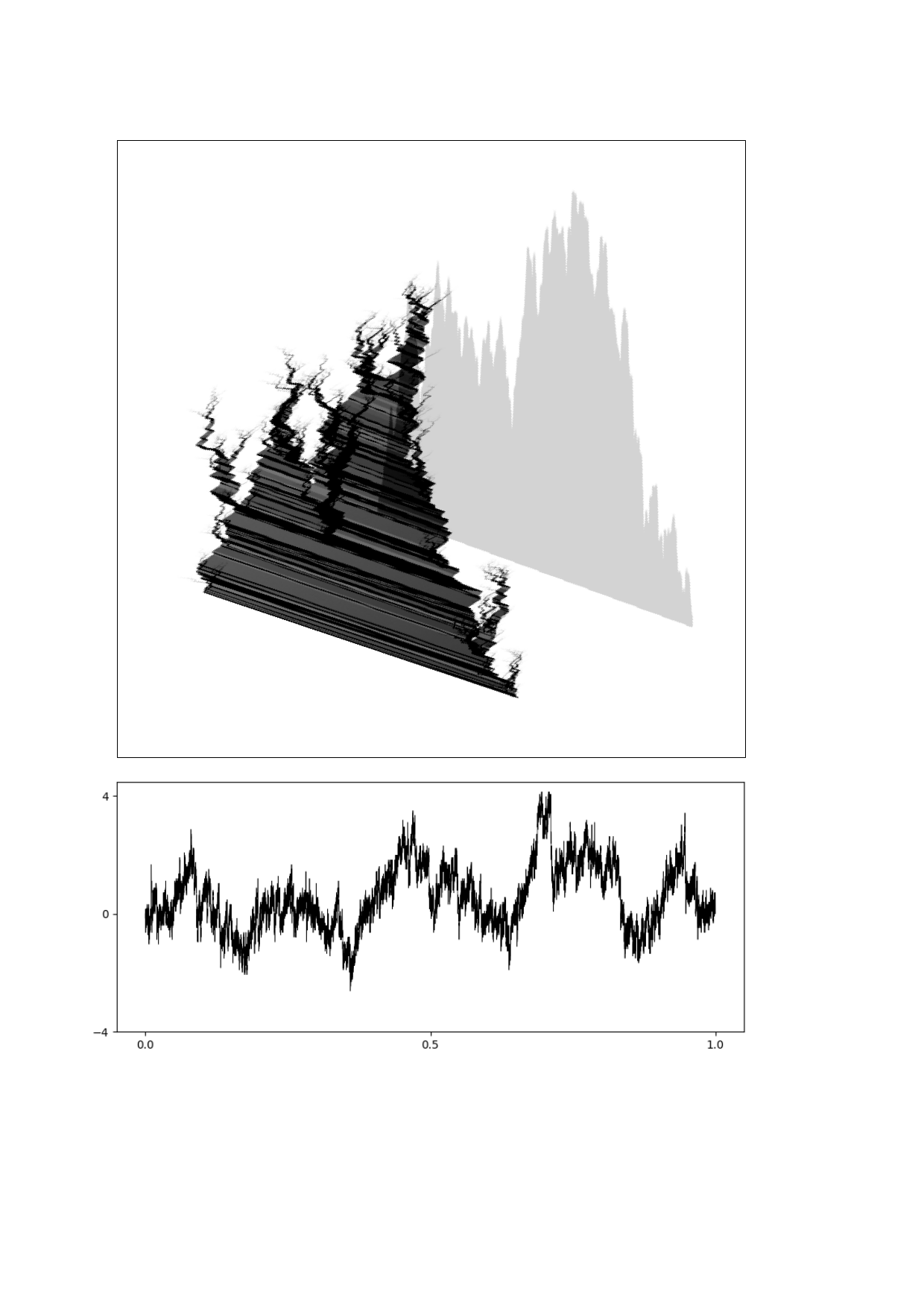}
    \vspace{-0.5cm}
    \caption{Top: a (rescaled) discrete snake; bottom: its head. The underlying tree is a size-conditioned Poisson$(1)$ Bienaymé tree with $n = 25000$ vertices and deterministic displacement distributions given by (\ref{eq:deter_disp}), below. The area under the contour process of the underlying tree is illustrated by the gray shaded region.}
    \label{fig:snake}
\end{figure}

We use two different notions of convergence for a sequence of random elements $(f_n)_{n \ge 1}$ of $\mathbf{C}([0,1], \R)$ such that $f_n(0)=f_n(1)=0$ for all $n \ge 1$. Let $U_1, U_2, \ldots$ be \iid Uniform($[0,1]$) random variables, independent of everything else. For $k \ge 1$, write $U_{(1)}^k, \ldots, U_{(k)}^k$ for the order statistics of $U_1, \ldots, U_k$. For another random element $f$ of $\mathbf{C}([0,1],\R)$ such that $f(0)=f(1)=0$, we say that $f_n \convdist f$ in the sense of \emph{random finite-dimensional distributions} if, for every $k \ge 1$,
\[
(f_n(U_{(1)}^k), \ldots, f_n(U_{(k)}^k)) \convdist (f(U_{(1)}^k), \ldots, f(U_{(k)}^k))
\]
as $n \to \infty$. (We will discuss our choice of this notion of convergence in more detail below.) We will also use the stronger notion of convergence with respect to the topology generated by the uniform norm.

\begin{thm}\label{thm:main}
Let $\mu = (\mu_k)_{k\ge 0}$ be a critical offspring distribution with  variance $\sigma^2 \in (0,\infty)$.  If $\nu=(\nu_k)_{k \ge 1}$ is such that
\makeatletter
\def\tagform@#1{\maketag@@@{[\ignorespaces#1\unskip\@@italiccorr]}}
\makeatother
\begin{equation}\leqnomode\tag{\bf A1}\label{a1}
\E{Y_{\bar\xi, U_{\bar\xi}}} = 0 \quad \text{and} \quad \beta^2 = \E{(Y_{\bar\xi, U_{\bar\xi}})^2}  < \infty,
\end{equation}
\makeatletter
\def\tagform@#1{\maketag@@@{(\ignorespaces#1\unskip\@@italiccorr)}}
\makeatother
\noindent \!\!then as $n\rightarrow \infty$ the following joint convergence holds in the sense of random finite-dimensional distributions:
\begin{equation}\label{eq:main}
\left(\frac{H_n(nt)}{\sqrt{n}}, \frac{R_n(nt)}{n^{1/4}}, \frac{\widetilde{H}_n(2nt)}{\sqrt{n}}, \frac{\widetilde{R}_n(2nt)}{n^{1/4}} \right)_{0 \le t \le 1}\convdist \left(\frac{2}{\sigma}\mathbf{e}_t, \beta\sqrt{\frac{2}{\sigma}} \mathbf{r}_t, \frac{2}{\sigma}\mathbf{e}_t, \beta\sqrt{\frac{2}{\sigma}}\mathbf{r}_t\right)_{0 \le t \le 1}. \end{equation}
The convergence (\ref{eq:main}) holds in distribution in $\mathbf{C}([0,1], \R^4)$ endowed with the topology of uniform convergence if, additionally, 
\makeatletter
\def\tagform@#1{\maketag@@@{[\ignorespaces#1\unskip\@@italiccorr]}}
\makeatother
\begin{equation}\leqnomode \tag{\bf A2}\label{a2} \p{\max_{1 \le i \le \xi} |Y_{\xi,i}| >y} = o(y^{-4}) \text{ as $y\rightarrow \infty$ and } \E{\xi^3} <\infty.
\end{equation}
\makeatletter
\def\tagform@#1{\maketag@@@{(\ignorespaces#1\unskip\@@italiccorr)}}
\makeatother
\end{thm}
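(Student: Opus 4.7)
My plan is to treat the four-coordinate process by leveraging known scaling limits for the genealogical encodings and deriving the spatial limit from a conditional CLT along ancestral paths. The results of Aldous--Marckert--Mokkadem and Duquesne already give, under $\sigma^2\in(0,\infty)$, the uniform convergences $H_n(n\cdot)/\sqrt{n}\to (2/\sigma)\mathbf{e}$ and $\widetilde H_n(2n\cdot)/\sqrt{n}\to(2/\sigma)\mathbf{e}$; moreover, contour and depth-first parameterisations of $\bT_n$ differ by a tree-measurable Lipschitz time change, so it suffices to treat the pair $(H_n,R_n)$ and transport the conclusion to $(\widetilde H_n,\widetilde R_n)$. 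The remaining work is thus to identify the scaling limit of $R_n/n^{1/4}$ jointly with the height process.

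For the random finite-dimensional statement, I would draw independent uniforms $U_1,\ldots,U_k$ on $[0,1]$, independent of $\bT_n$, and study $R_n(\lceil nU_i\rceil)$ for the vertices $v_i$ visited at depth-first times $\lceil nU_i\rceil$. Conditionally on $\rT_n$, each such value is a sum along the ancestral spine of $v_i$ of a single displacement coordinate selected, at each ancestor $u$, by the child index on the path. Classical spine descriptions of size-conditioned Bienaymé trees (Kesten's infinite tree and Aldous' fringe decomposition) imply that, for $k$ uniformly marked vertices, the ordered degrees along each spine are asymptotically iid~$\bar\mu$, the selected child indices are asymptotically uniform on $\{1,\ldots,\bar\xi\}$, and the pairwise meeting heights, rescaled by $n^{1/2}$, converge to $(2/\sigma)\min_{u\in[s\wedge t,s\vee t]}\mathbf{e}_u$. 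Under \ref{a1} the per-step displacements are then centered with variance $\beta^2$, and a triangular-array CLT applied to the $k$ partially overlapping ancestral sums yields a centered Gaussian limit with the covariance structure prescribed by~(\ref{eq:BSBE}), multiplied by the correct constant $\beta^2\cdot 2/\sigma$. Combined with the joint convergence of the heights at the marked times, this gives the random fidi statement.

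To upgrade to uniform convergence under the additional hypothesis \ref{a2}, I would establish tightness of $R_n/n^{1/4}$ in $\mathbf{C}([0,1],\R)$ via a Kolmogorov--Chentsov increment bound of the form
\[
\E{\bigl|R_n(\lceil nt\rceil)-R_n(\lceil ns\rceil)\bigr|^p}\le C\,n^{p/4}\,|t-s|^{\alpha}
\]
for some $p>2$ and $\alpha>1$, uniformly in $s,t,n$. Conditionally on $\rT_n$, the increment is a sum of displacement coordinates along the union of the two ancestral paths above their last common ancestor, whose length equals $H_n(\lceil ns\rceil)+H_n(\lceil nt\rceil)-2\min_{u\in[s,t]}H_n(\lceil nu\rceil)$. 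The third-moment assumption $\E{\xi^3}<\infty$ supplies the appropriate moment control for the height-process increments (in the spirit of Addario-Berry--Devroye--Janson), and the tail bound $\p{\max_{i}|Y_{\xi,i}|>y}=o(y^{-4})$ is precisely calibrated so that a truncation plus a Marcinkiewicz--Zygmund step gives a $p$-th moment bound on the edge-wise displacement sums of the correct order in $n$.

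The delicate step is the last one: the tail decay in \ref{a2} is essentially sharp, so the truncation argument must not waste any exponent. A workable scheme is to truncate displacements at a threshold $n^{1/4+\varepsilon}$, use the tail bound to discard, with cost $o(1)$, the event that any of the $O(n)$ displacements exceeds the threshold, and apply Marcinkiewicz--Zygmund edge-by-edge to the truncated variables. A further subtlety, present even for the fidi step, is that sibling displacements may be correlated and may depend on the vertex degree, so it is cleanest to index all sums by \emph{edges} of $\rT_n$ traversed on the ancestral paths: each summand then contributes a single coordinate $Y^{(u)}_{j}$ whose first two moments ($0$ and $\beta^2$) and tail bound follow from \ref{a1}--\ref{a2} after averaging over the uniformly chosen child index, decoupling the sibling-correlation issue from the moment estimates.
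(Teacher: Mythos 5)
Your fidi argument is in the right spirit (spine degrees become size-biased, child indices become uniform, then a triangular-array CLT), and it matches the paper at the conceptual level, though the paper replaces the appeal to ``classical spine descriptions'' with a concrete discrete line-breaking construction plus an explicit change of measure (Propositions~\ref{prop:measure_change_basic} and~\ref{prop:conv_degrees_disps}), because for $k$ jointly marked vertices one needs quantitative control of the Radon--Nikodym derivative between the conditioned-tree spine and the idealised \iid\ $\bar\mu$-sequence, not just a qualitative local limit. This is where $\E{\xi^3}<\infty$ and the local CLT enter (Lemma~\ref{lem:tv_distance}); your sketch glosses over the fact that $\widehat D_1^m$ is not exactly distributed as $\bar\xi$, and that the error must be $o(m^{-1/2})$ for the drift created by this discrepancy not to pollute the $n^{1/4}$ scaling.

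The real gap is in your tightness argument. A Kolmogorov--Chentsov bound $\E{|R_n(\lceil nt\rceil)-R_n(\lceil ns\rceil)|^p}\le C n^{p/4}|t-s|^\alpha$ with $\alpha>1$ forces $p>4$: the head of the Brownian snake is only H\"older-$(1/4-\varepsilon)$, so $\alpha/p<1/4$ must hold, which together with $\alpha>1$ requires $p>4$. But [\ref{a2}] is the sharp boundary case and does \emph{not} give a finite $p$-th moment of the displacement for any $p\ge 4$. Your truncation at $n^{1/4+\varepsilon}$ does remove large displacements with $o(1)$ cost, but the Rosenthal/Marcinkiewicz--Zygmund bound on a centered path sum of length $L$ has two pieces, $L^{p/2}\beta^p$ and $L\,\E{|Y^{\mathrm{trunc}}|^p}$. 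With $L\asymp\sqrt{n|t-s|}$ the first piece gives $|t-s|^{p/4}$ (fine for $p>4$), but the second gives only $|t-s|^{1/2}$, which is incompatible with $\alpha>1$ on small scales. The truncated $p$-th moment $\E{|Y^{\mathrm{trunc}}|^p}$ also grows polynomially in $n$ once $p>4$, so the small-increment Rosenthal term cannot be absorbed by adjusting constants, and no choice of $\varepsilon>0$ repairs this. This is precisely the reason the paper does not use Kolmogorov--Chentsov at all: it instead uses the random-mesh tightness criterion of Proposition~\ref{prop:rfdds+rtightness}, decomposes $\rT_n$ into the reduced tree $\rT_n^k$ plus the forest of pendant subtrees, bounds the maximum of the spatial location over each pendant subtree, and reduces everything to the uniform tail bound $\p{\|\breve R_{n,\delta}\|_\infty>\gamma n^{1/4}}\le A/\gamma^8$ proved by induction on the tree size with the help of the Haas--Miermont estimate (Lemma~\ref{lem:haas}). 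That structure replaces a chaining bound by a size-biased split-and-recurse bound and is what makes the borderline tail condition workable; your K--C scheme would need a substantially different (and not sketched) mechanism to survive the small-$|t-s|$ regime.

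A further omission in both steps: your proposal implicitly treats the edge-wise displacements as centered. After truncation they are not, and even before truncation the increments along the conditioned spine are only centered in the size-biased, limiting sense. The paper spends real effort controlling the resulting drifts (Lemmas~\ref{lem:moments_truncation}, \ref{lem:mean_no_perm}, \ref{lem:technical}); your plan needs an analogue of this, with an explicit rate, before either the fidi CLT or the tightness estimate can be closed.
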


Theorem~\ref{thm:main} follows immediately from Corollary~\ref{cor:4.2} and Proposition~\ref{prop:tightness} below. The analogue of Theorem \ref{thm:main} holds with $\R^d$-valued displacements for $d > 1$, and with essentially identical proofs to those in the current work; the only change in the conclusion is that the limit $\mathbf{r}$ of the rescaled spatial displacements takes values in $\mathbb{R}^d$ rather than $\R$, and that $\beta^2 $ should be interpreted as the covariance matrix of $Y_{\bar{\xi},U_{\bar{\xi}}}$.

Let $\Phi_n(i)$ be the random walk trajectory associated with path from the root to the $i$-th vertex visited in the contour exploration of $\bT_n$, for $0 \le i \le 2n$. Then $(\widetilde{H}_n, \Phi_n)$ is the \emph{discrete snake driven by~$\widetilde{H}_n$}. By the homeomorphism theorem of Marckert and Mokkadem (Theorem 2.1 of \cite{homeomorphism_thm}), Theorem \ref{thm:main} entails also that $(\widetilde{H}_n, \Phi_n)$ has the BSBE as its scaling limit; see Figure \ref{fig:snake} for an illustration.

By \cite[Corollary 2.5.1]{le2002random} and \cite{marckert} it turns out that the convergence of the parametrisations of the head of the snake via the height and contour processes are essentially equivalent. In particular, in order to prove Theorem \ref{thm:main}, it suffices to show that, under assumption [\ref{a1}], we have
\begin{equation}\label{eq:suffice}
\left(\frac{H_n(nt)}{\sqrt{n}}, \frac{R_n(nt)}{n^{1/4}}\right)_{0 \le t \le 1} \convdist \left(\frac{2}{\sigma}\mathbf{e}_t, \beta\sqrt{\frac{2}{\sigma}} \mathbf{r}_t\right)_{0 \le t \le 1},
\end{equation}
as $n\rightarrow \infty$ in the sense of random finite-dimensional distributions, and  
in $\mathbf{C}([0,1], \R^2)$ endowed with the topology of uniform convergence under the additional assumption [\ref{a2}]. 

It is not clear to us whether the requirement that $\E{\xi^3} < \infty$ in [\ref{a2}] is necessary or just an artefact of our approach to proving tightness. We shall see in the next subsection that the tail condition in [\ref{a2}] \emph{is} necessary.

\subsection{Necessity of the tail condition} 
If we adjust assumption [\ref{a2}] to allow for heavier tails, displacements start to appear near the leaves which are not negligible on the scale $n^{1/4}$. In this case, one can no longer expect a continuous limit process. This is a consequence of the following proposition on the largest displacement in the tree, which we prove in Section \ref{app:intro} in the appendix.
 
\begin{prop}\label{prop:big_displacement}
Let $\mu=(\mu_k)_{k\ge 0}$ be a critical offspring distribution with variance $\sigma^2\in (0,\infty)$. If $\limsup_{y\to \infty}y^4\p{\max_{1\le i \le \xi}|Y_{\xi,i}|>y }>0$, then there exists a $\delta>0$ such that 
\[
\limsup_{n\to \infty}\p{\max_{v \in v(\rT_n) \setminus \partial \rT_n} \max_{j \ge 1}|Y_{j}^{(v)}|>\delta n^{1/4}}>0.
\]
\end{prop}

We may however still obtain a global convergence result on the appropriate scale, under an additional regularity assumption; see [\ref{a3}] below.  Since the large displacements from a vertex with $k$ children need not be independent, in this setting the limit depends on the joint distribution of the displacements from a vertex to its children. 
For $k \ge 1$ and $j\in [k]$ denote by
\[
Y_{k,j}^+ := Y_{k,j} \vee 0 \quad \text{ and } \quad Y_{k,j}^- := (-Y_{k,j}) \vee 0
\]
the positive and negative displacements of the $j$-th child of a vertex with $k$ children, respectively. Further let $Y_k^+ := (Y_{k,j}^+)_{j\in [k]}$ and $Y_k^- = (Y_{k,j}^-)_{j\in [k]}$. 
\makeatletter
\def\tagform@#1{\maketag@@@{[\ignorespaces#1\unskip\@@italiccorr]}}
\makeatother
\medskip 
\begin{equation}\leqnomode \tag{\bf A3}\label{a3}
 \parbox{0.89\textwidth}{
Suppose that $\E{\xi^3} < \infty$. Furthermore, suppose that there exists a Borel measure $\pi$ on $\R_+^2\setminus \{(0,0)\}$ such that for any $\varepsilon>0$, both $\pi(\R_+\times(\varepsilon,\infty))<\infty$ and $\pi((\varepsilon,\infty)\times\R_+)<\infty$, and there exists $\eta \in [0,2)$ such that for all Borel sets $A \subset \R_+^2 \setminus \{(0,0)\}$ for which $\pi(\partial A)=0$, as $r \to \infty$
\[
r^{4-\eta} \p{ \frac{1}{r} \left(\max_{1 \le i \le \xi} Y^+_{\xi,i}, \max_{1 \le i \le \xi} Y^{-}_{\xi,i}\right) \in A} \to \pi(A).
\]
}
 \end{equation}
 \medskip
\makeatletter
\def\tagform@#1{\maketag@@@{(\ignorespaces#1\unskip\@@italiccorr)}}
\makeatother

We note the following lemma, whose proof may be found in Section \ref{app:intro} in the appendix. 

\begin{lem} \label{lem:noatoms}
 {[\ref{a3}]} implies that the projection of $\pi$ onto either of its coordinates has no atom in $(0,\infty)$.
\end{lem}

Under assumption [\ref{a3}] we prove convergence results for the head of the discrete snake in the space of non-empty compact subsets of $[0,1] \times \R$ equipped with the Hausdorff topology. In what follows, for a continuous function $f:[0,1]\rightarrow \R$ and a set $S \subset [0,1] \times \R_+^2 \setminus \{(0,0)\}$, write $U(f,S)$ for the union of the graph of $f$ and the vertical line segments $[(t,f(t)-y),(t,f(t)+x)]$ for each $(t,x,y) \in S$. The next theorem relates to the case $\eta = 0$ in [\ref{a3}].

\begin{thm} \label{thm:hairy_4}
Let $\mu = (\mu_k)_{k\ge 0}$ be a critical offspring distribution with  variance $\sigma^2 \in (0,\infty)$, and let $\nu = (\nu_k)_{k \ge 1}$ be such that [\ref{a1}] holds and [\ref{a3}] holds for a given measure $\pi$ with $\eta = 0$. Then, taking $\Xi$ to be a Poisson process on $[0,1]\times \R_+^2\setminus\{(0,0)\}$ with intensity $dt \otimes \pi(dx,dy)$, we have 
\begin{equation}\label{eq:hairy_4}\left(\left(\frac{H_n(nt)}{\sqrt{n}}\right)_{0 \le t \le 1},~ U\left(\frac{R_n(n\cdot)}{n^{1/4}}, \emptyset\right)\right) \convdist \left(\left(\frac{2}{\sigma}\mathbf{e}_t\right)_{0 \le t \le 1},~ U\left(\beta\sqrt{\frac{2}{\sigma}} \mathbf{r}, \Xi\right)\right),\end{equation}
as $n\rightarrow \infty,$ where the convergence in the first coordinate  is in $\mathbf{C}([0,1],\R)$ endowed with the topology of uniform convergence, and the convergence in the second is in the space of non-empty, compact subsets of $[0,1]\times \R$ endowed with the Hausdorff topology.
\end{thm}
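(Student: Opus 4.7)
The strategy is to decompose each displacement into a ``small'' part (bounded by $\varepsilon n^{1/4}$) and a ``large'' part, analyse each separately, and let $\varepsilon\downarrow 0$ at the end. First I would establish, jointly with the uniform convergence $H_n(n\cdot)/\sqrt{n}\to (2/\sigma)\mathbf{e}$, vague convergence in distribution of the point process
\[
N_n := \sum_{v\in v(\rT_n)} \delta_{\bigl(i_v/n,\, \max_j Y^{(v),+}_j/n^{1/4},\, \max_j Y^{(v),-}_j/n^{1/4}\bigr)}
\]
(viewed on $[0,1]\times (\R_+^2\setminus\{(0,0)\})$) to the Poisson process $\Xi$, with $\Xi$ independent of $\mathbf{e}$ in the limit. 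The intensity computation uses [\ref{a3}] with $\eta=0$ (which gives $\p{(\max_j Y^+_{\xi,j},\max_j Y^-_{\xi,j})\in n^{1/4}A}\sim n^{-1}\pi(A)$), the asymptotic degree counts $|\{v:\deg^+(v)=k\}|/n\to\mu_k$ (valid since $\E{\xi^3}<\infty$), and a standard triangular-array Poisson approximation.

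Second, for each fixed $\varepsilon>0$ I would introduce the snake $\tilde R_n^{(\varepsilon)}$ whose displacements are suitably re-centered truncations $\tilde Y^{(\varepsilon)}_{k,j}$ of the $Y_{k,j}$ at level $\varepsilon n^{1/4}$ (with an additive correction chosen to preserve [\ref{a1}]), and show that
\[
\tilde R_n^{(\varepsilon)}(n\cdot)/n^{1/4}\longrightarrow \beta\sqrt{2/\sigma}\,\mathbf{r}\qquad \text{uniformly on }[0,1].
\]
This is the main obstacle of the proof: because the truncation level depends on $n$, Theorem~\ref{thm:main} cannot be invoked as a black box. Random finite-dimensional convergence (Corollary~\ref{cor:4.2}) transfers without difficulty since $\tilde Y^{(\varepsilon)}$ has global variance converging to $\beta^2$, but the tightness argument of Proposition~\ref{prop:tightness} must be revisited for this triangular-array setting, in which $\p{\max_j|\tilde Y^{(\varepsilon)}_{k,j}|>y}=O(y^{-4})$ for $y\le \varepsilon n^{1/4}$ and vanishes beyond. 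A natural route is to sandwich $\tilde R_n^{(\varepsilon)}$ between fixed-level truncations $\tilde R_n^{(L)}$ (to which Theorem~\ref{thm:main} directly applies, with limit $\beta_L\sqrt{2/\sigma}\mathbf{r}$) and control the medium-displacement contribution $\tilde R_n^{(\varepsilon)}-\tilde R_n^{(L)}$ uniformly using its vanishing variance as $L\to\infty$.

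Third, the difference $R_n-\tilde R_n^{(\varepsilon)}$ accumulates the large displacements along each ancestry, modulo a cumulative centering of order $\sqrt{n}\cdot o(n^{-1/4})=o(n^{1/4})$ (negligible after rescaling, by uniform integrability of $Y_{\bar\xi,U_{\bar\xi}}^2$ under [\ref{a1}]). For each Poisson point $(t_0,x_0,y_0)\in \Xi$ with $\max(x_0,y_0)>\varepsilon$, the corresponding vertex $u$ (DFS position $\approx nt_0$) has children $c_\pm$ carrying the max positive and negative big displacements, and during the DFS of the subtrees $T_{c_\pm}$ the snake takes values close to $(f(t_0)+x_0)n^{1/4}$ and $(f(t_0)-y_0)n^{1/4}$ respectively (where $f:=\beta\sqrt{2/\sigma}\mathbf{r}$), reached from $R_n(u)$ by jumps of horizontal width $1/n$. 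A standard size-biased argument (subtree sizes at a typical vertex in a conditioned critical Bienaymé tree are $O_p(\sqrt{n})$) ensures $|T_{c_\pm}|/n\to 0$ in probability, so each such excursion collapses in Hausdorff distance to the vertical segment $\{t_0\}\times[f(t_0)-y_0,f(t_0)+x_0]$. Combining the three steps, the Hausdorff distance between $U(R_n(n\cdot)/n^{1/4},\emptyset)$ and $U(f,\Xi\cap\{(t,x,y):\max(x,y)>\varepsilon\})$ tends to zero in probability for each $\varepsilon>0$; since the spikes omitted by this restriction have vertical extent at most $\varepsilon$, letting $\varepsilon\downarrow 0$ completes the proof.
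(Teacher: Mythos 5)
Your overall architecture (Poisson convergence of the large-jump point process, uniform convergence of a truncated snake, then reassembly and $\varepsilon\downarrow 0$) mirrors the paper's, and your step~1 is essentially Lemma~\ref{lem:big_jumps_conv} together with the independence established in Proposition~\ref{prop:rffds_hairy}. The genuine gap is in step~2, precisely where you flag the ``main obstacle''. You truncate at level $\varepsilon n^{1/4}$ and propose to transfer tightness by sandwiching against a fixed level $L$ and invoking ``vanishing variance as $L\to\infty$''. But a vanishing-variance statement does not yield the uniform-in-$n$ tail bound $\sup_n\p{\|\tilde R^{(\varepsilon)}_n-\tilde R^{(L)}_n\|_\infty>\gamma n^{1/4}}\to 0$ that the sandwich needs; a maximal inequality is required, and that is exactly the tightness problem you set out to solve. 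The inductive bound underlying tightness (Proposition~\ref{lem:max_tight}) relies crucially on the displacements being bounded by $n^{1/4-\delta}$ for some $\delta>0$ so that $|\breve Y^{n,\delta}_i|/(\gamma n^{1/4})\to 0$ uniformly, making the Taylor step $(1-u)^{-8}\le 1+8u+72u^2$ valid for every $\gamma$; with truncation at $\varepsilon n^{1/4}$ this ratio stays at order $\varepsilon/\gamma$ and the induction breaks for small $\gamma$.

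The paper therefore uses a three-tier split at levels $n^{1/4-\delta}$ and $\gamma n^{1/4}$ rather than your two-tier split at $\varepsilon n^{1/4}$: typical displacements are handled by the inductive tightness bound (extended to the $n$-dependent cutoff $n^{1/4-\delta}$ in Proposition~\ref{prop:trimmed_snake}), while the mid-range band $(n^{1/4-\delta},\gamma n^{1/4}]$ is shown negligible by a separate combinatorial argument (Proposition~\ref{prop:mid_range_hairy}, using Lemma~\ref{lem:no_ganging_up_d_tree}) that shows no two mid-range vertices are ancestrally related, rather than by a variance estimate. Your step~3 captures the right intuition, but the paper's pruning-and-grafting lemma (Lemma~\ref{lem:uniform_grafting}) gives a cleaner mechanism: conditional on the pruned snake and the pruned subtrees, the grafting sites are exchangeable, which immediately yields the uniform $[0,1]$ times in the limit without having to track DFS positions directly. (Minor point: the subtree below a typical vertex is $O_\bP(1)$ with a $\sim m^{-1/2}$ tail, not $O_p(\sqrt{n})$; what you actually need and do use is only $o_\bP(n)$.) Finally, the reassembly of the pieces as $\gamma\downarrow 0$ is made rigorous in the paper via the principle of accompanying laws rather than a direct Hausdorff-distance estimate, which sidesteps the question of whether the error from the mid-range band is controlled uniformly in $n$.
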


We refer to the object on the right-hand side of (\ref{eq:hairy_4}) as the \emph{hairy tour}, in keeping with the previous work of Janson and Marckert \cite{discrete_snakes}.

When $\eta\in (0,2)$, the large jumps dominate the smaller ones to such an extent that, in the limit, we obtain a pure jump process.

\begin{thm} \label{thm:hairy_2} 
Let $\mu = (\mu_k)_{k\ge 0}$ be a critical offspring distribution with  variance $\sigma^2 \in (0,\infty)$, and let $\nu = (\nu_k)_{k \ge 1}$ be such that [\ref{a1}] holds and [\ref{a3}] holds for a given measure $\pi$ with $\eta \in (0,2)$. Then, taking $\Xi$ to be a Poisson process on $[0,1]\times \R_+^2\setminus\{(0,0)\}$ with intensity $dt \otimes \pi(dx,dy)$, we have 
\[
\left(\left(\frac{H_n(nt)}{\sqrt{n}}\right)_{0 \le t \le 1},~ U\left(\frac{R_n(n\cdot)}{n^{1/(4-\eta)}},\emptyset\right) \right) \convdist \left(\left(\frac{2}{\sigma}\mathbf{e}_t\right)_{0 \le t\le 1},~ U(0, \Xi)\right),
\]
as $n\rightarrow \infty,$ where the convergence in the first coordinate  is in $\mathbf{C}([0,1],\R)$ endowed with the topology of uniform convergence, and the convergence in the second is in the space of non-empty, compact subsets of $[0,1]\times \R$ endowed with the Hausdorff topology.
\end{thm}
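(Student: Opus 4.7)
My plan is to combine a Poisson convergence argument for the point process of large per-vertex child-displacement maxima with a truncation argument showing that the rescaled backbone vanishes on the scale $n^{1/(4-\eta)}$. The crucial observation, specific to $\eta\in(0,2)$, is that $n^{1/(4-\eta)}$ strictly exceeds $n^{1/4}$, so the $O_p(n^{1/4})$ head fluctuations guaranteed by Theorem~\ref{thm:main} disappear after the larger rescaling, leaving only the large jumps predicted by $\Xi$.

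The height-process convergence $H_n(n\cdot)/\sqrt n\to(2/\sigma)\mathbf e$ in the uniform topology is a classical result for finite-variance conditioned \B{\mu} trees. For the spatial side, write $\tau_v$ for the DFS-rank of an internal vertex $v$ of $\rT_n$ and $M_v^\pm:=\max_j (Y^{(v)}_j)^\pm$, and form the marked point measure
\[
\cP_n:=\sum_{v\in v(\rT_n)\setminus\partial\rT_n}\delta_{(\tau_v/n,\,M_v^+/n^{1/(4-\eta)},\,M_v^-/n^{1/(4-\eta)})}.
\]
Assumption [\ref{a3}] gives the per-vertex asymptotic $n\cdot\p{n^{-1/(4-\eta)}(M_v^+,M_v^-)\in A}\to\pi(A)$ in the unconditional \B{\mu} tree; a local limit theorem for the associated random walk (available under $\E{\xi^3}<\infty$) transfers this to the $n$-conditioned tree and provides asymptotic independence of marks across distinct vertices. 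A standard Poisson convergence argument then gives $\cP_n\convdist\Xi$ vaguely on $[0,1]\times(\R_+^2\setminus[0,\varepsilon]^2)$ for every $\varepsilon>0$.

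For the backbone, fix small $\varepsilon>0$, truncate displacements at the level $L:=\varepsilon n^{1/(4-\eta)}$, and re-center so that the truncated displacements satisfy [\ref{a1}]. Since the truncated displacements have global variance at most $\beta^2+o(1)$ and are bounded by $L$, a Doob $L^2$-inequality along each ancestral line combined with tightness of the height process gives that the associated truncated head $\hat R_n$ satisfies $\|\hat R_n\|_\infty=O_p(n^{1/4})=o_p(n^{1/(4-\eta)})$. The recentering bias, bounded per displacement by $\E{|Y_{\bar\xi,U_{\bar\xi}}|\mathbf{1}_{|Y_{\bar\xi,U_{\bar\xi}}|>L}}=O(L^{-(3-\eta)})$ under [\ref{a3}], accumulates along a height-$O_p(\sqrt n)$ path to $o(n^{1/(4-\eta)})$ since $\eta<2$. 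Hence $R_n-R_n^{\mathrm{big}}=o_p(n^{1/(4-\eta)})$ uniformly on $[0,n]$, where $R_n^{\mathrm{big}}$ collects only the displacements of size $>L$.

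For the Hausdorff convergence, on a high-probability event the $\varepsilon$-big vertices are $O(1)$ in number, and each big child roots a subtree of size $o_p(n)$; the latter follows from the CRT scaling limit of $\rT_n$, since the subtree-size fractions at any finite collection of distinct rescaled DFS-times converge jointly to ratios of interval lengths determined by $\mathbf e$. Each big jump therefore contributes a DFS-time spike of vanishing rescaled width that Hausdorff-converges to the vertical line segment predicted by the corresponding atom of $\Xi$; combined with the backbone estimate, this gives that $U(R_n(n\cdot)/n^{1/(4-\eta)},\emptyset)$ Hausdorff-converges to the restriction of $U(0,\Xi)$ to marks outside $[0,\varepsilon]^2$, and letting $\varepsilon\to 0$ closes the argument since sub-$\varepsilon$ marks contribute hairs of length at most $2\varepsilon$ and are Hausdorff-negligible. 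The main obstacle will be the backbone step: implementing the truncation-and-recentering cleanly so that the head of the truncated snake is controlled uniformly and the accumulated bias remains negligible, despite the fact that the truncated displacements do not literally satisfy [\ref{a2}].
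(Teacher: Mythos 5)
Your overall plan—Poissonize the large per-vertex displacement maxima, kill the backbone by showing it lives at scale $n^{1/4}\ll n^{1/(4-\eta)}$, then Hausdorff-convergence via a truncation parameter $\varepsilon\downarrow 0$—mirrors the paper's structure (Lemma~\ref{lem:big_jumps_conv}, Proposition~\ref{prop:trimmed_snake}, Proposition~\ref{prop:mid_range_hairy}, Proposition~\ref{prop:conv_spiky_snake}, accompanying-laws). You correctly identify where the novelty lies (the backbone bound) and correctly observe that you get more slack for $\eta>0$. But there is a genuine gap precisely at the step you flag as ``the main obstacle''.

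The claim that ``a Doob $L^2$-inequality along each ancestral line combined with tightness of the height process gives $\|\hat R_n\|_\infty=O_p(n^{1/4})$'' does not work as stated, for two reasons. First, along a \emph{fixed} ancestral lineage the successive displacements are not a martingale under the global centering condition [\ref{a1}]: the hypothesis centers the displacement of a uniformly random child of a size-biased vertex, not the specific child lying on a prescribed lineage, so the partial sums along a fixed lineage can have a drift. (The i.i.d.\ copies of $Y_{\bar\xi,U_{\bar\xi}}$ only appear along the size-biased/line-breaking branches as in Proposition~\ref{prop:conv_degrees_disps}, not arbitrary lineages.) Second, even if each lineage were controlled, $\|\hat R_n\|_\infty$ is a maximum over $n$ vertices, and an $L^2$ bound with a $1/\gamma^2$ tail cannot be union-bounded over $n$ events. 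The paper's actual control (Proposition~\ref{lem:max_tight}, adapted to $\eta>0$ in Proposition~\ref{prop:trimmed_snake}) yields a $\gamma^{-8}$ tail via an inductive Haas--Miermont root-decomposition argument that crucially uses the third-moment bound on $\mu$ (through the quantitative total-variation estimate Lemma~\ref{lem:tv_distance} and the size-biased split at the root via Lemma~\ref{lem:haas}). Your proposal does not supply a replacement for this machinery.

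A second, subsidiary gap: your single truncation at level $L=\varepsilon n^{1/(4-\eta)}$ is too coarse for the inductive backbone argument. That induction needs the per-step displacement to be bounded by $n^{1/(4-\eta)-\delta}$ for some fixed $\delta>0$, so that the factor $(1-\breve Y_i^{n,\delta}/(\gamma n^{1/(4-\eta)}))^{-8}$ can be Taylor-expanded uniformly and the recentering drift term estimated via Lemma~\ref{lem:mean_no_perm}. This is why the paper introduces an explicit ``mid-range'' regime $n^{1/(4-\eta)-\delta}<\|Y^{(v)}\|_\infty\le\gamma n^{1/(4-\eta)}$ and handles it separately (Proposition~\ref{prop:mid_range_hairy}), using that such vertices are $o_p(n^{1/12})$ in number (Lemma~\ref{lem:few_hairs}) and not ancestrally related (Lemma~\ref{lem:no_ganging_up_d_tree}). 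Your construction lumps these displacements into the backbone, where they cannot be controlled. Finally, your recentering-bias computation $O(L^{-(3-\eta)})$ implicitly assumes the displacement tail itself (rather than the tail of the size-biased maximum) is $y^{-(4-\eta)}$; the correct order after passing from $\xi$ to $\bar\xi$ via H\"older (cf.\ Lemma~\ref{lem:moments_truncation_move}) is $O(L^{1-2(4-\eta)/3})$, which still suffices for $\eta<2$ but should not be claimed to be $L^{-(3-\eta)}$. You should also make explicit the joint convergence of the Poisson point process with the height process; the paper derives this by pruning at the big-displacement vertices and regrafting uniformly (Lemma~\ref{lem:uniform_grafting}), which is cleaner than arguing directly about subtree-size fractions at marked DFS-times.
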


In contrast to Theorem \ref{thm:main}, in Theorems \ref{thm:hairy_4} and \ref{thm:hairy_2} we need the condition $\E{\xi^3} < \infty$ not just for tightness but also for the convergence of the random finite-dimensional distributions. The reason for this is that we apply a quantitative local central limit theorem which requires a third moment on the offspring distribution. (See Theorem \ref{lem:local_move} for the precise statement.) 

The fact that we obtain a continuous function decorated by intervals in both Theorems~\ref{thm:hairy_4} and~\ref{thm:hairy_2} is really an artefact of the choice to interpolate $R_n$ linearly between integer times. Indeed, the endpoints of the intervals capture the asymptotic behaviour of the two extremities of the displacements away from vertices, but tell us nothing about how the ``point process'' of displacements in between behaves. If we instead consider the graph of $\left(\frac{R_n(\lfloor nt \rfloor)}{n^{1/4}}\right)_{0 \le t \le 1}$ in the case where we do not have $\p{\max_{1 \le i \le \xi}|Y_{\xi,i}| > y} = o(y^{-4})$ there are, in fact, many possible behaviours. We will not undertake any sort of exhaustive classification here, but let us give a couple of illustrative examples. 

Suppose first that the displacements are simply \iid copies of a random variable $Y$ such that, for some Borel measure $\pi$ on $\R_+\setminus \{0\}$ such that for any $\varepsilon>0$, $\pi((\varepsilon,\infty))<\infty$, we have $r^{4} \p{Y \in r A} \to \pi(A)$ as $r \to \infty$ for every Borel set $A \subset \R \setminus \{0\}$ such that $\pi(\partial A)=0$. Then we will not, in the limit, observe two or more $\Theta(n^{1/4})$ displacements away from the same vertex of $\rT_n$ (nor, indeed, from vertices at distance $o(n^{1/2})$ from one another), and so we just obtain the graph of $\mathbf{r}$ decorated by isolated points which occur as a Poisson process of intensity $dt\otimes \pi(dy)$ on $[0,1] \times \R \setminus \{0\}$. 

On the other hand, suppose that we have the following deterministic displacements:
\begin{equation}\label{eq:deter_disp}
Y_{k,j} = \sigma - \frac{2}{\sigma}(k-j) \text{ for } \quad 1 \le j \le k. 
\end{equation}
These displacements have a particular significance, which we will discuss in the next subsection. For the moment, let us just observe that it is straightforward to check that they are globally centered and of finite global variance whenever the offspring distribution is critical and admits a finite third moment. Suppose that there exists a Borel measure $\pi_1$ on $(0,\infty)$ with $\pi_1((\varepsilon,\infty))<\infty$ for all $\varepsilon > 0$, such that $r^4\p{\xi \in r A} \to \pi_1(A)$ as $r \to \infty$ for any Borel set $A \subset (0,\infty)$ with $\pi_1(\partial A)=0$. Then all of the children of a vertex with $\Theta(n^{1/4})$ children will have $\Theta(n^{1/4})$ displacements which are regularly spaced with spacing $2/\sigma$. Again, with high probability, we will not see two vertices of degree $\Theta(n^{1/4})$ within distance $o(n^{1/2})$ in~$\rT_n$. So in the limit for the graph of $\left(\frac{R_n(\lfloor nt \rfloor)}{n^{1/4}}\right)_{0\le t\le 1}$ we will see decorations driven by a Poisson process on $[0,1] \times \R_+$ with intensity $dt\otimes\pi(dx)$ such that when we observe a point $(t,x)$ of the Poisson process, we attach the whole interval $[-2x/\sigma,0]$ to the graph of $\mathbf{r}$ at~$t$.

\subsection{Related work}

As mentioned earlier, versions of the topic studied in this paper have received extensive attention in the literature. One reason for this is that discrete snakes play a crucial role in the study of random planar maps; see \cite{addario2021convergence,chassaing2004random, le_gall, miermont, invariance_principles, albenque}. 

The earliest discrete snake convergence results were proved in models with a fixed offspring distribution. Chassaing and Schaeffer~\cite{chassaing2004random} treated the setting of a Geometric($1/2$) offspring distribution (which results in uniformly random planar trees) with \iid displacements uniform on $\{-1,0,1\}$. Marckert and Mokkadem~\cite{homeomorphism_thm} treated the same offspring distribution, but where the displacements away from a vertex all have the same centered marginal distribution (but may depend on one another) with a $6+\varepsilon$ moment. Gittenberger~\cite{gittenberger2003note} later generalised these results to critical, finite variance offspring distributions with centered (but not necessarily \iids) displacements having finite $8+\varepsilon$ moment.

Work on the \iid displacement case culminated in a paper of Janson and Marckert~\cite{discrete_snakes} which established the following result.

\begin{thm}[\cite{discrete_snakes}, Theorems 1 and 2]\label{thm:janson_markert}
Let $\mu = (\mu_k)_{k\ge 0}$ be a critical offspring distribution with variance $\sigma^2 \in (0,\infty)$ such that $\mu$ has a finite exponential moment. For each $k\ge 1$, let $\nu_k$ be the law of a vector of $k$ \iid copies of a random variable $Y$ with $\E{Y} = 0$ and $\E{Y^2} = \beta^2\in (0,\infty)$. Then
\begin{equation}
\left(\frac{\widetilde{H}_n(2nt)}{\sqrt{n}}, \frac{\widetilde{R}_n(2nt)}{n^{1/4}} \right)_{0 \le t \le 1}\convdist \left(\frac{2}{\sigma}\mathbf{e}_t, \beta\sqrt{\frac{2}{\sigma}} \mathbf{r}_t\right)_{0 \le t \le 1}
\end{equation}
as $n \to \infty$, in the sense of finite-dimensional distributions. The convergence also holds in distribution in $\mathbf{C}([0,1], \R^2)$ endowed with the topology of uniform convergence if and only if 
\begin{equation} 
\p{|Y| >y} = o(y^{-4}) \text{ as $y\rightarrow \infty$.}
\end{equation}
\end{thm}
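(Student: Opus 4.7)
The plan is to handle (i) finite-dimensional convergence and (ii) uniform tightness separately, and then (iii) exhibit a simple obstruction when the tail condition fails. The iid structure of the displacements and the exponential moment on $\mu$ give substantially stronger control on $\widetilde{H}_n$ and on partial sums of the $Y_v$'s than is available in our general setting.

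For finite-dimensional convergence, fix deterministic times $0 \le t_1 < \cdots < t_k \le 1$. The joint law of the rescaled heights $\widetilde{H}_n(2nt_i)/\sqrt{n}$ together with their pairwise minima $\min_{u \in [t_i,t_j]} \widetilde{H}_n(2nu)/\sqrt{n}$ converges, by classical scaling limit results for size-conditioned critical Bienaymé trees, to the corresponding marginals of $(2/\sigma)\mathbf{e}$. Conditional on $\rT_n$, the vector $(\widetilde{R}_n(2nt_i))_{i=1}^k$ is a vector of partial sums of iid mean-zero variance-$\beta^2$ increments along the branches of the reduced subtree spanned by the visited vertices; the prefix shared by the lineages of the $i$-th and $j$-th vertex has length equal to the generation of their most recent common ancestor. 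A multivariate Lindeberg CLT applied on each branch (all of length $\Theta(\sqrt{n})$) produces, after dividing by $n^{1/4}$, a conditionally Gaussian limit with covariance $\beta^2 \cdot (2/\sigma) \min_{u\in[t_i,t_j]} \mathbf{e}_u$, i.e., the finite-dimensional marginals of $\beta\sqrt{2/\sigma}\,\mathbf{r}$.

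For the uniform tightness half, under $\p{|Y|>y}=o(y^{-4})$, use the parametrisation equivalence cited in the excerpt to reduce to tightness of $\widetilde{R}_n(2n\cdot)/n^{1/4}$. The key deterministic estimate is
\[
|\widetilde{R}_n(2ns) - \widetilde{R}_n(2nt)| \le 2 \max_{0 \le \ell \le L_n(s,t)} |S_\ell|,
\]
where $L_n(s,t) := \widetilde{H}_n(2ns) + \widetilde{H}_n(2nt) - 2\min_{u \in [s,t]} \widetilde{H}_n(2nu)$ is the length of the path in $\rT_n$ between the two relevant vertices, and $(S_\ell)_{\ell \ge 0}$ is a random walk with step law that of $Y$; this bound uses only the iid displacements along each branch. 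Hölder-$(1/2-\varepsilon)$ control on $\widetilde{H}_n/\sqrt{n}$, inherited from its excursion limit and the exponential moment of $\mu$, gives $L_n(s,t) \le C n^{1/2} |s-t|^{1/2-\varepsilon}$ uniformly with high probability. Set $M_n := \max_{v \in v(\rT_n)} |Y^{(v)}|$; the tail hypothesis is exactly what is needed for $n \p{|Y|>\varepsilon n^{1/4}}\to 0$, so $M_n = o(n^{1/4})$ in probability. Truncate $Y$ at a slowly growing level $b_n = o(n^{1/4})$, chosen so that both the truncation error is negligible and the truncated walk admits an $L^p$ bound for some $p>2$; a Lévy-type maximal inequality applied branch by branch, combined with a union bound over a dyadic partition of $[0,1]$, yields a modulus-of-continuity estimate that vanishes in probability.

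The hard step is making the modulus estimate sharp enough under only the marginal tail hypothesis, since classical Kolmogorov chaining would demand moments strictly above two, which $\p{|Y|>y}=o(y^{-4})$ does not give directly. The resolution is exactly the single-jump/truncated-walk decomposition above: the $o(y^{-4})$ tail simultaneously forces $M_n/n^{1/4}\to 0$ and leaves the truncated walk with enough moment budget for a standard dyadic-grid argument. Necessity of the tail condition is immediate by reversing this reasoning: if $\limsup_{y\to\infty} y^4\p{|Y|>y}>0$, then $\p{M_n \ge \varepsilon n^{1/4}}$ is bounded away from zero along a subsequence, so the modulus of continuity of $\widetilde{R}_n(2n\cdot)/n^{1/4}$ at scale $1/(2n)$ stays above $\varepsilon$, precluding tightness of the sequence in $\mathbf{C}([0,1],\R^2)$ under uniform convergence.
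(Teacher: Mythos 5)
The paper does not prove this statement; Theorem~\ref{thm:janson_markert} is quoted verbatim from Janson and Marckert \cite{discrete_snakes} (as the attribution in the theorem header indicates), and the present paper uses it purely as background to situate its own, more general Theorem~\ref{thm:main}. There is therefore no internal proof against which to check your argument.

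That said, your sketch is essentially a recapitulation of the original Janson--Marckert strategy: finite-dimensional convergence via a conditional Lindeberg CLT for iid increments along the branches of the reduced subtree, and tightness via truncation of $Y$ at a slowly growing threshold $b_n = o(n^{1/4})$, a modulus-of-continuity bound for $\widetilde H_n$ inherited from the exponential moment on $\mu$, and a single-maximal-jump argument whose two directions are precisely what make $\p{|Y|>y}=o(y^{-4})$ necessary and sufficient. This is a legitimate route, and it is quite different from the route the present paper takes for the generalisation in Theorem~\ref{thm:main}: there the iid-per-edge structure is unavailable (displacements depend on the degree and may be dependent among siblings), so the authors replace the conditional CLT by a size-biased change of measure along a discrete line-breaking construction (Propositions~\ref{prop:measure_change_basic}, \ref{prop:fdds}, Corollary~\ref{cor:4.2}), and replace the dyadic chaining for tightness by the Haas--Miermont pendant-subtree decomposition plus an inductive tail bound on the maximum spatial location (Proposition~\ref{lem:max_tight}). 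Your approach buys elementarity and a clean equivalence characterisation in the iid setting; the paper's approach buys generality at the cost of a third-moment assumption on $\mu$.

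Two small points in your write-up deserve care if you were to turn the sketch into a proof. First, the bound $|\widetilde{R}_n(2ns) - \widetilde{R}_n(2nt)| \le 2\max_{0\le\ell\le L_n(s,t)}|S_\ell|$ should not be read as involving a single fixed walk $(S_\ell)$: the two legs of the path from the vertex at $2ns$ to the vertex at $2nt$ through their most recent common ancestor involve disjoint sets of edges, so you need a supremum over all length-$\le L_n(s,t)$ ancestral segments in $\rT_n$, controlled via a union bound, rather than a single maximal inequality. Second, the necessity argument should spell out that a jump of order $n^{1/4}$ between consecutive contour steps forces, for every fixed $\delta>0$, $\limsup_n\p{\omega_{\widetilde R_n(2n\cdot)/n^{1/4}}(\delta)>\epsilon}$ to be bounded away from zero, which is exactly the failure of the Arzelà--Ascoli tightness criterion in $\mathbf{C}([0,1],\R)$. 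Neither point is a conceptual gap, just a place where the sketch elides details.
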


The finite exponential moment condition on the offspring distribution has subsequently been shown to be unnecessary, and may be weakened to a finite second moment assumption; see, for example, Marzouk~\cite{stable_trees}. Our Theorem~\ref{thm:main} recovers this theorem under the additional assumption of a finite third moment for $\mu$ (and replacing convergence in the sense of finite-dimensional distributions in the first statement with random finite-dimensional distributions).

Janson and Marckert~\cite{discrete_snakes} also considered what happens in some of the ``heavy-tailed'' cases for which the tail condition $\p{|Y|>y} = o(y^{-4})$ fails. In particular, they considered the setting in which 
\[
\p{Y \ge y} \sim a_+ y^{-q}, \quad \p{Y \le -y} \sim a_{-} y^{-q} \quad \text{as $y \to \infty$}
\]
for some constants $a_+, a_- \ge 0$ and $q \in (2,4]$, and prove analogues of Theorems \ref{thm:hairy_4} and \ref{thm:hairy_2} in such cases. They call the limiting object in this setting the \emph{hairy tour}, and the associated snake the \emph{jumping snake}. Their results were an important inspiration for Theorems \ref{thm:hairy_4} and \ref{thm:hairy_2}. 

Marzouk~\cite{stable_trees} later extended Janson and Marckert's results in \cite{discrete_snakes} to the situation where the offspring distribution is in the domain of attraction of a stable law, and the displacements are \iids. 

Returning now to non-\iid displacements, there are several notions of centering and finite variance which have been imposed in order to obtain convergence to the BSBE. Marckert and Miermont~\cite{invariance_principles} worked under the ``local centering'' assumption that $\E{Y_{k,j}} = 0$ for all $1 \le j \le k$. For multi-type Bienaymé trees, \cite{addario2021convergence} establishes convergence of discrete snakes under assumptions that impose in particular that the displacements away from vertices of each type are centered.

Most closely related to our results is a paper of Marckert~\cite{marckert_snakes}, which proves the following theorem. 

\begin{thm}[\cite{marckert_snakes}, Theorem 1] \label{thm:marckert}
Let $\mu = (\mu_k)_{k \ge 0}$ be a critical offspring distribution with $\mu_0 + \mu_1 < 1$ and with bounded support. Suppose further that $\nu = (\nu_k)_{k \ge 1}$ is such that
\begin{equation*}
\E{Y_{\bar\xi, U_{\bar\xi}}} = 0 \quad \text{and} \quad \beta^2 = \E{(Y_{\bar\xi, U_{\bar\xi}})^2}  < \infty,
\end{equation*}
and that there exists $p > 4$ such that
\[
\sup_{1 \le j \le k \le K} \E{|Y_{k,j} - \E{Y_{k,j}}|^p} < \infty.
\]
where $\mu$ is supported by $\{0,\dots,K\}$. Then, as $n\rightarrow \infty$, 
\begin{equation}
\left(\frac{H_n(nt)}{\sqrt{n}}, \frac{R_n(nt)}{n^{1/4}}, \frac{\widetilde{H}_n(2nt)}{\sqrt{n}}, \frac{\widetilde{R}_n(2nt)}{n^{1/4}} \right)_{0 \le t \le 1}\convdist \left(\frac{2}{\sigma}\mathbf{e}_t, \beta\sqrt{\frac{2}{\sigma}} \mathbf{r}_t, \frac{2}{\sigma}\mathbf{e}_t, \beta\sqrt{\frac{2}{\sigma}} \mathbf{r}_t\right)_{0 \le t \le 1}
\end{equation}
in $\mathbf{C}([0,1], \R^4)$ endowed with the topology of uniform convergence.
\end{thm}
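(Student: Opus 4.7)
By the Marckert–Mokkadem homeomorphism theorem referenced after Theorem~\ref{thm:main}, and since the convergence $H_n(n\cdot)/\sqrt{n} \to (2/\sigma)\mathbf{e}$ in $\mathbf{C}([0,1],\R)$ is the classical theorem of Aldous and Duquesne–Le Gall (which applies without fuss because $\mu$ has bounded support), the task reduces to joint convergence of $(H_n(n\cdot)/\sqrt{n}, R_n(n\cdot)/n^{1/4})$ in $\mathbf{C}([0,1], \R^2)$. I would prove this in the standard way by combining convergence of finite-dimensional distributions with tightness.

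\textbf{Finite-dimensional convergence.} Fix $0 \le t_1 < \dots < t_k \le 1$. The vertices $v_1,\dots,v_k$ visited at positions $\lfloor n t_j\rfloor$ in depth-first order are, to leading order, essentially $k$ uniformly sampled vertices whose reduced genealogical structure converges, jointly with $H_n/\sqrt{n}$, to that of the Brownian CRT. Conditionally on $\rT_n$, one has the decomposition
\[
R_n(\lfloor nt_j\rfloor) \;=\; \sum_{v \prec v_j} Y^{(v)}_{\mathrm{deg}(v),\, c(v, v_j)},
\]
where $v \prec v_j$ means $v$ is a strict ancestor of $v_j$ and $c(v, v_j)$ is the index of the child of $v$ lying on the spine to $v_j$. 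A standard spine decomposition shows that along a root-to-uniform-vertex path the pairs $(\mathrm{deg}(v), c(v, v_j))$ are asymptotically \iid copies of $(\bar\xi, U_{\bar\xi})$, so the summands are asymptotically \iid with the law of $Y_{\bar\xi, U_{\bar\xi}}$, which by the global centering hypothesis has mean zero and variance $\beta^2$. A multi-spine Donsker-type invariance principle (shared increments strictly before the most recent common ancestor, conditionally independent afterwards) then yields a Gaussian vector with exactly the BSBE covariance $\beta^{2}\cdot(2/\sigma) \min_{u\in[t_i\wedge t_j, t_i\vee t_j]}\mathbf{e}_u$.

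\textbf{Tightness.} Write $R_n(j)-R_n(i) = S_{ij} + M_{ij}$, where $S_{ij}$ sums the locally-centered displacements $Y^{(v)}_{\mathrm{deg}(v), c(v, \cdot)} - \E{Y^{(v)}_{\mathrm{deg}(v), c(v, \cdot)}\mid \rT_n}$ along the at most $d_n(i,j)$ ancestors in the symmetric difference of the two root-to-vertex paths, and $M_{ij}$ is the tree-measurable drift. A conditional Rosenthal/Burkholder inequality for $S_{ij}$, using the uniform bound $\sup_{1\le j\le k\le K}\E{|Y_{k,j} - \E{Y_{k,j}}|^p} < \infty$ for some $p>4$, yields $\E{|S_{ij}|^p \mid \rT_n} \le C\,d_n(i,j)^{p/2}$. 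Combined with the standard estimate $\E{d_n(i,j)^{p/2}} \le C|j-i|^{p/4}$ (valid under $\mu_0+\mu_1<1$ with bounded support), this produces the Kolmogorov criterion $\E{|S_{ij}|^{p}} \le C|j-i|^{p/4}$, which gives tightness after the $n^{1/4}$ rescaling. The drift $M_{ij}$ requires a separate second-moment analysis: global centering forces the local means, averaged appropriately over child indices along depth-first traversal, to cancel to leading order, producing fluctuations of order $d_n(i,j)^{1/2}$ that are controlled by the same height-process estimates.

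\textbf{Main obstacle.} The central subtlety is that local means $\E{Y_{k,j}}$ are generally nonzero, so an arbitrary deterministic traversal would accumulate drift; only the size-biased global mean vanishes. The entire argument hinges on the fact that depth-first traversal visits each ancestor with a child index that is asymptotically uniform among size-biased many siblings, so the local biases average into a martingale-like fluctuation. I expect the hardest step to be the uniform control of the drift $M_{ij}$: for a single spine the averaging is clean, but obtaining a bound uniform in $(i,j)$, strong enough to match the $n^{1/4}$ scale of $R_n$, requires a careful analysis of the child-index sequence along the full Łukasiewicz path of $\rT_n$ and is where the global-centering hypothesis must be leveraged quantitatively.
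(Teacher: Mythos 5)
This is Theorem~\ref{thm:marckert}, which the paper does not prove; it is cited verbatim from Marckert \cite{marckert_snakes}, and the surrounding discussion notes that Marckert's argument is a ``tour de force'' tracking, for every degree $d$, the number of degree-$d$ vertices along a lineage and establishing convergence of the resulting family of counts to a Gaussian field. The paper's own generalization (Theorem~\ref{thm:main}) instead goes through the line-breaking bijection $B$ for finite-dimensional convergence and a Haas--Miermont-style recursive decomposition into pendant subtrees for tightness. Your sketch is therefore a third route, different from both, and its structure (reduce to $(H_n,R_n)$, prove fdd via spine decomposition, prove tightness via Kolmogorov's criterion after splitting off the tree-measurable drift) is the natural classical attack. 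The fdd part is fine in outline, and the Rosenthal bound $\E{|S_{ij}|^p\mid \rT_n}\le C\,d_n(i,j)^{p/2}$ for the locally-centered part is correct.

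The gap is the drift term $M_{ij}$, and it is a genuine one. First, a quantitative issue: Kolmogorov's criterion for $R_n(n\cdot)/n^{1/4}$ needs $\E{|M_{ij}|^p}\le C|j-i|^{p/4}$ with $p>4$; a ``second-moment analysis'' of the drift would give Kolmogorov exponent $1/2<1$ and so cannot close the argument, yet the drift is a deterministic functional of $\rT_n$ to which Rosenthal does not apply. Second, and more seriously, a structural issue: $M_{ij}$ is exactly the increment of the head of the discrete snake built from the deterministic displacements $\widehat Y_{k,c}=\E{Y_{k,c}}$, which is globally centered but not locally centered. Proving $\E{|M_{ij}|^p}\lesssim|j-i|^{p/4}$ is therefore already an instance of the theorem you are trying to prove, specialised to deterministic displacements, and nothing in your sketch breaks this circularity. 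Indeed the paper's Corollary~\ref{cor:height_queue} shows that precisely this type of drift (with $\widehat Y_{k,j}=\sigma-(2/\sigma)(k-j)$) genuinely has $n^{1/4}$-scale fluctuations with Brownian-snake limit and is itself deduced \emph{from} the main theorem. Marckert's resolution is the Gaussian-field analysis of degree counts that you do not reproduce; the paper's resolution is the recursive Haas--Miermont bound of Proposition~\ref{lem:max_tight} (whose $p=8$ inductive exponent and explicit use of the $o(m^{-1/2})$ total-variation estimate of Lemma~\ref{lem:tv_distance} are tailored to make the drift contribution lower order). As written, your step ``global centering forces the local means \dots\ to cancel to leading order'' is the statement to be proved, not a tool you may invoke.
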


The boundedness condition is a necessary requirement of Marckert's proof technique, which is a \emph{tour de force} involving tracking very detailed information about the number of vertices of each possible different degree along a lineage, which converge on appropriate rescaling to a Gaussian field. Our approach removes the boundedness requirement, but we do not obtain such fine information on the limit object.

Finally, we mention a forthcoming work of Duquesne and Rebei \cite{duquesne25}, which proves limit theorems for snakes whose jumps are centered and sibling-independent and such that the underlying family tree converges to a Lévy tree. Our understanding is that the results and technique of \cite{duquesne25} are rather different from those of the current work.

\subsection{A first application}\label{sec:appl1}
One nice consequence of Theorem \ref{thm:main} is a strengthening of a result of Marckert and Mokkadem \cite{marckert} concerning the difference between the height process, $H_n$, and the {\L}ukasiewicz path, here denoted by $W_n$ (and formally defined in Section \ref{sec:background}) of $\rT_n$. It is proved in \cite{marckert} that if $\xi$ is critical with variance $\sigma^2 \in (0,\infty)$ and has a finite exponential moment, then 
\[\left(\frac{\widetilde{H}_n(2nt)}{\sqrt{n}}, \frac{H_n(nt)}{\sqrt{n}}, \frac{W_n(nt)}{\sqrt{n}}\right)_{0 \le t \le 1} \convdist \left(\frac{2}{\sigma}\mathbf{e}_t, \frac{2}{\sigma}\mathbf{e}_t, \sigma \mathbf{e}_t\right)_{0 \le t \le 1}\]
as $n \to \infty$ in $\mathbf{C}([0,1], \R^3)$. (As mentioned after Theorem \ref{thm:janson_markert}, the finite exponential moment condition is unnecessary and may be removed; see Duquesne~\cite{duquesne2003limit} for this result in the context of trees rather than snakes.)

Moreover, under the same assumptions, \cite{marckert} establishes that, for any $\varepsilon > 0$, there exists $\gamma > 0$ such that for $n > 0$ sufficiently large 
\[
\p{\sup_{0 \le i \le n} \bigg|\sigma H_n(i) - 2\sigma^{-1}W_n(i)\bigg| \ge n^{1/4 + \varepsilon}} \le \exp\left(-\gamma n^{\varepsilon}\right).
\]
It is natural to conjecture that, under suitable conditions, the difference varies precisely on the order of $n^{1/4}$. We are able to prove this conjecture in a large degree of generality. It turns out that the difference $(\sigma H_n(i) - 2\sigma^{-1}W_n(i), 0 \le i \le n)$ evolves precisely as the head of a discrete snake (see Lemma \ref{lem:diff_height_luka} for a proof of this fact). 
The relevant displacements are given by $Y_{k,j} = \sigma - (2/\sigma)(k-j)$; this formula already appeared at (\ref{eq:deter_disp}). We have
\[
\sum_{k=1}^{\infty} \mu_k \sum_{j=1}^k \E{Y_{k,j}} = \sum_{k=1}^{\infty} \mu_k \sum_{j=1}^k \left(\hspace{-1pt}\sigma - \frac{2}{\sigma}(k-j)\hspace{-1pt}\right) 
= \sum_{k=1}^{\infty} \mu_k \left(\hspace{-1pt}\sigma k - \frac{k(k-1)}{\sigma}\hspace{-1pt} \right) \hspace{-1pt}= \sigma - \frac{\sigma^2}{\sigma} \hspace{-1pt}= \hspace{-1pt}0,
\]
so that the associated discrete snake is globally centered. Moreover, the global variance is
\begin{align*}
\sum_{k=1}^{\infty} \mu_k \sum_{j=1}^k \E{Y_{k,j}^2} & = \sum_{k=1}^{\infty} \mu_k \sum_{j=1}^k \left(\sigma - \frac{2}{\sigma}(k-j)\right)^2 \\
& = \sum_{k=1}^{\infty} \mu_k \left(\sigma^2 k - 2 k(k-1) + \frac{2}{3 \sigma^2} k(k-1)(2k-1) \right) \\
& = \frac{4}{3\sigma^2}(\E{\xi^3} - 1) - (\sigma^2 + 2),
\end{align*}
which is finite provided that $\E{\xi^3}<\infty$. Also,
\[
\p{\max_{1\le i \le \xi}|Y_{\xi, i}| > y } = \p{\left|\sigma - \frac{2}{\sigma}(\xi - 1)\right|\vee \sigma > y} = o(y^{-4})
\]
as $y \to \infty$ if and only if $\p{\xi > y} = o(y^{-4})$ as $y \to \infty$;
moreover, the latter condition implies $\E{\xi^3} < \infty$. We obtain the following corollary of Theorem \ref{thm:main}. 
\begin{cor}\label{cor:height_queue}
Let $\mu = (\mu_k)_{k \ge 1}$ be a critical offspring distribution with variance $\sigma^2 \in (0,\infty)$. Let $\beta^2 = \frac{4}{3\sigma^2}(\E{\xi^3} - 1) - (\sigma^2 + 2)$. Then
\[\left(\frac{H_n(nt)}{\sqrt{n}}, \frac{\sigma H_n(nt) - 2\sigma^{-1}W_n(nt) }{n^{1/4}}\right)_{0 \le t\le 1} \convdist \left(\frac{2}{\sigma}\mathbf{e}_t, \beta\sqrt{\frac{2}{\sigma}} \mathbf{r}_t\right)_{0 \le t \le 1},\]
as $n\rightarrow \infty$ in $\mathbf{C}([0,1], \R^2)$ if and only if $\p{\xi > y} = o(y^{-4})$ as $y \to \infty$.
\end{cor}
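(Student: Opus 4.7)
The strategy is to recast the object $(\sigma H_n(i) - 2\sigma^{-1}W_n(i))_{0 \le i \le n}$ as the head of a discrete snake with specific deterministic displacements, and then either apply Theorem~\ref{thm:main} or exhibit the failure of its hypotheses. The identification is the content of Lemma~\ref{lem:diff_height_luka}: the difference coincides with the head of the $(\mu,\nu)$-branching random walk on $\rT_n$ in which a vertex with $k$ children transmits to its $j$-th child the deterministic displacement $Y_{k,j} = \sigma - (2/\sigma)(k-j)$ of~\eqref{eq:deter_disp}. Granting this, both implications reduce to the question of when the hypotheses of Theorem~\ref{thm:main} hold for this particular $\nu$.

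For the ``if'' direction, assume $\p{\xi > y} = o(y^{-4})$; this already forces $\E{\xi^3} < \infty$. The algebraic identities displayed in the paragraph preceding the corollary verify $\E{Y_{\bar\xi, U_{\bar\xi}}} = 0$ and that $\beta^2 = \E{Y_{\bar\xi, U_{\bar\xi}}^2}$ is finite, so [\ref{a1}] holds. Assumption~[\ref{a2}] holds because $\max_{1 \le i \le \xi} |Y_{\xi, i}| = \sigma \vee |\sigma - (2/\sigma)(\xi - 1)| \asymp \xi$ for large $\xi$, so the hypothesised tail bound on $\xi$ immediately transfers to the corresponding bound on $\max_i |Y_{\xi,i}|$. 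Theorem~\ref{thm:main} then delivers the desired uniform convergence jointly with that of $H_n/\sqrt{n}$.

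For the ``only if'' direction, I would argue that failure of the tail condition forces a jump into any distributional limit of the rescaled difference, precluding convergence in $\mathbf{C}([0,1], \R)$ to the continuous process $\beta\sqrt{2/\sigma}\,\mathbf{r}$. Suppose $\limsup_{y \to \infty} y^4 \p{\xi > y} > 0$; then along some subsequence $n_k \to \infty$ one finds a constant $\alpha > 0$ for which $n_k \p{\xi > \alpha n_k^{1/4}}$ is bounded below, and an estimate on the number of high-degree vertices in $\rT_{n_k}$ ensures that $\rT_{n_k}$ contains a vertex $v$ of degree at least $\alpha n_k^{1/4}$ with probability bounded away from zero. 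Since $Y_{k,1} = \sigma - (2/\sigma)(k-1)$, the step from $v$ to its first child in the depth-first order contributes a one-step decrement of magnitude $\asymp n_k^{1/4}$ to $\sigma H_{n_k} - 2\sigma^{-1} W_{n_k}$; after rescaling by $n_k^{-1/4}$ this yields a macroscopic fluctuation over a time interval of length $1/n_k$, so the modulus of continuity of the rescaled process on scale $1/n_k$ does not vanish in probability, contradicting any uniform-topology limit with continuous paths.

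The main obstacle is rigorously executing the ``only if'' direction, and specifically controlling the maximum degree in the conditioned tree $\rT_n$. A clean approach is to work via the Łukasiewicz path $W_n$, whose jumps encode the offspring numbers and whose distribution under the $n$-size conditioning is amenable to the cyclic lemma together with unconditional tail estimates; alternatively, one can adapt the corresponding necessity argument in Theorem~\ref{thm:janson_markert} of Janson and Marckert~\cite{discrete_snakes}, which addresses the analogous question in the i.i.d.\ setting.
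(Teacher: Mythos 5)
Your ``if'' direction matches the paper exactly: the algebraic verification of global centering and global variance (displayed in the text immediately preceding the corollary) shows that [\ref{a1}] holds, and the equivalence of the tail condition on $\max_i|Y_{\xi,i}|$ with $\p{\xi > y} = o(y^{-4})$ (plus the observation that the latter forces $\E{\xi^3} < \infty$) gives [\ref{a2}], whence Theorem~\ref{thm:main} applies.

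For the ``only if'' direction, your high-level idea --- a vertex of degree $\Theta(n^{1/4})$ produces a one-step displacement of order $n^{1/4}$ and hence a non-vanishing modulus of continuity at scale $1/n$ after rescaling, precluding a continuous limit --- is exactly the paper's intended argument. But you have not actually established the key step, namely that $\p{\max_{1 \le i \le n} D_i^n > \epsilon n^{1/4}}$ stays bounded away from $0$ along a subsequence when $y^4 \p{\xi > y} \not\to 0$, and you flag this yourself as the main obstacle. The subtlety you gloss over is the effect of the conditioning $\sum_i \xi_i = n-1$: the crude bound $\p{\cdot \mid \sum_i \xi_i = n-1} \le \Theta(n^{1/2})\,\p{\cdot}$ is not sharp enough here, since the unconditional probability of having no degree above $\epsilon n^{1/4}$ is bounded away from both $0$ and $1$ when $\E{\xi^3} < \infty$. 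The paper's Lemma~\ref{lem:onlyifs} resolves this by a case split: if $\E{\xi^3} = \infty$ a cruder argument with threshold $n^{1/3-\epsilon}$ succeeds (the unconditional no-large-degree probability then decays like $e^{-n^{\epsilon}}$, beating the $n^{1/2}$ cost of conditioning); whereas if $\E{\xi^3} < \infty$ one computes $\p{\sum_i \xi_i = n-1 \mid \max_i \xi_i \le \epsilon n^{1/4}} / \p{\sum_i \xi_i = n-1}$ directly and shows it tends to $1$ via the quantitative local limit theorem (Lemma~\ref{lem:local_move}), using the moment stability of $\xi$ under truncation at $\epsilon n^{1/4}$. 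Your two suggested routes (cyclic lemma, or adapting Janson--Marckert) are plausible in principle, but as written they are placeholders rather than a proof, and neither confronts the conditioning issue that is the actual technical content of the paper's necessity lemma.
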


Let us observe that, while Corollary \ref{cor:height_queue} concerns the difference between the height process and the {\L}ukasiewicz path, the joint convergence in Theorem \ref{thm:main} can be used to prove an analogous result for the difference between the {\L}ukasiewicz path and the contour process encoding of the head of the same discrete snake. (We leave the details of this statement to the reader.)

In the case where $\xi$ is bounded, Marckert's result (Theorem~\ref{thm:marckert}) applies, so the corollary is new only in the case of unbounded offspring distributions. In an earlier paper \cite{marckert2004rotation}, Marckert had already observed that the difference between the left and right pathlengths (also known as the \emph{imbalance}) of a size-conditioned Bienaymé tree with offspring distribution $\mu_0 = \mu_2 = 1/2$ converges in distribution after rescaling to $2^{1/4}S$, where $S = \int_0^1 \mathbf{r}_t dt$. We note that such trees are binary, and recall that the left pathlength (resp.\ right pathlength) of a vertex $v$ is the number of vertices in its ancestral lineage who precede (resp.\ succeed) their siblings in the lexicographical order. The left (resp.\ right) pathlength of binary trees is then the sum of the left (resp.\ right) path lengths over all vertices in the tree. Janson~\cite{left_right} later used the method of moments to give an alternate proof of this convergence in distribution. 

It can also be the case that the sequence $(n^{-1/4}\max_{0 \le i \le n} |\sigma H_n(i) - 2\sigma^{-1} W_n(i)|)_{n \ge 1}$ is tight \textit{without} converging in distribution to the maximum modulus of the head of the BSBE; indeed, by Theorem~\ref{thm:hairy_4}, if   $r^4 \p{\xi \in rA} \to \pi_1(A)$ as $r \to \infty$ for all Borel sets $A$ such that $\pi_1(\partial A) = 0$ and a Borel measure $\pi_1$ on $\R_+\setminus \{0\}$ such that for any $\varepsilon>0$, $\pi_1((\varepsilon,\infty))<\infty$, then it is possible to prove that $n^{-1/4}\max_{0 \le i \le n} |\sigma H_n(i) - 2\sigma^{-1}W_n(i)|$ converges in distribution to the maximum modulus of the appropriate hairy tour. If, on the other hand, we have $r^{4-\eta} \p{\xi \in rA} \to \pi_1(A)$ as $r \to \infty$ for some $\eta \in (0,2)$, then Theorem \ref{thm:hairy_2} yields the convergence 
\[
n^{-1/(4-\eta)}\max_{0 \le i \le n} |\sigma H_n(i) - 2\sigma^{-1} W_n(i)| \convdist L,
\]
where $\p{L \le \ell} = \exp(-\int_{\ell}^{\infty} \pi_1(x) dx)$ is the probability that no point of a Poisson point process of intensity $dt \pi_1(dx)$ on $[0,1] \times \R_+$ has second co-ordinate greater than $\ell$.
\subsection{A second application} A second consequence of Theorem \ref{thm:main} concerns the difference between the height process of $\rT_n$ and the height process of the corresponding \emph{looptree}. The looptree corresponding to $\rT_n$, denoted by $\rT_n^\circ$, is the connected multigraph obtained by replacing the edges from a vertex to its children by a cycle going through the parent and all of its children in order (whose length, therefore, equals its number of children plus one). See Figure \ref{fig:loop_tree} for an illustration. (It turns out that it is possible to make sense of a continuum analogue of this notion, as proved by Curien and Kortchemski \cite{igor_nic_looptrees}.) 

\begin{figure}[h!]
\includegraphics[page=10]{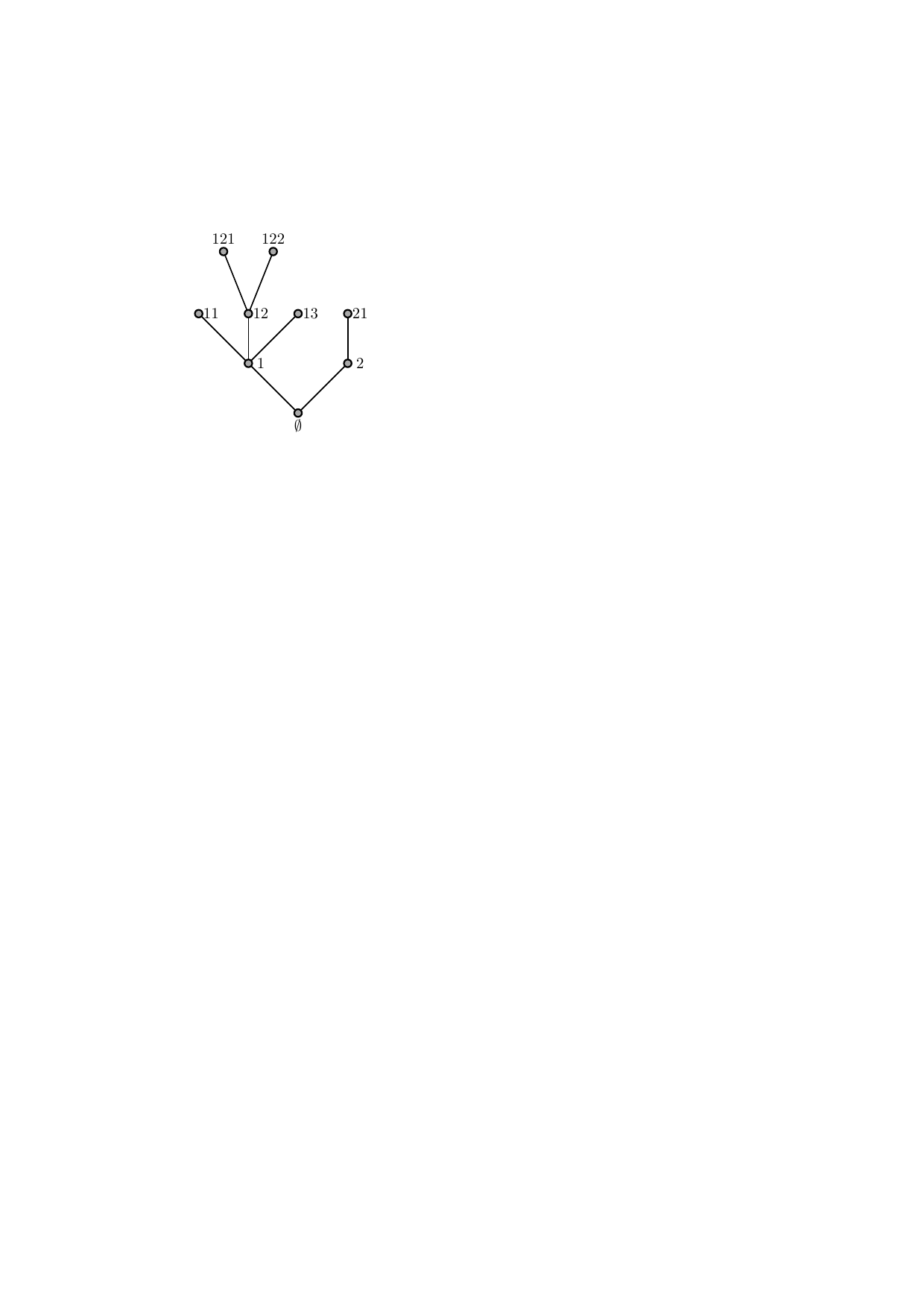}\hspace{2em}
\includegraphics[page=7,scale=0.7]{Encoding_tree.pdf}\\
\vspace{1em}
\includegraphics[page=8,scale=0.7]{Encoding_tree.pdf}\hspace{2em}
\includegraphics[page=11]{Encoding_tree.pdf}
\caption{In the top left figure, a tree, and in the bottom left figure its corresponding looptree. The top-right figure serves to aid in understanding the construction, and the bottom-right figure illustrates how distances are calculated in the loop-tree.}
\label{fig:loop_tree}
\end{figure}

Vertices in the original tree naturally correspond to vertices in the looptree. Let $v_1,\dots, v_n$ be the vertices of $\rT_n$ listed in lexicographical order. We define the height function of the looptree, denoted $H^\circ_n: [0,n]\rightarrow \R$, to give the graph distance between the root and each of the vertices in the looptree, visited in the order $v_1, \ldots, v_n$. This is the height process (in the usual sense) of the spanning tree of the looptree made up of the union of the geodesic paths from each of its vertices to the root. Formally, using the Ulam--Harris notation (see Section \ref{sec:background} for details) for $0 \le i \le n-1$ let 
\[H_n^\circ(i) := \sum_{(u,uj)\in e(\rT_n)~:~uj \preceq v_{i+1}} \min\{j, c(u,\rT_n) + 1 - j\},\]
where for $u\in v(\rT_n)$, $c(u,\rT_n)$ denotes the number of children of $u$ in $\rT_n$. Finally let $H_n^\circ(n) = 0$, and extend the domain to $[0,n]$ by linear interpolation. For $c\in \R$, it is readily seen that the difference $\left(cH_n(i)-H^\circ_n(i),0\le i \le n \right) $  evolves as the head of a discrete snake whose displacements are given by 
\[
Y_{k,j}=c-\min\{j,k+1-j\}.
\]
Moreover, if we fix $c = \frac{1}{4}\E{\xi^3} + \frac{1}{2} + \frac{1}{4}\p{\xi \in 2\Z + 1}$, then 
\begin{align*}
\sum_{k=1}^{\infty} \mu_k \sum_{j=1}^k \E{Y_{k,j}} &= \sum_{k=1}^{\infty} \mu_k \sum_{j=1}^k \left(c - \min\{j,k+1-j\}\right) \\
&=\sum_{k=1}^\infty\mu_k \left(ck-2\sum_{i=1}^{\lfloor k/2 \rfloor }i-\left\lceil \frac{k}{2} \right\rceil \I{k\in 2\Z +1} \right) \\
&= \sum_{k=1}^{\infty} \mu_k \left(c k - \frac{k}{2}\left(\frac{k}{2}+1 \right) - \frac{1}{4}\I{k \in 2\Z +1}\right)\\
&= c - \frac{1}{4}\E{\xi^3} - \frac{1}{2} - \frac{1}{4}\p{\xi \in 2\Z + 1}=  0,
\end{align*}
so that the associated discrete snake is globally centered. Moreover, the global variance is 
\begin{align}\label{eq:loop_tree_var}
& \sum_{k=1}^{\infty} \mu_k \sum_{j=1}^k \E{Y_{k,j}} \\
& \qquad = \sum_{k=1}^{\infty} \mu_k \sum_{j=1}^k \left(c - \min\{j,k+1-j\}\right)^2 \nonumber\\
& \qquad =\sum_{k=1}^\infty\mu_k \left(c^2 k -c k\left(\frac{k}{2}+1 \right)-\frac{c}{2}\I{k \in 2\Z +1}+2\sum_{i=1}^{\lfloor k/2 \rfloor } i^2+\left\lceil \frac{k}{2} \right\rceil^2 \I{k\in 2\Z +1}\right)\nonumber\\
&\qquad =\sum_{k=1}^\infty\mu_k \left(c^2 k +k\left(\frac{k}{2}+1 \right)\left(\frac{k}{6}+\frac{1}{6} -c\right)+\left(\frac{k^2}{12}+\frac{k}{3}+\frac{1}{4}-\frac{c}{2}\right) \I{k\in 2\Z +1}\right)\nonumber\\
&\qquad =c^2+\frac{\E{\xi^3}}{12}+\left(\frac{1}{4}-\frac{c}{2} \right)\E{\xi^2}+\frac{1}{6}-c +\E{\left(\frac{\xi^2}{12}+\frac{\xi}{3}+\frac{1}{4}-\frac{c}{2}\right)\I{\xi\in 2\Z +1} }\nonumber\\
&\qquad=:\beta^2,\nonumber
\end{align}
\noindent which is finite provided $\E{\xi^3} < \infty$.

Finally, 
\[
\p{\max_{1\le i \le \xi}|Y_{\xi, i}| > y } = \p{\left|c-\lceil \xi/2 \rceil \right|\vee |c-1| > y} = o(y^{-4})
\]
as $y \to \infty$ if and only if $\p{\xi > y} = o(y^{-4})$ as $y \to \infty$
and, moreover, the latter condition implies $\E{\xi^3} < \infty$. We obtain the following corollary of Theorem \ref{thm:main}.
\begin{cor}\label{cor:height_queue_loop}
Let $\mu = (\mu_k)_{k \ge 1}$ be a critical offspring distribution with variance $\sigma^2 \in (0,\infty)$, and let $\xi$ be a random variable with distribution $\mu$. Let $c = \frac{1}{4}\E{\xi^2} + \frac{1}{2} + \frac{1}{4}\p{\xi \in 2\Z + 1}$ and $\beta^2$ be as in (\ref{eq:loop_tree_var}). Then,
\begin{equation}\label{eq:looptree}\left(\frac{H_n(nt)}{\sqrt{n}}, \frac{H_n^\circ(nt)}{\sqrt{n}}, \frac{cH_n(nt) - H^\circ_n(nt) }{n^{1/4}}\right)_{0 \le t\le 1} \convdist \left(\frac{2}{\sigma}\mathbf{e}_t, \frac{2}{\sigma}\mathbf{e}_t,\sqrt{\frac{2}{\sigma}} \beta \mathbf{r}_t\right)_{0 \le t \le 1},\end{equation}
as $n\rightarrow \infty$ in $\mathbf{C}([0,1], \R^3)$ endowed with the topology of uniform convergence if and only if $\p{\xi > y} = o(y^{-4})$ as $y \to \infty$.
\end{cor}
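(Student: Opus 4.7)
The proof will essentially invoke Theorem~\ref{thm:main} applied to the discrete snake with displacements $Y_{k,j}=c-\min\{j,k+1-j\}$, together with a short additional argument to include the middle coordinate $H_n^\circ/\sqrt{n}$ and to handle the necessity direction.

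The setup is already in place: the discussion preceding the corollary has verified that $(cH_n(i)-H_n^\circ(i))_{0\le i \le n}$ coincides with the head of the discrete snake on $\rT_n$ with the stated displacements, that for the specified value of $c$ the snake is globally centered, that its global variance $\beta^2$ (given by \eqref{eq:loop_tree_var}) is finite whenever $\E{\xi^3}<\infty$, and that the tail condition $\p{\max_{1\le i \le \xi}|Y_{\xi,i}|>y}=o(y^{-4})$ is equivalent to $\p{\xi>y}=o(y^{-4})$ (which itself implies $\E{\xi^3}<\infty$). Thus the hypotheses \eqref{a1} and \eqref{a2} of Theorem~\ref{thm:main} hold precisely when $\p{\xi>y}=o(y^{-4})$, and applying Theorem~\ref{thm:main} directly yields the uniform joint convergence of the first and third coordinates of the left-hand side of \eqref{eq:looptree} to the first and third coordinates of the right-hand side.

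To append the middle coordinate, I would use the identity
\[
\frac{H_n^\circ(nt)}{\sqrt{n}}=c\cdot \frac{H_n(nt)}{\sqrt{n}}-\frac{1}{n^{1/4}}\cdot\frac{cH_n(nt)-H_n^\circ(nt)}{n^{1/4}}.
\]
The second summand is $n^{-1/4}$ times a sequence converging in $\bC([0,1],\R)$ under the uniform topology, and so converges uniformly to $0$. Consequently $H_n^\circ(n\cdot)/\sqrt{n}$ converges uniformly, jointly with the outer two coordinates, to the appropriate multiple of $(2/\sigma)\mathbf{e}$, and the joint convergence in $\bC([0,1],\R^3)$ follows from the continuous mapping theorem applied to the joint convergence of the outer pair.

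For the necessity of $\p{\xi>y}=o(y^{-4})$, I would argue as in Lemma~\ref{lem:onlyifs} (invoked for the analogous Corollary~\ref{cor:height_queue}): if the tail condition fails, then with probability bounded away from $0$ the tree $\rT_n$ contains a vertex of degree at least $\varepsilon n^{1/4}$, and for such a vertex the corresponding displacement $Y_{k,j}$ attains values of order $\lceil k/2\rceil =\Omega(n^{1/4})$, producing a single-step jump in $cH_n-H_n^\circ$ of order $n^{1/4}$. Such a macroscopic jump contradicts tightness of the third coordinate in $\bC([0,1],\R)$ with the uniform topology, ruling out the claimed convergence. The main ``obstacle'' here is a minor bookkeeping point rather than a genuine difficulty, since Theorem~\ref{thm:main} does the real work and the algebraic verifications of \eqref{a1}-\eqref{a2} have already been completed in the preamble to the corollary.
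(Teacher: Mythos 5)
Your overall route is the same as the paper's: verify the snake-head identification, check [\ref{a1}]/[\ref{a2}] via the computations in the preamble, invoke Theorem~\ref{thm:main} for sufficiency, recover the middle coordinate algebraically, and defer necessity to Lemma~\ref{lem:onlyifs}. Two points are worth flagging, though.

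First, your identity $\frac{H_n^\circ(nt)}{\sqrt{n}} = c\cdot\frac{H_n(nt)}{\sqrt{n}} - n^{-1/4}\cdot\frac{cH_n(nt)-H_n^\circ(nt)}{n^{1/4}}$ is correct and gives $H_n^\circ(n\cdot)/\sqrt{n}\convdist \frac{2c}{\sigma}\mathbf{e}$, not $\frac{2}{\sigma}\mathbf{e}$ as the corollary asserts; since $c=\frac{1}{4}\E{\xi^2}+\frac12+\frac14\p{\xi\in 2\Z+1}=\frac{\sigma^2+3+\p{\xi\in 2\Z+1}}{4}\neq 1$ in general, the middle coordinate in (\ref{eq:looptree}) should read $\frac{2c}{\sigma}\mathbf{e}_t$. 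Writing ``the appropriate multiple of $(2/\sigma)\mathbf{e}$'' elides this, and you should either name the constant or flag the discrepancy. Second, your heuristic for the necessity direction --- ``producing a single-step jump in $cH_n-H_n^\circ$ of order $n^{1/4}$'' --- does not literally hold for the looptree displacements. Here $Y_{k,1}=c-1$ is $O(1)$, so the depth-first step from a high-degree vertex $v$ to its first child is small; the $\Omega(n^{1/4})$ displacement is to a \emph{middle} child, which is visited only after all earlier subtrees, and the step into it has the form $Y_{k,j+1}-Y_{k,j}$ minus a sum of displacements along a returning lineage, with $|Y_{k,j+1}-Y_{k,j}|\le 1$. (Contrast this with Corollary~\ref{cor:height_queue}, where $Y_{k,1}\approx -2k/\sigma$ makes your single-step picture correct.) What a high-degree vertex actually produces is an $\Omega(n^{1/4})$ range of $R_n$ over the depth-first interval of $v$'s subtree, which with positive probability occupies a small fraction of $[0,n]$, contradicting convergence to a uniformly continuous limit. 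Since you formally invoke Lemma~\ref{lem:onlyifs} rather than rely on the heuristic, the proof structure still stands, but as written the explanation would not survive attempted formalization.
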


Analogues of this result also hold in the settings of Theorems \ref{thm:hairy_4} and \ref{thm:hairy_2}.
Even the functional convergence for the height process of looptrees of Bienaymé trees, expressed in the second coordinate of \eqref{eq:looptree}, is new, although pointwise convergence was proved by Kortchemski and Marzouk \cite{stable_looptrees}. (The convergence of looptrees in the Gromov--Hausdorff topology was proved in \cite{looptrees_CRT} via spinal decomposition --- see Theorem 1.2 and the generic case in Corollary 1.4 for the application to maps --- and convergence in the Gromov--Hausdorff--Prokhorov topology was shown in \cite[Theorem 15]{Robin_looptrees}.)

\subsection{Overview of the proofs}\label{sec:proof_overview} 
We will prove weak convergence of the head of the snake by making use of the following variant of the usual formulation of weak convergence for a sequence of random continuous functions. (This formulation is inspired by Theorem 20 of \cite{continuum_3}, and can be proved by essentially the same method as the second proof of Theorem~7.5 of \cite{billingsley2013convergence}.)

\begin{prop} \label{prop:rfdds+rtightness}
Let $(f_n)_{n \ge 1}$ and $f$ be random elements of $\mathbf{C}([0,1],\R)$ such that $f_n(0)=f_n(1)=0$ for every $n \ge 1$ and $f(0)=f(1)=0$. Let $U_1, U_2, \ldots$ be \iid $U[0,1]$ random variables, independent of $(f_n)_{n\ge 1}$ and $f$. For $k \ge 1$, write $U_{(1)}^k, U_{(2)}^k,\ldots,U_{(k)}^k$ for the values of $U_1, U_2, \ldots, U_k$ written in increasing order, and set $U_{(0)}^k = 0$ and $U_{(k+1)}^k = 1$. 

Suppose that for each $k \ge 1$ we have
\begin{equation} \label{eq:rfdds}
\left(f_n(U_{(1)}^{k}), \ldots, f_n(U_{(k)}^{k})\right) \convdist \left(f(U_{(1)}^k), \ldots, f(U_{(k)}^k)\right),
\end{equation}
as $n \to \infty$, and that for any $\varepsilon > 0$,
\begin{equation} \label{eq:rtightness}
\lim_{k \to \infty} \limsup_{n \to \infty} \p{\max_{0 \le i \le k} \sup_{s,t \in [U_{(i)}^{k}, U_{(i+1)}^{k}]} |f_n(s) - f_n(t)| > \varepsilon} = 0.
\end{equation}
Then
$f_n \convdist f$ 
as $n \to \infty$, for the topology generated by the uniform norm on $\mathbf{C}([0,1],\R)$.
\end{prop}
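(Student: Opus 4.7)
The plan is to combine tightness of $(f_n)$ in $\mathbf{C}([0,1], \R)$ with uniqueness of subsequential limits, and then invoke Prokhorov's theorem to conclude $f_n \convdist f$. For tightness I would use \eqref{eq:rtightness} together with the almost-sure positivity of the minimum spacing of uniform order statistics. Given $\varepsilon, \eta > 0$, first pick $k$ large enough that $\limsup_n \p{\max_{0 \le i \le k} \sup_{s,t \in [U_{(i)}^k, U_{(i+1)}^k]} |f_n(s) - f_n(t)| > \varepsilon/2} < \eta/2$; then pick $\delta > 0$ with $\p{\min_i (U_{(i+1)}^k - U_{(i)}^k) < \delta} < \eta/2$. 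On the event that the minimum spacing is at least $\delta$, any two points $s,t \in [0,1]$ with $|s-t| \le \delta$ must lie in at most two adjacent sub-intervals of the partition (otherwise their distance would exceed the length of a full intervening sub-interval), so the modulus of continuity satisfies $w(f_n, \delta) \le 2 \max_i \sup_{s,t \in [U_{(i)}^k, U_{(i+1)}^k]} |f_n(s)-f_n(t)|$. Combining yields $\limsup_n \p{w(f_n, \delta) > \varepsilon} < \eta$, hence tightness.

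Next I would identify any subsequential limit. Let $f_{n_j} \convdist \tilde f$ along a subsequence; the goal is to show $\tilde f \eqdist f$. Since $U_1, U_2, \ldots$ are independent of every $f_n$ and of $\tilde f$, the joint convergence $(f_{n_j}, U_1, U_2, \ldots) \convdist (\tilde f, U_1, U_2, \ldots)$ holds. The map $(g, \mathbf{u}) \mapsto (g(u_{(1)}^k), \ldots, g(u_{(k)}^k))$ is continuous on pairs with distinct $u_i$'s, so continuous mapping combined with \eqref{eq:rfdds} and uniqueness of weak limits yields $(\tilde f(U_{(1)}^k), \ldots, \tilde f(U_{(k)}^k)) \eqdist (f(U_{(1)}^k), \ldots, f(U_{(k)}^k))$ for every $k \ge 1$.

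The main obstacle is promoting these ``random-points'' equalities to equality of all deterministic finite-dimensional distributions of $\tilde f$ and $f$. For bounded continuous $\phi : \R^m \to \R$ set $\psi(\mathbf{u}) := \E{\phi(\tilde f(u_1), \ldots, \tilde f(u_m))} - \E{\phi(f(u_1), \ldots, f(u_m))}$, which is continuous on $[0,1]^m$ by path continuity and dominated convergence. For each $k \ge m$ and each $1 \le j_1 < \cdots < j_m \le k$, applying the previous equality with a test function that evaluates $\phi$ only at positions $j_1, \ldots, j_m$ of the $k$-tuple yields $\int_{u_1 < \cdots < u_m} \psi(\mathbf{u}) p^k_{j_1, \ldots, j_m}(\mathbf{u}) d\mathbf{u} = 0$, where $p^k_{j_1, \ldots, j_m}$ is (up to a constant) the joint density of $(U_{(j_1)}^k, \ldots, U_{(j_m)}^k)$, namely $u_1^{j_1 - 1} (u_2 - u_1)^{j_2 - j_1 - 1} \cdots (1 - u_m)^{k - j_m}$. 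Under the bijective change of variables $v_i := u_i - u_{i-1}$ (with $u_0 := 0$) and $v_{m+1} := 1 - u_m$, these kernels become monomials $v_1^{a_1} \cdots v_{m+1}^{a_{m+1}}$ on the open $m$-simplex, and as $k, j_1, \ldots, j_m$ vary they span (via Stone--Weierstrass) a dense subspace of continuous functions there. Hence $\psi \equiv 0$, so $(\tilde f(t_1), \ldots, \tilde f(t_m)) \eqdist (f(t_1), \ldots, f(t_m))$ for all deterministic $t_1 < \cdots < t_m$, and Kolmogorov's theorem for continuous processes gives $\tilde f \eqdist f$, completing the argument.
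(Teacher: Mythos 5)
Your proof is correct. The paper does not actually spell out a proof; it only says the result ``can be proved by essentially the same method as the second proof of Theorem~7.5 of \cite{billingsley2013convergence}.'' That method is a direct approximation argument: for a bounded uniformly continuous test function $H$ one writes $\E{H(f_n)} - \E{H(f)}$ as a sum of three pieces via the piecewise-linear interpolations $f_n^k$ and $f^k$ of $f_n$ and $f$ through the points $(U_{(i)}^k, f_n(U_{(i)}^k))$ and $(U_{(i)}^k, f(U_{(i)}^k))$; the two ``outer'' pieces are small by \eqref{eq:rtightness}, and the ``middle'' piece $\E{H(f_n^k)} - \E{H(f^k)}$ is controlled using \eqref{eq:rfdds}. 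You instead run the Prokhorov route: tightness (which you establish via the minimum-spacing argument), extraction of a subsequential limit $\tilde f$, and identification $\tilde f \eqdist f$.

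Your identification step addresses a genuine subtlety that is easy to miss. The hypothesis \eqref{eq:rfdds} is only a statement about the law of the vector $(f_n(U_{(1)}^k),\dots,f_n(U_{(k)}^k))$; it does \emph{not} automatically give the joint convergence of this vector together with $(U_{(1)}^k,\dots,U_{(k)}^k)$, which is what one would want in order to handle the ``middle'' term $\E{H(f_n^k)} - \E{H(f^k)}$ in the direct approach, since $H(f_n^k)$ depends on the $U$'s and not just on $(f_n(U_{(1)}^k),\dots,f_n(U_{(k)}^k))$. (On a subsequence one does get this joint convergence for free from the independence structure, which is exactly what you exploit.) Your Stone--Weierstrass argument on the Dirichlet kernels is a valid way to promote the random-time equalities $(\tilde f(U_{(1)}^k),\dots,\tilde f(U_{(k)}^k)) \eqdist (f(U_{(1)}^k),\dots,f(U_{(k)}^k))$ to equality of all deterministic finite-dimensional distributions. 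A slightly shorter route to the same conclusion: for fixed $0<t_1<\cdots<t_m<1$ and $k\ge m$, set $j_i(k)=\lceil t_i (k+1)\rceil$; then $U_{(j_i(k))}^k\convprob t_i$ by Glivenko--Cantelli, so by path continuity, independence of the $U$'s from $\tilde f$ and $f$, and bounded convergence, $\E{\phi(\tilde f(U^k_{(j_1(k))}),\dots,\tilde f(U^k_{(j_m(k))}))}\to\E{\phi(\tilde f(t_1),\dots,\tilde f(t_m))}$ and similarly for $f$; since the pre-limit quantities are functions of $(\tilde f(U_{(1)}^k),\dots,\tilde f(U_{(k)}^k))$ and hence agree with those for $f$, the limits agree as well. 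Either way the argument is sound.
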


We will refer to assumption (\ref{eq:rfdds}) as the \emph{convergence of random finite-dimensional distributions} and to (\ref{eq:rtightness}) as \emph{tightness}. Observe that (\ref{eq:rfdds}) is weaker than the usual convergence of finite-dimensional distributions. However, it is more natural in the context of random trees, and indeed plays a key role in Aldous' theory of continuum random trees as developed in \cite{continuum_3}. (See the appendix of \cite{AldousPitman} for a discussion and for further references.) 

Let $\cT_{2\mathbf{e}}$ be the real tree encoded by $2\mathbf{e}$, where $\mathbf{e}$ is a normalised Brownian excursion. (We refer to the survey of Le Gall~\cite{le_gall_tree_survey} for standard definitions concerning random real trees.) Fix $k \ge 1$ and let $U_1,\dots, U_k$ be \iid Uniform$([0,1])$ random variables. Furthermore, let $\cT_{2\mathbf{e}}^k$ be the subtree of $\cT_{2\mathbf{e}}$ spanned by the images of $0$ and of $U_1,\dots, U_k$ in $\cT_{2\mathbf{e}}$.  Formally, it is useful to think of this as an ordered rooted tree with leaves labeled by $1,2,\ldots,k$ and edge-lengths, where we use the relative ordering of $U_1, U_2, \ldots, U_k$ to determine the planar ordering of the leaves. Using Aldous' line-breaking construction \cite{continuum_3}, we may construct a tree which is equal in distribution to $\cT_{2\mathbf{e}}^k$ as follows. 

Let $J_1, \dots, J_k$ be the first $k$ jump times of a Poisson point process on $[0,\infty)$ with intensity~$tdt$ at time $t$. For $i = 1,\dots, k-1$, sample an {\em attachment point} $A_i\sim \mathrm{Uniform}([0, J_i])$, independent of $(A_j)_{j \neq i}$. Take the completion of each of the line segments $[0,J_1], (J_1, J_2],\dots,$ $ (J_{k-1}, J_k]$, and for each $i \in \{1,\dots, k-1\}$ let $J_i^*$ denote the limit point as $x \downarrow J_{i-1}$. Identify the points $J_i^*$ and $A_i$, and think of the line-segment as being attached to the left side of the branch containing $A_i$ with probability $1/2$ and to the right side with probability $1/2$. Denote the resulting rooted ordered tree with leaf-labels and edge-lengths by $\cT^k$. Then, $\cT_{2\mathbf{e}}^k \eqdist \cT^k$; see \cite[p. 279]{continuum_3}. 

The proof of Theorem \ref{thm:main} (and similarly Theorems \ref{thm:hairy_4} and \ref{thm:hairy_2}) relies on proving that a certain discrete line-breaking construction of $\rT_n$, described formally in Section \ref{sec:encodings}, converges to Aldous' line-breaking construction upon rescaling. The discrete construction builds a tree on $[n]$ by first constructing paths $P^{(1)},\dots, P^{(\ell^*)}$ and then attaching them to one another by identifying one endpoint of each path $P^{(i)}$ with a point in $(P^{(j)})_{j < i}$. The proof of convergence of the random finite-dimensional distributions relies on the observation that, along each path, the sequence of partial sums of the displacements is essentially a random walk trajectory with \iid steps with the same distribution as $Y_{\bar\xi, U_{\bar\xi}}$ and, moreover, that random displacements appearing at branch points do not contribute to the displacements of the discrete snake on the ``macroscopic'' spatial scale of $\Theta(n^{1/4})$. 

For the proofs of tightness, we adapt a method of Haas and Miermont \cite{miermont} used to prove tightness for the height process of a Markov branching tree. (Note that size-conditioned Bienaymé trees are examples of Markov branching trees.) Let $\rT_n^k$ be a subtree of $\rT_n$ spanned by its root and $k$ uniform vertices. The difference $\rT_n\setminus \rT_n^k$ is a forest $\mathrm{F}_n^k$, and to prove tightness we bound the maximum modulus of the spatial locations in each tree in $\mathrm{F}_n^k$. Following Haas and Miermont, we reduce this bound to an expression involving only a size-biased pick from among the trees in $\mathrm{F}_n^k$. The proof of tightness then reduces to proving an explicit tail bound for the maximum modulus of the spatial location of a vertex in $\rT_n$ when rescaled by $n^{-1/4}$. As a key part of our argument, we require a strong control on the total variation distance between the laws of $\bar\xi$ and of the number of children of the root of $\rT_n$, which we denote by $\widehat D^n_1$. For $k \in [n]$, by Kemperman's formula \cite[Chapter 6]{pitman2006combinatorial},
\begin{equation}\label{eq:dn_dist}\p{\widehat D^{n}_1 = k} = \left(\frac{n}{n-1}\right)\frac{\p{S_{n-1} = n-1-k}}{\p{S_{n} = n-1}}\p{\bar\xi = k}, \end{equation}
where $(S_n)_{n \ge 1}$ is a random walk with \iid $\mu$-distributed increments. 
 In order to control this total variation distance, we use a version of the local central limit theorem (\cite[Theorem 13, Chapter VII]{petrov} which, for completeness, we also state below in Theorem \ref{thm:gen_clt}) which holds whenever $\E{\xi^3}<\infty$; this is the origin of the third moment condition in our main theorem.

\subsection{Asymptotic notation}
We will use the following notation related to the asymptotics of random variables $(X_n)_{n\ge }\in \R$. (See Janson~\cite{janson2011probability}.) For $(y_n)_{n\ge 1}\in \R_{> 0}$,
\begin{itemize}
    \item $X_n = o_\bP(y_n)$ means that $X_n/y_n \convprob 0$ as $n\rightarrow \infty$;
    \item $X_n = \omega_\bP(y_n)$ means that $X_n/y_n \convprob \infty$  as $n\rightarrow \infty$;
    \item $X_n = O_\bP(y_n)$ means that for all $\varepsilon > 0$, there exist constants $n_\varepsilon$, $C_\varepsilon>0$ such that for all $n \ge n_\varepsilon$, $\p{X_n \le C_\varepsilon y_n} \ge 1 - \varepsilon$;
    \item  $X_n = \Omega_\bP(y_n)$ means that for all $\varepsilon > 0$ there exist constants $n_\varepsilon$, $C_\varepsilon>0$ such that for all $n \ge n_\varepsilon$, $\p{X_n \ge C_\varepsilon y_n} \ge 1 - \varepsilon$; 
    \item $X_n = \Theta_\bP(y_n)$ means that $X_n = O_\bP(y_n)$ and $X_n = \Omega_\bP(y_n)$;
    \item Lastly, ``with high probability'' always means ``with probability tending to $1$ as $n \to \infty$.
\end{itemize}

\section{Trees, branching random walks, and their encodings}\label{sec:background}

We require a number of different tree models, which we now define.

First, a {\em tree} is simply a connected acyclic graph $T=(v(T),e(T))$. A {\em rooted tree} consists of a tree together with a distinguished root vertex $\rho = \rho(T) \in v(T)$.
Given a rooted tree $T$ and a vertex $v$ of $T$ write $C(v,T)$ for the set of children of $v$ in $T$ and $c(v,T)=|C(v,T)|$; vertex~$v$ is a {\em leaf} of $T$ if $c(v,T)=0$. We write $\partial T$ for the set of leaves of $T$. Also, for a non-root vertex $v$ we write $p(v)=p(v,T)$ for the parent of $v$ in~$T$. For vertices $v,w \in v(T)$ we write $v \prec w$ if $v$ is an ancestor of $w$, and for an edge $e$ we also write $e \prec v$ if at least one endpoint of $e$ is an ancestor of $v$. For $S\subset v(T)$, the \emph{subtree of $T$ spanned by $S$} is the minimal subtree of $T$ containing all elements of $S$.

Letting $\N^0:=\{\emptyset\}$, the {\em Ulam--Harris tree} is the rooted tree with root $\emptyset$ and vertex set 
\[
\cU := \bigcup_{n\ge 0} \N^n
\] 
in which, for each $v \in \cU$, the set of children of $v$ is $\{vi,i \in \N\}$. (Here, and in the sequel, for a string $v=(v_1,\ldots,v_k)$ we write $vi:=(v_1,\ldots,v_k,i)$.) We say $w$ is a younger sibling of $u$ if $w=vj$, $u=vi$ and $j > i$. We will make use of the usual lexicographic order on $\cU$, which is the total order in which each vertex precedes all of its descendants and all of its younger siblings. Also, for $v \in \N^n \subset \cU$ we write $|v|=n$ for the depth of $v$ in $\cU$.
\begin{figure}
\centering
\hspace{-3.5cm}
\begin{minipage}
    [b]{0.1\textwidth}
    \centering
    \tikz[grow =up,  nodes={circle,draw}, minimum size = 
    1.2cm, inner sep = 0pt, scale = 0.78, transform shape]
  \node {	$\emptyset$}
    child {node {	3}
    child {node [right] {	 31}
    child {node {	 311}
    child {node {	 3112}}
    child {node {	 3111}
    child {node {	 31111}}}
    }}}
    child {node {	 2}}
    child {node {	 1}
    child {node {	 11}
    child {node {	 111}}}}
    ;
\end{minipage}
\hspace{2.8cm}
\begin{minipage}
    [b]{0.1\textwidth}
    \centering
    \tikz[grow = up, nodes = {circle, draw},minimum size = 
    1.1cm, inner sep = 0pt, scale = 0.78, transform shape]
    \node {	4}
    child {node {	3} 
    child {node [right]{	10}
    child {node {	1} 
    child {node {	11}}
    child {node {	9}
    child {node {	7}}}node [rectangle, inner sep= 0pt,draw = none, above left] {\hspace{-2.5cm}\color{black!30!orange} $(\sigma_1(9), \sigma_1(11))$}
    node [rectangle, inner sep= 0pt,draw = none,  left] {\hspace{-1.8cm}\color{black!30!orange} $ = (1,2)$}
    }}}
    child {node {	2}}
    child {node {	8}
    child {node {	5}
    child {node {	6}}}}
    node [rectangle, inner sep= 0pt,draw = none, above right] {\hspace{1cm}\color{black!30!orange}  $(\sigma_4(8), \sigma_4(2), \sigma_4(3))$} 
    node [rectangle, inner sep= 0pt,draw = none, right] {\hspace{1.2cm}\color{black!30!orange} $ = (1,2,3)$} 
    ;
\end{minipage}
\hspace{3.3cm}
\begin{minipage}
    [b]{0.1\textwidth}
    \centering
     \begin{tikzpicture}[grow = up, nodes = {circle, draw}, minimum size = 1.1cm, inner sep = 0pt, scale = 0.78, transform shape]
    \node (4) { 	4} 
    child {node[above right] (3) {	3} 
    child {node [above right](10) {	10}
    child {node[above] (1) { 	1}
    child {node [above right] (11) { 	11} edge from parent node [rectangle, inner sep = 1pt, draw=none,fill=white, minimum size = 0.1cm] {\color{black!50!green}   $(1,2)$}}
    child {node [above left] (9) { 	9} 
    child {node [above] (7) { 	7} edge from parent node [rectangle, inner sep = 1pt, draw=none,fill=white, minimum size = 0.1cm] {\color{black!50!green}   $(9,1)$}} edge from parent node [rectangle, inner sep = 1pt, draw=none,fill=white, minimum size = 0.1cm] {\color{black!50!green}   $(1,1)$}} 
    edge from parent node [rectangle, inner sep = 1pt, draw=none,fill=white, minimum size = 0.1cm] {\color{black!50!green}   $(10,1)$}} edge from parent node [rectangle, inner sep = 1pt, draw=none,fill=white, minimum size = 0.1cm] {\color{black!50!green}   $(3,1)$}}edge from parent node [rectangle, inner sep = 1pt, draw=none,fill=white, minimum size = 0.1cm] {\color{black!50!green}   $(4,3)$}}
    child {node[above] (2) {	2} edge from parent node [rectangle, inner sep = 1pt, draw=none,fill=white, minimum size = 0.1cm] {\color{black!50!green}   $(4,2)$}}
    child {node[above left] (8) {	8}
    child {node [above] (5) { 	5}
    child {node [above] (6) {	6} edge from parent node [rectangle, inner sep = 1pt, draw=none,fill=white, minimum size = 0.1cm] {\color{black!50!green}   $(5,1)$}} edge from parent node [rectangle, inner sep = 1pt, draw=none,fill=white, minimum size = 0.1cm] {\color{black!50!green}   $(8,1)$}}edge from parent node [rectangle, inner sep = 1pt, draw=none,fill=white, minimum size = 0.1cm] {\color{black!50!green}   $(4,1)$}}
    ;
    \end{tikzpicture}
\end{minipage}
\caption{Left: an ordered rooted tree. Center: a labeled ordered rooted tree, with the functions $\sigma_v$ indicated for $v \in \{1,4\}$. Right: the edge labeling of $T$, introduced in Section \ref{sec:encodings}.}
    \label{fig:lort}
\end{figure}
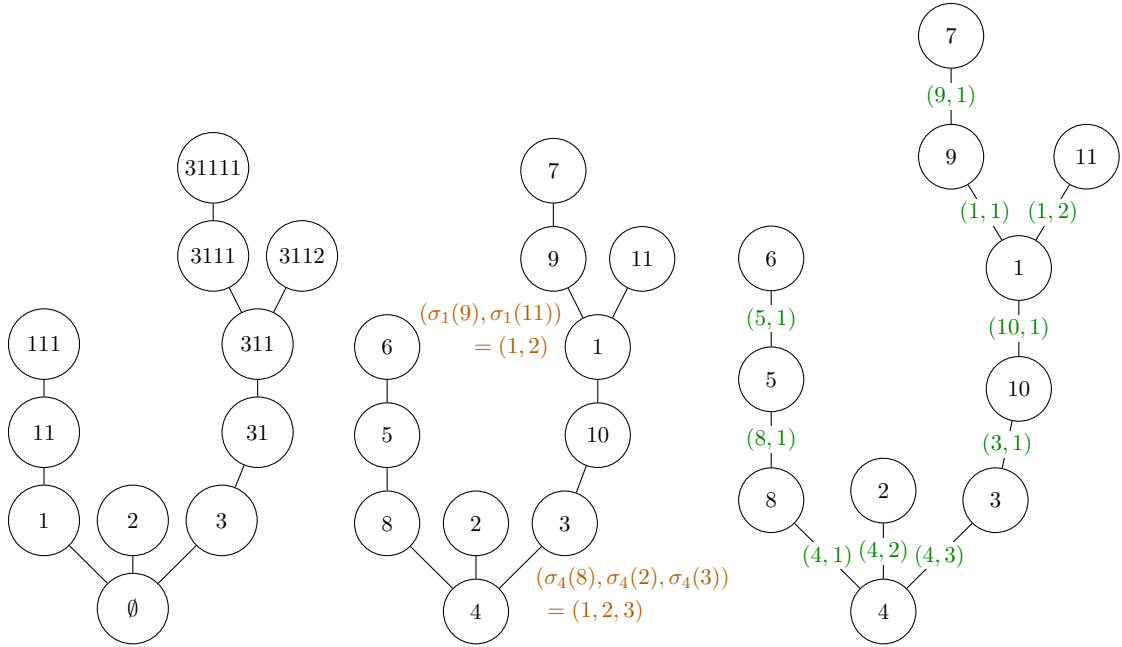

The definitions of the coming paragraph are illustrated in Figure~\ref{fig:lort}. An \emph{ordered rooted tree} is a tree $T$ with $v(T)\subset \cU$ and the following properties: (i) $\emptyset \in v(T)$; (ii) if $v \in v(T)$ then $p(v,\cU) \in v(T)$; (iii) if $vi \in v(T)$ then $vj \in v(T)$ for all $1 \le j \le i$. Note that the edge set of an ordered rooted tree may be recovered from its vertex set, and we will often identify ordered rooted trees with their vertex sets. The lexicographic order on $v(T)$ is simply the restriction of the lexicographic order on $\cU$ to~$v(T)$. 

A {\em labeled ordered rooted tree} is a finite rooted tree $T=(v(T),e(T))$ with $v(T)=[n]$ in which, for each non-leaf vertex of $T$, the set of children is endowed with a total order $\sigma_v=\sigma_{v,T}:C(v,T) \to [c(v,T)]$. We will sometimes abuse notation by writing $vi=\sigma_v^{-1}(i)$ for the $i$-th child of $v$ under this total order. This abuse of notation is justified by the observation that the ordering of the children of each non-leaf induces an injection $\varphi:v(T)\to \cU$ defined inductively by $\varphi(\rho(T))=\emptyset$ and $\varphi(vi)=\varphi(\sigma_v^{-1}(i))=\varphi(v)i$ for $i \in [c(v,T)]$; and $\varphi(v(T))$ is indeed (the vertex set of) an ordered rooted tree. As such, a labeled ordered rooted tree could equivalently be represented as a pair $(T,f)$ where $T\subset \cU$ is a finite ordered rooted tree and $f:T \to [n]$ is a bijection (so $n=|T|$). However, the first representation is more natural in the context of the methods we shall shortly use for constructing random labeled ordered rooted trees. Moreover, the second representation would be confusing, as it is very similar to our representations of {\em branching random walks} and of {\em spatial trees}, which we now describe.

\subsection{Branching random walks, {\L}ukasiewicz path, contour and height processes}\label{sec:bwcontheight}

A {\em branching random walk} is a pair $\rT=(T,Y)$, where $T$ is an ordered rooted tree (possibly labeled) and $Y=(Y^{(v)},v \in v(T)\setminus \partial T)$, where $Y^{(v)}=(Y^{(v)}_j,j \in [c(v,T)]) \in \R^{c(v,T)}$. We think of $Y^{(v)}$ as a set of spatial displacements from vertex $v$ to its children, so $Y^{(v)}_j$ is the difference in the spatial locations of vertices $v$ and $vj$.
The spatial {\em location} of $u \in v(T)$ is then given by the sum of displacements along $u$'s ancestral path:
\[
\ell(u)=\ell(u,\rT):= \sum_{\{(v,vj)\in e(T):vj \preceq u\}} Y^{(v)}_j. 
\]
We refer to the pair $(T,\ell)$ as a {\em spatial tree}. The branching random walk $(T,Y)$ can clearly be recovered from the spatial tree $(T,\ell)$, and vice versa.

Let $T=(v(T),e(T))$ be a finite ordered rooted tree and write $n=|T|$. The {\em \L ukasiewicz path} of $T$ is the function $W_T:[0,n] \to \R$ defined as follows. List the elements of $v(T)$ in lexicographic order as $v_1,\ldots,v_n$. Set $W_T(0) = 0$. For $1 \le i \le n$, set $W_T(i) = \sum_{j=1}^i (c(v_i, T) - 1)$, and then extend the domain of $W_T$ to $[0,n]$ by linear interpolation.
 
The {\em height process} of $T$ is the function $H_T:[0,n] \to \R_{\ge 0}$ defined as follows. For $0 \le i < n$ set $H_T(i)=|v_{i+1}|$ and set $H_T(n) = 0$; then extend the domain of $H_T$ to $[0,n]$ by linear interpolation.

The {\em contour order} of $v(T)$ is the sequence
$w_0,\ldots,w_{2(n-1)}$ of elements of $v(T)$ defined as follows. First, $w_0=\emptyset$ is the root of $T$. Inductively, for each $0 \le i < 2(n-1)$, if~$w_{i}$ has at least one child in $T$ which does not appear in the sequence $w_0,\ldots,w_{i-1}$, then let $w_{i+1}$ be the lexicographically least such child. Otherwise, let $w_{i+1}=p(w_i,T)$. It is straightforward to verify that each vertex $v$ of $T$ appears in the resulting sequence exactly $1+c(v,T)$ times. 
The {\em contour process} of $T$ is the function $\widetilde H_T:[0,2(n-1)] \to \R_{\ge 0}$ defined by setting $\widetilde H_T(i)=|w_i|$ for integers $i$ with $0 \le i \le 2(n-1)$, letting $\widetilde H_T(2n) = 0$, and extending to $[0,2n]$ by linear interpolation. 

If $\rT=(T,Y)$ is a branching random walk with underlying tree $T$ then we encode the spatial locations by a function $R_{\rT}:[0,n]\to \R$ given by setting $R_{\rT}(i)=\ell(v_{i+1},\rT)$,  for $i\in \{0,\dots, n-1\}$, $R_{\rT}(n) = 0$, and extending to $[0,n]$ by linear interpolation. We also define a process $\widetilde{R}_{\rT}:[0,2n] \to \R$ by setting $\widetilde{R}_{\rT}(i)=\ell(w_i,\rT)$ for integers $i$ with $0 \le i \le 2(n-1)$, $\widetilde{R}_{\rT}(2n) = 0,$ and extending to $[0,2n]$ by linear interpolation. 

The following result appears somewhat implicitly in Section 3 of \cite{prescribed_degrees}. For completeness we give a proof.

\begin{lem}
    \label{lem:diff_height_luka} Fix $\alpha_1, \alpha_2 \neq 0$. Let $\rT = (T, Y)$ be the branching random walk with $Y = (Y^{(v)}, v\in v(T) \setminus \partial T)$ such that $Y^{(v)} = (\alpha_1 - \frac{2}{\alpha_2}(c(v, T) - j), j\in [c(v,T)])$, $v\in v(T)$. Let $R_\rT$ be the function encoding the spatial locations of $\rT$. Then for all $t\in [0,n]$,
    \[R_\rT(t) = \alpha_1 H_T(t) - \frac{2}{\alpha_2} W_T(t).\]
\end{lem}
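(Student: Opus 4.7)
The plan is to verify the identity at integer times and then extend to all of $[0,n]$ by piecewise linearity. Since $R_\rT$, $H_T$, and $W_T$ are all specified at integer times and extended linearly between them, it suffices to prove equality at the integer points $i \in \{0,1,\dots,n-1\}$ (any boundary convention at $t=n$ being handled by the linear interpolation on the final unit interval).

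Fix such an $i$. I would unpack the definition of the spatial location $\ell(v_{i+1},\rT)$ using the given form $Y^{(u)}_k = \alpha_1 - \tfrac{2}{\alpha_2}(c(u,T) - k)$ to write
\[
R_\rT(i) \;=\; \ell(v_{i+1},\rT) \;=\; \sum_{(u,uk)\,\preceq\, v_{i+1}} \Bigl(\alpha_1 - \tfrac{2}{\alpha_2}\bigl(c(u,T) - k\bigr)\Bigr) \;=\; \alpha_1\, |v_{i+1}| \;-\; \tfrac{2}{\alpha_2}\, S_T(i),
\]
where $S_T(i) := \sum_{(u,uk)\,\preceq\, v_{i+1}} (c(u,T) - k)$ and I use that the number of edges on the root-to-$v_{i+1}$ path is $|v_{i+1}| = H_T(i)$. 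The claim therefore reduces to the purely combinatorial identity $S_T(i) = W_T(i)$.

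I would prove $S_T(i) = W_T(i)$ by induction on $i$. The base case $i=0$ is immediate, with $v_1 = \emptyset$ giving $S_T(0) = 0 = W_T(0)$. For the induction step I need to show $S_T(i) - S_T(i-1) = c(v_i,T) - 1$, which matches $W_T(i) - W_T(i-1)$ by definition. The argument splits along the depth-first transition from $v_i$ to $v_{i+1}$: if $v_i$ has at least one child then $v_{i+1}$ is its leftmost child, so the ancestral path gains the single new edge $(v_i, v_i 1)$, contributing $c(v_i,T) - 1$ to $S_T$; if $v_i$ is a leaf (so the target change is $-1$), then depth-first search backtracks through some number of ancestors until it finds one whose child on the path is not the last, and then descends to its next sibling. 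Each dropped edge is of ``last-child'' type, so its index $k$ equals the parent's out-degree and it contributes $0$ to $S_T$; at the unique branch point where the search moves sideways, $k$ increases by exactly one, so the contribution of that edge to $S_T$ decreases by $1$. This completes the induction.

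The genuinely substantive step is the combinatorial identity $S_T(i) = W_T(i)$, and the main (though mild) obstacle is the careful bookkeeping in the backtracking case of the induction, keeping track of which edges are shed and how the single ``sideways'' step modifies the relevant $c(u,T)-k$ term. Everything else is direct substitution and linearity.
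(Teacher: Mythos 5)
Your proof is correct and follows essentially the same route as the paper's: both reduce the claim to showing $\sum_{(u,uj)\preceq v_{i+1}}\bigl(c(u,T)-j\bigr) = W_T(i)$, i.e.\ that the {\L}ukasiewicz path at time $i$ counts the younger siblings of the ancestors of $v_{i+1}$. Where the paper simply records this as a standard property of the depth-first encoding, you supply an inductive verification of it; the bookkeeping in your backtracking case (dropped ``last-child'' edges contribute $0$, the single sideways step contributes $-1$) is accurate.
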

\begin{proof} It is sufficient to prove that 
    \[R_\rT(i) = \alpha_1 H_T(i) - \frac{2}{\alpha_2} W_T(i)\]
for $i\in \{0,1,\dots,n-1\}$. Let $i\in \{0,\dots,n-1\}$.  Then $H_T(i)$ is the number of ancestors of~$v_{i+1}$ in $T$. Further, $W_T(i)$ is the number of younger siblings of ancestors of $v_{i+1}$. It follows that  
\begin{equation*}
  \begin{aligned}
    \alpha_1 H_T(i) - \frac{2}{\alpha_2} W_T(i) &= \alpha_1 \cdot\left(\sum_{(u, uj)\in e(T): uj \preceq v_{i+1}} 1\right) -  \frac{2}{\alpha_2}\sum_{(u, uj)\in e(T): uj \preceq v_{i+1}}\left(c(u, T) - j\right)\\
    &=\sum_{(u, uj)\in e(T): uj \preceq v_{i+1}} \left(\alpha_1 -  \frac{2}{\alpha_2}\left(c(u, T) - j\right)\right)\\
    &= \sum_{(u, uj)\in e(T): uj \preceq v_{i+1}} Y_j^{(u)} =R_\rT(i).\qedhere
  \end{aligned}
  \end{equation*}
\end{proof}

\subsection{Sequential encodings of labeled ordered rooted trees}\label{sec:encodings}
Given a labeled ordered rooted tree $T = ([n], e(T))$, we assign labels to the \emph{edges} of $T$ as follows.
For $v \in [n]$ and $i \in [c(v,T)]$, assign label $(v,i)$ to the edge $\{v,vi\}=\{v,\sigma_v^{-1}(i)\}$. The set of all edge labels is then $L(T)=\{(v,i):v \in v(T),i \in [c(v,T)]\}$. 
Given any path $P=v_0v_1\ldots v_k$ from a vertex $v_0$ of $T$ to one of its descendants, let $\pi_P$ be the sequence of edge labels along the path from $v_0$ to $v_k$: formally, $\pi_P=\pi_P(T)=((v_0,c_0),\ldots,(v_{k-1},c_{k-1}))$, where $c_0,\ldots,c_{k-1}$ are such that $v_{j}=v_{j-1}c_{j-1}$ for each $j \in [k]$. 

We say a sequence $\bd=(d_1,\ldots,d_n)$ of non-negative integers is a {\em degree sequence} if $\sum_{v \in [n]} d_v=n-1$. We say a labeled tree $T$ with $v(T)=[n]$ has degree sequence $\bd$ if $c(v,T)=d_v$ for all $v \in [n]$. Write $\cL_\bd$ for the set of labeled ordered rooted trees with degree sequence $\bd$. For any tree $T \in \cL_{\bd}$, it is the case that $L(T)=\{(v,c): v \in [n],c \in [d_v]\}$. Write $\cP_{\bd}$ for the set of permutations of $\{(v,c): v \in [n],c \in [d_v]\}$; this set has size $(n-1)!$. For a fixed degree sequence $\bd$, we will make extensive use of a bijection $B:\cP_\bd \to \cL_\bd$ for $\bd=(d_1,\ldots,d_n)$ a degree sequence of length $n \ge 2$, which we give below. We first describe $B^{-1}$, as it is slightly simpler. 

\begin{tcolorbox}[title = The bijection $B^{-1}:\cL_\bd\to \cP_{\bd}$. {\bf Input}: $T \in \cL_\bd$.]
\begin{itemize}
    \item Let $T^{(0)}$ be the subtree of $T$ consisting of the root alone. 
    \item For $\ell \ge 1$, if $T^{(\ell-1)}\ne T$ then let $y^{(\ell)}$ be the smallest label of a vertex in $T$ which is not in $T^{(\ell-1)}$, let $P^{(\ell)}$ be the path in $T$ from $T^{(\ell-1)}$ to $y^{(\ell)}$, and let $T^{(\ell)}$ be the subtree of $T$ spanned by $\{P^{(\ell)}, y^{(1)},\dots, y^{(\ell-1)}\}.$ 
    \item Let $\ell^*$ be the first value for which $T^{(\ell^*)}=T$. 
    \item Let $\pi_T$ be the concatenation of the sequences $\pi_{P^{(1)}},\ldots,\pi_{P^{(\ell^*)}}$, and set $B^{-1}(T)=\pi_T$. 
\end{itemize}
\end{tcolorbox}

In the example of Figure~\ref{fig:lort}, $\ell^*=6$ and the paths are $P^{(1)}=4,3,10$, $P^{(2)}=4$, $P^{(3)}=4,8$, $P^{(4)}=5$, $P^{(5)}=1,9$ and $P^{(6)}=1$, so 
\begin{equation}\label{eq:pi}
\pi_T=((4,3),(3,1),(10,1),(4,2),(4,1),(8,1),(5,1),(1,1),(9,1),(1,2))\, .
\end{equation}

We next describe $B$; for this we make use of the fact that to specify a labeled ordered rooted tree $T$ with vertex set $[n]$ it suffices to specify the set $C(v,t)$ and the total orderings $\sigma_v:C(v,T) \to [c(v,T)]$ for each $v \in [n]$. 

Informally, this construction can be thought of as a discrete analog of the continuous line-breaking construction from the second paragraph of Section \ref{sec:proof_overview}. More specifically, given $\pi = ((v_1, c_1),\dots, (v_{n-1}, c_{n-1}))\in \cP_\bd$, certain substrings of $\pi$ will correspond to paths in the tree $B(\pi)$. We will list these paths as $P^{(1)},\dots, P^{(\ell^*)}$. As in the continuous line-breaking construction, for each $i \ge 2$ we will identify one endpoint of the path $P^{(i)}$,  with a vertex in $(P^{(j)})_{j < i}$. In the following formal description we denote the $i$-th identified vertex by $v_{j_i}.$ When we identify the endpoint of path $P^{(i)}$ with vertex $v_{j_i}$, we use the second coordinate of the pair $(v_{j_i},c_{j_i})$ to determine the position of the unique child of $v_{j_i}$ belonging to $P^{(i)}$ among the children of $v_{j_i}$.

\begin{tcolorbox}[title = {The bijection $B:\cP_{\bd} \to \cL_\bd$. {\bf Input}: $\pi=((v_1,c_1),\ldots,(v_{n-1},c_{n-1})) \in\cP_{\bd}$.}]
    \begin{itemize}
        \item Set $m_1=\min\{m \in \N: m \ne v_{1}\}$ and let
        \[
        j_1= \inf\{j > 1: v_j \in \{m_1,v_1,\ldots,v_{j-1}\}\} \wedge n\, .
        \]  
        \item For $i \ge 1$ , if $j_{i} < n$ then:
        \begin{itemize}
            \item set $m_{i+1}=\min\{m>m_{i}: m\not\in \{v_1,\dots,v_{j_{i}}\}\}$;
            \item let \[ j_{i+1}= \inf\{ j > j_i: v_j \in \{m_1,\ldots,m_{i+1},v_1,\dots, v_{j-1}\}\} \wedge n.\]
            \end{itemize}
        \item Let $\ell^*=\min\{i\ge 1:j_i=n\}$. 
        \item Define a labeled ordered rooted tree $T \in \cL_{\bd}$ as follows. For $1 \le i \le n-1$, if $i+1 \not\in \{j_1,\ldots,j_{\ell^*}\}$ then set $v_ic_i=\sigma_{v_i}^{-1}(c_i):=v_{i+1}$. If $i+1=j_k$ for some $1 \le k \le \ell^*$ then set $\sigma_{v_i}^{-1}(c_i)=m_k$.
        \item Set $B(\pi)=T$.\end{itemize}\end{tcolorbox}
The rightmost tree in Figure \ref{fig:lort} is the tree $B(\pi)$ where $\pi$ is equal to $\pi_T$ from (\ref{eq:pi}).

When needed, we will emphasise the dependence of the quantities $m_i$, $j_i$ and $\ell^{*}$ on $\pi$ by writing $m_i(\pi)$, $j_i(\pi)$ and $\ell^{*}(\pi)$. Setting $j_0=1$ for convenience, we may think of $T=B(\pi)$ as the union of the paths $P^{(1)},\ldots,P^{(\ell^*)}$, where $P_i=v_{j_{i-1}}\ldots v_{j_i-1}m_i$ is the path in $T$ from $v_{j_{i-1}}$ to $m_i$. Note that since $m_i \ge i$ for all $i \in [\ell^*]$, vertices $1,\dots,k$ are contained within the union of paths $P^{(1)},\dots,P^{(k \wedge \ell^*)}$ for all $k \in [n]$.

Recall from Section \ref{sec:main_result} that $\rT_n$ denotes a Bienaymé tree with offspring distribution~$\mu$ conditioned to have $n$ vertices. Suppose now that $D^{n} = (D^{n}_1,\dots,D^{n}_n)$ is a sequence of \iid $\mu$-distributed random variables conditioned to have total sum $\sum_{i=1}^n D^{n}_i = n-1$, and let $\Pi_{D^n} \in_{\cU}\cP_{D^n}$. Then the tree $\rT_n$ has the same law as $ B(\Pi_{D^n})$. Furthermore, $\bT_n = (\rT_n, Y)$, which we refer to as a $(\mu, \nu)$-branching random walk (conditioned to have size $n$), has the same law as $(B(\Pi_{D^n}), Y)$. (Here, conditionally on the underlying tree $T$, $Y = (Y^{(v)}, v\in v(T)\setminus \partial T)$ are independent random vectors such that if $c(v,T) = k$ then $Y^{(v)}$ has distribution $\nu_k$). The associated spatial tree $(B(\Pi_{D^n}), \ell)$ is such that $\ell(v_1) = 0$, and for $0 \le i \le n-1$ if $i+1 \not\in \{j_1,\ldots,j_{\ell^*}\}$,
\[ \ell(v_{i+1}) = \ell(v_i) + Y^{(v_i)}_{c_i},
   \]
and if $i+1=j_k \text{ for some } 1 \le k \le \ell^*,$ then 
\[\ell(m_k) =\ell(v_i) + Y^{(v_i)}_{c_i}.
\]

In Section \ref{sec:sample} we study the above bijective construction of uniform trees with a given deterministic degree sequence $\bd$; that is, for $T = B(\Pi_\bd)$ for $\Pi_\bd\in_\cU\cP_\bd$. We note however, that by conditioning on $D^n$, all results in Section \ref{sec:sample} also apply to $\rT_n$ and, consequently, to the underlying tree of $\bT_n = (\rT_n, Y)$.

\section{Sampling from $\cL_{\bd}$}\label{sec:sample}
Fix a degree sequence $\bd = (d_1,\dots,d_n)$. 
The bijection $B$ applied to a uniform element $\Pi_\bd\in_{\cU}\cP_\bd$ yields a uniform element $T = B(\Pi_\bd)$ of $\cL_{\bd}$. We can think of the bijection as constructing $T$ from $\Pi_\bd$ by adding vertices one at a time in order of their first appearance in a pair $(V,C)$ of $\Pi_\bd$. Below, we use this perspective to study properties of $T$, in particular the law of the sequence of vertices ordered by first appearance in a pair $(V,C)$ of $\Pi_\bd$, and the law of the number of vertices contained in the union of the paths $P^{(1)},.., P^{(k)}$, for given $k \ge 1$. 

\subsection{Size-biased random re-ordering}\label{sec:size_biased}
For $n \ge 1$ let $\cS_n$ denote the set of permutations of $[n]$. For $(k_1,\ldots,k_n) \in \N^n$, let $\Sigma = \Sigma_{(k_1,\dots,k_n)}$ be the random permutation of $[n]$ with law given by 
\[\p{\Sigma = \sigma} = \prod_{i=1}^n \frac{k_{\sigma(i)}}{\sum_{j=i}^n k_{\sigma(j)}},\quad \mbox{ for }\sigma\in \cS_n.\]
We call $(k_{\Sigma(1)},\dots, k_{\Sigma(n)})$ the \emph{size-biased random re-ordering} of $(k_1,\dots, k_n).$

For a degree sequence $\bd$, let $N_\bd = |\{i\in [n] : d_i > 0\}|$. For $\pi = ((v_1, c_1),\dots, (v_n,c_n))\in \cP_\bd$ we let $\hat{v}_1(\pi),\dots, \hat{v}_{N_\bd}(\pi)$ denote the internal vertices in $T = B(\pi)$ ordered by their first appearance in a pair $(v,c)$ in $\pi$. When $\pi = \Pi_\bd = ((V_1, C_1),\dots, (V_{n-1}, C_{n-1}))\in_\cU\cP_\bd$ is random, we write $\widehat{V}_i(\Pi_\bd) = \hat{v}_i(\Pi_\bd)$ to reinforce the fact that the order of the vertices is random. The next lemma states that $(\widehat{V}_1(\Pi_\bd),\dots,\widehat{V}_{N_\bd}(\Pi_\bd))$ are the vertices corresponding to a size-biased random reordering of $\{d_i ~:~ d_i > 0, i\in [n]\}$.  

\begin{lem}\label{lem:sb_order} Fix a degree sequence $\bd = (d_1,\dots,d_n)$ and let $\Pi_\bd\in_{\cU}\cP_\bd$. Then for any permutation $(i_1,\dots, i_{N_\bd})$ of $\{i \in [n]: d_i > 0\}$,
    \begin{align*}
     \p{(\widehat{V}_1(\Pi_\bd),\dots,\widehat{V}_{N_\bd}(\Pi_\bd)) = (i_1,\dots, i_{N_\bd})} &= \frac{d_{i_1}}{n-1}\frac{d_{i_2}}{n-1 - d_{i_1}}\dots \frac{d_{i_{N_\bd}}}{n-1-\sum_{j=1}^{N_\bd-1}d_{i_j}}.
    \end{align*}
    Consequently, the size-biased random reordering of the positive entries of $\bd$ is equal in distribution to $(d_{\widehat V_1(\Pi_\bd)},\dots, d_{\widehat V_{N_\bd}(\Pi_\bd)}).$
\end{lem}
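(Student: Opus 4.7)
The plan is to reduce the statement to the classical exponential-race characterisation of size-biased sampling. First, since $\Pi_\bd$ is uniform on $\cP_\bd$ and each arrangement of the multiset $M_\bd$ (with $d_v$ copies of each $v\in[n]$) lifts to exactly $\prod_v d_v!$ elements of $\cP_\bd$, the first-coordinate projection $(V_1,\ldots,V_{n-1})$ of $\Pi_\bd$ is a uniformly random arrangement of $M_\bd$. The sequence $(\widehat V_1, \ldots, \widehat V_{N_\bd})$ depends on $\Pi_\bd$ only through this projection: it is the list of distinct values of $(V_1, \ldots, V_{n-1})$ taken in order of first appearance.

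To compute the joint law of $(\widehat V_1, \ldots, \widehat V_{N_\bd})$, I would use an exponential coupling. Let $(T_{(v,c)}: v\in[n],\ c\in[d_v])$ be i.i.d.\ $\mathrm{Exp}(1)$ random variables, and list the pairs $(v,c)$ in increasing order of $T_{(v,c)}$; the resulting sequence is equal in distribution to $\Pi_\bd$. For each $v$ with $d_v > 0$ set $T_v^{\ast} := \min_{c\in[d_v]} T_{(v,c)}$. Then the $T_v^{\ast}$ are mutually independent with $T_v^{\ast} \sim \mathrm{Exp}(d_v)$, and $\widehat V_1, \widehat V_2,\ldots$ are precisely the positive-degree indices $v$ written in increasing order of $T_v^{\ast}$. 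The exponential race then immediately gives $\p{\widehat V_1 = i_1} = d_{i_1}/\sum_v d_v = d_{i_1}/(n-1)$. By memorylessness of the exponential, conditional on $\widehat V_1 = i_1$ and $T_{i_1}^{\ast} = t$, the shifted variables $(T_v^{\ast} - t : v \neq i_1,\ d_v > 0)$ are again independent with $T_v^{\ast} - t \sim \mathrm{Exp}(d_v)$; applying the same argument recursively yields
\[
\p{\widehat V_{k+1} = i_{k+1} \mid \widehat V_1 = i_1, \ldots, \widehat V_k = i_k} = \frac{d_{i_{k+1}}}{n-1 - \sum_{j=1}^k d_{i_j}},
\]
and the product of these conditional probabilities is exactly the displayed formula.

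For the second (``consequently'') part I would unpack the definition of the size-biased reordering $\Sigma$ applied to the positive entries of $\bd$. Enumerate the positive-degree vertices in some fixed order $u_1,\ldots,u_{N_\bd}$ and set $k_j := d_{u_j}$. Then $\p{(k_{\Sigma(1)}, \ldots, k_{\Sigma(N_\bd)}) = (a_1, \ldots, a_{N_\bd})}$ equals, by definition, the sum over those orderings $(i_1, \ldots, i_{N_\bd})$ of $\{v : d_v > 0\}$ with $d_{i_j} = a_j$ of $\prod_{j=1}^{N_\bd} a_j/\sum_{\ell \ge j} a_\ell$. Since $\sum_\ell a_\ell = \sum_v d_v = n-1$, this rewrites as $\prod_j d_{i_j}/(n-1 - \sum_{\ell < j} d_{i_\ell})$, which by the first part is precisely $\p{(d_{\widehat V_1(\Pi_\bd)}, \ldots, d_{\widehat V_{N_\bd}(\Pi_\bd)}) = (a_1, \ldots, a_{N_\bd})}$, proving the distributional identity. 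The only step that requires genuine care is the inductive application of memorylessness in the exponential coupling; everything else is elementary bookkeeping.
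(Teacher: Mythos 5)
Your proof is correct, but it takes a genuinely different route from the paper's. The paper proves the formula by induction on $N_\bd$, via an explicit enumeration of the permutations $\pi \in \cP_\bd$ satisfying $(\hat{v}_1(\pi),\dots,\hat{v}_{N_\bd}(\pi))=(i_1,\dots,i_{N_\bd})$: it counts the ways to place the remaining entries with first coordinate $i_1$, then recursively reduces to a smaller degree sequence, arriving at the count in equation \eqref{eq:pdcount}, which it divides by $|\cP_\bd|=(n-1)!$. You instead use the exponential-race characterisation of size-biased sampling: realise $\Pi_\bd$ by sorting i.i.d.\ $\mathrm{Exp}(1)$ clocks attached to the edge labels $(v,c)$, note that the first-appearance order of $v$ is the order of $T_v^* = \min_{c\in[d_v]} T_{(v,c)}\sim\mathrm{Exp}(d_v)$, and apply memorylessness recursively. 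Both arguments are valid and close in total effort. Your coupling is arguably cleaner conceptually and avoids direct combinatorial bookkeeping; the paper's counting approach has the practical advantage that the intermediate enumeration formula \eqref{eq:pdcount} is reused verbatim in the proof of the subsequent Lemma \ref{lem:cuts}, so the authors get more mileage out of the computation. Your derivation of the second ``consequently'' claim is also fine: you unpack the definition of the size-biased reordering, observe $\sum_v d_v = n-1$, and match it term-by-term with the displayed product; the paper leaves this step implicit.
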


\begin{proof}
We show the statement by induction on $N_\bd$. For $N_\bd=1$, the statement is immediate for all $n$ and for all degree sequences of length $n$ with $|\{i: d_i > 0\}|=1$ since if $N_\bd = 1$ there is a single vertex of positive degree and $\widehat{V}_1(\Pi_\bd) = i_1$. 

Next, fix $\ell \in \N$ and suppose the statement holds for all degree sequences $\bd$ with $N_\bd \le \ell$. Then fix any degree sequence $\bd = (d_1,\dots,d_n)$ with $N_\bd = \ell + 1$, and any permutation $(i_1,\dots,i_{N_\bd})$ of $\{i \in [n] : d_i > 0\}.$  To specify an element of $\{\pi\in \cP_\bd:(\hat{v}_1(\pi),\dots, \hat{v}_{\ell+1}(\pi))=(i_1,\dots,i_{\ell+1})\} $, it is necessary and sufficient to specify

\begin{enumerate}
 \item $\pi_1 =  (v_1, c_1)\in \{(i_1,c):c\in [d_{i_1}]\}$; 
    \item The $d_{i_1}-1$ values $j\in \{2,3,\dots,n-1\}$ for which $\pi_j = (i_1, c)$ for some $1\le c \le d_{i_1}$;
    \item The order of the $d_{i_1}-1$ elements of $\{(i_1,c),  1\le c \le d_{i_j} \}\backslash\{\pi_1\}$ in $\pi$;
    \item The order of the elements of $\{(i_j,c), 2\le j\le {\ell+1}, 1\le c \le d_{i_j} \}$ in $\pi$, which must ensure that $(\hat{v}_2(\pi),\dots, \hat{v}_{\ell+1}(\pi))=(i_2, \dots, i_{\ell+1})$. 
\end{enumerate}

By the induction hypothesis applied to the degree sequence $(d_{i_2},\dots, d_{i_{\ell+1}},0,\dots,0)\in$ $\Z_{\ge 0}^{n-d_{i_1}}$ this implies that 
\begin{align} &\left|\{\pi\in \cP_\bd:(\hat{v}_1(\pi),\dots, \hat{v}_{\ell+1}(\pi))=(i_1,\dots,i_{\ell+1})\}\right|\notag\\&\quad=d_{i_1} \binom{n-2}{d_{i_1}-1}(d_{i_1}-1)! (n-1-d_{i_1})!\frac{d_{i_2}}{n-1 - d_{i_1}}\dots \frac{d_{i_{\ell+1}}}{n-1-\sum_{j=1}^{\ell}d_{i_j}}\notag\\
&\quad= (n-1)!\frac{d_{i_1}}{n-1}\frac{d_{i_2}}{n-1 - d_{i_1}}\dots \frac{d_{i_{\ell+1}}}{n-1-\sum_{j=1}^{\ell}d_{i_j}}\, ;\label{eq:pdcount}
\end{align}
since $|\cP_{\bd}|=(n-1)!$, the claim follows. 
\end{proof}

\subsection{Repeats in $\Pi_\bd$} 
Let $\Pi_{\bd} = ((V_1,C_1),\dots, (V_{n-1}, C_{n-1}))\in_\cU \cP_{\bd}$. Recall that $\widehat V_1(\Pi_{\bd}),$ $ \dots,$ $ \widehat V_{N_\bd}(\Pi_\bd)$ are the internal vertices in $B(\Pi_{\bd})$ ordered by their first appearance in a pair $(V,C)$ in $\Pi_\bd$.

For $i \in [\ell^*(\Pi_\bd)]$ let $M^\bd_i = m^\bd_i(\Pi_\bd)$ and $J^\bd_i = j^\bd_i(\Pi_\bd)$. We introduce this notation to emphasise that $M^\bd_1,\dots, M^\bd_{\ell^*(\Pi_\bd)}$ and $J^\bd_1,\dots, J^\bd_{\ell^*(\Pi_\bd)}$ are random variables. We will see later that for the random degree sequences $D^n = (D_1^n,\dots, D_n^n)$ arising in this paper, for $k \ge 1$ fixed and for $n$ large, $\{V_1,\dots, V_{J_k^{D^n}}\}\cap [k] = \emptyset$ with high probability. In this case, for each $i\in [k]$ the first coordinate of the pair $(V_{J_i^{D^n}}, C_{J_i^{D^n}})\in \Pi_{D^n}$, corresponds to a \emph{repeated} first coordinate of $\Pi_{D^n}$. It is therefore convenient to define a second set of indices which correspond to the indices of $\Pi_\bd$ for which the first coordinate is a repeat. Specifically, let $\widetilde{J}^\bd_1 =  \inf \{ j > 1: V_j \in \{V_1,\dots,V_{j-1}\}\}\wedge n$, and for $i \ge 1$, let 
 \[\widetilde{J}^\bd_{i+1} =  \inf \{ j > \widetilde{J}^\bd_{i}: V_j \in \{V_1,\dots,V_{j-1}\}\}\wedge n.\] 

The next two lemmas describe the laws of $\widetilde{J}^\bd_1$ and $(\widetilde{J}^\bd_i, i \ge 2)$, respectively. 

\begin{lem} \label{lem:cuts}Fix an integer $n \ge 2$ and a degree sequence $\bd = (d_1,\dots,d_n)$ and let $\Pi_\bd \in_\cU\cP_{\bd}$. Then for $1 \le k \le N_\bd$, 
\[
\p{\widetilde{J}^\bd_1 > k~\Big|~ \widehat{V}_1(\Pi_\bd),\dots, \widehat{V}_{N_d}(\Pi_\bd)} = \prod_{j=1}^{k} \left(1 - \frac{\sum_{i=1}^j (d_{\widehat{V}_i(\Pi_\bd)}+1)}{n - 1 -j}\right), 
\]
and, for $k>N_\bd$,
\[
\p{\widetilde{J}^\bd_1 > k~\Big|~ \widehat{V}_1(\Pi_\bd),\dots, \widehat{V}_{N_d}(\Pi_\bd)} = 0. 
\]
\end{lem}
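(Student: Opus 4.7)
I would reduce the claim to a counting problem that parallels the argument in the proof of Lemma~\ref{lem:sb_order}. The starting observation is that, by the definition of $\widetilde J_1^\bd$, the event $\{\widetilde J_1^\bd > k\}$ equals $\{V_1,\ldots,V_k \text{ pairwise distinct}\}$; and since $\widehat V_1(\Pi_\bd), \widehat V_2(\Pi_\bd),\ldots$ lists the distinct first coordinates of $\Pi_\bd$ in order of first appearance, conditional on the full sequence $(\widehat V_i(\Pi_\bd))_{i \in [N_\bd]}$ this event coincides with $\{V_j = \widehat V_j(\Pi_\bd) \text{ for all } j \in [k]\}$. In particular, since $V_1,\ldots,V_{n-1}$ take only $N_\bd$ distinct values, $\{\widetilde J_1^\bd > k\}$ is empty whenever $k > N_\bd$, which yields the second assertion.

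For $1 \le k \le N_\bd$ and a fixed permutation $(i_1,\ldots,i_{N_\bd})$ of $\{i \in [n] : d_i > 0\}$, I would compute the joint probability
\[
\p{V_j = i_j \text{ for all } j \in [k],\ \widehat V_l(\Pi_\bd) = i_l \text{ for all } l \in [N_\bd]}
\]
by counting the admissible $\pi \in \cP_\bd$ directly. Such a permutation is described by (i) a choice of the second coordinates $(c_1, \ldots, c_k) \in [d_{i_1}] \times \cdots \times [d_{i_k}]$, contributing $\prod_{l=1}^k d_{i_l}$ possibilities; and (ii) an arrangement of the remaining $n-1-k$ pairs at positions $k+1,\ldots,n-1$ in which the still-unseen values $i_{k+1},\ldots,i_{N_\bd}$ appear for the first time in this prescribed order. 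The second count is a smaller instance of the very problem solved by Lemma~\ref{lem:sb_order}, and I would evaluate it by invoking~\eqref{eq:pdcount} applied to the reduced degree sequence with $d_{i_l}-1$ copies of $i_l$ for $l \le k$ and $d_{i_l}$ copies for $l > k$.

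Dividing the resulting joint count by the marginal count $|\{\pi \in \cP_\bd : \hat v(\pi) = (i_1,\ldots,i_{N_\bd})\}|$ supplied by Lemma~\ref{lem:sb_order} gives the desired conditional probability, with the factors indexed by $j > k$ cancelling between numerator and denominator. Simplifying what remains using $\sum_{l=1}^{N_\bd} d_{i_l} = n-1$ rearranges into the stated product. The main obstacle is purely one of algebraic bookkeeping: matching the size-biased selection ratios produced by Lemma~\ref{lem:sb_order} to the form in the conclusion. No further probabilistic input is required; equivalently, one can proceed by induction on $k$, interpreting the $j$-th factor as the conditional probability that $V_j = \widehat V_j(\Pi_\bd)$ given $V_l = \widehat V_l(\Pi_\bd)$ for all $l < j$ and given the full sequence $(\widehat V_i(\Pi_\bd))_{i \in [N_\bd]}$, computed by the same counting method.
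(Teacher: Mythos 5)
Your high-level strategy is the same as the paper's: identify $\{\widetilde J_1^\bd>k\}$ with $\{V_1,\ldots,V_k\text{ distinct}\}$, observe that conditionally on $(\widehat V_i)$ this forces $V_j=\widehat V_j$ for $j\in[k]$, dispose of $k>N_\bd$ since $\widetilde J_1^\bd\le N_\bd+1$ deterministically, and then compute a ratio of permutation counts, matching the denominator against~\eqref{eq:pdcount}. Up to and including the Bayes-ratio setup, your argument is sound.

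The gap is in the evaluation of the numerator, specifically in item (ii). You correctly describe the count you need: arrangements of the remaining $n-1-k$ pairs in which the \emph{first appearances of $i_{k+1},\ldots,i_{N_\bd}$ alone} occur in the prescribed order, with the leftover $d_{i_l}-1$ pairs belonging to $i_1,\ldots,i_k$ placed with no ordering constraint whatsoever. But then you propose to compute this by ``invoking~\eqref{eq:pdcount} applied to the reduced degree sequence with $d_{i_l}-1$ copies of $i_l$ for $l\le k$ and $d_{i_l}$ copies for $l>k$.'' That is not the same count. Equation~\eqref{eq:pdcount} enumerates permutations with a \emph{fully} specified first-appearance order of \emph{all} vertices of positive degree; applied to the reduced sequence it would additionally constrain where the residual copies of $i_1,\ldots,i_k$ first reappear relative to each other and to the $i_{l}$ with $l>k$. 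That constraint is not present in the event you are counting, so you undercount. To see this concretely, take $n=5$, $\bd=(2,1,1,0,0)$, $\widehat V=(1,2,3)$, $k=2$. The correct numerator is $4$ (four of the six permutations with $\widehat V=(1,2,3)$ satisfy $(v_1,v_2)=(1,2)$), giving $\p{\widetilde J_1>2\mid\widehat V}=2/3$; your count for item (ii) via~\eqref{eq:pdcount} on the reduced sequence $(1,0,1)$ with order $(1,3)$ gives $1$, and hence a numerator of $2$, which is wrong.

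The missing ingredient is precisely the falling-factorial term that appears in the paper's proof. The paper first \emph{removes} the residual copies of $i_1,\ldots,i_k$ by choosing their positions among $\{k+1,\ldots,n-1\}$ in $(n-1-k)_{d_{i_1}+\cdots+d_{i_k}-k}$ ways; only then does it apply~\eqref{eq:pdcount}, and it does so to the genuinely smaller degree sequence $\bd'=(d_{i_{k+1}},\ldots,d_{i_{N_\bd}},0,\ldots,0)$, which involves \emph{only} $i_{k+1},\ldots,i_{N_\bd}$, so that the full first-appearance order constraint that~\eqref{eq:pdcount} encodes coincides with the partial constraint you actually need. Without that separation, either you must sum~\eqref{eq:pdcount} over all interleavings of the residual $i_l$ ($l\le k$) with $(i_{k+1},\ldots,i_{N_\bd})$ (a computation you do not carry out), or you must introduce the falling factorial as the paper does. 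The inductive alternative you sketch at the end runs into the same issue when you compute each single-step conditional by ``the same counting method.''
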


\begin{proof}
Observe that $\widetilde{J}_1^\bd \le N_\bd+1$ deterministically, so the statement for $k>N_\bd$ is immediate. To prove the statement for $1 \le k \le N_{\bd}$, fix any ordering $i_1,\ldots,i_{N_{\bd}}$ of $\{i \in [n]:d_i>0\}$. Then using Bayes' formula and the fact that $|\cP_{\bd}|=(n-1)!$, the probability 
\[
\p{\widetilde{J}^\bd_1 > k~\Big|~ (\widehat{V}_1(\Pi_\bd),\dots, \widehat{V}_{N_d}(\Pi_\bd))=(i_1,\dots, i_{N_\bd})}
\]
may be expressed as a ratio with denominator 
\[
|\{\pi=((v_i,c_i),i \in [n]) \in \cP_{\bd}: (\hat{v}_1(\pi),\ldots,\hat{v}_{N_{\bd}}(\pi))=(i_1,\ldots,i_{N_d})\}|
\]
and numerator
\[
|\{\pi\!=\!((v_i,c_i),i \in [n]) \in \cP_{\bd}\hspace{-1pt}:\hspace{-1pt} (\hat{v}_1(\pi),...,\hat{v}_{N_{\bd}}(\pi))\hspace{-1pt}=\hspace{-1pt}(i_1,...,i_{N_d}),(v_1,...,v_k)\hspace{-1pt}=\hspace{-1pt}(i_1,...,i_k)\}|.
\]
Equation \eqref{eq:pdcount} directly yields a formula for the denominator. 
Also, letting
$\bd'$ be the degree sequence $(d_{i_k+1},\ldots,d_{N_d},0,\ldots,0) \in \Z_{\ge 0}^{n-d_{i_1}-\ldots-d_{i_k}}$, then the numerator is 
\[
\prod_{j=1}^k d_{i_j} \cdot
(n-1-k)_{d_{i_1}+\ldots+d_{i_k}-k}
\cdot|
\{\pi'\in \cP_{\bd'}: (\hat{v}_1(\pi'),\ldots,\hat{v}_{N_{\bd}-k}(\pi'))=(i_{k+1},\ldots,i_{N_d})\}
|\, .
\]
The first term selects $c_j \in [d_{i_j}]$ for each  $j \in [k]$; 
the second, falling factorial term selects the locations of the remaining entries of $\pi$ whose first coordinate belongs to $\{i_1,\ldots,i_k\}$; and the third term specifies the order of the remaining entries of $\pi \in \cP_\bd$. Equation \eqref{eq:pdcount} also gives a formula for this final term, and the lemma then follows by routine algebra.
\end{proof}
    \begin{lem}
        \label{lem:later_cuts}
        Fix a degree sequence $\bd = (d_1,\dots,d_n)$ and let $\Pi_\bd\in_\cU\cP_\bd$. Let $i 
    \ge 1$. Then for $n \ge 2$ and $k$ such that $\widetilde{J}^\bd_i+k\in [N_\bd]$,
        \begin{align*}&\p{\widetilde{J}^\bd_{i+1} > \widetilde{J}^\bd_i + k~\Big|~ \widetilde{J}^\bd_1,\dots, \widetilde{J}^\bd_i, \widehat{V}_1(\Pi_\bd),\dots, \widehat{V}_{N_\bd}(\Pi_\bd)}\\
        &\quad= \prod_{j=\widetilde{J}_i^\bd}^{\widetilde{J}_i^\bd + k}\left(1 - \frac{\sum_{\ell = 1}^j (d_{\widehat{V}_\ell(\Pi_\bd)} - 1) - i }{n - j}\right)\end{align*}
        and, for $k>N_\bd$,
    \[\p{\widetilde{J}^\bd_{i+1} > k~\Big|~ \widehat{V}_1(\Pi_\bd),\dots, \widehat{V}_{N_\bd}(\Pi_\bd)} = 0. \]
    \end{lem}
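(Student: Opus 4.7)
My plan is to extend the sequential conditioning argument used for Lemma~\ref{lem:cuts}. The foundation is the observation that, since $\Pi_\bd$ is uniform on $\cP_{\bd}$, the sequence of first coordinates $(V_1,\ldots,V_{n-1})$ has the law of a uniform random ordering of the multiset in which each vertex $v$ appears with multiplicity $d_v$. Hence, given only $V_1,\ldots,V_{j-1}$, the conditional law of $V_j$ is uniform on the remaining $n-j$ multiset elements. Writing $m_{j-1}:=|\{V_1,\ldots,V_{j-1}\}|$ for the number of distinct values seen by position $j-1$, this gives
\[
\p{V_j \text{ is new} \mid V_1,\ldots,V_{j-1}} = \frac{n - 1 - \sum_{\ell=1}^{m_{j-1}} d_{\widehat{V}_\ell(\Pi_\bd)}}{n-j},
\]
because among the $n-j$ remaining multiset elements, exactly $\sum_{\ell=1}^{m_{j-1}} d_{\widehat{V}_\ell} - (j-1)$ correspond to already-seen vertices.

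The first key step is to upgrade this to the same formula with additional conditioning on $\widehat{V}_1(\Pi_\bd),\ldots,\widehat{V}_{N_\bd}(\Pi_\bd)$. I would verify this via Bayes' rule combined with Lemma~\ref{lem:sb_order}, which identifies the conditional law of the as-yet-unseen portion $(\widehat{V}_\ell)_{\ell > m_{j-1}}$ given $V_1,\ldots,V_{j-1}$ as a size-biased reordering of the unseen positive-degree vertices. The factor $d_{v^*}$ attached to the identity $V_j = v^*$ of the next new vertex appears in both numerator and denominator of the Bayes ratio and cancels cleanly, leaving the formula unchanged.

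With this in hand, the second step is a direct sequential multiplication. Conditioning on $(\widetilde{J}^\bd_1,\ldots,\widetilde{J}^\bd_i) = (t_1,\ldots,t_i)$ determines the ``new/repeat'' labelling of every position in $[t_i]$---positions $t_1,\ldots,t_i$ are repeats, the rest are new---so in particular $m_{t_i} = t_i - i$. Decomposing $\{\widetilde{J}^\bd_{i+1} > t_i + k\}$ as the intersection over $s = 1,\ldots,k$ of the events $A_s := \{V_{t_i+s}\text{ is new}\}$, and noting that along the trajectory $A_1 \cap \cdots \cap A_{s-1}$ one has $m_{t_i+s-1} = t_i + s - 1 - i$, I would apply the upgraded formula at each step to conclude
\[
\p{\widetilde{J}^\bd_{i+1} > t_i + k \mid \widetilde{J}^\bd_1,\ldots,\widetilde{J}^\bd_i,\widehat{V}_1,\ldots,\widehat{V}_{N_\bd}}
= \prod_{s=1}^{k} \frac{n - 1 - \sum_{\ell=1}^{t_i + s - 1 - i} d_{\widehat{V}_\ell}}{n - t_i - s},
\]
and then reindex in the variable $j$ to recover the product displayed in the statement. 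For $k$ large enough that no unseen positive-degree vertex remains, the numerator of one factor vanishes, giving the $k > N_\bd$ case.

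The main obstacle is the upgrade step. Although the insensitivity of the per-step ``new'' probability to further conditioning on $(\widehat{V}_\ell)_{\ell > m_{j-1}}$ is essentially an exchangeability phenomenon, I would carry it out as an explicit Bayes computation, since carefully tracking which parts of the ``future'' information the conditioning actually constrains is where sign and index errors are most likely to slip in.
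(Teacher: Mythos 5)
Your approach is sound and takes a genuinely different route from the paper's. The paper's proof of Lemma~\ref{lem:cuts} (to which this lemma's proof is declared analogous) is a one-shot counting argument: it expresses the conditional probability as a ratio of two cardinalities in $\cP_\bd$, with denominator given by \eqref{eq:pdcount} and numerator obtained by choosing the second coordinates $c_j$ for $j\le k$, placing the remaining entries with first coordinate in $\{i_1,\ldots,i_k\}$ via a falling factorial, and recursing on a truncated degree sequence $\bd'$; the displayed formula is then said to follow by routine algebra. You instead filter sequentially on $V_1,\ldots,V_{j-1}$, compute a one-step repeat probability, argue it is unchanged under the extra conditioning on the remaining $\widehat V$-order, and multiply. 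The insensitivity step is the crux, and the cleanest way to close it is actually a conditional-independence argument rather than the explicit Bayes cancellation you sketch: given $V_1,\ldots,V_{j-1}$, the event $\{V_j\text{ is a repeat}\}$ depends on the uniform permutation of the $n-j$ remaining multiset entries only through whether the entry in the first remaining slot belongs to an already-seen vertex, while $(\widehat V_\ell)_{\ell>m_{j-1}}$ depends only on the relative order of the unseen-vertex entries within that permutation; conditioning on the former does not disturb the uniform law of the subsequence of the latter, so $\p{V_j\text{ repeat}\mid V_1,\ldots,V_{j-1},(\widehat V_\ell)_\ell}=\p{V_j\text{ repeat}\mid V_1,\ldots,V_{j-1}}$. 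After that, the tower property (everything depends on the $V$'s only through $m_{t_i+s-1}=t_i+s-1-i$) turns the telescoping product into the desired formula.

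One caveat: if you carry out the final reindexing literally you will not reproduce the product printed in the lemma. Your sequential formula is the correct one; the printed product appears to suffer index slippage (range of $j$, upper limit of the inner sum, and $n-j$ versus $n-1-j$). The same slippage is visible in Lemma~\ref{lem:cuts}: the printed product, with $\sum_{i\le j}(d_{\widehat V_i}+1)$ in the numerator and range $j=1,\ldots,k$, evaluates to $0$ at $k=1$, whereas $\p{\widetilde J^\bd_1>1}=1$ deterministically; a direct enumeration for $n=5$, $\bd=(2,2,0,0,0)$, $\widehat V=(1,2)$ gives $\p{\widetilde J^\bd_1>2}=2/3$, matching your per-step formula and not the printed one. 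The enumerative computation in the proof of Lemma~\ref{lem:cuts} is correct, and your argument agrees with it step by step; the discrepancy you will hit in your last step is a defect in the displayed formula rather than a gap in your reasoning.
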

    The proof of Lemma \ref{lem:later_cuts} is analogous to that of Lemma \ref{lem:cuts} and is therefore omitted.
Finally, a bound we will need in Section \ref{sec:tightness}, whose proof relies on the bijective construction of $\rT_n$, is the following; its proof is postponed to the appendix.

\begin{lem}\label{lem:no_ganging_up_d_tree}
Let $\bd=(d_1,\dots,d_{n})$ be a degree sequence and let $\cB\subset[n]$ be a set of vertices. Suppose that $|\cB|\leq K$ and suppose that $\max_{1 \le i \le n} d_i\le \Delta$. Let $B_{\bd}$ be the smallest distance between two vertices in $\cB$ that are ancestrally related in $T_\bd = B(\Pi_\bd)$ (with $B_{\bd}=\infty$ if no vertices in $\cB$ are ancestrally related). Then, for any $b\ge 0$ 
\[
\p{B_{\bd}\leq b}\leq K\left(1-\left(1-\frac{K\Delta}{n-1-b\Delta}\right)^b\right).
\]
\end{lem}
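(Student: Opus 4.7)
The plan is to reduce the statement via a union bound to a per-vertex estimate, and to obtain that estimate from an explicit formula for the joint law of the first $b$ ancestors of a fixed vertex in $T_{\bd} = B(\Pi_{\bd})$.

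Writing $p^i(v)$ for the $i$-th ancestor of $v$ in a tree (with the convention that $p^i(v)$ may fail to exist), I would first note that $\{B_{\bd}\le b\} \subseteq \bigcup_{v_0\in\cB}\{p^i(v_0)\in \cB\setminus\{v_0\}\text{ for some }1\le i\le b\}$. Hence it suffices to show, for each fixed $v_0\in \cB$,
\[
\p{p^i(v_0)\in\cB\setminus\{v_0\}\text{ for some }1\le i\le b} \;\le\; 1-\bigl(1-K\Delta/(n-1-b\Delta)\bigr)^{b}.
\]

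The central ingredient is an explicit formula: for distinct $v_0,w_1,\dots,w_j\in [n]$,
\[
\p{p^i(v_0)=w_i\text{ for all }i\in [j]} \;=\; \prod_{i=1}^{j}\frac{d_{w_i}}{n-i}.
\]
Because $B$ is a bijection, $|\cL_{\bd}|=(n-1)!$, so this is equivalent to the counting identity $|\{T\in\cL_{\bd}: p^i_T(v_0)=w_i\ \forall i\}| = (n-j-1)!\prod_{i=1}^{j}d_{w_i}$. I would prove this by contracting the ancestral path $v_0{-}w_1{-}\cdots{-}w_j$ into a single vertex (naturally identified with $w_j$): the resulting labelled ordered rooted tree $T^{\star}$ lies in $\cL_{\bd^{\star}}$, where $\bd^{\star}$ agrees with $\bd$ away from the path, removes $v_0,w_1,\dots,w_{j-1}$, and sets $d^{\star}_{w_j} = d_{v_0}+\sum_{i=1}^{j}(d_{w_i}-1)$ (a quick check gives $\sum_v d^{\star}_v = (n-j)-1$, so $|\cL_{\bd^{\star}}|=(n-j-1)!$). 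Conversely, given $T^{\star}\in\cL_{\bd^{\star}}$, the uncontraction back to a tree in $\cL_{\bd}$ with the prescribed ancestral path is specified by a vector $(c_1,\dots,c_j)\in [d_{w_1}]\times\cdots\times[d_{w_j}]$ indicating the position of $w_{i-1}$ (or $v_0$ when $i=1$) among the children of $w_i$; the corresponding children of $w_j$ in $T^{\star}$ are parsed into nested blocks in DFS order to recover the rest of the structure. This gives $\prod_{i=1}^{j}d_{w_i}$ preimages per $T^{\star}$, proving the counting identity.

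Given the joint formula, summing over $w_j\in \cB\setminus\{v_0\}$ and using $d_{w_j}\le \Delta$ and $|\cB|\le K$ yields
\[
\p{p^j(v_0)\in \cB\setminus\{v_0\}\;\big|\;p^i(v_0)\notin \cB\setminus\{v_0\}\text{ for }i<j} \;\le\; \frac{K\Delta}{n-j} \;\le\; \frac{K\Delta}{n-1-b\Delta}
\]
for every $1\le j\le b$, where the conditioning event is understood to hold vacuously whenever some $p^i(v_0)$ does not exist, and the last inequality uses $\Delta\ge 1$ (valid since $\sum d_i = n-1\ge 1$ in any non-trivial case). By the chain rule,
\[
\p{p^i(v_0)\notin \cB\setminus\{v_0\}\text{ for all }1\le i\le b} \;\ge\; \bigl(1-K\Delta/(n-1-b\Delta)\bigr)^{b},
\]
and the union bound over $v_0\in \cB$ completes the proof. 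The main obstacle is the bookkeeping in the contraction/uncontraction correspondence (tracking how the ordered blocks of descendants of the contracted vertex split among $v_0,w_1,\dots,w_j$); once the joint formula is in hand, the remainder is a routine chain-rule estimate.
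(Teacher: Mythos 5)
Your proposal is correct, and it takes a genuinely different route from the paper's proof, though both reduce to a union bound over $v_0\in\cB$ followed by a chain-rule estimate along the ancestral path. The paper works directly with the random permutation $\Pi_{\bd}$ from the bijective construction $B$: it reveals ancestors of $v_0$ one at a time, introducing a filtration $(\cF_k)_{k\ge 0}$ so that conditionally on $\cF_k$ the unrevealed entries of $\Pi_{\bd}$ are still a uniformly random permutation, which yields the bound $\p{p^{k+1}\in\cB\setminus\{v_0\}\mid \cF_k,\,\ldots}\le K\Delta/(n-1-\sum_{i=0}^{k}d_{p^i})$. You instead derive an explicit closed-form law for the ancestral path, $\p{p^i(v_0)=w_i\ \forall i\in[j]}=\prod_{i=1}^{j}d_{w_i}/(n-i)$, via a contraction bijection (each contracted tree in $\cL_{\bd^*}$ having exactly $\prod_i d_{w_i}$ preimages), and then read off the conditional probability $\p{p^j(v_0)=w_j\mid p^{<j}}=d_{w_j}/(n-j)$ directly. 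Your per-step bound $K\Delta/(n-j)$ is in fact slightly tighter than the paper's $K\Delta/(n-1-\sum d_{p^i})$ before both are weakened to $K\Delta/(n-1-b\Delta)$, so the two approaches end at the same place. The trade-off: the paper's argument stays entirely inside the machinery of Section~\ref{sec:encodings} and avoids any bijective counting beyond $|\cL_{\bd}|=(n-1)!$, while yours is self-contained and gives the exact ancestral law, at the cost of having to verify carefully the ordered-tree bookkeeping in the contraction/uncontraction (the nested splitting of children-blocks by the positions $c_1,\dots,c_j$), which you correctly flag as the main delicate point. Your handling of the edge cases — the root being reached before step $b$ (the conditioning event holding vacuously), and using $\Delta\ge 1$ to pass from $n-j$ to $n-1-b\Delta$ — is also sound.
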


\section{Random finite-dimensional distributions}\label{sec:fdds}

In this section we use the bijection $B$ to prove the convergence of the random finite-dimensional distributions of the head of the discrete snake $(H_n, R_n)$.  We assume throughout this section that $\mu$ is critical and has variance $\sigma^2\in (0,\infty)$, and that assumption [\ref{a1}] holds. 
 
Recall that $\rT_n$ is a Bienaymé tree with offspring distribution $\mu$ conditioned to have $n$ vertices, and that $\bT_n = (\rT_n,Y)$ denotes the conditioned $(\mu,\nu)$-branching random walk. By Section \ref{sec:encodings}, $\bT_n$ has the same distribution as $(B(\Pi_{D^n}),Y)$, where $D^n = (D_1^n,\dots, D_n^n)$ is a sequence of \iid $\mu$-distributed random variables conditioned to have total sum $\sum_{i=1}^nD_i^n = n-1$ and, conditionally on $D^n$, $\Pi_{D^n} = ((V_1,C_1),\dots, (V_{n-1}, C_{n-1}))\in_{\cU}\cP_{D^n}$.
 
Fix $k \ge 1$. Let $U^n_1,\dots,U^n_k$ be a uniformly random $k$-set of indices chosen from $[n]$. Let $\rT_n(U^n_1,\dots,U^n_k)$ be the subtree of $\rT_n$ spanned by the root of $\rT_n$ and the vertices $v_{U^n_1},\dots,$ $ v_{U^n_k}$, where for $i\in [n]$, we recall that $v_i$ is the $i$-th vertex in the lexicographical order of $\rT_n$. (For fixed $k$, as $n \to \infty$, a collection of $k$ \iid Uniform$([n])$ random variables will be distinct with probability tending to 1, so we can treat $U_1^n, \ldots, U_k^n$ as indistinguishable from independent uniform picks from the vertices.) We immediately observe that $\rT_n(U_1^n, \ldots,U_k^n)$ has the same distribution as $\rT_n^k$, the subtree of $B(\Pi_{D^n})$ spanned by the root and the vertices $1,\dots, k$. Since $\rT_n^k$ is more convenient for our analysis, we will work with it instead. Note that $\rT_n^k$ is a labeled ordered rooted tree whose leaves are labeled by $1,2,\ldots,k$. Write $\ell^k_n$ for the map from $\rT_n^k$ into $\R$ which gives the spatial locations of the vertices, so that $(\rT_n^k, \ell^k_n)$ is the spatial tree $(\rT_n,\ell)$ restricted to the subtree spanned by the root and the vertices $1,\ldots,k$.
 
Let $\cT_{2\mathbf{e}}$ denote the Brownian tree encoded by the excursion $2\mathbf{e}$, and let $U_1, \ldots, U_k$ be \iid Uniform($[0,1]$) random variables, independent of $\mathbf{e}$. Recall that $\cT_{2\mathbf{e}}^{k} = \cT_{2\mathbf{e}}(U_1,\dots,U_k)$ denotes the subtree of $\cT_{2\mathbf{e}}$ spanned by the images of $0$ and $U_1,\dots, U_k$ in $\cT_{2\mathbf{e}}$, thought of as an ordered rooted tree with leaves labeled by $1,2,\ldots,k$ and with real-valued edge lengths. Recall that $\cT_{2\mathbf{e}}^k$ has the same distribution as the tree $\cT^k$ built by Aldous' line-breaking construction. We now introduce a version of the line-breaking construction which incorporates spatial locations.

\begin{tcolorbox}[title = Line-breaking construction of the Brownian tree with spatial locations] We construct a sequence $(\cT^k)_{k \ge 1}$ of trees along with two functions $\mathbf{h}: [0,\infty) \to [0, \infty)$ and $\mathbf{l}: [0,\infty) \to \R$ recursively. Let $J_1, J_2, \dots$ be the jump times of a Poisson point process on $[0,\infty)$ with intensity $tdt$ at time $t$, listed in increasing order. Independently, let $(B_t)_{t \ge 0}$ be a standard Brownian motion. Start from the tree $\cT^1$ which consists of the line-segment $[0,J_1]$. Define $\mathbf{h}(t) = t$ and $\mathbf{l}(t) = B_t$ for $0 \le t \le J_1$. Recursively, for $k \ge 2$, conditionally on $J_{k-1}$, sample an attachment point $A_{k-1}\sim \mathrm{Uniform}([0, J_{k-1}])$, independent of $(A_j)_{j < k-1}$. Take the completion of the line segment $(J_{k-1}, J_{k}]$, and let $J_{k-1}^*$ denote the limit point as $x\downarrow J_{k-1}$. Identify the points $J_{k-1}^*$ and $A_{k-1}$. This has the effect of gluing the line-segment $(J_{k-1},J_{k}]$ onto $\cT^{k-1}$. We do this with probability $1/2$ to the left side and with probability $1/2$ to the right side. This yields $\cT^{k}$. Define $\mathbf{h}(t) = \mathbf{h}(A_{k-1}) + t-J_{k-1}$ and $\mathbf{l}(t) = \mathbf{l}(A_{k-1}) + B_t -  B_{J_{k-1}}$ for $t \in (J_{k-1},J_{k}]$ to determine the height and location processes on the new line-segment. 
\end{tcolorbox}

The planar embedding of $\cT^k$ is captured by a permutation $\tau^k: [k] \to [k]$ which is such that $\tau^k(1), \ldots, \tau^k(k)$ is the order in which we observe the leaves when exploring the tree from left to right. 
Using the notation $U^k_{(1)},\ldots,U^k_{(k)}$ for the increasing ordering of $U_1,\ldots,U_k$ 
as in Proposition~\ref{prop:rfdds+rtightness}, we then have  
\begin{align}
& \left(\mathbf{h}(J_{\tau^k(1)}), \ldots, \mathbf{h}(J_{\tau^k(k)}), \mathbf{l}(J_{\tau^k(1)}), \ldots, \mathbf{l}(J_{\tau^k(k)})\right) \notag \\
& \qquad \eqdist \left(2\mathbf{e}_{U_{(1)}^k}, \ldots, 2 \mathbf{e}_{U_{(k)}^k}, \sqrt{2}\mathbf{r}_{U_{(1)}^k}, \ldots, \sqrt{2} \mathbf{r}_{U_{(k)}^k}\right), \label{eq:BSrfdds}
\end{align}
where the equality in distribution of the first $k$ co-ordinates on the two sides is a consequence of 
Corollary 22 of Aldous~\cite{continuum_3}, and that of the final $k$ co-ordinates is a consequence of the definition of the Brownian snake given at (\ref{eq:BSBE}). So the line-breaking construction indeed realises the random finite-dimensional distributions of the head of the Brownian snake.

We show that the scaling limit of $(\rT_n^k,\ell_n^k)$ is $(\cT^k,\mathbf{l}|_{[0,J_k]})$ in an appropriate sense, which will allow us to prove the convergence of the random finite-dimensional distributions, along with a certain amount of extra information which will be useful to us in Section \ref{sec:tightness} where we prove tightness.

Recall that the tree $\rT^k_n$ necessarily sits within the first $k$ paths, $P^{(1)}, \ldots, P^{(k)}$, in the discrete line-breaking construction. We need to understand the lengths of these paths, and the positions at which the paths are glued onto one another. It is convenient to use the \emph{indices} of the vertices in $\Pi_{D^n}$ for this purpose rather than the vertex labels themselves. Recall that $J_1^{D^n}, J_2^{D^n}, \ldots, J_k^{D^n}$ are the first $k$ indices at which we see either a repeat or an element of $\{1,2,\ldots,k\}$. Let us henceforth write $J_i^n = J_i^{D^n}$ (and also $\widetilde{J}_i^n = \widetilde{J}_i^{D^n}$) for $i \ge 1$. Then the lengths of the paths $P^{(1)}, \ldots, P^{(k)}$ are given by $J_1^n, J_2^n - J_1^n, \ldots, J_k^n - J_{k-1}^n$.  For $1 \le m \le k-1$, the index at which the path $P^{(m+1)}$ attaches onto the subtree constructed from the first $m$ paths is given by the value $i$ such that $V_{J_{m}^n} = \widehat V_i(\Pi_{D^n})$ (i.e.\ we find the index of the vertex $V_{J_{m}^n}$ within the vector $(\widehat{V}_1(\Pi_{D^n}),\ldots,\widehat{V}_{N_n}(\Pi_{D^n}))$). We write $A_{m}^n$ for this value $i$ and call this the $m$-th \emph{attachment point}. See Figure \ref{fig:christina_fig}. 

\begin{figure}[h!]
\begin{center}
\includegraphics[width=0.95\textwidth]{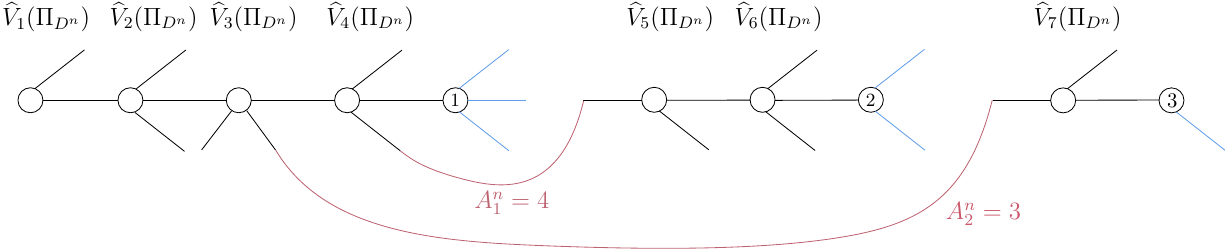}
\end{center}
\caption{Illustration of the first and second attachment points, $A_1^n$ and $A_2^n$.}\label{fig:christina_fig}
\end{figure} 

Since $\rT_n^k$ is an ordered tree, we will need to understand where the paths $P^{(2)}, \ldots, P^{(k)}$ attach relative to the pre-existing children of their attachment points. If we are looking to attach to a vertex which has only one pre-existing child (i.e.\ for which there has been no previous repeat) then that vertex must have degree $d \ge 2$, and then whether we attach to the left or to the right of the pre-existing child is simply determined by the relative ordering of the corresponding second coordinates in the sequence $\Pi_{D^n}$. If there has been no previous repeat at this vertex then this pair of second coordinates is chosen uniformly at random without replacement from $[d]$ and, in particular, we attach to the left and right sides each with probability $1/2$. This ceases to be true after the first repeat (not least because then there are three or more children whose relative ordering we need to understand), but as we shall show below, we observe a second repeat of any vertex in $\rT_n^k$ with vanishing probability as $n \to \infty$. Let $F^n_1, \ldots, F^n_k$ be random variables taking values in $\{0,1,2\}$ such that $F^n_i = 1$ if $P^{(i+1)}$ attaches at a first repeat and to the left-hand side, $F^n_i = 2$ if $P^{(i+1)}$ attaches at a first repeat and to the right-hand side and $F^n_{i+1} = 0$ otherwise, for $1 \le i \le k$.

Finally, recall that vertex $\widehat{V}_i(\Pi_{D^n})$ has degree $D^n_{\widehat{V}_i(\Pi_{D^n})}$ for $i \le N_{D^n} = |\{i \in [n]: D_i^n > 0\}|$. Let $L^n(0) = 0$ and let $L^n(i)$ be the spatial location of the $C_i$-th child of vertex $V_i$ in line-breaking construction $B(\Pi_{D^n})$, for $1 \le i \le n-1$. 

The following proposition shows that, on rescaling, these quantities converge in distribution to their analogues in the line-breaking construction of the Brownian tree with spatial locations.

\begin{prop} \label{prop:fdds}
Fix $k \ge 1$. Then  
\begin{equation}\label{eq:conv_attach-label}\frac{\sigma}{\sqrt{n}}(J_1^n, J_2^n,\dots, J_k^n, A_1^n,\dots, A_k^n) \convdist (J_1, J_2,\dots, J_k, A_1,\dots, A_k)
\end{equation}
as $n\rightarrow \infty$. Jointly with this convergence, we have that
\begin{equation} \label{eq:flagsconv}
(F_1^n, F_2^n, \ldots, F_k^n) \convdist (F_1, F_2, \ldots, F_k),
\end{equation}
where $F_1, F_2, \ldots, F_k$ are \iid random variables, independent of everything else, such that $\p{F_i = 1} = \p{F_i = 2} = 1/2$ and 
\begin{equation} \label{eq:RWtoBM}
\begin{split}
\left(\frac{L^n(\fl{tn^{1/2}}\wedge (J_1^n-1))}{n^{1/4}}\right)_{t \ge 0} & \hspace{-0.5cm}\convdist \beta (B_{t \wedge (J_1/\sigma)})_{t \ge 0}, \\
\left(\hspace{-3pt}\frac{L^n((J_i^n \hspace{-1pt}+\hspace{-1pt} \fl{t n^{1/2}})\hspace{-1pt} \wedge\hspace{-1pt} (J_{i+1}^n\hspace{-2pt}-\hspace{-2pt}1))}{n^{1/4}}\hspace{-3pt}\right)_{t \ge 0} & \hspace{-0.5cm}\convdist  \beta (B_{A_i/\sigma}\hspace{-1pt}+\hspace{-1pt} B_{((J_i/\sigma) \hspace{-1pt}+\hspace{-1pt} t) \wedge (J_{i+1}/\sigma)} \hspace{-1pt}-\hspace{-1pt}B_{(J_i/\sigma)})_{t \ge 0}
\end{split}
\end{equation}
for $1 \le i \le k-1$, in each case for the uniform norm.
\end{prop}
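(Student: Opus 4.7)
The plan is to exploit the bijection $B$: the pair $(\rT_n,Y)$ is built from the sequence $\Pi_{D^n}$ together with the independent displacements $Y$, and I will show that on the scales $\sqrt n$ (for indices) and $n^{1/4}$ (for spatial locations) this discrete construction converges, path by path, to Aldous' line-breaking construction of $\cT^k$ enriched with Brownian increments along each newly added edge.

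The first step is to establish an ``approximate iid'' picture for the size-biased sequence. By Lemma~\ref{lem:sb_order}, $(D^n_{\widehat V_i(\Pi_{D^n})})_{i\ge 1}$ is the size-biased reordering of the positive entries of $D^n$, which is itself obtained by conditioning $n$ iid $\mu$-variables to sum to $n-1$. A standard local CLT argument using $\sigma^2<\infty$ shows that the restriction of this sequence to the first $K_n=\Theta(\sqrt n)$ positions is close in total variation to an iid sequence of $\bar\mu$-distributed variables. Plugging this picture into the product formulas of Lemmas~\ref{lem:cuts} and~\ref{lem:later_cuts}, taking logarithms, and retaining leading-order terms yields
\[
-\log\p{\widetilde J_1^n > \lfloor t\sqrt n/\sigma\rfloor} \;\to\; \frac{t^2}{2}
\]
as $n\to\infty$, matching $\p{J_1>t}=e^{-t^2/2}$. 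Iterating the same argument over the successive differences $\widetilde J_i^n-\widetilde J_{i-1}^n$ yields the joint convergence of $(\sigma\widetilde J_i^n/\sqrt n)_{i\in[k]}$ to the Poisson jump times $(J_i)$. The gap between $J_i^n$ and $\widetilde J_i^n$ comes from the discrete ``markers'' $\{m_1,\dots,m_k\}$, whose influence over the first $O(\sqrt n)$ steps is $o(\sqrt n)$ and so negligible in the limit.

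For the attachment points and flags, observe that when $J_m^n$ corresponds to a repeat, $V_{J_m^n}=\widehat V_{A_m^n}$ and, conditional on $(J_m^n,\widehat V_\cdot)$, the index $A_m^n$ is weighted by the number of remaining $(\widehat V_{A_m^n},\cdot)$-pairs. With high probability each $\widehat V_i$ has been hit at most once in the first $\Theta(\sqrt n)$ steps (by Lemma~\ref{lem:no_ganging_up_d_tree}), so this weight equals $D^n_{\widehat V_{A_m^n}}-1$. Since these quantities are marginally asymptotically iid with positive mean $\sigma^2$, integrating them out makes $A_m^n/J_m^n$ asymptotically Uniform$(0,1)$, matching $A_m/J_m$. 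For the flags, the same ``no multiple repeats'' input ensures that the first $k$ attachments occur at $k$ distinct internal vertices of degrees $\ge 2$; conditional on a first repeat at a vertex of degree $d$, the two child-indices involved form a uniform pair from $[d]^2\setminus\mathrm{diag}$, so the left/right choice is uniform on $\{1,2\}$ and independent across $m$.

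Finally, for the spatial convergence (\ref{eq:RWtoBM}), the increments $L^n(j)-L^n(j-1)=Y^{(V_j)}_{C_j}$ are independent of $\Pi_{D^n}$; conditional on $D^n_{V_j}=d$ and $C_j\sim\mathrm{Uniform}([d])$ they are distributed as $Y_{d,U_d}$. Averaging over the asymptotically iid $\bar\mu$-distributed degrees, the step distribution converges to that of $Y_{\bar\xi,U_{\bar\xi}}$, which under [\ref{a1}] has mean $0$ and variance $\beta^2$. Donsker's invariance principle then gives the uniform convergence $n^{-1/4}L^n(\lfloor t\sqrt n\rfloor)\convdist \beta B_t$ along $P^{(1)}$, and iterating, starting from the spatial location of each attachment point, produces the concatenation structure in (\ref{eq:RWtoBM}). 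Joint convergence of all four parts follows from the independence of $Y$ from $\Pi_{D^n}$ together with the continuous mapping theorem applied to the attachment procedure. The main obstacle will be quantifying the approximate-iid behaviour of the first $\Theta(\sqrt n)$ size-biased degrees: controlling the total-variation cost of the constraint $\sum_i D_i^n=n-1$ on any $o(n)$ collection of coordinates, and then tracking the leading-order effect of those degrees through the product formulas of Lemmas~\ref{lem:cuts}--\ref{lem:later_cuts}, is what makes the correct scaling constants $\sigma$ and $\beta$ emerge.
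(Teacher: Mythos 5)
Your overall strategy follows the same route as the paper: measure-change (or TV) comparison of the size-biased degree sequence to $\bar\mu$-iid variables over the first $\Theta(\sqrt n)$ steps, then passing to the limit in the product formulas of Lemmas~\ref{lem:cuts}--\ref{lem:later_cuts} to get the Poisson jump times, analysing the conditional distribution of the attachment points, and using Donsker's theorem for the spatial increments along each path. However, there are two genuine gaps.

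First, the spatial location of the vertex $m_i$ (the leaf where the $(i+1)$-th path begins) is not $L^n(A_i^n+i-2)$ but $L^n(A_i^n+i-2)+\Delta_i^n$, where $\Delta_i^n$ is the displacement from the repeated vertex $V_{J_i^n}$ to its new child. This displacement is \emph{not} one of the ``typical'' increments covered by the Donsker argument (it is a displacement to a second or later child of a vertex drawn by a size-biased mechanism), and you never address it. If $\Delta_i^n$ were of order $n^{1/4}$ or larger, (\ref{eq:RWtoBM}) would fail. The paper devotes Lemma~\ref{lem:attach_points} to showing that $\max_{i\le k}|\Delta_i^n|$ is tight in $n$ and hence vanishes after dividing by $n^{1/4}$; without some such control the iteration you describe (``starting from the spatial location of each attachment point'') does not yield the concatenation structure in (\ref{eq:RWtoBM}).

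Second, your citation of Lemma~\ref{lem:no_ganging_up_d_tree} for ``each $\widehat V_i$ has been hit at most once'' and for the flags being first repeats is a misfit: that lemma bounds the chance that two vertices in a fixed set are \emph{ancestrally related at small distance}, which is a statement about the tree geometry, not about whether the same vertex appears twice as an attachment point in the first $\Theta(\sqrt n)$ steps. The correct justification is the one the paper uses: Proposition~\ref{prop:conv_cuts_attach} shows $\tfrac{\sigma}{\sqrt n}(A_1^n,\dots,A_k^n)$ converges to $(A_1,\dots,A_k)$, which are a.s.\ pairwise distinct (being continuous and conditionally independent uniforms), so the discrete attachment indices are pairwise distinct with high probability. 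Alternatively, as the paper does in (\ref{eq:num_dem}), one can sandwich the conditional law of $A_m^n$ between bounds that do not depend on previous hits, making the ``no multiple repeats'' claim unnecessary. Either fix works, but as written the argument invokes the wrong tool.
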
 

As a corollary, we obtain the convergence of the random finite-dimensional distributions in (\ref{eq:suffice}).

\begin{cor}\label{cor:4.2}
For any $k \ge 1$, as $n\rightarrow \infty$
\begin{align*}
& \left(\frac{H_n(nU_{(1)}^k)}{\sqrt{n}}, \ldots, \frac{H_n(nU_{(k)}^k)}{\sqrt{n}}, \frac{R_n(nU_{(1)}^k)}{n^{1/4}}, \ldots, \frac{R_n(nU_{(k)}^k)}{n^{1/4}}\right) \\
& \convdist \left(\frac{2}{\sigma}\mathbf{e}_{U_{(1)}^k}, \ldots, \frac{2}{\sigma}\mathbf{e}_{U_{(k)}^k}, \beta \sqrt{\frac{2}{\sigma}} \mathbf{r}_{U_{(1)}^k},\ldots,\beta \sqrt{\frac{2}{\sigma}}\mathbf{r}_{U_{(k)}^k}\right),
\end{align*}
where $U_{(1)}^k, \ldots, U_{(k)}^k$ are the order statistics of $k$ \iid Uniform$([0,1])$ random variables.
\end{cor}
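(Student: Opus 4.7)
The plan is to transfer the statement to the spanned subtree $\rT_n^k$ and apply Proposition~\ref{prop:fdds} together with the identity (\ref{eq:BSrfdds}). First I would observe that, modulo linear interpolation error that vanishes after rescaling, $H_n(nU_{(j)}^k)$ and $R_n(nU_{(j)}^k)$ agree with the height and spatial location of the $(\lfloor n U_{(j)}^k\rfloor+1)$-th vertex of $\rT_n$ in lex order. Since $U_1^k,\ldots,U_k^k$ are independent of $\rT_n$, the sorted positions $(\lfloor n U_{(j)}^k\rfloor+1)_{j=1}^k$ have the same asymptotic joint law as the order statistics $U_{(1)}^n,\ldots,U_{(k)}^n$ of $k$ \iid uniform indices from $[n]$. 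Combined with the distributional equivalence $\rT_n(U_1^n,\ldots,U_k^n)\eqdist \rT_n^k$, this reduces the problem to computing heights and spatial locations of the vertices labeled $1,\ldots,k$ in $\rT_n = B(\Pi_{D^n})$, taken in the lex order induced by $\rT_n^k$.

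Next I would verify that with probability tending to $1$, the vertex labeled $j$ coincides with $m_j$, the far endpoint of the path $P^{(j)}$ in the discrete line-breaking construction, for every $j\in[k]$. This holds because the first coordinates $V_1,V_2,\ldots$ of $\Pi_{D^n}$ are a size-biased reordering of the internal vertices of $\rT_n$ (Lemma~\ref{lem:sb_order}), so take values of order $n$ with high probability; since $J_k^n = O_\bP(\sqrt{n})$ by Proposition~\ref{prop:fdds}, none of $V_1,\ldots,V_{J_k^n}$ equal any element of $[k]$ with high probability, whence $m_j=j$ for each $j\in[k]$. On this event, the height of vertex $j$ is determined recursively by $\mathfrak{h}_1^n = J_1^n$ and $\mathfrak{h}_j^n = \mathfrak{h}^n(A_{j-1}^n) + (J_j^n - J_{j-1}^n)$ for $j\geq 2$, where $\mathfrak{h}^n$ is the piecewise linear function obtained by gluing the path lengths at the attachment points; its spatial location is $L^n(J_j^n-1)$, whose limit is analogously obtained by gluing the rescaled random walk trajectories at the attachment points. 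The displacements incurred at the branching vertices contribute only $O_\bP(1)$ per branching point and so are negligible on the $n^{1/4}$ scale. Applying Proposition~\ref{prop:fdds} and the continuous mapping theorem then yields
\[
\left(\frac{\mathfrak{h}^n_j}{\sqrt{n}},\ \frac{L^n(J_j^n - 1)}{n^{1/4}}\right)_{j=1}^k\ \convdist\ \left(\frac{1}{\sigma}\mathbf{h}(J_j),\ \frac{\beta}{\sqrt{\sigma}}\mathbf{l}(J_j)\right)_{j=1}^k.
\]

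Finally, the ordering of the vertices $m_1,\ldots,m_k$ in the lex order of $\rT_n^k$ is encoded by the flags $F_1^n,\ldots,F_{k-1}^n$, which describe on which side of the existing structure each new path attaches. Proposition~\ref{prop:fdds} gives $(F_j^n)_{j=1}^{k-1}\convdist (F_j)_{j=1}^{k-1}$ jointly with the above, and the flags $(F_j)_{j=1}^{k-1}$ are precisely those determining the planar embedding of $\cT^k$ and the leaf-ordering permutation $\tau^k$. Thus the permutation $\tau_n^k$ giving the lex order of $\{m_1,\ldots,m_k\}$ in $\rT_n^k$ converges in distribution to $\tau^k$ jointly with the above, and (\ref{eq:BSrfdds}) then identifies the resulting limit with $\bigl((2/\sigma)\mathbf{e}_{U_{(j)}^k},\, \beta\sqrt{2/\sigma}\,\mathbf{r}_{U_{(j)}^k}\bigr)_{j=1}^k$. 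The main obstacle I anticipate is the clean bookkeeping of the joint convergence --- ensuring that heights, spatial locations and the leaf-ordering permutation all simultaneously arise as continuous functionals of the data appearing in Proposition~\ref{prop:fdds}, and carefully justifying that the branching-point displacements vanish in the limit so that the discrete spatial recursion matches the Brownian one.
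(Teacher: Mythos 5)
Your proposal is correct and follows essentially the same route as the paper: transfer to the spanned subtree $\rT_n^k$, identify the heights and spatial locations of vertices $1,\ldots,k$ with continuous functionals of the quantities $(J_i^n, A_i^n, L^n, F_i^n)$ of Proposition~\ref{prop:fdds}, and apply (\ref{eq:BSrfdds}). Some of the steps you spell out (the good event that $m_j=j$, and the $O_{\mathbf{P}}(1)$ displacements at branch points) are already internalized in Proposition~\ref{prop:fdds} via Lemma~\ref{lem:conv_good_event} and Lemma~\ref{lem:attach_points}, so the paper's proof can be, and is, more compressed.
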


\begin{proof}
Let $(U_1^n,\dots, U_k^n)$ be a uniformly random $k$-set chosen from $[n]$, and $(U^{n,k}_{(1)},\dots,$ $ U_{(k)}^{n,k})$ be the order statistics of $(U_1^n,\dots, U_k^n)$. As argued above, we may straightforwardly replace $nU_{(1)}^k, \ldots, nU_{(k)}^k$ by $(U_{(1)}^{n,k}-1, \ldots, U_{(k)}^{n,k}-1)$ at no asymptotic cost. Recall that $H_n(i)$ gives the distance from the root of the $(i+1)$-th vertex visited in a depth-first exploration of the tree. The random variables
\[
(H_n(U_{1}^{n}-1), \ldots, H_n(U_{k}^{n}-1))
\]
have the joint law of the distances from the root to the leaves labeled $1, 2, \ldots, k$ in $\rT^k_n$ (these may be expressed in terms of sums and differences of elements of $(J_1^n, \ldots, J_k^n, A_1^n, \ldots, A_k^n)$ analogously to the definition of $\mathbf{h}$ in the line-breaking construction of the Brownian tree with spatial locations), and
\[
(R_n(U_1^n-1), \ldots, R_n(U_k^n-1)) = (L^n(J_1^n-1), \ldots, L^n(J_k^n-1)).
\]
The effect of ordering the uniforms is simply to apply the same permutation of the entries to each of $(H_n(U_1^n-1),\ldots,H_n(U_k^n-1))$ and $(R_n(U_1^n-1), \ldots, R_n(U_k^n-1))$. This permutation is straightforwardly induced by the choices $(F_1^n, \ldots, F_{k-1}^n)$. By (\ref{eq:flagsconv}), this permutation then converges in distribution to $\tau^k$. But then the claimed convergence follows from Proposition~\ref{prop:fdds} using the scaling property of Brownian motion and (\ref{eq:BSrfdds}).
\end{proof} 

 We begin by studying the vertex degrees at the start of the bijective construction, and show that, on the timescale of $\sqrt{n}$, the degrees that we observe are asymptotically indistinguishable from \iid copies of $\bar\xi$. We show further that the subtree $\rT_n^k$ is constructed on a timescale of order $\sqrt{n}$. This allows us to prove (\ref{eq:conv_attach-label}) in Proposition~\ref{prop:conv_cuts_attach}. To get the convergence of the spatial locations, we observe that, with the exception of branch points, the displacements along the ancestral lineages in $\rT_n^k$ are asymptotically indistinguishable from \iid copies of $Y_{\bar\xi, U_{\bar\xi}}$. Combining this with the convergence of the tree allows us to obtain the convergence of the spatial locations along the branches of the subtree.

\subsection{A discrete change of measure}
In this subsection, we show that the size-biased random re-ordering of the positive entries of $D^n$ may be viewed as a vector of \iid copies of the size-biased offspring random variable $\bar{\xi}$ up to a change of measure. We study the behaviour of the Radon--Nikodym derivative and show that its effect is trivial on the first $O(\sqrt{n})$ entries of the vector. Recall that $N_{D^n} = |\{i\in[n] : D_i^n > 0\}|$. To ease the notation, we write $N_n = N_{D^n}$. Let \[\widehat D^{n} = \left(\widehat D_1^n,\dots, \widehat D^n_{N_n}\right)\] be the size-biased random re-ordering of the positive entries of $D^{n}$. We note that 
\[\widehat D^n \eqdist \left(D^n_{\widehat{V}_1(\Pi_{D^n})},\dots, D^n_{\widehat{V}_{N_n}(\Pi_{D^n})}\right).\]
Later we will often somewhat abuse notation and write $(\widehat D_1^n,\dots, \widehat D_{N_n}^n)$ in place of \[(D_{\widehat V_1(\Pi_{D^n})}^n,\dots, D_{\widehat V_{N_n}(\Pi_{D^n})}),\] for example in the proof of Proposition \ref{prop:conv_cuts_attach}.

\begin{prop} \label{prop:measure_change_basic} 
Let $\xi_1,\xi_2,\dots, \xi_n$ be \iid random variables with distribution $\mu$. Further, let $\bar\xi_1, \bar\xi_2,\dots$ be \iid samples from the size-biased distribution of $\xi_1$. Then for $1 \le m < n$, and any non-negative measurable function $f:\Z^m \rightarrow \R_+$,
\[
\E{f(\widehat D_1^{n},\dots, \widehat D^{n}_m)\I{N_n \ge m}} = \E{f(\bar\xi_1,\dots,\bar\xi_m)\Theta^n(\bar\xi_1,\dots,\bar\xi_m)},
\]
where for $k_1,\dots,k_m \in \N$,
\begin{equation}\label{eq:basic_meas_change}
\Theta^n(k_1,\dots,k_m) = \frac{\p{\sum_{i = m+1}^n \xi_i = n - 1 - \sum_{i=1}^m k_i}}{\p{\sum_{i=1}^n\xi_i = n-1}}\prod_{i=1}^m\left(\frac{n-i+1}{n-1-\sum_{j=1}^{i-1}k_j}\right),
\end{equation}
if $k_1+\ldots+k_m\le n-1$, and $\Theta^n(k_1,\ldots,k_m)=0$ otherwise. 
\end{prop}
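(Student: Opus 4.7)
The plan is to unfold the size-biased reordering as a sum over ordered tuples of distinct indices, apply exchangeability of $D^n$ to collapse this sum to a single term, and then absorb the remaining size-biasing factors via the standard change of measure from $\mu$ to $\bar\mu$.

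Conditionally on $D^n=(d_1,\dots,d_n)$, the probability that the first $m$ positive picks of the size-biased procedure come from the ordered distinct tuple $(i_1,\dots,i_m)$ is $\prod_{j=1}^m d_{i_j}/(n-1-\sum_{l<j}d_{i_l})$; this was essentially shown en route to Lemma~\ref{lem:sb_order}. Summing over all such tuples, and noting that the factor $D_{i_j}^n$ automatically enforces positivity,
\begin{equation*}
\E{f(\widehat D_1^n,\dots,\widehat D_m^n)\I{N_n \ge m}} = \sum_{(i_1,\dots,i_m)} \E{f(D_{i_1}^n,\dots,D_{i_m}^n)\prod_{j=1}^m \frac{D_{i_j}^n}{n-1-\sum_{l<j}D_{i_l}^n}},
\end{equation*}
where the sum runs over the $(n)_m := n(n-1)\cdots(n-m+1)$ ordered $m$-tuples of distinct indices. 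Since $D^n$ is by definition $(\xi_1,\dots,\xi_n)$ conditioned on the symmetric event $\{\sum_{i=1}^n\xi_i = n-1\}$, it is exchangeable, so every term in the sum coincides with the $(1,\dots,m)$ term and
\begin{equation*}
\E{f(\widehat D_1^n,\dots,\widehat D_m^n)\I{N_n\ge m}} = (n)_m\cdot \frac{\E{f(\xi_1,\dots,\xi_m)\prod_{j=1}^m \frac{\xi_j}{n-1-\sum_{l<j}\xi_l}\I{\sum_{i=1}^n\xi_i=n-1}}}{\p{\sum_{i=1}^n\xi_i=n-1}}.
\end{equation*}

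I then split $\I{\sum_{i=1}^n\xi_i=n-1} = \I{\sum_{i=m+1}^n\xi_i = n-1-\sum_{i=1}^m \xi_i}$ and integrate out the independent tail $(\xi_{m+1},\dots,\xi_n)$, which replaces that indicator with $\p{\sum_{i=m+1}^n \xi_i = n-1 - \sum_{i=1}^m \xi_i}$. The elementary size-biasing identity $\E{\xi g(\xi)} = \E{g(\bar\xi)}$ for non-negative $g$ (which relies on $\E{\xi}=1$), applied coordinate-by-coordinate using the independence of the $\xi_j$'s, then converts $\prod_{j=1}^m \xi_j \cdot F(\xi_1,\dots,\xi_m)$ into $\E{F(\bar\xi_1,\dots,\bar\xi_m)}$ for non-negative $F$. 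Combining this with the identity $(n)_m = \prod_{j=1}^m(n-j+1)$ yields exactly the formula for $\Theta^n$ displayed at \eqref{eq:basic_meas_change}; the case $k_1+\cdots+k_m > n-1$ is automatic because the probability factor vanishes there, matching the convention $\Theta^n\equiv 0$. I anticipate no real obstacle: the computation is essentially bookkeeping, with the only delicate points being the justification of exchangeability for the conditioned sequence and the matching of the combined prefactor $(n)_m\prod_j(n-1-\sum_{l<j}k_l)^{-1}$ with the explicit form of $\Theta^n$.
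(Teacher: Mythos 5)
Your proposal is correct, and it takes a genuinely different and more elementary route than the paper. The paper deduces Proposition~\ref{prop:measure_change_basic} as the $r=s=0$ special case of the much more general Proposition~\ref{prop:sizebias_swole} in the appendix, whose proof works directly with the joint law of the full size-biased reordering $\vec{Z}_\Sigma$ together with $\Sigma^{-1}[r]$ (the positions of the prefixed entries), establishes the explicit combinatorial identity \eqref{eq:append_labeled}, sums out the tail, and handles $\mu_0>0$ by a separate conditioning on $N$. Your argument instead unfolds the size-biased reordering as a sum over ordered $m$-tuples, exploits the exchangeability of $D^n$ (which holds because it is an i.i.d.\ vector conditioned on a permutation-symmetric event) to collapse the sum to $(n)_m$ times a single term, and then applies the one-line identity $\E{\prod_{j=1}^m \xi_j \cdot F(\xi_1,\dots,\xi_m)}=\E{F(\bar\xi_1,\dots,\bar\xi_m)}$ valid by independence and $\E{\xi}=1$. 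This is shorter and more transparent for the statement at hand, but it relies essentially on exchangeability: the more general Proposition~\ref{prop:sizebias_swole}, with its prefixed block $d_1,\dots,d_r$ and conditioning event, concerns a non-exchangeable vector $\vec Z$, so the reduction to a single representative term is unavailable there — which is presumably why the authors chose the combinatorial route. The only points worth spelling out in a polished write-up are (i) that the identity for the law of the first $m$ picks follows from Lemma~\ref{lem:sb_order} by marginalisation (the stick-breaking product telescopes), and (ii) that the potentially zero denominators $n-1-\sum_{l<j}\xi_l$ are always accompanied either by a vanishing numerator or a vanishing probability factor, matching the convention $\Theta^n\equiv 0$ when $k_1+\cdots+k_m>n-1$; you flag both of these, so nothing is missing.
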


Proposition \ref{prop:measure_change_basic} is a special case of Proposition \ref{prop:sizebias_swole} that we state and prove in the appendix, and use in full generality to prove Theorems \ref{thm:hairy_4} and \ref{thm:hairy_2} in Section \ref{sec:hairy_tour}. We state only the special case here as the more general formulation is much more technical and requires definitions that are only relevant in settings where assumption [\ref{a3}] holds.

The next lemma shows that the change of measure $\Theta^n$ appearing in Proposition \ref{prop:measure_change_basic} is asymptotically unimportant provided that $m = \Theta(\sqrt{n})$.

\begin{lem}\label{lem:conv_measure_change_basic} Let $\mu$ be a critical offspring distribution with variance $\sigma^2\in (0,\infty)$, and let $(\bar\xi_i)_{i \ge 1}$ be \iid samples from the size-biased distribution of $\mu$. Suppose that $m = m(n) =
\Theta(\sqrt{n})$. Then as $n\rightarrow\infty$
\[
\Theta^n(\bar\xi_1,\dots,\bar\xi_m) \convprob 1,
\]
and $(\Theta^n(\bar\xi_1,\dots,\bar\xi_m))_{n\ge 1}$ is a uniformly integrable sequence of random variables.
\end{lem}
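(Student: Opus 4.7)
The plan is to take logarithms of $\Theta^n(\bar\xi_1,\ldots,\bar\xi_m)$, decompose it into its probability-ratio factor and its product factor, and show that their leading-order contributions (both of order $m^2/n = \Theta(1)$) cancel exactly. Write $T_i := \bar\xi_1 + \cdots + \bar\xi_i$ and recall $\E{\bar\xi}=\E{\xi^2}=\sigma^2+1$.  For the product factor, since typical $T_{i-1}=O_\bP(\sqrt n)$ and $i\le m=O(\sqrt n)$, both $(i-1)/n$ and $(1+T_{i-1})/n$ are $O_\bP(n^{-1/2})$, so a Taylor expansion of $\log(1-x)$ gives
\[\log \prod_{i=1}^m \frac{n-i+1}{n-1-T_{i-1}} = \frac{1}{n}\sum_{i=1}^m (T_{i-1}-i+2) + o_\bP(1).\]
A weighted weak law of large numbers — whose validity requires only $\E{\bar\xi}<\infty$, via a truncation argument if $\bar\xi$ has no finite second moment — then gives $m^{-2}\sum_{i=1}^m T_{i-1}\convprob (\sigma^2+1)/2$, so this factor equals $\exp(\sigma^2 m^2/(2n)+o_\bP(1))$.

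For the probability-ratio factor, apply the standard local central limit theorem for iid lattice variables with finite variance (a special case of Theorem~\ref{thm:gen_clt}): writing $S_k=\xi_1+\cdots+\xi_k$, we have $\p{S_n=n-1}\sim (\sigma\sqrt{2\pi n})^{-1}$, and uniformly over deviations on scale $O(\sqrt n)$ (which covers $m-1-T_m=O_\bP(\sqrt n)$ by the WLLN applied to $T_m$),
\[\p{S_{n-m}=n-1-T_m} = \frac{1}{\sigma\sqrt{2\pi(n-m)}}\exp\!\left(-\frac{(m-1-T_m)^2}{2\sigma^2(n-m)}\right)(1+o_\bP(1)).\]
The WLLN gives $m-1-T_m=-\sigma^2 m(1+o_\bP(1))$, so the exponent equals $\sigma^2 m^2/(2n)(1+o_\bP(1))$; combined with $\sqrt{n/(n-m)}\to 1$, the log of the ratio is $-\sigma^2 m^2/(2n)+o_\bP(1)$. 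Summing with the contribution from the product factor yields $\log\Theta^n(\bar\xi_1,\ldots,\bar\xi_m)=o_\bP(1)$, hence $\Theta^n(\bar\xi_1,\ldots,\bar\xi_m)\convprob 1$.

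For uniform integrability, apply Proposition~\ref{prop:measure_change_basic} with $f\equiv 1$ to obtain $\E{\Theta^n(\bar\xi_1,\ldots,\bar\xi_m)}=\p{N_n\ge m}$. A standard fact about critical, finite-variance Bienaymé trees conditioned on size $n$ is that $N_n/n\convprob 1-\mu_0$, so $\p{N_n\ge m}\to 1$ whenever $m=\Theta(\sqrt n)$. Since $\Theta^n\ge 0$, $\Theta^n\convprob 1$, and $\E{\Theta^n}\to 1$ with the limit integrable, Scheffé's lemma gives $L^1$-convergence, which is equivalent to uniform integrability. The main technical obstacle will be ensuring that the cancellation is genuinely quadratic in $m/\sqrt n$ — it requires tracking the Taylor expansion and the LLT exponent to sufficient precision while allowing the $T_m$-dependence to be handled in probability rather than in $L^2$ (since we do not assume $\E{\xi^3}<\infty$ here).
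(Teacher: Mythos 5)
Your proposal is correct and takes essentially the same route as the paper's proof: both decompose $\Theta^n$ into the probability-ratio factor (handled via the local CLT) and the product factor (handled via a law of large numbers plus a Taylor expansion of the logarithm), observe that the two exponential contributions of order $m^2/n$ cancel exactly, and then deduce uniform integrability from $\E{\Theta^n(\bar\xi_1,\dots,\bar\xi_m)}=\p{N_n\ge m}\to 1$ together with Scheff\'e's lemma. The only differences are stylistic: you work directly with $\log\Theta^n=o_\bP(1)$ and use $T_m$ as a uniform bound on $\max_{i\le m}T_{i-1}$ (valid since $\bar\xi\ge 1$), whereas the paper passes to a subsequence with $m/\sqrt n\to t$ and invokes the functional strong law of large numbers (Lemma~\ref{lem:serflln}); both yield the same cancellation.
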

 
\begin{proof} 
By a subsubsequence argument we may assume that $m/\sqrt{n}\rightarrow t$ as $n\rightarrow \infty$ for some $t > 0$. Let $\xi_1,\dots, \xi_n$ be \iid random variables with distribution $\mu$. We deal with the ratio of probabilities in the definition of $\Theta^{n}$ using the local central limit theorem. Specifically, since $\E{\xi_1} = 1$ and $\V{\xi_1} = \sigma^2$, we have that
\[\sup_{k\in \Z}\left|\sqrt{n - m}\cdot\p{\sum_{i=m+1}^n\xi_i = n - 1 - m + k} - \frac{1}{\sqrt{2\pi\sigma^2}}\exp\left(-\frac{k^2}{2\sigma^2(n-m)}\right)\right| \rightarrow 0\]
as $n\rightarrow \infty$, so for $k_1,\dots, k_m \in \N$,
\begin{align*}
&\p{\sum_{i=m+1}^n\xi_i = n - 1 - \sum_{i=1}^m k_i}\\
&\quad = \p{\sum_{i=m+1}^n\xi_i - (n - m) = -1 - m\sigma^2- \sum_{i=1}^m(k_i - 1-\sigma^2)}\\
& \quad = \frac{\exp\left(-\frac{1}{2\sigma^2(n-m)}\bigg(1+ m\sigma^2 + \sum_{i=1}^m(k_i - 1-\sigma^2)\right)^2\bigg)}{\sqrt{2\pi\sigma^2(n-m)}} + o(n^{-1/2}).
\end{align*}
Similarly, we have that 
\begin{align*}
\p{\sum_{i=1}^n\xi_i = n - 1}&= \frac{1}{\sqrt{2\pi\sigma^2  n }} + o(n^{-1/2}).
\end{align*}
Therefore,  
\begin{align}\label{eq:basic_meas_need}
&\frac{\p{\sum_{i=m+1}^n\xi_i = n - 1- \sum_{i=1}^m k_i}}{\p{\sum_{i=1}^n\xi_i = n - 1}}\nonumber\\
& \qquad =\exp\left(-\left(\frac{1 + m\sigma^2 + \sum_{i=1}^m(k_i - 1-\sigma^2)}{\sqrt{2\sigma^2(n- m)}}\right)^2\right)+ o(1). 
\end{align}
Since the random variables $\bar\xi_1,\dots, \bar\xi_n$ are \iid with mean $\sigma^2 + 1$, by the functional strong law of large numbers (as stated in Lemma \ref{lem:serflln}), as $n\rightarrow\infty$,
\begin{equation}\label{eq:slln}\frac{1}{\sqrt{n}}\max_{1\le i \le \lfloor t\sqrt{n}\rfloor}~\left|~ \sum_{j = 1}^i (\bar\xi_j -1 - \sigma^2)\right| \convas 0.\end{equation}
Since $m = (1+o(1))t\sqrt{n}$ this, in particular, yields that
\begin{equation}\label{eq:conv_gen}
\exp\left(-\left(\frac{1 + m\sigma^2 + \sum_{i=1}^m(\bar\xi_i - (1+\sigma^2))}{\sqrt{2\sigma^2(n - m)}}\right)^2 \right) \convprob \exp\left(-\frac{t^2\sigma^2}{2}\right).
\end{equation}
We claim that, as $n\rightarrow \infty$,
\begin{equation}
\label{eq:not_hairy_prod}\prod_{i=1}^m \left(\frac{n - i + 1}{n - 1 - \sum_{j=1}^{i-1}\bar\xi_j}\right) \convprob \exp\left(\frac{t^2\sigma^2}{2}\right).
\end{equation}
Indeed,
\begin{align*}
\prod_{i=1}^m \left(\frac{n - i + 1}{n - 1 - \sum_{j=1}^{i-1}\bar\xi_j}\right) &= \exp\left(-\sum_{i=1}^m\log\left(1 - \frac{\sum_{j=1}^{i-1}(\bar\xi_j - 1- \sigma^2) + \sigma^2(i-1)}{n  - i + 1}\right)\right).
\end{align*}
It follows by Taylor's theorem and (\ref{eq:slln}) that the last expression is equal to 
\begin{align}\label{eq:use_in_ms}
& \exp\left(\sum_{i = 1}^{m}\frac{\sum_{j = 1}^{i-1}(\bar\xi_j - 1 - \sigma^2)+ \sigma^2 (i-1)}{n-i+1}+ o_\bP(1)\right)\nonumber\\
& \qquad =\exp\left(\frac{\sigma^2 \lfloor t\sqrt{n}\rfloor(\lfloor t\sqrt{n}\rfloor -1)}{2n} + o_\bP(1)\right) \convprob \exp\left(\frac{t^2\sigma^2}{2}\right),
\end{align}
establishing (\ref{eq:not_hairy_prod}).
Combining this with (\ref{eq:basic_meas_need}) and (\ref{eq:conv_gen}) yields that 
\[
\Theta^{n}(\bar\xi_1,\dots, \bar\xi_m)\convprob 1.
\]

To prove uniform integrability, notice that, by applying Proposition \ref{prop:measure_change_basic} with $f \equiv 1$,
\[
\E{\Theta^{n}(\bar\xi_1, \dots,\bar\xi_m )}= \p{N_n \ge m}.
\]
We claim that this tends to $1$ as $n\rightarrow\infty$. To this end, note that \[\#\{i\in [n]~:~\xi_i > 0\} \eqdist \mathrm{Binomial}(n, 1-\mu_0).\] So by conditioning on the event $\{\sum_{i=1}^{n}\xi_i = n - 1 \}$, which occurs with probability $\Theta(n^{-1/2})$, there are $(1+o_\bP(1))n(1-\mu_0)$ non-zero entries of $(\xi_{1},\dots, \xi_n)$. Since $m = (1+o(1))t\sqrt{n}$ it follows that as $n\rightarrow \infty$,
\[ \p{N_n \ge m} = \p{\#\{i\in [n]~:~\xi_i > 0\} \ge m~\bigg|~\sum_{i=1}^n \xi_i = n - 1} \rightarrow 1.\]
Uniform integrability then follows by the generalised Scheffé lemma, see \cite[Theorem 5.12]{kallenberg}. \end{proof}

Lemma~\ref{lem:conv_measure_change_basic} implies that if the offspring distribution has finite variance then, on a timescale of $\sqrt{n}$ in the bijective construction $B(\Pi_{D^n})$ of $\rT_n$, the degrees we observe are asymptotically indistiguishable from \iid copies of $\bar\xi$. To prove Proposition \ref{prop:fdds}, we use this fact in the form of Proposition \ref{prop:conv_degrees_disps} stated below.

\begin{prop} \label{prop:conv_degrees_disps}
Given $\widehat D^n$, let $U_1,\dots, U_n$ be independent random variables such that, for each $i\in [n]$, $U_i$ is uniformly distributed on $[\widehat D_i^n]$. Further, let $Y_{\widehat D^n_1, U_{1}},\dots Y_{\widehat D^n_n, U_{n}}$ be independent random variables such that, for each $i\in [n]$, $Y_{\widehat D_i^n, U_i}$ is a uniform entry of a $\nu_{\widehat D^n_i}$-distributed displacement vector. If [\ref{a1}] holds then as $n\rightarrow \infty$,
\[\left(\frac{1}{\sqrt{n}}\sum_{i=1}^{\lfloor t\sqrt{n}\rfloor}(\widehat D^n_i - 1), \frac{1}{n^{1/4}}\sum_{i=1}^{\lfloor t\sqrt{n}\rfloor} Y_{\widehat D^n_i, U_i}\right)_{t \ge 0} \convdist (\sigma^2 t, \beta B_t)_{t\ge 0},\]
for the topology of uniform convergence on compact time-intervals, where $(B_t)_{t \ge 0}$ is a standard Brownian motion.
\end{prop}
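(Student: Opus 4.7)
The plan is to use the change of measure in Proposition~\ref{prop:measure_change_basic} to replace the size-biased reordering $\widehat D^n$ by an iid sequence, apply the functional LLN and Donsker's FCLT in that iid setting, and then transfer the conclusion back via the uniform integrability from Lemma~\ref{lem:conv_measure_change_basic}.

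First I would fix a time horizon $T > 0$; it suffices to prove convergence on $[0,T]$, with $m = m_n := \lfloor T\sqrt{n}\rfloor + 1$. In the iid setting of Proposition~\ref{prop:measure_change_basic}, let $(\widetilde S_n, \widetilde L_n)$ denote the analogous pair of partial-sum processes built from $(\bar\xi_i, Y_{\bar\xi_i, U_i})_{1 \le i \le m_n}$. Under that law, $\bar\xi_i - 1$ has mean $\sigma^2$ and $Y_{\bar\xi_i, U_i}$ has mean $0$ and variance $\beta^2$ by~[\ref{a1}]. The functional strong law of large numbers (Lemma~\ref{lem:serflln}) handles the first coordinate, yielding uniform convergence on $[0,T]$ to the linear function $t \mapsto \sigma^2 t$; Donsker's invariance principle handles the second, giving convergence in distribution to $(\beta B_t)_{0 \le t \le T}$. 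Since the first limit is deterministic, joint convergence follows immediately from the marginals, so $(\widetilde S_n, \widetilde L_n) \convdist (\sigma^2 t, \beta B_t)_{0 \le t \le T}$ in $\mathbf{C}([0,T], \R^2)$.

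Next I would transfer this back to the process built from $\widehat D^n$. Denote the latter by $(S_n, L_n)$. For any bounded continuous $F \colon \mathbf{C}([0,T], \R^2) \to \R$, Proposition~\ref{prop:measure_change_basic} (applied to the positive and negative parts of $F$ after composing with the measurable map that produces the partial-sum paths from the first $m_n$ coordinates) yields
\[
\E{F(S_n, L_n) \I{N_n \ge m_n}} = \E{F(\widetilde S_n, \widetilde L_n) \cdot \Theta^n(\bar\xi_1, \ldots, \bar\xi_{m_n})}.
\]
Since $m_n = \Theta(\sqrt{n})$, Lemma~\ref{lem:conv_measure_change_basic} ensures that $\Theta^n(\bar\xi_1,\ldots,\bar\xi_{m_n}) \convprob 1$ and that this sequence is uniformly integrable. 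Combined with the boundedness of $F$ and the weak convergence of $(\widetilde S_n, \widetilde L_n)$, this forces the right-hand side to converge to $\E{F((\sigma^2 t, \beta B_t)_{0 \le t \le T})}$ via Vitali's convergence theorem applied to $F(\widetilde S_n, \widetilde L_n)\Theta^n$. The proof of Lemma~\ref{lem:conv_measure_change_basic} additionally shows $\p{N_n \ge m_n} \to 1$, so the left-hand side agrees with $\E{F(S_n, L_n)}$ up to $o(1)$, closing the argument.

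I do not expect a genuine obstacle: the iid limits are classical and the bridging machinery is already developed in the paper. The one technical point to handle carefully is that the change-of-measure identity is stated for fixed $m$ while we apply it with $m = m_n$ growing like $\sqrt{n}$; this is legitimate because Proposition~\ref{prop:measure_change_basic} imposes no restriction on how $m$ depends on $n$, and Lemma~\ref{lem:conv_measure_change_basic} is tailored precisely to this regime.
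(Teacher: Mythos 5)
Your proposal is correct and takes essentially the same route as the paper: change measure via Proposition~\ref{prop:measure_change_basic}, apply the functional strong law of large numbers and Donsker's theorem in the \iid setting, and transfer back using the convergence in probability and uniform integrability of $\Theta^n$ from Lemma~\ref{lem:conv_measure_change_basic}. The only cosmetic difference is that the paper tests against bounded continuous functionals on $\mathbf{D}([0,T],\R)^2$ with the Skorokhod topology (since the partial-sum paths are step functions), whereas you phrase the argument in $\mathbf{C}([0,T],\R^2)$; this is immaterial because the limit is continuous and Skorokhod convergence to a continuous limit coincides with uniform convergence.
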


\begin{proof}
Fix $T > 0$ and let $F:\bD([0,T], \R)^2\rightarrow \R$ be a bounded continuous function, where $\bD([0,T], \R)$ is the space of real-valued functions on $[0,T]$ that are right-continuous with left limits equipped with the Skorokhod topology. Let $\bar\xi_1,\bar\xi_2,\dots$ be \iid samples from the size biased distribution of $\xi$.  Further, independently for $i\ge 1$, let $\overline U_i$ be a Uniform$([\bar\xi_i])$ random variable. 
    
By Proposition \ref{prop:measure_change_basic},
\begin{align}\label{eq:to_converge}
    &\E{F\left(\left(\frac{1}{\sqrt{n}}\sum_{i=1}^{\lfloor t\sqrt{n}\rfloor}(\widehat D_i^n - 1), \frac{1}{n^{1/4}}\sum_{i = 1}^{\lfloor t\sqrt{n}\rfloor} Y_{\widehat D_i^n, U_i}\right)_{0 \le t \le T}\right)\I{N_n \ge \lfloor T\sqrt{n}\rfloor}}\nonumber\\
    &\quad = \E{F\left(\left(\frac{1}{\sqrt{n}}\sum_{i=1}^{\lfloor t\sqrt{n}\rfloor}(\bar \xi_i - 1), \frac{1}{n^{1/4}}\sum_{i = 1}^{\lfloor t\sqrt{n}\rfloor} Y_{\bar\xi, \overline U_i}\right)_{0 \le t \le T}\right)\Theta^n\left(\bar\xi_1,\dots,\bar\xi_{\lfloor T\sqrt{n} \rfloor }\right)},
\end{align}
where the random variables $(Y_{\bar\xi_i, \overline U_i})_{i\ge 1}$ are independent and, given $\bar\xi_i$, we have that $Y_{\bar\xi_i, \overline U_i}$ is a uniform entry of a $\nu_{\bar\xi_i}$ distributed displacement vector. Since $\E{\bar\xi_1} = \sigma^2 + 1$, by the functional strong law of large numbers (Lemma \ref{lem:serflln}), as $n\rightarrow \infty$, 
\begin{equation*}
\left(\frac{1}{\sqrt{n}}\sum_{i=1}^{\lfloor t\sqrt{n}\rfloor}(\bar \xi_i - 1)\right)_{t \ge 0} \convprob (\sigma^2 t)_{t\ge 0}
\end{equation*}
in $\mathbf{D}((0,T), \R)$. 

Furthermore, the random variables $(Y_{\bar\xi, \overline U_i})_{i\ge 1}$ are \iid with mean and variance given by 
\[
\E{ Y_{\bar\xi_1, \overline U_1}} = \sum_{k=1}^\infty \mu_k \sum_{j=1}^k \E{Y_{k,j}} = 0,\quad \V{Y_{\bar\xi_1, \overline U_1}} =\sum_{k=1}^\infty \mu_k\sum_{j=1}^k\E{Y_{k,j}^2} = \beta^2.
\]
It then follows from Donsker's theorem that as $n\rightarrow\infty$
\[
\left(\frac{1}{n^{1/4}}\sum_{i = 1}^{\lfloor t\sqrt{n}\rfloor}Y_{\bar\xi_i, \overline U_i}\right)_{t \ge 0} \convdist (\beta B_t)_{t \ge 0}
\]
in $\mathbf{D}([0,T], \R)$. Therefore, by the continuity of $F$, as $n\rightarrow \infty$,
\[
\E{F\left(\left(\frac{1}{\sqrt{n}}\sum_{i=1}^{\lfloor t\sqrt{n}\rfloor}(\bar \xi_i - 1), \frac{1}{n^{1/4}}\sum_{i = 1}^{\lfloor t\sqrt{n}\rfloor}Y_{\bar\xi_i, \overline U_i}\right)_{0 \le t \le T}\right)} \rightarrow \E{F\left((\sigma^2 t, \beta B_t)_{0 \le t \le T}\right)}.
\]
Combining this with Lemma~\ref{lem:conv_measure_change_basic} and the boundedness of $F$ yields that (\ref{eq:to_converge}) converges to
\[
\E{F\left((\sigma^2 t, \beta B_t)_{0 \le t \le T}\right)},
\]
as $n\rightarrow \infty$, and the result follows.
\end{proof}

\subsection{Bijective construction on the timescale $\sqrt{n}$} \label{sec:bijection_sqrtn}
In this subsection we show that the subtree $\rT_n^k$ is constructed on a timescale of order $\sqrt{n}$ with high probability. We then prove that the lengths of the paths which are glued together to form $\rT_n^k$ converge on rescaling, as do the positions at which they attach to one another.

We begin by showing that, with high probability, the vertices $1,\dots, k$ do not appear in the first $\Theta(\sqrt{n})$ entries of $\Pi_{D^n}$. 
\begin{lem} \label{lem:conv_good_event} 
Fix $T > 0$ and $k \ge 1$, and let 
\[
\cG_{n,k}(T) = \left\{\left\{\widehat{V}_1(\Pi_{D^n}),\dots, \widehat{V}_{\lfloor T\sqrt{n}\rfloor}(\Pi_{D^n})\right\}\cap \{1,\dots,k\} = \emptyset, N_n \ge \lfloor T\sqrt{n}\rfloor\right\}.
\]
Then $\p{\cG_{n,k}(T)} \rightarrow 1$ as $n\rightarrow \infty.$
\end{lem}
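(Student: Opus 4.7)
\medskip
\noindent\textbf{Proof plan for Lemma \ref{lem:conv_good_event}.} The plan is to exploit the exchangeability of $D^n=(D_1^n,\ldots,D_n^n)$ to reduce the claim to a simple combinatorial estimate, and then invoke the tail bound on $N_n$ already established inside the proof of Lemma \ref{lem:conv_measure_change_basic}.

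First I would observe that, since the entries of $D^n$ are \iid $\mu$-distributed conditioned on $\sum_{i=1}^n D_i^n = n-1$, the law of $D^n$ is invariant under permutations of $[n]$. The size-biased re-ordering $(\widehat V_1(\Pi_{D^n}),\ldots,\widehat V_{N_n}(\Pi_{D^n}))$ is a measurable function of $D^n$ together with auxiliary uniform randomness (namely, $\Pi_{D^n}$), so if $\pi$ is any permutation of $[n]$ then applying $\pi$ to the vertex labels gives the same joint distribution. Consequently, marginalising over $D^n$, for every $m$-subset $S\subset [n]$,
\[
\p{\{\widehat V_1(\Pi_{D^n}),\ldots,\widehat V_m(\Pi_{D^n})\}=S,\ N_n\ge m}
\]
depends only on $|S|=m$, so that conditionally on $\{N_n\ge m\}$ the set $\{\widehat V_1(\Pi_{D^n}),\ldots,\widehat V_m(\Pi_{D^n})\}$ is uniformly distributed over the $m$-subsets of $[n]$.

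Taking $m=\lfloor T\sqrt n\rfloor$, a direct computation then gives
\[
\p{\{\widehat V_1(\Pi_{D^n}),\ldots,\widehat V_m(\Pi_{D^n})\}\cap [k]\neq \emptyset\;\Big|\;N_n\ge m}
=1-\frac{\binom{n-k}{m}}{\binom{n}{m}}
=1-\prod_{j=0}^{k-1}\frac{n-m-j}{n-j},
\]
which is $O(km/n)=O(kT/\sqrt n)\to 0$ as $n\to\infty$ for $k$ and $T$ fixed. It therefore suffices to show $\p{N_n\ge \lfloor T\sqrt n\rfloor}\to 1$; but this is precisely the statement established at the end of the proof of Lemma \ref{lem:conv_measure_change_basic}, which used that the number of non-zero entries of $D^n$ is $\mathrm{Binomial}(n,1-\mu_0)$ before conditioning on $\sum_i D_i^n = n-1$, combined with the fact that this conditioning event has probability $\Theta(n^{-1/2})$ by the local central limit theorem, so that with high probability there are $(1+o_\bP(1))n(1-\mu_0)\gg \sqrt n$ internal vertices.

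Combining the two estimates via a union bound yields $\p{\cG_{n,k}(T)^c}\to 0$, as required. The only potential subtlety is the permutation-invariance argument in the first step: one must be careful that $\Pi_{D^n}$ is, conditionally on $D^n$, defined on $\cP_{D^n}$ in a way that is equivariant under permutations of vertex labels; but this follows immediately from the definition of $\cP_{\bd}$ and the uniform distribution on it. No further ingredients are needed.
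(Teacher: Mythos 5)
Your argument is correct and reaches the conclusion by a genuinely different route from the paper. The paper's proof works conditionally on $D^n$: using Lemma~\ref{lem:sb_order}, it bounds the probability that the first $\lfloor T\sqrt n\rfloor$ size-biased picks avoid $[k]$ from below by
$\left(1 - \frac{D_1^n+\cdots+D_k^n}{n-1-T\sqrt n\,\max_i D_i^n}\right)^{\lfloor T\sqrt n\rfloor}$,
and then needs the separate estimate $\max_{1\le i\le n}D_i^n = o_\bP(\sqrt n)$ (proved via a moment bound and the local CLT) before applying bounded convergence. You instead observe that, because the law of $D^n$ is exchangeable and the construction of $(\widehat V_i(\Pi_{D^n}))_i$ is equivariant under relabelling vertices, the set $\{\widehat V_1,\ldots,\widehat V_{\lfloor T\sqrt n\rfloor}\}$ is — conditionally on $N_n\ge \lfloor T\sqrt n\rfloor$ — uniformly distributed over $\lfloor T\sqrt n\rfloor$-subsets of $[n]$, which collapses the first term to an explicit hypergeometric tail $1-\binom{n-k}{m}/\binom{n}{m} = O(kT/\sqrt n)$. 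Your approach is shorter, entirely avoids the $o_\bP(\sqrt n)$ bound on the maximum degree, and makes transparent that the only probabilistic input beyond exchangeability is the tail bound $\p{N_n\ge \lfloor T\sqrt n\rfloor}\to 1$, which you correctly reuse from the end of the proof of Lemma~\ref{lem:conv_measure_change_basic}. One remark worth keeping in mind: the exchangeability claim requires that the set $\{\widehat V_1,\ldots,\widehat V_m\}$, though conditionally on $D^n$ confined to the positive-degree vertices, has an unconditional law (restricted to $\{N_n\ge m\}$) that is symmetric under permutations of $[n]$ because both the event $\{N_n\ge m\}$ and the $\mu$-law of $D^n$ are invariant under relabelling; you implicitly use this when decomposing $\p{\cG_{n,k}(T)^c}\le \p{N_n<m}+\p{\{\widehat V_1,\ldots,\widehat V_m\}\cap[k]\neq\emptyset\mid N_n\ge m}$, and the step is sound.
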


Notice that on the \emph{good event} $\cG_{n,k}(T)$, if $J_k^n \le \lfloor T\sqrt{n}\rfloor$ then $J_i^{D^n} = \widetilde{J}_i^{D^n}$ for all $i\in [k]$, and $\rT_n^k$ is precisely the tree spanned by the root and the paths $P^{(1)},\dots, P^{(k)}$ in the bijective construction $B(\Pi_{D^n})$ of $\rT_n$. 

\begin{proof}
We have
\begin{align*}
    \p{\cG_{n,k}(T)} &= \p{\{\widehat{V}_1(\Pi_{D^n}),\dots,\widehat{V}_{\lfloor T\sqrt{n}\rfloor}(\Pi_{D^n}))\}\cap \{1,\dots,k\} = \emptyset, N_n \ge \lfloor T\sqrt{n}\rfloor}\\
    &\ge \E{\left(1 - \frac{D^n_1 + \dots + D^n_k}{n - 1 - T\sqrt{n}\max_{1\le i \le n}D_i^n}\right)^{\lfloor T\sqrt{n}\rfloor}}-\p{N_n< \lfloor T \sqrt n\rfloor }.
\end{align*}
Let $\varepsilon > 0$ and $(\xi_i)_{i\ge 1}$ be a sequence of \iid  random variables with distribution $\mu$. Then, 
\begin{equation}\label{eq:max_deg}
\begin{split}
    \p{\max_{1\le i \le n}D_i^n > \varepsilon \sqrt{n}} &= \frac{\p{\max_{1\le i\le n}\xi_i > \varepsilon\sqrt{n}, \sum_{i=1}^n \xi_i = n-1}}{\p{\sum_{i=1}^n \xi_i = n-1}}\\
    &\le \frac{n\p{\xi_1 > \varepsilon \sqrt{n}, \sum_{i=1}^n\xi_i = n-1}}{\p{\sum_{i=1}^n\xi_i = n-1}}.
\end{split}
\end{equation}
Since $\E{\xi^2} < \infty$ we have $n\p{\xi_1 > \varepsilon\sqrt{n}} \rightarrow 0$ as $n\rightarrow \infty.$ Hence,  
\begin{align*}
    &\frac{n\p{\xi_1 > \varepsilon \sqrt{n} , \sum_{i=1}^n\xi_i = n-1}}{\p{\sum_{i=1}^n\xi_i = n-1}}\\ &\quad\le 
    \frac{n\p{\xi_1 > \varepsilon\sqrt{n}}\max_{\varepsilon \sqrt{n} < m \le n-1}\p{\sum_{i=2}^n\xi_i = n - 1 - m}}{\p{\sum_{i=1}^n\xi_i = n-1}} \rightarrow 0
\end{align*}
as $n\rightarrow \infty.$ Combining this with (\ref{eq:max_deg}) gives that 
\[
\frac{1}{\sqrt{n}}\max_{1\le i \le n}D_i^n \convprob 0
\]
and so
\[
\frac{D_1^n + \dots + D_k^n}{\sqrt{n}} \convprob 0
\]
as $n\rightarrow \infty.$ Therefore, by the bounded convergence theorem, 
\[
\E{\left(1 - \frac{D_1^n + \dots + D_k^n}{n - 1 - T\sqrt{n}\max_{1\le i \le n}D_i}\right)^{\lfloor T\sqrt{n}\rfloor}} \rightarrow 1
\]
as $n\rightarrow \infty$. The result then follows by noting (as at the end of the proof of Lemma \ref{lem:conv_measure_change_basic}) that as $n\rightarrow \infty$, 
\[
\p{N_n< \lfloor T \sqrt n\rfloor }\le \frac{\p{\operatorname{Binomial}(n,1-\mu_0)< \lfloor T \sqrt n\rfloor }}{\p{\sum_{i=1}^n\xi_i = n-1}}\to 0. \qedhere
\]
\end{proof}

\begin{prop}\label{prop:conv_cuts_attach}
Fix $k \ge 1$. Then
\begin{equation}\label{eq:conv_attach}
\frac{\sigma}{\sqrt{n}}(J_1^n, J_2^n,\dots, J_k^n, A_1^n,\dots, A_k^n) \convdist (J_1, J_2,\dots, J_k, A_1,\dots, A_k)
\end{equation}
as $n\rightarrow \infty$, where $J_1, J_2,\dots, J_k$ are the first $k$ jump-times of an inhomogeneous Poisson process of intensity $t$ with respect to the Lebesgue measure at $t\in \R_+$ and, for $i\in [k],$ conditionally on $J_1,\dots, J_i$, $A_i$ is uniform on $[0, J_i]$, independently of $A_1,\dots, A_{i-1}.$
\end{prop}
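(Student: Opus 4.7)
The strategy is to combine the explicit conditional formulas for repeat times supplied by Lemmas~\ref{lem:cuts} and~\ref{lem:later_cuts} with the functional law of large numbers for the size-biased degree sequence from Proposition~\ref{prop:conv_degrees_disps}. By Lemma~\ref{lem:conv_good_event}, for every fixed $T>0$ we may work on the good event $\cG_{n,k}(T)$, on which no element of $[k]$ appears among the first $\lfloor T\sqrt{n}\rfloor$ entries of $\Pi_{D^n}$; on this event $J_i^n$ agrees with the pure-repeat time $\widetilde J_i^n$ whenever $J_i^n\le T\sqrt{n}$. Taking $T$ large at the outset --- legitimate once the marginal convergence has been shown, since the $J_i^n$ are increasing --- reduces the task to analysing the joint law of $(\widetilde J_1^n,\ldots,\widetilde J_k^n,A_1^n,\ldots,A_k^n)$ on this event.

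For the marginal limit I apply Lemma~\ref{lem:cuts} with $\bd=D^n$ at the integer $\lfloor t\sqrt{n}/\sigma\rfloor$, yielding a product formula
\[
\p{\sigma\widetilde J_1^n/\sqrt{n} > t \,\Big|\, D^n, \widehat V_1(\Pi_{D^n}),\ldots,\widehat V_{N_n}(\Pi_{D^n})} = \prod_{j=1}^{\lfloor t\sqrt{n}/\sigma\rfloor}\left(1 - \frac{\Sigma_j^n}{n-1-j}\right),
\]
where $\Sigma_j^n$ denotes the cumulative slot count $\sum_{i=1}^j \widehat D_i^n$ (modulo a deterministic $O(j)$ shift). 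Each summand is $o_\bP(1)$ because $\max_i\widehat D_i^n = o_\bP(\sqrt{n})$ on the event $\{\sum_i\xi_i=n-1\}$, an estimate already used in the proof of Lemma~\ref{lem:conv_good_event}. The bound $|\log(1-x)+x|\le Cx^2$ for $|x|\le 1/2$ then turns the exponent into $-\sum_{j=1}^{\lfloor t\sqrt{n}/\sigma\rfloor}\Sigma_j^n/(n-1-j)+o_\bP(1)$, and Proposition~\ref{prop:conv_degrees_disps} gives $\Sigma_j^n/j\convprob\sigma^2$ uniformly over $j\le T\sqrt{n}$. A Riemann-sum computation then delivers $\sum_j\Sigma_j^n/(n-1-j)\convprob t^2/2$, so $\p{\sigma J_1^n/\sqrt{n}>t}\to e^{-t^2/2}=\p{J_1>t}$. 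Iterating this argument with Lemma~\ref{lem:later_cuts} --- whose analogous product formula admits the same log-linearisation and LLN substitution --- gives the joint convergence of $(\sigma J_i^n/\sqrt{n})_{i\le k}$ to the first $k$ jump times of the inhomogeneous Poisson process of intensity $t\,dt$.

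For the attachment points, conditionally on $(J_1^n,\ldots,J_m^n)$, $D^n$ and the vertex ordering, $V_{J_m^n}$ is the first coordinate of a uniformly chosen unused pair from the slots of previously-visited vertices, so $\p{A_m^n=\ell \mid \cdots}$ is proportional to the number of unused slots at $\widehat V_\ell$ at time $J_m^n-1$. Since only $m-1$ slots have ever been re-used by that time, for all but at most $m-1$ indices $\ell\in[J_m^n-m]$ this weight equals $\widehat D_\ell^n-1$. Proposition~\ref{prop:conv_degrees_disps} therefore implies that the empirical cumulative weight function on the rescaled index $\ell/(J_m^n-m)\in[0,1]$ converges uniformly in probability to the identity, so that $A_m^n/(J_m^n-m)$ converges conditionally on $J_m^n$ to $\mathrm{Uniform}([0,1])$. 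Since $J_m^n-m=J_m^n(1+o_\bP(1))$ and $\sigma J_m^n/\sqrt{n}\convdist J_m$, this yields $\sigma A_m^n/\sqrt{n}\convdist A_m\sim\mathrm{Uniform}([0,J_m])$; the conditional independence of $A_1,\ldots,A_k$ given $(J_1,\ldots,J_k)$ drops out because interactions between different repeats contribute only $O(1)$ to the slot counts and so vanish in the LLN limit.

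The main obstacle is the bookkeeping for the joint convergence: one must show that the conditional characteristic function of $(\sigma J_i^n/\sqrt{n},\sigma A_i^n/\sqrt{n})$ given the past converges in probability to its Aldous-limit counterpart, uniformly enough to pass from the marginal to the joint statement. A supplementary step is verifying $J_k^n/\sqrt{n}=O_\bP(1)$ to justify the good-event reduction; this is immediate from the established marginal convergence together with the monotonicity $J_1^n<\ldots<J_k^n$.
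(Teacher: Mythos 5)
Your proposal takes essentially the same route as the paper: reduce to the pure-repeat times $\widetilde J_i^n$ on the good event $\cG_{n,k}(T)$ from Lemma~\ref{lem:conv_good_event}, use the explicit conditional product formulas of Lemmas~\ref{lem:cuts} and~\ref{lem:later_cuts} together with the LLN in Proposition~\ref{prop:conv_degrees_disps} to get the exponential/Rayleigh limit for the repeat times, and then use the explicit weighted-slot-count formula for the conditional law of $A_m^n$ (with the observation that the at-most-$(m-1)$ already-used slots are negligible) to get the conditional uniform limit and the asymptotic independence of the $A_i$'s given the $J_i$'s. This is exactly the structure of the paper's proof, which carries out the ``bookkeeping for the joint convergence'' that you flag as the main obstacle by passing to a Skorokhod coupling in which the degree-process convergence of Proposition~\ref{prop:conv_degrees_disps} holds almost surely, proving a.s.\ pointwise convergence of the rescaled conditional joint density of $(\widetilde J_1^n,\dots,\widetilde J_k^n)$ given $\widehat D^n$, and then integrating via dominated convergence; the sandwich you propose for the $A_m^n$-conditionals (endpoints independent of $A_1^n,\dots,A_{m-1}^n$) is precisely the device used at \eqref{eq:num_dem}. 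One small imprecision: Proposition~\ref{prop:conv_degrees_disps} gives $n^{-1/2}\sum_{i\le \lfloor t\sqrt n\rfloor}(\widehat D_i^n-1)\to\sigma^2 t$ uniformly on compacts, which does \emph{not} imply $\Sigma_j^n/j\to\sigma^2$ uniformly over \emph{all} $j\le T\sqrt n$ (it fails for $j=O(1)$). However, the contribution of $j\le\varepsilon\sqrt n$ to your Riemann sum is $O(\varepsilon^2)$, so the final limit $t^2/2$ is unaffected; the paper avoids the issue by working directly with the functional convergence rather than the ratio $\Sigma_j^n/j$.
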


\begin{proof}
Fix $T > 0$.  Let $0 \le t_1 \le \dots \le t_k \le T$ and $s_1 < t_1, \ldots, s_k < t_k$. We will prove that
\begin{align}
& \p{J_1^n \le t_1 \sqrt{n}, \ldots, J_k^n \le t_k\sqrt{n}, A_1^n \le s_1 \sqrt{n}, \ldots, A_k^n \le s_k \sqrt{n}} \notag \\
& \qquad \to \sigma^{2k} \left(\prod_{j=1}^k s_j \right) \int_0^{t_1} \cdots \int_{r_{k-1}}^{t_k}\exp(-\sigma^2t_k^2/2) dr_k \ldots dr_1 \notag \\
& \qquad = \p{J_1 \le \sigma t_1, \ldots J_k \le \sigma t_k, A_1 \le \sigma s_1, \ldots, A_k \le \sigma s_k}. \label{eq:distconv}
\end{align}
We will often work conditionally on the random variables $\widehat{D}^n = (\widehat{D}_1^n, \ldots, \widehat{D}_{N_n}^n)$. To make the equations easier to read, we write $\mathbf{P}_{\widehat{D}^n}$ for the conditional probability given $\widehat{D}^n$ and $\mathbf{E}_{\widehat{D}^n}$ for the corresponding expectation.

Fix $T' > T$. By Skorokhod's representation theorem, there exists a probability space on which the uniform convergence 
\begin{equation} \label{eq:forSkorokhod}
\left(\frac{1}{\sqrt{n}} \sum_{i=1}^{\fl{t \sqrt{n}}} (\widehat{D}_i^n - 1)\right)_{0 \le t \le T'} \convdist (\sigma^2t)_{0 \le t \le T'}
\end{equation}
from Proposition~\ref{prop:conv_degrees_disps} occurs in the almost sure sense. We work on this probability space for the rest of the proof. Note, in particular, that if the above convergence occurs almost surely then it is also the case that
\[
\I{N_n \ge \fl{T\sqrt{n}}} \convas 1.
\]

We first show that, as $n \to \infty$,
\begin{equation}\label{eq:jointdensity}
\begin{split}
& n^{k/2}\pD{\widetilde{J}_1^n = \fl{t_1 \sqrt{n}}, \widetilde{J}_2^n = \fl{t_2\sqrt{n}},\dots, \widetilde{J}_k^n = \fl{t_k\sqrt{n}}} \I{N_n \ge \fl{T \sqrt{n}}} \\
& \qquad \convas \sigma^{2k} t_1 t_2 \ldots t_k \exp \left(-\sigma^2 t_k^2/2\right). 
\end{split}
\end{equation}
By Lemma \ref{lem:sb_order}, whenever the bijective construction $B(\Pi_{D^n})$ of $\rT_n$ encounters a new vertex, its degree is distributionally equivalent to the next one on the list $(\widehat D^n_1,\dots, \widehat D^n_{N_n})$. So by Lemma \ref{lem:cuts}, on the event $\{N_n \ge \fl{T\sqrt{n}}\}$, we have
\begin{align*}
& \pD{\widetilde{J}_1^n = \fl{t_1\sqrt{n}}} \\
& = \frac{\sum_{\ell = 1}^{\fl{t_1 \sqrt{n}}-1} (\widehat D_\ell^n - 1)}{n-\fl{t_1\sqrt{n}}} \prod_{j=1}^{\lfloor t_1 \sqrt{n}\rfloor-2}\left(1 - \frac{\sum_{\ell = 1}^j (\widehat D_\ell^n - 1)}{n-1-j}\right) \\
&=\frac{\sum_{\ell = 1}^{\fl{t_1 \sqrt{n}}-1} (\widehat D_\ell^n - 1)}{n-\fl{t_1\sqrt{n}}}  \exp\left(\sum_{j=1}^{\lfloor t_1\sqrt{n}\rfloor-2}\log\left(1 - \frac{\sum_{\ell = 1}^j (\widehat D_\ell^n - 1)}{n-1-j}\right)\right) \\
&=\frac{\sum_{\ell = 1}^{\fl{t_1 \sqrt{n}}-1} (\widehat D_\ell^n - 1)}{n-\fl{t_1\sqrt{n}}} \exp\left(\sum_{j=1}^{\lfloor t_1 \sqrt{n}\rfloor -2}\log\left(1 - \frac{\sum_{\ell = 1}^j (\widehat D_\ell^n - 1 - \sigma^2) + \sigma^2j}{n-1-j}\right)\right).
\end{align*}
By (\ref{eq:forSkorokhod}) and a similar argument to that used in the proof of (\ref{eq:use_in_ms}), we get that as $n\rightarrow \infty$,
 \[
\sqrt{n} \pD{\widetilde{J}_1^n = \fl{t_1\sqrt{n}}} \I{N_n \ge \fl{T \sqrt{n}}} \convas \sigma^2 t_1 \exp\left(-\frac{t_1^2\sigma^2}{2}\right).
 \]

We now proceed to prove the joint convergence of the first $k$ coordinates in (\ref{eq:jointdensity}) by induction. Suppose that the claimed convergence holds for $\widetilde{J}_1^n$,\dots, $\widetilde{J}_{m-1}^n$. By Lemma \ref{lem:later_cuts}, on the event $\{N_n \ge \fl{T \sqrt{n}}\}$,
\begin{align*}
&\pD{\widetilde{J}_m^n - \widetilde{J}_{m-1}^n = \fl{t_m\sqrt{n}} - \fl{t_{m-1}\sqrt{n}} ~\Big|~ \widetilde{J}_1^n = \fl{t_{1}\sqrt{n}}, \ldots, \widetilde{J}_{m-1}^n = \fl{t_{m-1}\sqrt{n}}}\\
& \quad = \frac{\sum_{\ell = 1}^{\fl{t_m\sqrt{n}}-m}(\widehat D_\ell^n - 1) - m + 1}{n-\fl{t_{m} \sqrt{n}}} \prod_{j = \fl{t_{m-1}\sqrt{n}}}^{\fl{t_m \sqrt{n}} -2}\left(1 - \frac{\sum_{\ell = 1}^{j-m+1}(\widehat D_\ell^n - 1) - m + 1}{n-1-j}\right).
\end{align*}

Arguing as above, we obtain
\begin{align*}
\sqrt{n} & \pD{\widetilde{J}_m^n = \fl{t_m\sqrt{n}} ~\Big|~ \widetilde{J}_1^n = \fl{t_{1}\sqrt{n}}, \ldots, \widetilde{J}_{m-1}^n = \fl{t_{m-1}\sqrt{n}}}\I{N_n \ge \fl{t_m \sqrt{n}}} \\
&  \convas
\sigma^2 t_m \exp\left(-\int_{t_{m-1}}^{t_{m}}\sigma^2 rdr\right).
\end{align*}
By induction on $m$, we get this for all $1 \le m \le k$. Taking the product of the conditional probabilities, we obtain (\ref{eq:jointdensity}).

We now wish to add in the random variables $(A_i^n)_{i\in [k]}$. We work conditionally on the event $\mathcal{G}_{n,k}(T)$. Given also $J_1^n = \fl{t_1 \sqrt{n}}, \ldots, J_{m}^n = \fl{t_m \sqrt{n}}$, $\widehat{D}_1^n, \ldots, \widehat{D}_{N_n}^n$ and $A^n_1,\dots, A_{m-1}^n$, since $N_n \ge \fl{T\sqrt{n}}$, at time $J_{m}^n$  there are $\widehat D_i^n - 1 - \sum_{\ell = 1}^{m-1}\I{A_\ell^n = i}$ remaining instances of the vertex $\widehat{V}_i(\Pi_{D^n})$ to appear in the bijective construction. So, the repeated vertex that we see is $\widehat{V}_i(\Pi_{D^n})$, i.e.\ $A_m^n = i$, with probability 
\[
\frac{\widehat D_i^n - 1 - \sum_{\ell = 1}^{m-1}\I{A_\ell^n = i}}{\sum_{j=1}^{\fl{t_m \sqrt{n}} - m}(\widehat D_i^n - 1) - m },
\]
for $1 \le i \le \fl{t_m \sqrt{n}} - m$. Hence, 
\begin{align*}
&\pD{A_m^n \le s_m\sqrt{n} ~|~ \cG_{n,k}(T), J_1^n = \fl{t_1 \sqrt{n}}, \ldots, J_{m}^n = \fl{t_m \sqrt{n}}, A_1^n,\dots, A_{m-1}^n} \\
&\qquad=\frac{\sum_{i=1}^{\lfloor s_m \sqrt{n}\rfloor}(\widehat D_i^n -1) - \sum_{\ell = 1}^{m-1}\I{A_\ell^n \le s_m \sqrt{n}}}{\sum_{j=1}^{\fl{t_m\sqrt{n}} - m}(\widehat D_j^n - 1) - m }.
\end{align*}
This quantity lies in the interval
\[
\left[\frac{\sum_{i=1}^{\lfloor s_m \sqrt{n}\rfloor}(\widehat D_i^n -1) - m+1}{\sum_{j=1}^{\fl{t_m\sqrt{n}} - m}(\widehat D_j^n - 1) - m }, \frac{\sum_{i=1}^{\lfloor s_m \sqrt{n}\rfloor}(\widehat D_i^n -1)}{\sum_{j=1}^{\fl{t_m\sqrt{n}} - m}(\widehat D_j^n - 1) - m }\right]
\]
whose end-points do not depend on $A_1^n, A_2^n, \ldots, A_{m-1}^n$. Iterating, we thus obtain that
\[
\pD{A_1^n \le s_1 \sqrt{n}, \ldots, A_k^n \le s_k \sqrt{m} ~\Big|~\cG_{n,k}(T), J_1^n = \fl{t_1 \sqrt{n}}, \ldots, J_{m}^n = \fl{t_m \sqrt{n}}}
\]
lies in a random interval depending only on $\widehat{D}_1^n, \ldots, \widehat{D}_{\fl{t_k \sqrt{n}}}$, both of whose end-points converge almost surely to $\prod_{m=1}^k (s_m/t_m)$ by (\ref{eq:forSkorokhod}). So the same is true by sandwiching for our conditional probability which lies in that interval.  

Putting everything together, we then have
\begin{align*}
& \p{J_1^n \le t_1\sqrt{n}, \ldots, J_k^n \le t_k \sqrt{n}, A_1^n \le s_1 \sqrt{n}, \ldots, A_k \le s_k \sqrt{n}} \\
& \qquad = \p{J_1^n \le t_1\sqrt{n}, \ldots, J_k^n \le t_k \sqrt{n}, A_1^n \le s_1 \sqrt{n}, \ldots, A_k^n \le s_k \sqrt{n}, \cG_{n,k}(T)^c} \\
& \qquad\hspace{0.5cm} + \E{ \pD{J_1^n \le t_1\sqrt{n}, \ldots, J_k^n \le t_k \sqrt{n}, A_1^n \le s_1 \sqrt{n}, \ldots, A_k^n \le s_k \sqrt{n}, \cG_{n,k}(T)}}.
\end{align*}
The first term on the right-hand side of this equation clearly tends to 0 by Lemma~\ref{lem:conv_good_event}. Since the second is the expectation of a conditional probability, it is sufficient to show that the conditional probability itself tends to $\exp(-\sigma^2 t_k/2)\prod_{m=1}^k s_k$ in distribution. For $1 \le m \le k$ and $n \ge 1$, let us write
\[
t_m^n = \frac{\fl{t_m \sqrt{n}}+1}{\sqrt{n}}.
\]
Then we have
\begin{align*}
& \pD{J_1^n \le t_1\sqrt{n}, \ldots, J_k^n \le t_k \sqrt{n}, A_1^n \le s_1 \sqrt{n}, \ldots, A^n_k \le s_k \sqrt{n}, \cG_{n,k}(T)} \\
& = \! \! \int_0^{t_1^n} \hspace{-10pt}\ldots \hspace{-3pt} \int_{r_{k-1}}^{t_k^n} \! \! \! \pD{A_1^n \le s_1 \sqrt{n}, \ldots, A_k \le s_k \sqrt{n}\Big|\cG_{n,k}(T), J_1^n = \fl{r_1\sqrt{n}}, \ldots, J_k^n = \fl{r_k \sqrt{n}}}  \\
& \hspace{2.2cm} \times n^{k/2} \pD{\widetilde{J}_1^n = \fl{r_1\sqrt{n}}, \ldots, \widetilde{J}_k^n = \fl{r_k \sqrt{n}}, \cG_{n,k}(T)}  dr_k \ldots dr_1 \\
& = \! \! \int_0^{t_1^n} \hspace{-10pt} \ldots \hspace{-3pt} \int_{r_{k-1}}^{t_k^n} \hspace{-5pt}  \pD{A_1^n \le s_1 \sqrt{n}, \ldots, A_k \le s_k \sqrt{n}\Big|\cG_{n,k}(T), J_1^n = \fl{r_1\sqrt{n}}, \ldots, J_k^n = \fl{r_k \sqrt{n}}}  \\
& \hspace{2cm} \times n^{k/2} \pD{\widetilde{J}_1^n = \fl{r_1\sqrt{n}}, \ldots, \widetilde{J}_k^n = \fl{r_k \sqrt{n}}} \I{N_n \ge \fl{T \sqrt{n}}}  dr_k \ldots dr_1 \hspace{-1.1pt}-\hspace{-1.1pt} E_n,
\end{align*}
where $E_n$ is an error term with the property that $0 \le E_n \le \pD{\cG_{n,k}(T)^c}$, and so tends to $0$ in distribution as $n \to \infty$. The first term in the product which forms the integrand tends to $\prod_{m=1}^k (s_m/r_m)$ as $n \to \infty$ and the second term tends to $\sigma^{2k}r_1 \ldots r_k \exp(-\sigma^2 r_k^2/2)$, both almost surely. Write $g_n(r_1, \ldots, r_k)$ for the integrand above, considered as a function of $r_1, \ldots, r_k$. Then we have just shown that
\[
g_n(r_1, \ldots, r_k) \convas  \sigma^{2k} \prod_{m=1}^k s_m \exp(-\sigma^2 r_k^2/2), 
\]
It is straightforward to see that this convergence is, in fact, uniform on compacts. Hence,
\begin{align*}
& \int_0^{t_1^n} \! \! \cdots \! \int_{r_{k-1}}^{t_k^n} g_n(r_1, \ldots, r_k) dr_k \ldots dr_1 \\
& \qquad \convas \sigma^{2k} \left(\prod_{m=1}^k s_m \right) \int_0^{t_1} \! \! \cdots \! \int_{r_{k-1}}^{t_k} \exp(-\sigma^2r_k^2/2) dr_k \ldots dr_1,
\end{align*}
which yields (\ref{eq:distconv}). The result follows, since $T > 0$ was arbitrary.
\end{proof}

This completes the proof of (\ref{eq:conv_attach-label}) in Proposition~\ref{prop:fdds}.

\subsection{Displacements at repeats}
As shown above, for fixed $k$ and large $n$, $\rT_n^k$ is with high probability the subtree of $B(\Pi_{D^n})$ composed of the union of the paths $P^{(1)},\dots, P^{(k)}$. Moreover, for $i\in [k]$, under the bijective construction, by Proposition \ref{prop:conv_degrees_disps}, with the exception of the first vertex in each path $P^{(i)}$, the displacements of the vertices in $P^{(i)}$ away from their parents are asymptotically indistinguishable from \iid copies of uniform entries of a $\nu_{\bar\xi}$ distributed displacement vector. On the other hand, the displacement away from of the first vertex in $P^{(i)}$ cannot be compared to a random variable with the same distribution as a uniform entry of a~$\nu_{\bar\xi}$ distributed displacement vector. However, in the following lemma we will prove that such displacements are $O_\bP(1)$ and so negligible on the scale of $n^{1/4}$.

We first introduce some notation. Recall that for $i\in [\ell^*(\Pi_{D^n})]$, vertex $V_{J_i^n}$ is the $i$-th repeated vertex encountered in the bijective construction $(B(\Pi_{D^n}), Y)$ of $\bT_n = (\rT_n, Y)$ (and hence a branchpoint). For $i\in[\ell^*(\Pi_{D^n})]$, let $\Delta_i^n$ be the displacement of $V_{J_i^n + 1}$ away from its parent $V_{J_i^n}$ in  $\bT_n$.

\begin{lem}
    \label{lem:attach_points} For any $\ell \ge 0$, $\max\{|\Delta_1^n|,\dots, |\Delta_\ell^n|\}$ is a tight sequence of random variables for $n \ge 1$.
\end{lem}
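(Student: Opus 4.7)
The plan is to establish tightness of $|\Delta_i^n|$ for each fixed $i \in [\ell]$, after which the full statement follows by a union bound. Fix $i \ge 1$. Since $\Delta_i^n$ is one of the entries of $Y^{(V_{J_i^n})}$, a random vector with law $\nu_{D^n_{V_{J_i^n}}}$, and since the displacements are independent of the underlying tree, I obtain for any $K \ge i$ the bound
\[
\p{|\Delta_i^n| > y} \;\le\; \p{\widehat D^n_{A_i^n} > K} + \max_{k \le K \,:\, \mu_k > 0}\p{\max_{1 \le j \le k}|Y_{k,j}| > y},
\]
using the identification $D^n_{V_{J_i^n}} = \widehat D^n_{A_i^n}$. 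The second term is easy: assumption [\ref{a1}] gives $\E{Y^2_{\bar\xi, U_{\bar\xi}}} < \infty$, which forces $Y_{k,j}$ to be almost surely finite for every $(k,j)$ with $\mu_k > 0$ and $j \in [k]$, so the maximum of these finitely many almost surely finite random variables is almost surely finite and its tail vanishes as $y \to \infty$. The task therefore reduces to showing that the sequence $(\widehat D^n_{A_i^n})_{n \ge 1}$ is tight.

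For this, I would first apply Proposition~\ref{prop:conv_cuts_attach} to obtain $A_i^n = O_\bP(\sqrt{n})$ and $J_i^n = O_\bP(\sqrt{n})$, and then exploit the uniform permutation structure of $\Pi_{D^n}$ given the degree sequence. By exchangeability, conditionally on $\widehat D^n$ and on the event that $V_{J_i^n}$ is a genuine repeat, the probability that it equals the $a$-th previously-seen vertex $\widehat V_a$ is proportional to the number of remaining edge-labels at $\widehat V_a$, which equals $\widehat D^n_a$ minus the number of prior appearances of $\widehat V_a$ in $(V_1, \ldots, V_{J_i^n - 1})$. Summing these weights over all $a < J_i^n$ recovers the total number of unused labels at seen vertices, namely $\sum_{a < J_i^n} \widehat D^n_a - (J_i^n - 1)$; in particular, $\widehat D^n_{A_i^n}$ is a size-biased pick (up to a shift bounded by $i$) from $(\widehat D^n_a \,:\, a < J_i^n)$.

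Applying the change of measure in Proposition~\ref{prop:measure_change_basic} together with Lemma~\ref{lem:conv_measure_change_basic} allows me to replace $(\widehat D^n_1, \ldots, \widehat D^n_{\lfloor T\sqrt{n}\rfloor})$ by \iid samples $\bar\xi_1, \ldots, \bar\xi_{\lfloor T\sqrt{n}\rfloor}$ at the cost of a Radon--Nikodym factor that converges to $1$ in probability and is uniformly integrable. Combined with the law of large numbers for these \iid variables (noting $\E{\bar\xi} - 1 = \sigma^2$) and the tightness of $J_i^n/\sqrt{n}$, this yields
\[
\limsup_{n\to\infty}\p{\widehat D^n_{A_i^n} > K} \;\le\; \frac{\E{\bar\xi \,\I{\bar\xi > K}}}{\sigma^2},
\]
which tends to $0$ as $K \to \infty$ by dominated convergence, since $\E{\bar\xi} = \sigma^2 + 1 < \infty$ under [\ref{a1}]. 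This completes the tightness argument.

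The main obstacle I foresee is making the size-biased characterisation of $A_i^n$ fully rigorous, particularly when $i \ge 2$: the conditional law of $A_i^n$ given $(\widehat D^n, J_1^n, \ldots, J_i^n)$ requires a careful accounting of how many times each previously-seen vertex has been used so far, analogous to the derivations in Lemmas~\ref{lem:cuts} and~\ref{lem:later_cuts}. One also needs to handle the (asymptotically negligible) possibility that $V_{J_i^n} = m_k$ is a small-label vertex rather than a genuine repeat, but this is controlled by Lemma~\ref{lem:conv_good_event}. Once the conditional law is pinned down, combining it with Proposition~\ref{prop:conv_cuts_attach} and the change of measure as above should finish the proof.
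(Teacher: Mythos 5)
Your proposal is correct and follows essentially the same route as the paper: reduce the problem to tightness of $\widehat D^n_{A_i^n}$, then control that by combining the size-biased structure of the attachment (weights $\widehat D^n_a - 1$ among the first $\Theta(\sqrt{n})$ degrees), the change of measure from Proposition~\ref{prop:measure_change_basic}, Lemma~\ref{lem:conv_measure_change_basic}, the functional LLN, and Proposition~\ref{prop:conv_cuts_attach}. The paper's version packages the bound on $\p{\widehat D^n_{A_1^n} > K}$ via the pointwise limit $\p{\widehat D^n_{A_1^n} = k, J_1^n \le T\sqrt{n}} \to \tfrac{k(k-1)\mu_k}{\sigma^2}\p{J_1\le\sigma T}$ and the fact that $\sum_k \tfrac{k(k-1)\mu_k}{\sigma^2}=1$, which is equivalent to your direct size-biased tail estimate.
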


\begin{proof} 
We will prove that for all $\varepsilon > 0$ there exists $N > 0$ such that for all $n \ge N$, $\p{|\Delta_1^n| > N} < \varepsilon$. 
    
To prove the result for $|\Delta_2^n|,\dots, |\Delta_\ell^n|$, note that by Proposition \ref{prop:conv_cuts_attach}, since $(A_i)_{i \in [k]}$ are almost surely distinct, we have
\[
\p{(A_i^n)_{i\in [k]} \text{ are distinct}}\rightarrow 1
\]
as $n\rightarrow \infty$. On the event $\{(A_i^n)_{i\in [k]} \text{ are distinct}\}$ the proof for $|\Delta_2^n|,\dots, |\Delta_\ell^n|$ is analogous to that for $|\Delta_1^n|$ and so we omit it. 

Recall from Proposition \ref{prop:conv_cuts_attach} that $\sigma n^{-1/2}J_1^n \convdist J_1$. Recalling also that $A_1^n$ is such that $V_{J_1^n} = \widehat V_{A_1^n}(\Pi_{D^n})$, it follows that conditionally on $\widehat D^n_{A_1^n} = k$, $\Delta_1^n \eqdist Y_{k,U_k}$, where $U_k \eqdist \mathrm{Uniform}([k])$ and $Y_{k, U_k}$ is distributed as a uniform entry of a displacement vector with law $\nu_k$, independent of $D^n$.  Fix $T > 0$ large. We work on the event $\{ J_1^n \le  T\sqrt{n}\}$.  For $N > 0$ and $K \ge 1$,
\begin{align}
    &\p{|\Delta_1^n| > N,~J_1^n \le  T\sqrt{n}}\nonumber\\
    & \le \p{\widehat D^n_{A_1^n} > K,~J_1^n \le  T\sqrt{n}} + \p{|\Delta_1^n| > N,~ \widehat D^n_{A_1^n} 
        \le K,~ J_1^n \le  T\sqrt{n}}\nonumber\\
    & \le \p{\widehat D^n_{A_1^n} > K,~J_1^n \le T\sqrt{n}}+ \sum_{k=2}^K\frac{k(k-1)\mu_k}{\sigma^2}\p{|Y_{k,U_k}| > N}\p{ J_1\le \sigma T}\nonumber\\
    & + \sum_{k=2}^K\left|\p{|\Delta_1^n| \hspace{-1pt}>\hspace{-1pt} N,\widehat D_{A_1^n}^n \hspace{-1pt}=\hspace{-1pt} k,J_1^n \hspace{-1pt}\le\hspace{-1pt} T\sqrt{n}} - \frac{k(k-1)\mu_k}{\sigma^2}\p{|Y_{k,U_k}| > N}\p{J_1 \le \sigma T}\right|. \nonumber
\end{align}
We have
\[
    \p{|\Delta_1^n| > N, \widehat{D}^n_{A_1^n}=k, J_1^n \le T \sqrt{n}} = \p{|Y_{k,U_k}| > N} \p{\widehat{D}^n_{A_1^n}=k, J_1^n \le T \sqrt{n}}
\]
and so
\begin{equation}\label{eq:attach_points_1}
\begin{split}
    &\p{|\Delta_1^n| > N,~J_1^n \le  T\sqrt{n}} \\
    &\le \p{\widehat D^n_{A_1^n} > K,~J_1^n \le  T\sqrt{n}}+ \sum_{k=2}^K\frac{k(k-1)\mu_k}{\sigma^2}\p{|Y_{k, U_k}| > N} \\
    &\quad\quad + \sum_{k=2}^K\left|\p{\widehat D^n_{A_1^n} = k, ~J_1^n \le  T\sqrt{n}} - \frac{k(k-1)\mu_k}{\sigma^2}\p{ J_1\le \sigma T}\right|.
\end{split}
\end{equation}
Since $k\mu_k \le 1$ for all $k$, it follows that (\ref{eq:attach_points_1}) is at most
\begin{equation}\label{eq:attach_points}
\begin{split}
    &\p{\widehat D^n_{A_1^n} > K,~J_1^n \le  T\sqrt{n}} + \frac{K-1}{\sigma^2}\p{|Y_{\bar\xi, U_{\bar\xi}}| > N}\\
    &\qquad  + K\max_{2\le k \le K}\left|\p{\widehat D_{A_1^n}^n = k, ~J_1^n \le  T\sqrt{n}} - \frac{k(k-1)\mu_k}{\sigma^2}\p{ J_1 \le \sigma T}\right|.
\end{split}
\end{equation}

Fix $\varepsilon > 0$. Since $|Y_{\bar\xi, U_{\bar\xi}}|$ is a random variable with support in $[0,\infty)$, we may take $M = M(K) > 0$ large enough so that 
\[
\frac{K-1}{\sigma^2}\p{|Y_{\bar\xi, U_{\bar\xi}}| > N} < \frac{\varepsilon}{4}.
\]

It remains to prove that for sufficiently large $n \ge 1$ and $K \ge 1$ the sum of the first and third terms in (\ref{eq:attach_points}) is at most $3\varepsilon/4$. To this end, observe that for $i \ge 1$, 
\[\pD{A_1^n = i,~J_1^n \le  T\sqrt{n}~\big|~ J_1^n} = \frac{\widehat D_i^n - 1}{\sum_{j=1}^{J_1^n -1}(\widehat D_j^n - 1)}\I{1 \le i \le J_1^n}\I{J_1^n \le  T\sqrt{n}}.
\]
Therefore, for any $k \ge 2$, 
\begin{equation*}
\pD{\widehat D^n_{A_1^n} = k, ~J_1^n \le  T\sqrt{n}~\big|~J_1^n} = (k-1)\frac{\left|\left\{1 \le i \le J_1^n~:~\widehat D_i^n = k \right\}\right|}{\sum_{j=1}^{J_1^n - 1}(\widehat D_j^n - 1)}\I{J_1^n \le  T\sqrt{n}}.
\end{equation*}
It follows that 
\begin{align*}
    &\p{\widehat D_{A_1^n}^n = k, J_1^n \le  T\sqrt{n}}\\ 
    &\quad= (k-1)\E{\frac{\left|\left\{1 \le i < J^n_1~:~ \widehat D_i^n = k\right\}\right|}{\sum_{j=1}^{J_1^n - 1}(\widehat D_j^n - 1)}\I{J_1^n \le  T\sqrt{n}}}\\
        &\quad=(k-1)\E{\frac{\left|\left\{1 \le i < J^n_1~:~ \bar \xi_i = k\right\}\right|}{\sum_{j=1}^{J_1^n - 1}(\bar \xi_j - 1)}\I{J_1^n \le  T\sqrt{n}}\Theta^n\left(\bar\xi_1,\dots,\bar\xi_{\lfloor T\sqrt{n}\rfloor}\right)},
\end{align*}
where the final equality holds by Proposition \ref{prop:measure_change_basic}.
    
By Proposition \ref{prop:conv_cuts_attach}, $J_1^n = \Theta_\bP(\sqrt{n})$, and so by a functional law of large numbers (see Lemma \ref{lem:serflln} in the appendix),
\[
\frac{\left|\left\{1 \le i < J^n_1~:~ \bar \xi_i = k\right\}\right|}{\sum_{j=1}^{J_1^n - 1}(\bar \xi_j - 1)} \convprob \frac{k\mu_k}{\sigma^2}.
\]
Combining this with Lemma \ref{lem:conv_measure_change_basic} we obtain that as $n\rightarrow \infty$
\begin{equation}\label{eq:conv_big_jumps} 
\p{\widehat D_{A_1^n}^n = k, ~J_1^n \le  T\sqrt{n}} \rightarrow \frac{k(k-1)\mu_k}{\sigma^2}\p{J_1 \le \sigma T}.
\end{equation}
Since $\sum_{k=1}^\infty k(k-1)\mu_k/\sigma^2 = 1$, we can take $K \ge 1$ and $T > 0$ large enough so that 
\[
\p{J_1 \le \sigma T}\sum_{k=2}^K\frac{k(k-1)\mu_k}{\sigma^2} > 1 - \frac{\varepsilon}{4}.
\]
Further, by (\ref{eq:conv_big_jumps}) we can take $n \ge 1$ large enough such that 
\[
\max_{2\le k \le K}\left|\p{\widehat D_{A^n_1}^n = k,~J_1^n \le  T\sqrt{n}} - \frac{k(k-1)\mu_k}{\sigma^2}\p{J_1 \le \sigma T}\right| < \frac{\varepsilon}{4K}.
\]
For such $n$ and $T$, we have $\p{\widehat D_{A_1^n}^n > K, ~J_1^n \le  T\sqrt{n}} < \varepsilon/2.$ The result follows.
\end{proof}

\subsection{Convergence to the continuous line-breaking construction}

We are now ready to complete the proof of Proposition~\ref{prop:fdds}.

\begin{proof}[Proof of Proposition~\ref{prop:fdds}]

In view of Proposition~\ref{prop:conv_cuts_attach}, it remains to prove (\ref{eq:flagsconv}) and (\ref{eq:RWtoBM}). 

For (\ref{eq:flagsconv}), we recall from the discussion at the start of Section~\ref{sec:fdds} (where $(F_1^n, \ldots, F_k^n)$ were defined) that at attachment points which are first repeats, the attachment is to the left with probability $1/2$ and to the right with probability $1/2$. By Proposition~\ref{prop:conv_cuts_attach}, the first $k$ attachment points are distinct and are, therefore, all first repeats with probability tending to 1 as $n \to \infty$. The statement (\ref{eq:flagsconv}) follows.

For (\ref{eq:RWtoBM}), we must consider the spatial locations of the vertices along the first $k$ paths in the bijective construction. We work on the event that the paths $P^{(1)}, \ldots, P^{(k)}$ terminate in vertices $1, 2, \ldots, k$ respectively, which we have already shown holds with high probability as $n \to \infty$. For the first path, we have 
\[
L^n(\fl{tn^{1/2}}\wedge (J_1^n-1)) = \sum_{j=1}^{\fl{tn^{1/2}} \wedge (J_1^n-1)} Y_{\widehat{D}^n_j, U_j}
\]
and, for $1 \le i \le k-1$,
\[
L^n((J_i^n + \fl{tn^{1/2}}) \wedge (J_{i+1}^n-1)) = L^n(A_i^n+i-2) + \Delta_{i}^n + \sum_{j=J_i^n+1}^{(J_i^n + \fl{tn^{1/2}})\wedge (J_{i+1}^n-1)} Y_{\widehat{D}_j^n,U_j}.
\]
The desired convergence then follows from Propositions~\ref{prop:conv_degrees_disps} and \ref{prop:conv_cuts_attach} and Lemma~\ref{lem:attach_points}.
\end{proof}

\section{Tightness}\label{sec:tightness}

We assume throughout the section that $\mu$ is critical and has finite variance $\sigma^2\in (0,\infty)$, and that [\ref{a1}] and [\ref{a2}] hold.

Let $k \ge 1$. Recall that $\rT_n^k$ is the subtree of $\rT_n$ spanned by the root and the vertices $v_{U_1^n},\dots v_{U_k^n}\in\rT_n$, where $(U^n_1,\ldots, U^n_k)$ is a uniformly random $k$-set sampled from $[n]$ and, for $i\in [n]$, $v_i$ is the $i$-th vertex in the lexicographical order of $\rT_n$. In what follows we write $(U^{n,k}_{(1)},\ldots,U^{n,k}_{(k)})$ for the increasing rearrangement of $(U^n_1,\ldots, U^n_k)$. Further, recall from Section \ref{sec:background} that $\bT_n = (\rT_n, Y)$ is the $(\mu, \nu)$-branching random walk conditioned to have size $n$. 

\begin{prop}\label{prop:tightness}
    Suppose that [\ref{a1}] holds. Then for all $\gamma > 0$, 
     \begin{equation}
         \label{eq:height_tight}
         \lim_{k \to \infty} \limsup_{n \to \infty} \p{\max_{0 \le i \le k} \sup_{s,t \in [U_{(i)}^{n,k}-1, U_{(i+1)}^{n,k}-1]}|H_n(s) - H_n(t)| > \gamma n^{1/2}} = 0
     \end{equation}
    
    and, additionally, if [\ref{a2}] holds, then
    \begin{equation}
        \label{eq:labels_tight}\lim_{k \to \infty} \limsup_{n \to \infty} \p{\max_{0 \le i \le k} \sup_{s,t \in [U_{(i)}^{n,k}-1, U_{(i+1)}^{n,k}-1]}|R_n(s) - R_n(t)| > \gamma n^{1/4}} = 0.
    \end{equation}
  
\end{prop}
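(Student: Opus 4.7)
The plan is to adapt the Haas--Miermont strategy sketched in Section~\ref{sec:proof_overview}. The unifying observation is that each spacing $[U_{(i)}^{n,k}-1, U_{(i+1)}^{n,k}-1]$ in the depth-first exploration corresponds precisely to a consecutive list of excursions of $(H_n,R_n)$ into the subtrees of $\rT_n$ that dangle off the spine $\rT_n^k$. Writing $\mathrm{F}_n^k$ for the forest obtained by removing the edges of $\rT_n^k$ from $\rT_n$ and attaching the components to the spine at single vertices, for each spacing the maximum variation of $H_n$ is at most twice the maximum height of any tree in $\mathrm{F}_n^k$ visited during that spacing, and similarly for $R_n$ with ``height'' replaced by the maximum spatial modulus measured from the root of that subtree. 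Consequently, it suffices to control
\[
M_n^k := \max_{\tau \in \mathrm{F}_n^k} H(\tau) \quad\text{and}\quad L_n^k := \max_{\tau \in \mathrm{F}_n^k} \max_{v \in \tau} |\ell(v)-\ell(\rho(\tau))|,
\]
showing $M_n^k/\sqrt n \to 0$ and $L_n^k/n^{1/4} \to 0$ in probability as first $n \to \infty$ and then $k \to \infty$.

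The next step is to trade the maximum over $\mathrm{F}_n^k$ for a single \emph{size-biased} subtree. By Aldous' ball-in-boxes/size-biased pick argument (as used in Haas--Miermont) one has, for any non-negative functional $\Phi$,
\[
\E{\max_{\tau \in \mathrm{F}_n^k} \Phi(\tau)} \le \E{|\mathrm{F}_n^k|}\cdot \frac{\E{|\tau^*_n|\,\mathbf{1}_{\{\Phi(\tau^*_n)>x\}}}}{n} + \text{error},
\]
where $\tau^*_n$ is the subtree of $\mathrm{F}_n^k$ containing a uniformly chosen vertex of $\rT_n \setminus \rT_n^k$; conditionally on its size, $\tau^*_n$ is a $\mu$-Bienaymé tree conditioned to have that size, with an independent $(\mu,\nu)$-branching random walk on top. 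A standard absorption estimate for size-conditioned critical Bienaymé trees (e.g.\ \cite{stable_trees}) gives that $|\tau^*_n|/n \to 0$ in probability as $k \to \infty$ uniformly in $n$, so matters reduce to a tail estimate of the form
\[
\limsup_{m \to \infty}\; \p{H(\rT_m) \ge a\sqrt m} \xrightarrow{a\to\infty} 0,\qquad \limsup_{m \to \infty}\; \p{\max_{v \in \rT_m}|\ell(v)| \ge b\, m^{1/4}} \xrightarrow{b\to\infty} 0.
\]
The height statement is classical (it is implicit in Marckert--Mokkadem and Duquesne), so the first claim~\eqref{eq:height_tight} requires only [\ref{a1}]'s ancillary condition $\sigma^2<\infty$ to conclude.

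For the label statement, which constitutes the main work, the plan is to take a single uniform vertex $U$ of $\rT_m$ and estimate $\p{|\ell(v_U)| \ge b\, m^{1/4}}$; a union bound combined with the height tail bound and a maximal inequality along the spine then upgrades this pointwise control to the uniform maximum. The ancestral line of $v_U$ in $\rT_m$ is, by Lemma~\ref{lem:sb_order}, governed by a size-biased reordering of the positive entries of $D^m$, which by Proposition~\ref{prop:measure_change_basic} and Lemma~\ref{lem:conv_measure_change_basic} is close in total variation to an \iid sequence $(\bar\xi_i)$ on the scale $\sqrt m$, provided $\E{\xi^3}<\infty$ (this is exactly where the third-moment requirement in [\ref{a2}] enters, via the quantitative local CLT \cite[Thm.~13, Ch.~VII]{petrov} invoked through \eqref{eq:dn_dist}). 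Along such a lineage the displacements are essentially \iid copies of $Y_{\bar\xi,U_{\bar\xi}}$, so the contribution from ``typical'' displacements is $O_\bP(m^{1/4})$ by Donsker's theorem applied at time $\Theta(\sqrt m)$.

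The main obstacle, and the reason [\ref{a2}] is needed, is controlling the contribution of \emph{atypically large} displacements, of which there are potentially $\Theta(m)$ candidates across the whole tree. A union bound over all vertices requires $m \cdot \p{\max_{1 \le i \le \xi}|Y_{\xi,i}| > b\, m^{1/4}} \to 0$, which is precisely the content of $\p{\max_{1 \le i \le \xi}|Y_{\xi,i}|>y}=o(y^{-4})$. Once this is secured, the lineage decomposes into a Donsker-scale sum of typical displacements plus a residual term that is $o_\bP(m^{1/4})$, and Lemma~\ref{lem:no_ganging_up_d_tree} rules out ``ganging up'' of large displacements at a single vertex so that the same argument applies uniformly in $v$. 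Substituting $m = |\tau^*_n| = o_\bP(n)$ yields $L_n^k = o_\bP(n^{1/4})$ as $k \to \infty$, establishing~\eqref{eq:labels_tight}.
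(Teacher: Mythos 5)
Your overall skeleton -- forest decomposition off $\rT_n^k$, size-biased pick, truncation of displacements -- aligns with the paper's strategy, but there are several genuine gaps in the reduction, and the hardest part of the paper's proof is replaced by an assertion.

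The key issue is that your reduction to the \emph{qualitative} tail estimate $\limsup_m\p{\max_v|\ell(v)|\ge b\,m^{1/4}}\to 0$ as $b\to\infty$ is not sufficient. The union bound over the pendant forest $\mathrm{F}_n^k$ involves $\Theta_\bP(\sqrt{n})$ subtrees whose sizes follow a heavy-tailed ($\sim x^{-3/2}$) distribution, so to make $\sum_j\p{\|\breve R_{|\rT_{n,j}^k|}\|_\infty>\tfrac{\gamma}{2}n^{1/4}\mid \mathrm{F}_n^k}$ small you need a \emph{quantitative} polynomial bound of the form $\p{\|\breve R_{m,\delta}\|_\infty>\gamma m^{1/4}}\le A/\gamma^p$ \emph{uniformly in $m$, $n$, $\delta$}, with exponent $p>4$ (the paper takes $p=8$); substituting $\gamma\mapsto \tfrac{\gamma}{2}(n/m)^{1/4}$ then pulls out a factor $(m/n)^{p/4}$, and it is precisely because $p/4>1$ that $\E{\sum_j(|\rT_{n,j}^k|/n)^{p/4}}$ vanishes as $k\to\infty$ via Pitman's formula for the size-biased excursion length. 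Your ``trade for a single size-biased pick'' inequality, with an indicator $\mathbf{1}_{\{\Phi(\tau_n^*)>x\}}$ on the right but no $x$ on the left, does not encode this and cannot be corrected to a qualitative bound. Proving the uniform $A/\gamma^8$ estimate is where all the work is, and the paper does it by \emph{induction on the tree size} (Proposition~\ref{lem:max_tight}, proved in Section~\ref{sec:technical}): it decomposes at the root, applies the inductive hypothesis to the subtrees, and uses the Haas--Miermont estimate $\E{1-\sum_i(\Lambda_i^{(m)}/m)^2}=\Theta(m^{-1/2})$ (Lemma~\ref{lem:haas}) together with the technical moment bounds of Lemma~\ref{lem:technical} to close the recursion. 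Your proposed route -- ``pointwise estimate on $\p{|\ell(v_U)|\ge bm^{1/4}}$, then union bound plus maximal inequality along the spine'' -- is asserted rather than argued, and a naive union bound over all $m$ vertices needs per-vertex bounds of order $o(1/m)$, which requires more than a Chebyshev-type argument along a lineage of length $\Theta(\sqrt m)$; it would, in effect, re-derive the same kind of polynomial tail bound by other means.

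Two further issues. First, after truncating the displacements at level $n^{1/4-\delta}$ they are no longer globally centered, and the resulting drift $\E{Y^{n,\delta}_{\bar\xi,U_{\bar\xi}}}\cdot H_n(t)$ must be shown to be $o_\bP(n^{1/4})$; this requires the explicit rate $|\E{Y^{n,\delta}_{\bar\xi,U_{\bar\xi}}}|=O(n^{-5/12+5\delta/3})$ of Lemma~\ref{lem:moments_truncation} and the choice of $\delta$ small, which is entirely absent from your plan. Second, the measure-change machinery (Proposition~\ref{prop:measure_change_basic}, Lemma~\ref{lem:conv_measure_change_basic}) that you cite governs the size-biased reordering arising from the bijective line-breaking construction, not the ancestral lineage of a uniformly chosen vertex in $\rT_m$; the latter is a different object and needs a spine decomposition or a separate argument. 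Your proposal also places Lemma~\ref{lem:no_ganging_up_d_tree} in the typical-displacement part of the argument, but the paper uses it for the \emph{mid-range} displacements (Proposition~\ref{prop:ancestral_rel}): the truncation into typical/mid-range/large and the treatment of the three regimes separately is not a cosmetic choice but the mechanism by which the quantitative estimate becomes uniform in $n$.
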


Under [\ref{a1}], we have that
\[
\left(\frac{H_n(nt)}{\sqrt{n}}\right)_{0 \le t \le 1} \convdist \left(\frac{2}{\sigma} \mathbf{e}_t\right)_{0 \le t \le 1}
\]
in $\mathbf{C}([0,1],\R)$ and so (\ref{eq:height_tight}) holds. It follows that we only need to prove (\ref{eq:labels_tight}).

Let us immediately observe that the vertices of the tree $\rT_n$ either belong to $\rT_n^k$ or belong to a subtree hanging off $\rT_n^k$. In Proposition~\ref{prop:fdds}, we showed the convergence of the spatial locations along the subtree $\rT_n^k$ to those given by a Brownian motion indexed by $\cT^k$.
This has the consequence that for values $s,t \in [U_{(i)}^{n,k}-1,U_{(i+1)}^{n,k}-1]$ such that both corresponding vertices lie in $\rT_n^k$, we have that $|R_n(s) - R_n(t)|$ is bounded above by the maximum modulus $\Upsilon_i^{n,k}$ of an increment of the location process along the path from $U_{(i)}^{n,k}$ to $U_{(i+1)}^{n,k}$ in $\rT_n^k$. 
Moreover, this upper bound converges in distribution on rescaling to the analogous quantity in the limit tree, which has the same distribution as the maximum modulus $\Upsilon_i^k$ of an increment of $\beta$ times a Brownian motion run for time $D_i^k$, where $D_i^k$ is the distance between the $i$th and $(i+1)$st leaves of $(2/\sigma) \cT^k$ in planar order. 
We thus have that
\[
\max_{0 \le i \le k} \Upsilon_i^k \eqdist \beta \sqrt{\frac{2}{\sigma}} \max_{0 \le i \le k} \sup_{s,t \in [U_{(i)}^k, U_{(i+1)}^k]} |\mathbf{r}_s - \mathbf{r}_t|.
\]
But
\[
\max_{0 \le i \le k} \left(U_{(i+1)}^k - U_{(i)}^k\right) \convas 0
\]
as $k \to \infty$ and so, since $\mathbf{r}$ is uniformly continuous, we may deduce that for any $\gamma > 0$,
\begin{equation} \label{eq:displ}
\lim_{k \to \infty} \lim_{n \to \infty} \p{\max_{0 \le i \le k} \Upsilon_i^{n,k} > \gamma n^{1/4}} = \lim_{k \to \infty} \p{\max_{0 \le i \le k}\Upsilon_i^k > \gamma} = 0.
\end{equation}

For values $s,t \in [U_{(i)}^{n,k}, U_{(i+1)}^{n,k}]$ for some $0 \le i \le k$ such that at least one of the corresponding vertices does not lie in $T_n^k$, we may bound $|R_n(s) - R_n(t)|$ by $\Upsilon_i^{n,k}$ plus twice the maximum modulus of the difference in spatial location between the parent in $\rT_n^k$ of the root of a pendant subtree and some other vertex inside the tree. We have already dealt with $\Upsilon_i^{n,k}$, and so it remains to deal with the pendant subtrees. Before we can do so, we need to do some truncation of the displacements.

Fix $\gamma > 0$ and $\delta \in (0,1/4)$. We will consider three ``restrictions'' of the branching random walk $\bT_n = (\rT_n, Y)$, which we denote by $\bT_{n,\delta} = (\rT_n, Y_{n,\delta}),$ $\bT_{n,\delta}^\gamma = (\rT_n, Y_{n,\delta}^\gamma)$, and $\bT_n^\gamma = (\rT_n, Y_n^\gamma)$. These branching random walks capture the ``typical'', ``mid-range'', and ``large'' spatial displacements in $\bT_n$.
\begin{enumerate}
\item {\bf (typical displacements):} $Y_{n,\delta} = (Y^{(v)}_{n,\delta}, v\in v(\rT_n)\setminus \partial \rT_n)$ is such that for $v\in v(\rT_n)\setminus \partial \rT_n$,
\[Y_{n,\delta}^{(v)} = Y^{(v)}\I{\|Y^{(v)}\|_\infty \le n^{1/4-\delta}}. \]

\item {\bf (mid-range displacements):} $Y_{n,\delta}^\gamma = (Y^{\gamma, (v)}_{n,\delta}, v\in v(\rT_n)\setminus \partial \rT_n)$ is such that for all $v\in v(\rT_n)\setminus \partial \rT_n$,
\[Y_{n,\delta}^{\gamma, (v)} = Y^{(v)}\I{n^{1/4 - \delta} < \|Y^{(v)}\|_\infty \le  \gamma n^{1/4}}.\]

\item {\bf (large displacements):} $Y_{n}^\gamma = (Y^{\gamma, (v)}_{n}, v\in v(\rT_n)\setminus \partial \rT_n)$ is such that for $v\in v(\rT_n)\setminus \partial \rT_n$,
\[Y_{n}^{\gamma, (v)} = Y^{(v)}\I{\|Y^{(v)}\|_\infty >  \gamma n^{1/4}}. \]
\end{enumerate}
For $v\in v(\rT_n)\setminus \partial \rT_n$, the vectors $Y_{n,\delta}^{(v)}, Y_{n,\delta}^{\gamma, (v)}, Y_n^{\gamma, (v)}$ are all of length $c(v, \rT_n)$; however, in what follows we will not refer to their individual entries.

Let $R_{n,\delta}$, $R_{n,\delta}^\gamma$, and $R_{n}^\gamma$ denote the functions encoding the spatial locations of the branching random walks $\bT_{n,\delta}$, $\bT_{n,\delta}^\gamma$, and $\bT_n^\gamma$, respectively. Then,  for all $n$ large enough so that $n^{1/4 - \delta} \le \gamma n^{1/4}$, 
\[
R_n = R_{n,\delta} + R_{n,\delta}^\gamma + R_n^\gamma.
\]
By the triangle inequality, for all $\gamma > 0$, we then have 
\begin{align}
& \max_{0 \le i \le k} \sup_{s,t \in [U_{(i)}^{n,k}-1, U_{(i+1)}^{n,k}-1]}|R_n(s) - R_n(t)|
\notag \\
& \le \max_{0 \le i \le k} \sup_{s,t \in [U_{(i)}^{n,k}-1, U_{(i+1)}^{n,k}-1]}|R_{n,\delta}(s) - R_{n,\delta}(t)| + 2 \|R_{n,\delta}^{\gamma}\|_{\infty} + 2 \|R_n^{\gamma}\|_{\infty}.\label{eq:triangleineq}
\end{align}
We deal with each of these three terms separately.

\subsection{Large and mid-range displacements}

Under assumption [\ref{a2}], we show that the probability that there is a displacement in $\bT_n$ with modulus exceeding $\gamma n^{1/4}$ goes to zero, so that the contribution of the large displacements is negligible.
\begin{prop}\label{prop:large_tight}
   For all $\gamma > 0$,  as $n\rightarrow \infty$,
\[\p{\|R_{n}^\gamma\|_\infty > \gamma n^{1/4}} = o(1).\] 
\end{prop}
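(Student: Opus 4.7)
The approach is to show that, with probability tending to $1$, no vertex of $\rT_n$ produces a displacement vector with $L^\infty$-norm larger than $\gamma n^{1/4}$; once this is established, $Y_n^{\gamma,(v)} \equiv 0$ for every $v$, and hence $R_n^\gamma \equiv 0$ as well, which is considerably stronger than what the proposition asks for.

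First I would observe that if $\|Y^{(v)}\|_\infty \le \gamma n^{1/4}$ for every $v \in v(\rT_n) \setminus \partial\rT_n$, then each $Y_n^{\gamma,(v)}$ vanishes by definition of the truncation, so that $R_n^\gamma \equiv 0$. Therefore
\begin{align*}
\p{\|R_n^\gamma\|_\infty > \gamma n^{1/4}} \le \p{\exists\, v \in v(\rT_n) \setminus \partial\rT_n : \|Y^{(v)}\|_\infty > \gamma n^{1/4}}.
\end{align*}
Let $D^n = (D_1^n,\ldots,D_n^n)$ denote the lexicographic degree sequence of $\rT_n$, which is distributed as a sequence of \iid $\mu$-variables conditioned on the sum being $n-1$ and is in particular exchangeable. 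Conditionally on $D^n$, the displacement vector at the $i$-th vertex has law $\nu_{D_i^n}$, and hence $\|Y^{(i)}\|_\infty \eqdist \max_{1 \le j \le D_i^n}|Y_{D_i^n,j}|$. A union bound over vertices together with exchangeability then gives
\begin{align*}
\p{\exists\, v : \|Y^{(v)}\|_\infty > \gamma n^{1/4}} \le n\,\p{\max_{1 \le j \le D_1^n}|Y_{D_1^n,j}| > \gamma n^{1/4}}.
\end{align*}

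Finally, I would control the law of $D_1^n$ via a Kemperman-style identity analogous to (\ref{eq:dn_dist}): writing $S_m = \sum_{i=1}^m \xi_i$ for the random walk with \iid $\mu$-distributed increments, one has
\begin{align*}
\p{D_1^n = k} = \mu_k\cdot \frac{\p{S_{n-1} = n-1-k}}{\p{S_n = n-1}}.
\end{align*}
Since $\mu$ is critical with finite variance and aperiodic support, the classical local central limit theorem yields a constant $C$ such that $\p{S_{n-1} = n-1-k}/\p{S_n = n-1} \le C$ uniformly in $k$ for all $n$ sufficiently large. Conditioning on $D_1^n$ and summing over $k$ then gives
\begin{align*}
\p{\|R_n^\gamma\|_\infty > \gamma n^{1/4}} \le Cn\,\p{\max_{1 \le j \le \xi}|Y_{\xi,j}| > \gamma n^{1/4}},
\end{align*}
and the tail assumption in [\ref{a2}] forces the right-hand side to be $Cn \cdot o((\gamma n^{1/4})^{-4}) = C\gamma^{-4}\cdot o(1) = o(1)$, as required. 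The only genuine step is the uniform boundedness of the Kemperman ratio, which is a routine consequence of the local central limit theorem and requires no moment assumption beyond the finite variance already in force; the third-moment hypothesis in [\ref{a2}] plays no role in this particular estimate.
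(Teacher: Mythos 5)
Your proposal is correct and takes a genuinely simpler route than the paper's. The paper shows $\p{M_n^\gamma > 0}\to 0$ by first bounding the \emph{unconditioned} count $\widetilde M_n^\gamma$ and total degree $\widetilde S_n^\gamma$ of bad vertices via a Chernoff bound, then transferring from the unconditioned to the conditioned model using the quantitative local CLT (Lemma~\ref{lem:local_move}); this is also where the bound $\max_i \xi_i = O_\bP(n^{1/3})$ from $\E{\xi^3}<\infty$ is invoked. Your approach instead applies the union bound directly to the conditioned degree sequence, absorbs the conditioning through the Kemperman ratio $\p{D_1^n=k}=\mu_k\,\p{S_{n-1}=n-1-k}/\p{S_n=n-1}\le C\mu_k$, and then uses the assumed tail estimate; this is a one-line first-moment argument that in fact proves the stronger statement $\p{R_n^\gamma\not\equiv 0}\to 0$. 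The uniform bound on the ratio indeed needs only the Gnedenko local CLT (finite variance and aperiodicity), so your observation that the third-moment hypothesis in [\ref{a2}] is not used in this particular step is a genuine and worthwhile remark; the paper packages this proposition together with machinery that it reuses in the heavier-tailed setting of Section~\ref{sec:hairy_tour} (Lemma~\ref{lem:big_jumps_conv}), which is the likely reason the published proof is more elaborate than necessary here.
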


\begin{proof}  Let 
\[
M_n^\gamma := \left|\left\{v\in v(\rT_n)\setminus \partial \rT_n~:~ \|Y^{(v)}\|_\infty > \gamma n^{1/4}\right\}\right|.
\]
It suffices to prove that $\p{M_n^\gamma > 0} \rightarrow 0$ as $n\rightarrow \infty$. To this end, let $\xi_1,\dots, \xi_n$ be \iid random variables with distribution $\mu$. By assumption [\ref{a2}], 
\[\p{\|Y_{\xi_1}\|_\infty > \gamma n^{1/4}} = o(n^{-1}).\]
Fixing $\varepsilon > 0$, this implies that for $n$ large enough, 
\begin{equation}\label{eq:stoch_dom}\widetilde{M}_n^\gamma := \left|\left\{i\in [n]~:~\|Y_{\xi_i}\|_\infty > \gamma n^{1/4}\right\}\right| \preceq_{st} \mathrm{Bin}\left(n, \frac{\varepsilon}{n}\right),\end{equation}
where $\preceq_{st}$ denotes stochastic domination. It follows from a Chernoff bound that there exists $c > 0$ such that for $n$ sufficiently large,
\[
\p{\widetilde{M}^\gamma_n \ge n^\varepsilon} \le \exp\left(-cn^\varepsilon\right).
\]

Since $\p{\sum_{i=1}^n\xi_i = n-1} = \Theta(n^{-1/2}),$ we obtain 
\begin{align*}
\p{M_n^\gamma \ge n^\varepsilon} & = \p{\widetilde M_n^\gamma \ge n^\varepsilon~|~ \sum_{i=1}^n \xi_i = n-1} \\
&\le \frac{\p{\widetilde{M}_n^\gamma \ge n^\varepsilon}}{\p{\sum_{i=1}^n\xi_i = n-1}} = O\left(n^{1/2}\exp(-cn^\varepsilon)\right).
\end{align*}

Let $\widetilde{S}^\gamma_n := \sum_{i=1}^n\xi_i\I{\|Y_{\xi_i}\|_\infty > \gamma n^{1/4}}$ and let $S^\gamma_n := \sum_{v\in v(\rT_n)}c(v,\rT_n)\I{\|Y^{(v)}\|_\infty > \gamma n^{1/4}}$. Since $\E{\xi^3} < \infty$, by \cite[Corollary 19.11]{janson2012simply}, both $\max_{1\le i \le n} \xi_i$ and $\max_{v\in v(\rT_n)}c(v,\rT_n)$ are $O_\bP(n^{1/3})$, and so
\begin{align}\label{eq:hairy_refer_2}
\p{{M}^\gamma_n\ge n^\varepsilon \text{ or }  S^\gamma_n \ge n^{1/3 + \varepsilon}} 
&\le o(1) + \p{S_n^\gamma \ge n^{1/3+\varepsilon} \cap \max_{v\in v(\rT_n)}c(v,\rT_n) \le n^{1/3}}\nonumber\\
&\le o(1) + \p{\sum_{v\in v(\rT_n)}\I{\|Y^{(v)}\|_\infty \ge \gamma n^{1/4}} > n^\varepsilon}\nonumber\\
&\le o(1) + \frac{\p{\mathrm{Bin}\left(n, \frac{\varepsilon}{n}\right) > n^\varepsilon}}{\p{\sum_{i=1}^n \xi_i = n-1}} = o(1),
\end{align}
where the final inequality holds by (\ref{eq:stoch_dom}).
Further, for $\xi_1^n, \xi_2^n,\dots$ independent random variables such that for each $i\ge 1$, $\xi_i^n$ is distributed as $\xi_i$ conditional on $\|Y_{\xi_i}\|_\infty < \gamma n^{1/4}$, we have that
\[
\p{\sum_{i=1}^n \xi_i = n-1~\bigg|~\widetilde{S}^\gamma_n, \widetilde{M}_n^\gamma} = \p{\widetilde{S}^\gamma_n + \sum_{i = 1}^{n-\widetilde{M}^\gamma_n}\xi_i^n = n-1 ~\bigg|~\widetilde{S}^\gamma_n, \widetilde{M}_n^\gamma}.
\]
Therefore,  
\begin{align*}
\p{M_n^\gamma>0} & =\p{0<M_n^\gamma < n^{\varepsilon},~ S^\gamma_n < n^{1/3+ \varepsilon}} + o(1)\\
& = \p{0<\widetilde M_n^\gamma < n^{\varepsilon},~ \widetilde S^\gamma_n < n^{1/3+ \varepsilon}\bigg| \sum_{i=1}^n \xi_i=n-1} + o(1)\\
&= \frac{\p{0<\widetilde M_n^\gamma < n^{\varepsilon},~ \widetilde S^\gamma_n < n^{1/3+ \varepsilon}, \sum_{i=1}^n \xi_i=n-1 }}{\p{\sum_{i=1}^n\xi_i = n-1}} + o(1)\\
&= \E{\frac{\p{\widetilde{S}^\gamma_n \hspace{-1pt}+\hspace{-1pt} \sum_{i = 1}^{n-\widetilde{M}^\gamma_n}\xi_i^n = n\hspace{-1pt}-\hspace{-1pt}1 \bigg|\widetilde{S}^\gamma_n, \widetilde{M}_n^\gamma}}{\p{\sum_{i=1}^n\xi_i = n-1}}\I{0<\widetilde M_n^\gamma < n^{\varepsilon},~ \widetilde S^\gamma_n < n^{1/3+ \varepsilon}}} + o(1).
\end{align*}
By a quantitative local limit theorem (see Lemma \ref{lem:local_move} in the appendix), we obtain that as $n\rightarrow \infty$
\[
\frac{\p{\sum_i^{n- m}\xi_i^n = n- 1 -  s}}{\p{\sum_{i=1}^n\xi_i = n-1}}\rightarrow 1,
\]
uniformly over all $0 < m < n^\varepsilon$ and $0< s < n^{1/3 + \varepsilon}$. It follows that 
\[
\p{M_n^\gamma>0} =\p{0<\widetilde M_n^\gamma < n^{\varepsilon},~ \widetilde S^\gamma_n < n^{1/3+ \varepsilon}} + o(1)\le \p{\widetilde M_n^\gamma >0} + o(1). 
\]
The result follows since for $n$ sufficiently large $\widetilde M_n^\gamma \preceq_{st}\mathrm{Bin}(n,\varepsilon/n)$, and $\varepsilon > 0$ is arbitrary.
 \end{proof}

Similarly to the large displacements, the mid-range displacements are also negligible on the order of $n^{-1/4}$. However, the argument required to prove this is more refined. 

\begin{prop}\label{prop:mid_range} 
Fix $\gamma > 0$. For $\delta > 0$ sufficiently small, as $n\rightarrow \infty$,
\[
\p{\|R^\gamma_{n,\delta}\|_\infty > \gamma n^{1/4}}=o(1).
\]
\end{prop}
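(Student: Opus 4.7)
The plan is to prove the stronger statement that, with high probability, the set
\[
\cB_n := \{v \in v(\rT_n)\setminus \partial\rT_n : n^{1/4-\delta} < \|Y^{(v)}\|_\infty \le \gamma n^{1/4}\}
\]
of vertices with a ``mid-range'' displacement vector forms an antichain in $\rT_n$. Each nonzero term in the ancestral sum defining $R_{n,\delta}^{\gamma}(\cdot)$ comes from an edge $(v,vi)$ with $v \in \cB_n$ and has absolute value at most $\|Y^{(v)}\|_\infty \le \gamma n^{1/4}$; so the antichain property forces at most one such term on every root-to-vertex path, giving $\|R_{n,\delta}^{\gamma}\|_\infty \le \gamma n^{1/4}$ deterministically.

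To bound $|\cB_n|$ I would mirror the opening of the proof of Proposition~\ref{prop:large_tight}. Assumption~[\ref{a2}] gives $\p{\|Y_\xi\|_\infty > n^{1/4-\delta}} = o(n^{-1+4\delta})$, so the corresponding unconditional count $\widetilde M_n := |\{i \in [n] : \|Y_{\xi_i}\|_\infty > n^{1/4-\delta}\}|$ (with \iid $\xi_1,\ldots,\xi_n$) is exactly $\mathrm{Bin}(n,q_n)$ with $nq_n = o(n^{4\delta})$. The Chernoff-type bound $\p{\mathrm{Bin}(n,q) \ge K} \le (enq/K)^K$ yields $\p{\widetilde M_n \ge K_n} = \exp(-\omega(\log n))$ for $K_n := n^{4\delta+2\varepsilon}$ and any fixed $\varepsilon>0$; dividing by $\p{\sum_i \xi_i = n-1} = \Theta(n^{-1/2})$ in the usual manner yields $\p{|\cB_n| > K_n} = o(1)$. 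I would complement this with the two standard a priori bounds valid under $\E{\xi^3}<\infty$: $\Delta_n := \max_{v\in v(\rT_n)} c(v,\rT_n) \le \Delta^* := n^{1/3+\varepsilon}$ with high probability (Corollary~19.11 of Janson, already used in the proof of Proposition~\ref{prop:large_tight}) and $H_n := \max_{v\in v(\rT_n)} |v| \le H^* := n^{1/2+\varepsilon}$ with high probability.

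The final ingredient is Lemma~\ref{lem:no_ganging_up_d_tree}. Crucially, conditional on $D^n = \bd$ and $Y$, the set $\cB_n$ is a deterministic subset of $[n]$, while $\Pi_{D^n}$ remains uniform on $\cP_\bd$; so the lemma applies with the measurable marked set $\cB_n$ and $b=H^*$, and the approximation $1-(1-x)^b \le bx$ gives
\[
\p{\text{some pair in } \cB_n \text{ is ancestrally related in } \rT_n} \le \frac{K_n^2\, \Delta^*\, H^*}{n-1-H^*\Delta^*} + o(1) = O\!\left(n^{8\delta - 1/6 + O(\varepsilon)}\right),
\]
where the $o(1)$ absorbs the probabilities of the bad events $\{|\cB_n|>K_n\}\cup\{\Delta_n>\Delta^*\}\cup\{H_n>H^*\}$ (and the fact that any ancestrally related pair is at distance $\le H_n \le H^*$ on the good event). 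For $\delta$ and $\varepsilon$ small enough, e.g.\ $\delta<1/48$, this is $o(1)$, establishing the antichain property and hence the proposition.

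The main obstacle is the three-way interplay between $|\cB_n|$, $\Delta_n$, and $H_n$ in the bound of Lemma~\ref{lem:no_ganging_up_d_tree}: bounding $|\cB_n|$ by Markov alone would be too weak once the $n^{1/2}$ cost of the local CLT conditioning is paid, which forces us to extract a sub-exponential Chernoff tail for the unconditional count from~[\ref{a2}] in the same way as Proposition~\ref{prop:large_tight}. This interplay is what pins down the quantitative threshold on $\delta$.
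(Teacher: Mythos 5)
Your proposal is correct and follows essentially the same approach as the paper. The paper deduces Proposition~\ref{prop:mid_range} from Proposition~\ref{prop:ancestral_rel}, which is exactly the antichain statement you re-derive: with high probability no two vertices with $\|Y^{(v)}\|_\infty > n^{1/4-\delta}$ are ancestrally related, established by bounding $|\cB|$ (Lemma~\ref{lem:few_hairs}, whose proof is the same Chernoff argument you give), the maximum degree, and the height, and then invoking Lemma~\ref{lem:no_ganging_up_d_tree} exactly as you do; the only cosmetic difference is that you use $n^{\varepsilon}$-inflated bounds where the paper uses constant multiples of the typical scale, but both give the same threshold $\delta<1/48$.
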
 

To prove this proposition, we will require some further results pertaining to the positions of non-typical displacements in the branching random walk $\bT_n$. More specifically, we will need to study the law of the \emph{number} and \emph{positions} of the vertices $v\in v(\rT_n)\setminus \partial \rT_n$ such that $\|Y^{(v)}\|_\infty >  n^{1/4 - \delta}$, for fixed, small $\delta > 0$. The next lemma pertains to the number of such vertices. 

\begin{lem}\label{lem:few_hairs}
For $\delta > 0$ sufficiently small,
\begin{equation*}
\left|\left\{ v\in v(\rT_n)\setminus \partial \rT_n \text{ such that } \|Y^{(v)}\|_\infty >  n^{1/4 - \delta}\right\}\right| = o_\bP(n^{1/12}).
\end{equation*}
\end{lem}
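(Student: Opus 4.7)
The plan is to adapt the short Chernoff-based computation at the start of the proof of Proposition~\ref{prop:large_tight}, namely the estimate $\p{M_n^\gamma > x} \le \p{\widetilde{M}_n^\gamma > x}/\p{\sum_{i=1}^n \xi_i = n-1}$ coming from stochastic domination by a binomial. Fix $\delta \in (0, 1/48)$. Let $M_n$ denote the quantity in the statement, and write $\widetilde{M}_n = |\{i \in [n] : \|Y_{\xi_i}\|_\infty > n^{1/4-\delta}\}|$ for its analogue for the unconditioned walk with \iid $\xi_i \sim \mu$ and, given $\xi_i$, independent displacement vectors $Y_{\xi_i} \sim \nu_{\xi_i}$. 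Then $M_n$ has the conditional distribution of $\widetilde{M}_n$ given $\{\sum_{i=1}^n \xi_i = n-1\}$.

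By assumption [\ref{a2}], $\p{\|Y_\xi\|_\infty > n^{1/4-\delta}} = o(n^{-(1-4\delta)})$. Hence, for any $\varepsilon > 0$ and all $n$ large enough, $\widetilde{M}_n$ is stochastically dominated by $\mathrm{Bin}(n, \varepsilon n^{-(1-4\delta)})$, which has mean $\varepsilon n^{4\delta}$. Since $\delta < 1/48$ implies $4\delta < 1/12$, for every $\eta > 0$ and all $n$ large the threshold $\eta n^{1/12}$ exceeds this mean, and the multiplicative Chernoff bound $\p{\mathrm{Bin}(n,p) \ge a} \le (enp/a)^a$ (valid for $a \ge np$) gives
\[
\p{\widetilde{M}_n > \eta n^{1/12}} \le \left(\frac{e\varepsilon}{\eta}\, n^{4\delta - 1/12}\right)^{\eta n^{1/12}} \le \exp\!\left(-c\, n^{1/12} \log n\right)
\]
for some $c = c(\eta, \delta) > 0$ and all sufficiently large $n$.

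Since $\p{\sum_{i=1}^n \xi_i = n-1} = \Theta(n^{-1/2})$ by the local central limit theorem, the usual transfer to the conditioned tree gives
\begin{align*}
\p{M_n > \eta n^{1/12}} &\le \frac{\p{\widetilde{M}_n > \eta n^{1/12}}}{\p{\sum_{i=1}^n \xi_i = n-1}} \\
&= O\!\left(n^{1/2}\, \exp(-c\, n^{1/12} \log n)\right) = o(1),
\end{align*}
which yields $M_n = o_\bP(n^{1/12})$ for every $\delta \in (0, 1/48)$.

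The key technical point, and the only place where a quantitative hypothesis on $\delta$ enters, is the Chernoff step above: the condition $\delta < 1/48$ is chosen precisely so that the mean $\varepsilon n^{4\delta}$ of the dominating binomial is polynomially smaller than the threshold $\eta n^{1/12}$, and consequently the super-polynomial tail bound on $\widetilde{M}_n$ easily survives division by the $\Theta(n^{-1/2})$ normalising constant. In practice I would take $\delta$ strictly smaller than $1/48$ and, if needed, strictly smaller than any further upper bound on $\delta$ imposed by Proposition~\ref{prop:mid_range} and the subsequent tightness arguments.
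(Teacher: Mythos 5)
Your proof is correct and follows essentially the same route as the paper: stochastic domination of the unconditioned count by a binomial with mean $\Theta(n^{4\delta})$ (using [\ref{a2}] and the choice $\delta < 1/48$ so that $4\delta < 1/12$), a multiplicative Chernoff bound to obtain a super-polynomial tail, and then division by $\p{\sum_i\xi_i = n-1}=\Theta(n^{-1/2})$ to transfer to the conditioned tree. The only cosmetic difference is the exact form of the Chernoff estimate (the paper writes the threshold as $(1+t)$ times the binomial mean and quotes $O(\exp(-n^{4\delta}))$, whereas you use the $(enp/a)^a$ form to get $O(\exp(-cn^{1/12}\log n))$); both comfortably absorb the $n^{1/2}$ factor.
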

\begin{proof}
Let $\xi_1,\dots, \xi_n$ be \iid with distribution $\mu$. By [\ref{a2}]  there exists $C > 0$ such that $\p{\|Y_{\xi_1}\|_\infty >  n^{1/4 - \delta}} \le Cn^{-1+4\delta}$. It follows that 
\[ 
A_n := \left|\left\{i\in [n]~:~ \|Y_{\xi_i}\|_\infty >  n^{1/4 - \delta}\right\}\right| \preceq_{st} \mathrm{Bin}\left(n, Cn^{-1 + 4\delta}\right).
\]    
By a Chernoff bound, this implies that for $\delta \in (0, 1/48)$, and $n \ge 1$ sufficiently large, for any $\varepsilon >0$,
\begin{align*}
\p{ A_n > \varepsilon n^{1/12}}&\le \p{\mathrm{Bin}\left(n, Cn^{-1+4\delta}\right) > \varepsilon n^{1/12}} \\
&= \p{\mathrm{Bin}\left(n, Cn^{-1+4\delta}\right) > Cn^{4\delta}\left(1 + \left(\frac{\varepsilon}{C}n^{1/12 - 4\delta} - 1\right)\right)}\\
&= O\left(\exp(-n^{4\delta})\right),
\end{align*}
and so 
\begin{equation*}
\p{ A_n > \varepsilon n^{1/12}~\bigg|~\sum_{i=1}^n\xi_i = n-1} 
= O\left(n^{1/2}\exp(-n^{4\delta})\right)=o(1).\qedhere
\end{equation*}
\end{proof}

We say that two vertices $u,v\in \cU$ are \emph{ancestrally related} if either $u \prec v$ or $v\prec u$. The following lemma establishes that with high probability there are no ancestrally related vertices $u,v\in v(\rT_n)\setminus \partial \rT_n$ such that $\|Y^{(u)}\|_\infty \wedge \|Y^{(v)}\|_\infty >  n^{1/4 - \delta}$. 

\begin{prop}\label{prop:ancestral_rel}
For $\delta > 0$ sufficiently small, as $n\rightarrow \infty$, 
\[\p{\exists u,v\in \bT_n, u \prec v, \text{ such that } \|Y^{(u)}\|_\infty \wedge  \|Y^{(v)}\|_\infty >  n^{1/4 - \delta}} = o(1).\]
\end{prop}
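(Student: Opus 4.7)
The plan is to combine Lemma~\ref{lem:few_hairs} with Lemma~\ref{lem:no_ganging_up_d_tree}, the latter applied conditionally on the degree sequence $D^n$ together with the displacement vectors. The key structural observation is that, given $D^n$, one may realise $\bT_n$ in two independent steps: first sample independent vectors $(Y^{[v]})_{v \in [n]}$ with $Y^{[v]} \sim \nu_{D^n_v}$; then, independently, sample $\Pi_{D^n}\in_\cU\cP_{D^n}$, form $\rT_n = B(\Pi_{D^n})$, and attach the pre-sampled $Y^{[v]}$ to vertex $v \in [n]$. Under this coupling, $\cB := \{v \in [n] : \|Y^{[v]}\|_\infty > n^{1/4-\delta}\}$ is measurable with respect to $(D^n, (Y^{[v]})_v)$, which is independent of $\Pi_{D^n}$ given $D^n$. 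So, conditional on $(D^n, \cB)$, the tree $\rT_n$ is uniform on $\cL_{D^n}$, and Lemma~\ref{lem:no_ganging_up_d_tree} applies with $\bd = D^n$. The event in the proposition is exactly $\{B_{D^n} < \infty\}$ in the notation of that lemma, and in particular forces $B_{D^n} \le \max_{0 \le i \le n} H_n(i)$.

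Given $\eta > 0$, I would first fix $M > 0$ large enough that, for all $n$ sufficiently large,
\[
\p{\max_{v \in [n]} D^n_v > Mn^{1/3}} < \eta/4 \qquad \text{and} \qquad \p{\max_{0 \le i \le n} H_n(i) > Mn^{1/2}} < \eta/4;
\]
the first bound uses $\E{\xi^3} < \infty$ via \cite[Corollary~19.11]{janson2012simply} (as invoked in the proof of Proposition~\ref{prop:large_tight}), and the second follows from the convergence $H_n(n\cdot)/\sqrt{n} \convdist (2/\sigma)\mathbf{e}$. Next, Lemma~\ref{lem:few_hairs} lets me pick $\epsilon \in (0,1)$ as small as I wish---in particular with $2\epsilon^2 M^2 < \eta/4$---while still having $\p{|\cB| > \epsilon n^{1/12}} < \eta/4$ for all $n$ large.

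On the intersection of these three good events, Lemma~\ref{lem:no_ganging_up_d_tree} applied with $K = \epsilon n^{1/12}$, $\Delta = Mn^{1/3}$, and $b = Mn^{1/2}$ gives, via $(1-x)^b \ge 1 - bx$,
\[
\p{B_{D^n} \le Mn^{1/2} \,\big|\, D^n, (Y^{[v]})_v} \le \frac{K^2 b \Delta}{n - 1 - b\Delta} \le 2\epsilon^2 M^2 \, n^{\,1/6 + 1/2 + 1/3 - 1} = 2\epsilon^2 M^2
\]
for all $n$ large, since $b\Delta = M^2 n^{5/6}$ makes the denominator at least $n/2$. A union bound combining this with the three bad events bounds the target probability by $\eta$ for $n$ large, and since $\eta > 0$ is arbitrary the claim follows.

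The main obstacle is the tight balance of exponents: the natural parameter choices make $K^2 b \Delta / n$ exactly of constant order $n^{0}$, so the bound of Lemma~\ref{lem:no_ganging_up_d_tree} does not decay by itself. The argument only closes because Lemma~\ref{lem:few_hairs} provides the strengthened estimate $|\cB| = o_\bP(n^{1/12})$ (rather than merely $O_\bP(n^{1/12})$), so that the constant prefactor can be taken arbitrarily small.
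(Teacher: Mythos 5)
Your proof is correct and follows essentially the same route as the paper: decompose into bad events controlling the maximum degree (via $\E{\xi^3}<\infty$), the maximum height, and $|\cB|$, then apply Lemma~\ref{lem:no_ganging_up_d_tree} on the complementary good event. You correctly identified the crucial point that the exponents balance to $n^0$, so that $|\cB|=o_\bP(n^{1/12})$ (and not merely $O_\bP$) is what closes the argument—this is exactly the reasoning the paper uses, with the only cosmetic difference being that you make the coupling (sample displacements first, then $\Pi_{D^n}$) and the algebraic simplification $K(1-(1-x)^b)\le K^2 b\Delta/(n-1-b\Delta)$ a bit more explicit.
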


The proof of this proposition relies on an application of the technical lemma, Lemma \ref{lem:no_ganging_up_d_tree}, which we prove in the appendix.

\begin{proof}
We generate $\rT_n$ using the bijective construction $B(\Pi_{D^n})$ described in Section \ref{sec:encodings}. Sample the displacement vectors $(Y_{D_i^n})_{1\le i \le n}$ with $Y_{D_i^n} = (Y_{D_i^n, 1}, \dots, Y_{D_i^n, D_i^n})$, and let 
\[
\cB = \left\{i\in [n]~:~ \|Y_{D_i^n}\|_\infty > n^{1/4 - \delta}\right\}.
\]
Then
\begin{align}\label{eq:ancest_hairs}
& \p{\exists u,v\in \bT_n, u \prec v, \text{ such that } \|Y^{(u)}\|_\infty \wedge \|Y^{(v)}\|_\infty >  n^{1/4 - \delta}} \nonumber\\
& \quad \le \p{\max_{0 \le i \le n} H_n(i) > t\sqrt{n}}+ \p{|\cB| > sn^{1/12}} + \p{\max_{1\le i \le n}D_i^n > Tn^{1/3}}\nonumber\\
& \quad\quad + \p{\left\{\max_{1\le i \le n}D_i^n \le Tn^{1/3},|\cB| \le sn^{1/12}\right\} \cap \left\{\exists i,j \in \cB: i\prec j, d_n(i,j) \le t\sqrt{n}\right\}}
\end{align}
where, for vertices $i,j\in v(\rT_n)$, $d_n(i,j)$ denotes the length of the shortest path between $i$ and $j$ in $B(\Pi_{D^n}) \eqdist \rT_n$. Take $t$ and $T$ large enough so that $\p{\max_{0 \le i \le n} H_n(i) > t\sqrt{n}} < \varepsilon/4$, and $\p{\max_{1\le i \le n}D_i^n > Tn^{1/3}} < \varepsilon/4$. (The latter inequality is possible by \cite[Corollary 19.11]{janson2012simply} since $\E{\xi^3} < \infty$.) By Lemma \ref{lem:few_hairs}, we may take $n$ large enough so that $\p{|\cB| > sn^{1/12}} < \varepsilon/4$. Therefore, for $t,T$ and $n$ sufficiently large, (\ref{eq:ancest_hairs}) is at most 
\[
\frac{3\varepsilon}{4} + \p{\left\{\max_{1\le i \le n}D_i^n \le Tn^{1/3},|\cB| \le sn^{1/12}\right\} \cap \left\{\exists i,j \in \cB: i\prec j, d_n(i,j) \le t\sqrt{n}\right\}}.
\]
Then by Lemma \ref{lem:no_ganging_up_d_tree} with $\bd = D^n$, $K \le sn^{1/12}$, $\Delta \le Tn^{1/3}$, and $b = t\sqrt{n}$, for $n$ sufficiently large, this is at most
\begin{align*}
&\frac{3\varepsilon}{4} + sn^{1/12}\left(1 - \left(1- \frac{sTn^{-7/12}}{1 - n^{-1} - tTn^{-1/6}}\right)^{t\sqrt{n}}\right).
\end{align*}
The result follows by taking $s > 0$ small enough and $n$ large enough so that 
\[
sn^{1/12}\left(1 - \left(1- \frac{sTn^{-7/12}}{1 - n^{-1} - tTn^{-1/6}}\right)^{t\sqrt{n}}\right) < \frac{\varepsilon}{4},
\]
which is possible since
\[
sn^{1/12}\left(1 - \left(1 - \frac{sTn^{-7/12}}{1 - n^{-1} - tTn^{-1/6}}\right)^{t\sqrt{n}}\right) < s^2Tt\frac{1}{1 - n^{-1} - tTn^{-1/6}},
\]
for $n$ large enough because $(1 - x)^r > 1- rx$ for $x < 1$ and $r > 1$.
\end{proof}

\begin{lem}\label{lem:pendant_small} 
Let $v^*(\rT_n) \subseteq v(\rT_n)\setminus \partial \rT_n$ be the set of vertices $v\in v(\rT_n)\setminus \partial \rT_n$ such that $\|Y^{(v)}\|_\infty \le  n^{1/4 - \delta}$ and there exists an ancestor $u\prec v$ with $\|Y^{(u)}\|_\infty >  n^{1/4 - \delta}$. For $\delta > 0$ sufficiently small, $v^*(\rT_n) = o_\bP(n)$.

\end{lem}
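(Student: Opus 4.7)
The plan is to combine the disjointness of large-displacement subtrees from Proposition~\ref{prop:ancestral_rel} with a first-moment bound controlled via a spinal decomposition. Write $\cN := \{v\in v(\rT_n)\setminus\partial\rT_n : \|Y^{(v)}\|_\infty > n^{1/4-\delta}\}$. By Proposition~\ref{prop:ancestral_rel}, with probability tending to $1$ no two elements of $\cN$ are ancestrally related, so the subtrees $\{T_v : v\in\cN\}$ rooted at elements of $\cN$ are pairwise disjoint. On this event each $w\in v^*(\rT_n)$ is a strict descendant of a unique element of $\cN$, so
\[
|v^*(\rT_n)| \;\le\; \sum_{v\in\cN}(|T_v|-1),
\]
and by Markov's inequality it suffices to show that $\E{\sum_{v\in v(\rT_n)}|T_v|\,\I{\|Y^{(v)}\|_\infty > n^{1/4-\delta}}} = o(n)$.

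Conditioning on $\rT_n$ and using the independence of the displacement vectors, this expectation equals $\E{\sum_v |T_v|\, f(c(v,\rT_n))}$, where $f(d) := \p{\|Y_d\|_\infty > n^{1/4-\delta}}$. Expanding $|T_v|$ as $|\{w : v\preceq w\}|$ and swapping the sums gives
\[
\E{\textstyle\sum_v |T_v|\,f(c(v,\rT_n))} \;=\; n\,\E{\textstyle\sum_{v\preceq W} f(c(v,\rT_n))}
\]
for $W$ chosen uniformly in $v(\rT_n)$, so the task reduces to showing that the expected number of ``bad'' vertices along the ancestral line of a uniform vertex is $o(1)$.

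For this I would use the spinal decomposition for size-conditioned Bienaymé trees: $H_n(W) = O_\bP(\sqrt n)$, and the degrees along the ancestral line of $W$ are asymptotically i.i.d.\ with the size-biased law $\bar\mu$. Quantitatively, this should follow from Lemma~\ref{lem:sb_order} together with the change-of-measure estimate in Proposition~\ref{prop:measure_change_basic}, yielding a bound of the form
\[
\E{\textstyle\sum_{v\preceq W} f(c(v,\rT_n))} \;\le\; C\sqrt n\,\E{f(\bar\xi)} \;=\; C\sqrt n\,\E{\xi\,\I{\|Y_\xi\|_\infty > n^{1/4-\delta}}}.
\]
H\"older's inequality with exponents $3$ and $3/2$, using $\E{\xi^3}<\infty$ and [\ref{a2}], then gives
\[
\E{\xi\,\I{\|Y_\xi\|_\infty > n^{1/4-\delta}}} \;\le\; \bigl(\E{\xi^3}\bigr)^{1/3}\,\p{\|Y_\xi\|_\infty > n^{1/4-\delta}}^{2/3} \;=\; o\!\bigl(n^{-2/3+8\delta/3}\bigr).
\]
Combining, $\E{\sum_{v\preceq W} f(c(v,\rT_n))} = o(n^{-1/6+8\delta/3})$, which is $o(1)$ as soon as $\delta < 1/16$, completing the plan.

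The main obstacle is turning the spinal decomposition into a quantitative, non-asymptotic bound for finite $n$, since $H_n(W)$ and the degrees along $W$'s ancestral line are dependent. I expect this to be handled via the size-biased ordering $(\widehat V_i,\widehat D_i^n)_{i\ge 1}$: Proposition~\ref{prop:measure_change_basic} allows the first $O(\sqrt n)$ of these degrees to be treated as essentially i.i.d.\ $\bar\mu$ after accounting for the change-of-measure factor $\Theta^n$, while the tail contribution from later vertices on the spine is controlled using $\E{\xi^3}<\infty$ together with the tail bound on $H_n(W)$.
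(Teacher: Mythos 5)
Your high-level strategy (reduce to a first-moment bound over the subtree sizes of large-displacement vertices, then to a spinal sum) is a reasonable alternative route, and the Hölder numerics at the end are correct. But the central step,
\[
\E{\textstyle\sum_{v\preceq W} f(c(v,\rT_n))} \;\le\; C\sqrt n\,\E{f(\bar\xi)},
\]
is not a consequence of the results you cite, and the gap you flag as a ``main obstacle'' is a genuine one. Proposition~\ref{prop:measure_change_basic} gives an exact change of measure for $(\widehat D_1^n,\dots,\widehat D_m^n)$ with $m$ \emph{fixed}, weighted by the random factor $\Theta^n$, and Lemma~\ref{lem:conv_measure_change_basic} shows only that $\Theta^n\convprob 1$ and is uniformly integrable when $m=\Theta(\sqrt n)$. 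To turn $\E{\sum_{i<J_1^n} f(\widehat D_i^n)}$ into $C\sqrt n\,\E{f(\bar\xi)}$ you would need either an $L^\infty$ or $L^2$ bound on $\Theta^n$ that is uniform in $n$, or a joint control of the random spine length $J_1^n$ and the degrees along the spine; neither is provided, and the trivial bound $\I{J_1^n>k}\le 1$ loses too much: summing $\E{f(\widehat D_k^n)}$ over all $k\le n$ gives $O(n\cdot\p{\|Y_\xi\|_\infty>n^{1/4-\delta}})=o(n^{4\delta})$, which is not $o(1)$. Closing this would require essentially a new quantitative spinal estimate.

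The paper's route avoids this entirely and is considerably shorter. It uses the exchangeability of the labels $1,\dots,n$ in the line-breaking construction: $\E{|v^*(\rT_n)|}=n\,\p{1\in v^*(\rT_n)}$, so it suffices to show $\p{1\in v^*(\rT_n)}=o(1)$. This in turn follows by adding vertex $1$ to the set $\cB$ in the proof of Proposition~\ref{prop:ancestral_rel} (by Lemma~\ref{lem:few_hairs}, $|\cB|$ remains $o_\bP(n^{1/12})$) and invoking Lemma~\ref{lem:no_ganging_up_d_tree} verbatim: with high probability no element of $\cB$ is ancestrally related to any other within distance $t\sqrt n$, and combined with the usual height bound this shows vertex~$1$ has no large-displacement ancestor. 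So the machinery needed is already in place, and no new moment bound on spinal sums is required. Your detour through $\sum_{v\in\cN}|T_v|$ and then the spine both introduces the gap and is unnecessary, since the exchangeability reduction gets you to the single-vertex event $\{1\in v^*(\rT_n)\}$ directly.
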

\begin{proof}
The result holds if and only if the probability that a uniformly random vertex in $v\in v(\rT_n)$ is ancestrally related to a vertex $u\in v(\rT_n)\setminus \partial \rT_n$ with $\|Y^{(u)}\|_\infty > n^{1/4-\delta}$ is $o_\bP(1)$. By exchangeability, this holds if and only if the probability that vertex $1$ is ancestrally related to a vertex  $u\in v(\rT_n)\setminus \partial \rT_n$ with $\|Y^{(u)}\|_\infty > n^{1/4-\delta}$ is $o_\bP(1)$. To prove this we may adapt the proof of Proposition \ref{prop:ancestral_rel} by including vertex $1$ in the set $\cB$. Then by Lemma \ref{lem:few_hairs}, $|\cB|=o_\bP(n^{1/12})$ still holds, and so the proof carries over verbatim.
\end{proof}

As an immediate consequence of Lemma \ref{lem:pendant_small}, with probability $1-o(1)$ none of the increments of the branching random walk $\bT_{n,\delta}^\gamma$ are ancestrally related with high probability, and Proposition \ref{prop:mid_range} follows.

\subsection{Typical displacements} 
In this subsection we will prove the following proposition.
\begin{prop}\label{prop:tightness_typ}
For all $\gamma > 0$, 
\[
\lim_{k\rightarrow \infty}\limsup_{n\rightarrow\infty}\p{\max_{0 \le i \le k} \sup_{s,t \in [U_{(i)}^{n,k}-1, U_{(i+1)}^{n,k}-1]}|R_{n,\delta}(s) - R_{n,\delta}(t)| > \gamma n^{1/4}} = 0.
\]
\end{prop}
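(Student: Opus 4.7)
The plan is to reduce the problem to controlling the maximum modulus of spatial displacement within the pendant subtrees hanging off the spine $\rT_n^k$. For any $s,t$ in a common interval $[U^{n,k}_{(i)}-1, U^{n,k}_{(i+1)}-1]$, the triangle inequality (applied at the attachment point on the spine of each pendant subtree containing $v_{s+1}$ or $v_{t+1}$) gives
\[
|R_{n,\delta}(s) - R_{n,\delta}(t)| \le \Upsilon^{n,k}_i + 2 \max_{T' \in \mathrm{F}_n^k} M_{n,\delta}(T'),
\]
where $M_{n,\delta}(T'):= \max_{v \in v(T')} |\ell_{n,\delta}(v) - \ell_{n,\delta}(\rho(T'))|$ is the maximum spatial displacement inside the subtree $T'$ relative to its root. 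The spine contribution $\Upsilon^{n,k}_i$ is handled exactly as in \eqref{eq:displ}, so it suffices to show that $\lim_{k\to\infty} \limsup_{n\to\infty} \p{\max_{T' \in \mathrm{F}_n^k} M_{n,\delta}(T') > \gamma n^{1/4}} = 0$.

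To this end, I would follow the Haas--Miermont size-biasing strategy. By the Markov branching property of size-conditioned Bienaymé trees, conditional on the attachment structure and the vector of sizes $(|T'|, T' \in \mathrm{F}_n^k)$, the subtrees are independent $(\mu,\nu)$-branching random walks of the prescribed sizes, with displacements that are simply the truncations of fresh copies at level $n^{1/4-\delta}$. Writing $\tilde T^*$ for a size-biased pick from $\mathrm{F}_n^k$ (so that $\p{\tilde T^* = T' \mid \mathrm{F}_n^k} = |T'|/(n - |\rT_n^k|)$), the union bound gives
\[
\p{\max_{T' \in \mathrm{F}_n^k} M_{n,\delta}(T') > \gamma n^{1/4}} \le \E{(n - |\rT_n^k|) |\tilde T^*|^{-1} \I{M_{n,\delta}(\tilde T^*) > \gamma n^{1/4}}}.
\]
This converts the problem into a quantitative tail bound on $M(\bT_m)$ for a single size-conditioned $(\mu,\nu)$-BRW $\bT_m$ (with the truncated displacements), together with control on the size distribution of the bush $\tilde T^*$.

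The main technical input is the explicit tail bound: there exist $p>4$ and $C>0$ such that
\[
\p{M_{n,\delta}(\bT_m) > y} \le C m (m^{1/4}/y)^p
\]
uniformly for $m \le n$. To prove it, I would apply Proposition~\ref{prop:measure_change_basic} to rewrite displacements along a branch as a random walk with i.i.d.\ increments distributed as $Y_{\bar\xi,U_{\bar\xi}}\I{|Y_{\bar\xi,U_{\bar\xi}}|\le n^{1/4-\delta}}$ under a change of measure of bounded expectation (Lemma~\ref{lem:conv_measure_change_basic}). Doob's $L^p$ maximal inequality controls the max along a fixed branch of length $\le H_{\max}(\bT_m) = O_\bP(m^{1/2})$, and a union bound over the $m$ possible endpoints together with moderate-deviation control on $H_{\max}$ yields the bound; the truncation at $n^{1/4-\delta}$ ensures that $\E{|Y_{\bar\xi, U_{\bar\xi}}|^p\I{|Y_{\bar\xi, U_{\bar\xi}}|\le n^{1/4-\delta}}}$ grows only polynomially in $n$, which is enough for $p$ close to (but greater than) $4$.

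Substituting back, the Haas--Miermont bound becomes
\[
\p{\max_{T' \in \mathrm{F}_n^k} M_{n,\delta}(T') > \gamma n^{1/4}} \le C \gamma^{-p} n^{1-p/4}\, \E{|\tilde T^*|^{p/4 - 1}}.
\]
The results of Sections~\ref{sec:fdds} imply that $|\tilde T^*|/n$ converges in distribution (uniformly integrably, for moments of order $p/4-1$ close to $1$) to the size-biased bush mass in the line-breaking construction of $\cT_{2\mathbf{e}}$, which tends to $0$ in probability as $k \to \infty$. Hence $n^{1-p/4}\E{|\tilde T^*|^{p/4-1}} = \E{(|\tilde T^*|/n)^{p/4-1}} \to 0$ first as $n \to \infty$ and then as $k \to \infty$, giving the desired conclusion. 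The main obstacle will be making the tail bound on $M(\bT_m)$ genuinely uniform in $m$ with a value of $p$ strictly larger than $4$, while simultaneously ensuring the moment control on $|\tilde T^*|/n$ holds at exponent $p/4-1$; both constraints are delicate but are afforded, respectively, by the truncation level $n^{1/4-\delta}$ and by the third moment assumption on $\xi$ built into~\eqref{a2}.
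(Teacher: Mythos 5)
Your high-level decomposition (spine contribution $\Upsilon^{n,k}_i$ plus twice the maximum excursion within a pendant subtree, reduced via the Haas--Miermont size-biasing identity to a tail bound on the maximum label of a single size-conditioned BRW, then control on a size-biased bush) is the same skeleton the paper follows. But there are two places where your plan departs from the paper, one of which is a genuine gap.

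The gap is that you do not re-center. After truncating at $n^{1/4-\delta}$, the displacement $Y^{n,\delta}_{\bar\xi,U_{\bar\xi}}$ has nonzero mean, so the partial sums you describe --- ``i.i.d.\ increments distributed as $Y_{\bar\xi,U_{\bar\xi}}\I{|Y_{\bar\xi,U_{\bar\xi}}|\le n^{1/4-\delta}}$'' --- are not a martingale, and Doob's $L^p$ maximal inequality does not apply as stated. The paper handles this explicitly: it writes $R_{n,\delta}(t) \eqdist \breve R_{n,\delta}(t) + \E{Y^{n,\delta}_{\bar\xi,U_{\bar\xi}}}\cdot H_n(t)$, proves in Lemma~\ref{lem:moments_truncation} that $|\E{Y^{n,\delta}_{\bar\xi,U_{\bar\xi}}}| = O(n^{-5/12+5\delta/3})$, and uses $\|H_n\|_\infty=O_\bP(\sqrt n)$ to show the drift term is $o_\bP(n^{1/4})$ for $\delta$ small, after which the argument is carried out entirely for the centered process $\breve R_{n,\delta}$. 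Without this step you cannot even state a moment bound along a branch.

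Beyond that, the mechanism you propose for the key tail bound is genuinely different from the paper's and, as sketched, leaves the hard parts to good faith. You use Doob/Rosenthal along a fixed branch, a union bound over the $m$ endpoints, and moderate deviations for the height, yielding a bound of the shape $Cm(m^{1/4}/y)^p$ with $p$ slightly above $4$. The paper instead proves Proposition~\ref{lem:max_tight} --- a clean bound $\p{\|\breve R_{m,n,\delta}\|_\infty > \gamma n^{1/4}}\le A\gamma^{-8}$ uniformly in $m\le n$, with no extraneous factor of $m$ --- by an induction on the tree size, bootstrapping the bound through the root decomposition with Lemma~\ref{lem:haas} (controlling $1-\sum_i(\Lambda_i^{(m)}/m)^2$), Lemma~\ref{lem:technical}, and a quantitative LCLT (Lemma~\ref{lem:tv_distance}) that is where the third-moment assumption on $\xi$ enters. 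Your branch-by-branch route faces three technical hurdles that you only gesture at: (i) the degrees along a branch of a size-conditioned Bienaymé tree are not i.i.d.\ size-biased copies, and the measure change of Proposition~\ref{prop:measure_change_basic}/Lemma~\ref{lem:conv_measure_change_basic} is only proved on the scale $\Theta(\sqrt n)$ for the reordered degree sequence, not along a fixed genealogical path; (ii) the random height $H_{\max}(\bT_m)$ has to be cut off at a level that simultaneously has small probability of being exceeded \emph{and} allows the Doob/Rosenthal bound to beat $\gamma^{-p}$, and doing this uniformly in $m$ (including very small $m$) is exactly where the paper's induction saves work; (iii) the branch-point displacements (i.e.\ the steps at repeat vertices) are not distributed as $Y_{\bar\xi,U_{\bar\xi}}$, and there are $\Theta_\bP(\sqrt m)$ of them, so you cannot simply absorb them as the paper does for a fixed number $k$ via Lemma~\ref{lem:attach_points}. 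None of these is fatal in principle, but you have not dealt with any of them, and you are replacing the paper's inductive bootstrap --- which is the main technical novelty of Section~\ref{sec:technical} --- with an argument whose uniformity in $m$ is precisely the thing that still needs to be established.
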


Notice that $R_{n,\delta}$ is equal in distribution to the function encoding the spatial locations of the branching random walk with underlying tree $\rT_n$ and displacements $Y^{n,\delta} = ( Y^{n,\delta,(v)}, v\in v(\rT_n)\setminus \partial \rT_n)$ such that if $v\in v(\rT_n)\setminus \partial \rT_n$ has $k$ children, then $Y^{n,\delta,(v)}$ has the same distribution as 
\[
Y_k^{n,\delta} = (Y_{k,1}^{n,\delta},\dots Y_{k,k}^{n,\delta}) 
:= \begin{cases}
(Y_{k,1},\dots, Y_{k,k}) & \text{if } \max_{1\le j \le k} |Y_{k,j}| \le n^{1/4 - \delta}, \\
(0,\dots,0) & \text{else.}
\end{cases}
\]
This branching random walk is \emph{not} globally centered, and in particular has ``global'' drift $\E{Y^{n,\delta}_{\bar\xi, U_{\bar\xi}}}$. Thus for all $t\in [0,n]$ we have that
\[
R_{n,\delta}(t) \eqdist \breve{R}_{n,\delta}(t) + \E{Y^{n,\delta}_{\bar\xi, U_{\bar\xi}}}\cdot H_n(t),
\]
where $\breve{R}_{n,\delta}:[0,n]\rightarrow \R$ is the function encoding the spatial locations of the globally centered branching random walk $(\rT_n, \breve{Y}^{n,\delta})$ for which, conditionally on $\rT_n$, $\breve{Y}^{n,\delta}= (\breve{Y}^{n,\delta,(v)}, v\in v(\rT_n)\setminus \partial \rT_n)$ is a vector of independent random variables, such that if $v\in v(\rT_n)\setminus \partial \rT_n$ has $k$ children then $\breve{Y}^{n,\delta,(v)}$ has the same distribution as 
\[
\breve{Y}_k^{n,\delta} := Y_k^{n,\delta} - \E{Y_{\bar\xi, U_{\bar\xi}}^{n,\delta}} = Y_k^{n,\delta} - \E{Y_{\bar\xi, U_{\bar\xi}}\I{\|Y_{\bar\xi}\|_\infty \le n^{1/4 - \delta}}}.
\]
Moreover, by the triangle inequality, for all $\gamma > 0$,
\begin{align*}
& \lim_{k\rightarrow \infty}\limsup_{n\rightarrow\infty}\p{\max_{0 \le i \le k} \sup_{s,t \in [U_{(i)}^{n,k}-1, U_{(i+1)}^{n,k}-1]}|R_{n,\delta}(s) - R_{n,\delta}(t)| > \gamma n^{1/4}} \\
& \le \lim_{k\rightarrow \infty}\limsup_{n\rightarrow\infty}\p{\max_{0 \le i \le k} \sup_{s,t \in [U_{(i)}^{n,k}-1, U_{(i+1)}^{n,k}-1]}|\breve R_{n,\delta}(s) - \breve R_{n,\delta}(t)| > \frac{\gamma}{2} n^{1/4}}\\
& \qquad + \limsup_{n\rightarrow \infty}\p{\left|\E{Y_{\bar\xi, U_{\bar\xi}}^{n,\delta}}\right| \cdot \|H_n\|_\infty > \frac{\gamma}{4} n^{1/4}}.
\end{align*} 

\begin{lem}\label{lem:moments_truncation}
It holds that, as $n\to \infty$, 
\[
\left| \E{Y^{n,\delta}_{\bar\xi, U_{\bar\xi}}}\right| =  O(n^{-5/12 +5\delta/3})\quad\text{ and }\quad\V{Y^{n,\delta}_{\bar\xi, U_{\bar\xi}}}\rightarrow \beta^2.
\]
\end{lem}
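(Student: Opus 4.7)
The plan is to bound the mean using the global centering hypothesis [\ref{a1}], transfer the tail decay in [\ref{a2}] from $\|Y_\xi\|_\infty$ to $\|Y_{\bar\xi}\|_\infty$ via Hölder's inequality (using $\E{\xi^3} < \infty$), and then integrate; the variance statement will then follow by dominated convergence.

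For the first bound, the starting observation is that $\E{Y_{\bar\xi, U_{\bar\xi}}} = 0$ by [\ref{a1}], so
\[
\left|\E{Y^{n,\delta}_{\bar\xi, U_{\bar\xi}}}\right| = \left|\E{Y_{\bar\xi, U_{\bar\xi}} \I{\|Y_{\bar\xi}\|_\infty > n^{1/4-\delta}}}\right| \le \E{\|Y_{\bar\xi}\|_\infty \I{\|Y_{\bar\xi}\|_\infty > n^{1/4-\delta}}}.
\]
To control the tail of $\|Y_{\bar\xi}\|_\infty$, I would apply Hölder's inequality with exponents $3$ and $3/2$:
\[
\p{\|Y_{\bar\xi}\|_\infty > t} = \sum_{k \ge 1} k\mu_k \p{\|Y_k\|_\infty > t} \le (\E{\xi^3})^{1/3} \p{\|Y_\xi\|_\infty > t}^{2/3},
\]
where the final step uses $\p{\|Y_k\|_\infty > t}^{3/2} \le \p{\|Y_k\|_\infty > t}$. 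By [\ref{a2}] the right-hand side is $o(t^{-8/3})$, so the layer-cake formula
\[
\E{\|Y_{\bar\xi}\|_\infty \I{\|Y_{\bar\xi}\|_\infty > M}} = M\,\p{\|Y_{\bar\xi}\|_\infty > M} + \int_M^\infty \p{\|Y_{\bar\xi}\|_\infty > t}\, dt
\]
is $o(M^{-5/3})$. Taking $M = n^{1/4-\delta}$ gives $\left|\E{Y^{n,\delta}_{\bar\xi, U_{\bar\xi}}}\right| = o(n^{-5/12 + 5\delta/3})$, which in particular is $O(n^{-5/12 + 5\delta/3})$.

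For the variance, I would write
\[
\V{Y^{n,\delta}_{\bar\xi, U_{\bar\xi}}} = \E{Y^2_{\bar\xi, U_{\bar\xi}} \I{\|Y_{\bar\xi}\|_\infty \le n^{1/4-\delta}}} - \left(\E{Y^{n,\delta}_{\bar\xi, U_{\bar\xi}}}\right)^{\!2}.
\]
The squared mean vanishes by the first part. For the remaining term, since $\|Y_{\bar\xi}\|_\infty < \infty$ almost surely (being the maximum of $\bar\xi < \infty$ real-valued random variables), the indicator tends a.s.\ to $1$; the integrand is dominated by $Y^2_{\bar\xi, U_{\bar\xi}}$, which has expectation $\beta^2 < \infty$ by [\ref{a1}], so dominated convergence gives the limit $\beta^2$.

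The main subtlety is the Hölder transfer: the tail condition in [\ref{a2}] is stated for $\xi$-distributed degrees, while size-biasing to $\bar\xi$ inserts an extra factor of $k$ in the sum. The third-moment assumption $\E{\xi^3} < \infty$ is exactly what is needed to absorb this factor while losing only a $2/3$ power in the tail exponent; this still yields $5/3$ units of integrability after integrating the tail, matching the claimed exponent $-5/12 + 5\delta/3$ after substituting $M = n^{1/4-\delta}$.
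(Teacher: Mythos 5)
Your proof is correct and follows essentially the same route as the paper's: both transfer the tail bound from $\xi$ to $\bar\xi$ via H\"older's inequality to get $\p{\max_{1 \le i \le \bar\xi}|Y_{\bar\xi,i}| > t} \le \E{\xi^3}^{1/3}\p{\max_{1 \le i \le \xi}|Y_{\xi,i}| > t}^{2/3}$, integrate the tail with the split $M\p{\cdot > M} + \int_M^\infty \p{\cdot > t}\,dt$ to reach the exponent $-5/12 + 5\delta/3$, and conclude the variance convergence by dominated convergence. The paper obtains this as the $\eta = 0$ specialization of Lemma~\ref{lem:moments_truncation_move} in the appendix; the only cosmetic difference is that you carry the $o(y^{-4})$ from~[\ref{a2}] all the way through and hence get the marginally stronger $o(n^{-5/12+5\delta/3})$.
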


This result is a special case of Lemma \ref{lem:moments_truncation_move}, which is stated and proved in the appendix. Since $\|H_n\|_\infty = O_\bP(\sqrt{n}),$ Lemma \ref{lem:moments_truncation} implies that for $\delta$ sufficiently small,
\[
\limsup_{n\rightarrow \infty}\p{\left|\E{Y_{\bar\xi, U_{\bar\xi}}^{n,\delta}}\right| \cdot \|H_n\|_\infty > \gamma n^{1/4}} = 0.
\]
It follows that to prove Proposition \ref{prop:tightness_typ}, it suffices to prove that for all $\gamma > 0$, 
\[
\lim_{k\rightarrow \infty}\limsup_{n\rightarrow\infty}\p{\max_{0 \le i \le k} \sup_{s,t \in [U_{(i)}^{n,k}-1, U_{(i+1)}^{n,k}-1]}|\breve R_{n,\delta}(s) - \breve R_{n,\delta}(t)| > \gamma n^{1/4}} = 0.
\]

As discussed above, we need to deal with the maximum modulus of the difference in spatial location (for the branching random walk $\mathbf{\rT}_{n,\delta}$) between the parent of the root of a pendant subtree and a vertex of that subtree. There are 
\[
c(\rT_n^{k}):= \sum_{v\in V(\rT_n^{k})}(c(v, T_n) - 1) + 1
\]
edges in $\rT_n$ with one endpoint in $\rT_n^{k}$ and another in $\rT_n\setminus \rT_n^{k}$. Conditionally on $\rT_n^{k}$, if we remove all such edges we obtain a Bienaymé($\mu$) forest conditioned to have $n-|V(\rT^{(k)}_n)|$ vertices and $c(\rT_n^{k})$ trees. We denote this forest by $\mathrm{F}_n^{k} = (\rT_{n,j}^{k})_{j \ge 1}$, where the trees are listed in decreasing order of size, and $|\rT_{n,j}^{k}| = 0$ for $j > c(\rT_n^{k})$. Write $\|\breve R_{n,\delta}(\rT_{n,j}^k)\|_{\infty}$ for maximum modulus of the difference in spatial location between the root and any other vertex of $\rT_{n,j}^k$. 

The trees $(\rT_{n,j}^k)_{j \ge 1}$ are independent Bienaymé trees, conditioned on their sizes. Therefore, conditionally on $\mathrm{F}_n^k$, we have $\|\breve R_{n,\delta}(\rT_{n,j}^k)\|_\infty \eqdist \|\breve R_{|\rT^k_{n,j}|,\delta}\|_\infty$. Moreover, displacements on the tree $\rT_{n,j}^k$ (from the branching random walk $\bT_{n,\delta}$) depend on those in other parts of $\rT_n$ only through the displacement  $\breve Z_j^{n,\delta}$ of the root of $\rT_{n,j}^k$ away from its parent in $\rT_n^k$; see Figure~\ref{fig:tree_tight}.

\begin{figure}
    \centering
    \includegraphics[scale = 0.8]{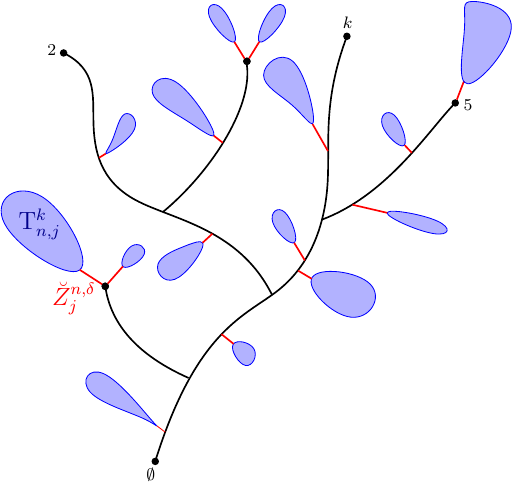}
    \caption{In black, the tree $\rT_n^{k}$. In blue, the forest $\mathrm{F}_n^{k} = (\rT_{n,j}^{k})_{j \ge 1}$. The root of tree $\rT_{n,j}^{k}$ is displaced $\breve Z^{n,\delta}_j$ away from its parent in $\rT_n^{k}$.}
    \label{fig:tree_tight}
\end{figure}

It follows that
\begin{align*}
& \max_{0 \le i \le k} \sup_{s,t \in [U_{(i)}^{n,k}-1, U_{(i+1)}^{n,k}-1]}|\breve R_{n,\delta}(s) - \breve R_{n,\delta}(t)| \\
& \qquad \le \max_{0 \le i \le k} \Upsilon_i^{n,k} + 2 \max_{1 \le j \le c(\mathrm{F}_n^{k})}\left(\left\|\breve R_{n,\delta}(\rT_{n,j}^k)\right\|_\infty + | \breve Z^{n,\delta}_j|\right).
\end{align*}
Consequently, using (\ref{eq:displ}), in order to prove Proposition~\ref{prop:tightness_typ}, it is sufficient to prove that for $\gamma > 0$,
\[
\lim_{k \to \infty} \limsup_{n \to \infty} \p{\max_{1\le j \le c(\mathrm{F}_n^{k})}\left(\left\|\breve R_{n,\delta}(\rT_{n,j}^k)\right\|_\infty + | \breve Z^{n,\delta}_j|\right) \ge \gamma n^{1/4}} = 0.
\]
The proof requires two key ingredients: (1) a scaling limit for the sizes of the trees in $\mathrm{F}_n^{k}$; (2) quantitative control on the tail of $\|\breve R_{n,\delta}\|_\infty$. We begin by establishing (1).

\begin{prop}\label{prop:comp_trees}
As $n\rightarrow \infty$,
\begin{equation}\label{eq:conv_components}
\frac{c(\rT_n^{k})}{\sigma\sqrt{n}} \convdist J_k,
\end{equation}
where $J_k$ is $\mathrm{Gamma}(k,1/2)$ distributed. Jointly with this convergence, we have
\begin{equation} \label{eq:conv_sizes}
\frac{\sigma}{n - |V(\rT_n^{k})|}(|\rT_{n,j}^{k}|, j \ge 1) \convdist (|\gamma^{k}_j|, j \ge 1),
\end{equation}
where, conditionally on $J_k$, $(|\gamma^{k}_j|, j \ge 1)$ lists the sizes of the excursions above the past minimum of a Brownian motion stopped on first hitting $-J_k$, listed in decreasing order.
\end{prop}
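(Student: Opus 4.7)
The plan is to combine the bijective description of $\rT_n^k$ from Section~\ref{sec:encodings} with size-biased reordering to prove~(\ref{eq:conv_components}), and then, for~(\ref{eq:conv_sizes}), to apply a conditioned-random-walk invariance principle to the \L{}ukasiewicz path encoding of the forest $\mathrm F_n^k$. For~(\ref{eq:conv_components}), I first observe directly from the line-breaking construction that $|V(\rT_n^k)|=J_k^n$ exactly: path $P^{(1)}$ contributes $J_1^n$ new vertices, and each subsequent $P^{(i)}$ adds $J_i^n-J_{i-1}^n$ new vertices (its attachment endpoint already lies in the tree). Rewriting
\[c(\rT_n^k)=\sum_{v\in V(\rT_n^k)}c(v,\rT_n)-|V(\rT_n^k)|+1,\]
the problem reduces to estimating the sum of degrees in $\rT_n$ over vertices of $\rT_n^k$. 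The $J_k^n-k$ internal vertices of $\rT_n^k$ are precisely $\widehat V_1(\Pi_{D^n}),\ldots,\widehat V_{J_k^n-k}(\Pi_{D^n})$ by Lemma~\ref{lem:sb_order}, so their degrees in $\rT_n$ are the first $J_k^n-k$ entries of $\widehat D^n$; the $k$ leaves $m_1,\ldots,m_k$ contribute $O_\bP(1)$ in total by exchangeability. Using Proposition~\ref{prop:measure_change_basic} and Lemma~\ref{lem:conv_measure_change_basic} to replace these degrees by i.i.d.\ copies of $\bar\xi$ (with $\E{\bar\xi}=1+\sigma^2$) up to a Radon--Nikodym factor that converges to $1$ in $L^1$, together with a functional law of large numbers (Lemma~\ref{lem:serflln}) and Proposition~\ref{prop:conv_cuts_attach}, yields
\[\sum_{v\in V(\rT_n^k)}c(v,\rT_n)=(1+\sigma^2)\,J_k^n+o_\bP(\sqrt n),\]
so $c(\rT_n^k)=\sigma^2 J_k^n+o_\bP(\sqrt n)$, and~(\ref{eq:conv_components}) follows from $\sigma J_k^n/\sqrt n\convdist J_k$.

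For~(\ref{eq:conv_sizes}), I work conditionally on $\rT_n^k$: then $\mathrm F_n^k$ is a uniform Bienaym\'e($\mu$) forest with $m:=c(\rT_n^k)$ trees and $N:=n-|V(\rT_n^k)|$ vertices, and I encode it by its concatenated \L{}ukasiewicz path $W^N$, a random walk with $N$ i.i.d.\ steps distributed as $\xi-1$ (mean $0$, variance $\sigma^2$) conditioned on first hitting $-m$ at time $N$. The tree sizes $|\rT_{n,j}^k|$ are exactly the lengths of the excursions of $W^N$ strictly above its running minimum. Part~(1) gives $m/\sqrt N\convdist\sigma J_k$ jointly with Proposition~\ref{prop:conv_cuts_attach}, and $N/n\convprob 1$; I then invoke an invariance principle of Duquesne--Le~Gall / Kortchemski / Marzouk type in the regime $m\asymp\sqrt N$ to deduce that $(W^N(\lfloor Nt\rfloor)/\sqrt N)_{0\le t\le 1}$ converges in distribution to the appropriate scaling of a standard Brownian motion hitting $-J_k$ first at time~$1$, which by Brownian time-change can be identified, after the $\sigma/N$ normalisation, with the excursions of a standard Brownian motion stopped at first hitting $-J_k$. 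Continuity of the sorted-excursion-length functional at Brownian sample paths (whose local time at the running minimum has no atoms) then delivers the joint convergence of the rescaled, sorted tree sizes to $(|\gamma_j^k|)_{j\ge 1}$.

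Joint convergence of~(\ref{eq:conv_components}) with~(\ref{eq:conv_sizes}) follows by conditioning on the pair $(J_k^n,c(\rT_n^k))$: part~(1) and Proposition~\ref{prop:conv_cuts_attach} pass to the limit of the first coordinate, and the forest invariance principle, applied conditionally, together with the conditional independence of $\mathrm F_n^k$ from $\rT_n^k$ given $(N,m)$, gives the second coordinate. The main obstacle is establishing the conditioned invariance principle in the critical regime $m=\Theta(\sqrt N)$: the ``first hitting time equals $N$'' conditioning is not directly accessible from Donsker's theorem, but can be handled by combining Kemperman's formula (\ref{eq:dn_dist}) with the quantitative local central limit theorem (Theorem~\ref{lem:local_move}) to control the Radon--Nikodym derivative against the unconditioned walk, followed by standard excursion-theoretic continuity of the Skorokhod reflection functional.
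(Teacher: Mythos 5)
Your argument for~\eqref{eq:conv_components} is essentially the paper's, with slightly different bookkeeping. Both work on the high-probability event $\cG_{n,k}(T)\cap\{J_k^n\le T\sqrt n\}$, on which $\rT_n^k$ is the union of $P^{(1)},\ldots,P^{(k)}$ and $|V(\rT_n^k)|=J_k^n$ (your ``exactly'' holds only there, a point you should make explicit), express $c(\rT_n^k)$ through the $J_k^n-k$ internal degrees $\widehat D_1^n,\ldots,\widehat D_{J_k^n-k}^n$ plus an $O_\bP(1)$ contribution from the $k$ leaves, and feed this into Proposition~\ref{prop:conv_degrees_disps} together with Proposition~\ref{prop:conv_cuts_attach}. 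Your identity $c(\rT_n^k)=\sum_v c(v,\rT_n)-|V(\rT_n^k)|+1$ and the paper's count of pendant subtrees and branch points are the same arithmetic.

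The genuine divergence is~\eqref{eq:conv_sizes}. The paper invokes two existing results on component sizes of conditioned Bienaym\'e forests with $\Theta(\sqrt N)$ trees (\cite[Proposition 1.4]{tao_lei}, \cite[Lemma 11]{prescribed_degrees}), while you propose to re-derive the forest invariance principle via the concatenated \L ukasiewicz path conditioned on its first passage time, controlling the conditioning with Kemperman's formula and the quantitative local CLT, and then extracting sorted excursion lengths by a continuity argument. That is essentially the proof of the cited lemmas, so your route is self-contained but considerably longer; the paper's citation route is simpler and cleaner, and avoids the delicate continuity-at-Brownian-paths step you gloss over (sorted excursion lengths are only an a.s.-continuous functional, and matching the finitely many discrete components to the countably many continuous excursions requires some care).

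One step in your sketch is not correct as stated and needs repair. You claim that the limiting first-passage bridge of duration $1$ to $-J_k$ ``can be identified by Brownian time-change with the excursions of a standard Brownian motion stopped at first hitting $-J_k$''. Time-changing a Brownian motion stopped at $T_{J_k}$ onto $[0,1]$ yields a first-passage bridge to $-J_k/\sqrt{T_{J_k}}$, not to $-J_k$, and the two families of excursion lengths sum to $1$ and to $T_{J_k}$ respectively. The object you actually obtain in the limit is the first-passage bridge of duration $1$ to $-J_k$ (equivalently, the stopped Brownian motion \emph{conditioned} on $T_{J_k}=1$), and that is also what makes the size-biased pick formula $\widehat{|\gamma^k|}\eqdist B^2/(J_k^2+B^2)$ used in the proof of Proposition~\ref{prop:tightness_typ} come out correctly. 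A complete argument should work directly with the first-passage bridge and not pass through an unconditioned stopped Brownian motion.
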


\begin{proof}
By Skorokhod's representation theorem we may work in a probability space where the convergence in Proposition \ref{prop:conv_cuts_attach} holds almost surely so that in particular as $n\rightarrow \infty$, $\sigma n^{-1/2} J_k^n \convas J_k$. 

Let $T > 0$ and recall the event \[\cG_{n,k}(T) = \left\{\left\{\widehat V_1(\Pi_{D^n}),\dots, \widehat V_{\lfloor T\sqrt{n}\rfloor}(\Pi_{D^n})\right\}\cap \{1,\dots,k\} = \emptyset, N_n \ge \lfloor T\sqrt{n}\rfloor\right\}\] from Lemma \ref{lem:conv_good_event}. 
On $\cG_{n,k}(T)\cap \{J_k^n \le \lfloor T\sqrt{n}\rfloor\}$, the tree $\rT_n^k$ is precisely the subtree of $\rT_n$ spanned by the root and the vertices $1,\dots,k$. Therefore, on $\cG_{n,k}(T)\cap \{J_k^n \le \lfloor T\sqrt{n}\rfloor\}$,
\[
\frac{V(\rT_n^k)}{\sqrt{n}} = \frac{J_k^n}{\sqrt{n}}.
\]
Since $T>0$ is arbitrary and $\sigma n^{-1/2}J_k^n \convas J_k$ we obtain that $n - |V(\rT_n^{k})| = n - o_\bP(n)$. 
Hence, we are essentially considering a forest of Bienaymé trees conditioned to have $n$ vertices. We now need to show that the number of trees in such a forest is $\sim \sigma \sqrt{n}J_k$. We note that on the event $\cG_{n,k}(T) \cap \{J_k^n \le T\sqrt{n}\}$, there are $\sum_{i=1}^{J_k^n - k}(\widehat D_i^n - 1) + \sum_{i=1}^k D_{J_i^n}^n$ subtrees of $\rT_n$ whose roots have a parent in $\rT_n^k$, and $(k-1)$ branch points in $\rT_n^k$. Therefore, for $s \ge 0$,
\begin{align}\label{eq:num_comps}
&\p{\frac{c(\rT_n^k)}{\sigma \sqrt{n}} \ge s, ~\cG_{n,k}(T), ~J_k^n \le T\sqrt{n}}\nonumber\\ 
&\quad= \p{\frac{1}{\sigma\sqrt{n}}\left(\sum_{i=1}^{J_k^n - k}(\widehat D^n_i - 1) + \sum_{i=1}^k D_{J_i^n}^n - (k-1)\right) \ge s,~\cG_{n,k}(T),~J_k^n \le T\sqrt{n}}.
\end{align}
Since $\sigma n^{-1/2}J_k^n \convas J_k$, by Proposition \ref{prop:conv_degrees_disps},
\begin{align*}\frac{1}{\sigma\sqrt{n}}\sum_{i=1}^{J_k^n - k}(\widehat D_i - 1) \convdist J_k.
\end{align*}
Combining this with Lemma \ref{lem:conv_good_event} and Proposition \ref{prop:conv_cuts_attach}, we obtain that (\ref{eq:num_comps}) converges to 
\[
\p{J_k > s, J_k \le \sigma T}.
\]
Then (\ref{eq:conv_components}) follows as $T>0$ is arbitrary. The scaling limit in (\ref{eq:conv_sizes}) now follows from \cite[Proposition 1.4]{tao_lei} and \cite[Lemma 11]{prescribed_degrees}.
\end{proof}

The control on $\|\breve R_{n,\delta}\|_\infty$ needed to prove Proposition \ref{prop:tightness_typ} is given by the next proposition. 

\begin{prop}\label{lem:max_tight}
There exists $A > 0$ such that for all $\gamma > 0$, $\delta\in (0,1/4)$, and $n \ge 1$,
\[
\p{\|\breve R_{n,\delta}\|_\infty > \gamma n^{1/4}} \le \frac{A}{\gamma^8}.
\]
\end{prop}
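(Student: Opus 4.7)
The proof is a moment method. By Markov's inequality applied at power $8$,
\[
\p{\|\breve R_{n,\delta}\|_\infty > \gamma n^{1/4}} \le \gamma^{-8} n^{-2} \E{\|\breve R_{n,\delta}\|_\infty^8},
\]
so it suffices to establish the uniform moment estimate $\E{\|\breve R_{n,\delta}\|_\infty^8} \le A n^2$ for some constant $A$ independent of $n \ge 1$ and $\delta \in (0,1/4)$.

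I would first establish a conditional bound at a single vertex. Given $\rT_n$, for each $v \in v(\rT_n)$ the random variable $\breve R_{n,\delta}(v)$ is a sum of $H_n(v)$ independent centred random variables, each bounded in absolute value by $2n^{1/4-\delta}$ and with variance at most $2\beta^2$ (this variance bound is a consequence of Lemma \ref{lem:moments_truncation}). Rosenthal's inequality then yields
\[
\E{\breve R_{n,\delta}(v)^8 \mid \rT_n} \le C_1 \beta^8 H_n(v)^4 + C_2 H_n(v) n^{2-8\delta},
\]
and since the restriction of $\breve R_{n,\delta}$ to the ancestors of a leaf $\ell$ is a martingale in depth, Doob's $L^8$ maximal inequality extends this estimate to $\sup_{u \preceq \ell}|\breve R_{n,\delta}(u)|^8$.

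To avoid the loss of a factor $|\partial\rT_n|$ that would be incurred by a union bound over leaves, the plan is to decompose $\rT_n$ at its root and set up a recursion for $g(m) := \E{\|\breve R_{m,\delta}\|_\infty^8}$. Writing $D := \widehat D_1^n$, $\breve Y_1,\ldots,\breve Y_D$ for the centred-truncated displacements at the root, and $\rT^{(1)},\ldots,\rT^{(D)}$ with sizes $N_1,\ldots,N_D$ for the root subtrees --- which, conditionally on their sizes, are independent conditional Bienaymé trees --- we have
\[
\|\breve R_{n,\delta}\|_\infty \le \max_{1 \le i \le D}\left(|\breve Y_i| + \|\breve R^{(i)}_{N_i,\delta}\|_\infty\right).
\]
Combining $(\max_i a_i)^8 \le \sum_i a_i^8$, the sharp Young inequality $(a+b)^8 \le (1+\varepsilon)^7 a^8 + C_\varepsilon b^8$, and the pointwise bound $|\breve Y_i| \le 2 n^{1/4-\delta}$, this yields the recursion
\[
g(n) \le (1+\varepsilon)^7 \,\E{\sum_{i=1}^D g(N_i)} + C_\varepsilon n^{2-8\delta}\, \E{D}.
\]
Kemperman's formula \eqref{eq:dn_dist}, combined with the local central limit theorem (which is the step requiring the third-moment assumption $\E{\xi^3} < \infty$), gives a strong control of the total variation between the laws of $\widehat D_1^n$ and of $\bar\xi$, in particular yielding $\E{D} = O(1)$ so that the second term is $O(n^{2-8\delta}) = O(n^2)$.

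The main obstacle is closing the recursion, since the largest subtree at the root of a conditioned Bienaymé tree typically has size $n - O(\sqrt n)$, making $\E{\sum_i N_i^2}/n^2 \to 1$ and ruling out a naive induction. The plan is to iterate the decomposition along the spine obtained by repeatedly descending into the largest root subtree: this spine has length $\Theta(\sqrt n)$, and the spatial displacement accumulated along it is controlled by the Rosenthal/Doob bound from the first step, contributing $O(n^2)$. The bushes hanging off the spine have typical sizes $O(\sqrt n)$ and collectively satisfy $\E{\sum_B |B|^2} = o(n^2)$, so applying the recursion only to these small bushes closes cleanly and produces the desired uniform bound $g(n) \le A n^2$.
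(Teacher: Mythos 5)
The core step in your proposal---the Rosenthal bound $\E{\breve R_{n,\delta}(v)^8\mid\rT_n}\le C_1\beta^8 H_n(v)^4 + C_2 H_n(v) n^{2-8\delta}$---is not valid in the globally centered setting, and this is precisely the difficulty the paper has to work around. Conditionally on $\rT_n$, the variance of the displacement from a vertex of degree $k$ to one of its children need not be bounded by a multiple of $\beta^2$: condition [\ref{a1}] only controls the \emph{average} $\E{Y^2_{\bar\xi,U_{\bar\xi}}}$ over the size-biased degree, and for a fixed degree $k$ the conditional variance can grow without bound (e.g.\ like $k^2$ for the height/\L ukasiewicz displacements $Y_{k,j}=\sigma-\frac{2}{\sigma}(k-j)$). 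Lemma~\ref{lem:moments_truncation}, which you invoke, is a statement about a size-biased average and does not give a per-degree bound. With only the truncation bound $\operatorname{Var}\le 4n^{1/2-2\delta}$, the first Rosenthal term becomes $O(H_n(v)^4 n^{2-8\delta})$, which after integrating $H_n(v)\asymp\sqrt n$ gives $n^{4-8\delta}$, far above the target $n^2$. Thus the single-lineage estimate, and with it the Doob step, fails.

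There is also a second, less local issue: your spine recursion is sketched but the claim that the bushes have sizes $O(\sqrt n)$ and that $\E{\sum_B|B|^2}=o(n^2)$ is only a heuristic. In fact, a macroscopic split at a spine vertex occurs with probability $\Theta(n^{-1/2})$, and correctly bookkeeping this contribution is exactly the content of the Haas--Miermont estimate $\E{1-\sum_i(\Lambda_i^{(m)}/m)^2}=\Theta(m^{-1/2})$ that the paper uses (Lemma~\ref{lem:haas}). The paper's actual proof sidesteps the conditional-variance obstruction entirely by working with one-sided tail probabilities $\p{\breve R^+_{m,n,\delta}>\gamma n^{1/4}}\le A/\gamma^8$ and running an induction over tree sizes $m\le n$ at the root decomposition, using exchangeability of the subtree sizes $(\Lambda_i^{(m)})_i$. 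After substituting the inductive hypothesis and applying $(1-u)^{-8}\le 1+8u+72u^2$ to the small ratio $u=\breve Y_i/(\gamma n^{1/4})$, the first-order and second-order error terms (controlled via Lemma~\ref{lem:technical}, which relies on the total variation bound $d_{\mathrm{TV}}(\widehat D_1^m,\bar\xi)=o(m^{-1/2})$ and hence on the local CLT with $\E{\xi^3}<\infty$) are absorbed by the Haas--Miermont gain for $\gamma$ large. This is a genuinely different mechanism from moment bounds along lineages, and I do not see how to repair the Rosenthal step without essentially recreating something like the paper's size-biased reweighting along the lineage. You would need to find a new idea there before the rest of the sketch becomes relevant.
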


The proof of this proposition is long and somewhat technical, so we postpone it until Section \ref{sec:technical}. 

\begin{proof}[Proof of Proposition \ref{prop:tightness_typ} assuming Proposition \ref{lem:max_tight}]
By Skorokhod's representation theorem, we may assume that we are working on a probability space where the convergence in Proposition \ref{prop:conv_cuts_attach} is almost sure. In particular, $\sigma n^{-1/2}J_k^n \convas J_k$ as $n\rightarrow \infty.$ 

As argued above, it remains to show that, for $\gamma > 0$,
\[
\lim_{k \to \infty} \limsup_{n \to \infty} \p{\max_{1\le j \le c(\mathrm{F}_n^{k})}\left(\left\|\breve R_{n,\delta}(\rT_{n,j}^k)\right\|_\infty + | \breve Z^{n,\delta}_j|\right) \ge \gamma n^{1/4}} = 0.
\]
Since $(\rT_{n,j}^k)_{1 \le j \le c(\mathrm{F}_n^k)}$ are independent Bienaym\'e($\mu$) trees conditionally on their sizes, we obtain
\begin{align*}
& \p{\max_{1\le j \le c(\mathrm{F}_n^{k})}\left(\left\|\breve R_{n,\delta}(\rT_{n,j}^k)\right\|_\infty + | \breve Z^{n,\delta}_j|\right) \ge \gamma n^{1/4}} 
    \notag \\
& = \E{\p{\max_{1\le j \le c(\mathrm{F}_n^{k})}\left(\left\|\breve R_{|\rT_{n,j}^{k}|,\delta}\right\|_\infty + | \breve Z^{n,\delta}_j|\right) \ge \gamma n^{1/4}~|~ \mathrm{F}^{k}_n, (\breve Z^{n,\delta}_{j})_{j \ge 1}}}\nonumber\\
&\le \E{\sum_{j = 1}^{c(\mathrm{F}_n^{k})}\p{\left\|\breve R_{|\rT_{n,j}^{k}|,\delta}\right\|_\infty \ge n^{1/4}(\gamma - |\breve Z^{n,\delta}_j|/n^{1/4})~|~  \mathrm{F}^{k}_n, (\breve Z^{n,\delta}_{j})_{j \ge 1}}} \\
& \le \E{\sum_{j = 1}^{\infty}\p{\|\breve R_{|\rT^{k}_{n,j}|}\|_\infty \ge \frac{\gamma n^{1/4}}{2}~\bigg|~\mathrm{F}_n^{k}}},
\end{align*}
for all $n$ sufficiently large, since $|\breve Z_j^{n,\delta}| \le n^{1/4 - \delta}$ for all $1 \le j \le c(\rT_k^n)$ and all $n \ge 1$. Applying Proposition \ref{lem:max_tight} to each of the conditional probabilities in the above sum, we obtain that the right-hand side is at most
\begin{equation}\label{eq:pre_lim_n}
\frac{2^8A}{\gamma^8} \E{\sum_{j = 1}^\infty \left(\frac{|\rT^{k}_{n,j}|}{n}\right)^{2}}
=\frac{2^8A}{\gamma^8}\E{\left(\frac{n - |V(\rT_n^{k})|}{n }\right)^2\cdot\frac{\widehat{| \rT^{k}|}}{n- |V(\rT_n^{k})|}},
\end{equation}
where $\widehat{| \rT^{k}|}$ is a size-biased pick from $( |\rT^{k}_{n,j}|)_{j \ge 1}.$ Clearly,
\[
\frac{n - |V(\rT_n^k)|}{n} \le 1.
\]
By Proposition \ref{prop:comp_trees}, as $n\rightarrow \infty$, $\widehat{| \rT^{k}|}/(n-|V(\rT_n^{k})|) \convdist \sigma^{-1}\widehat{|\gamma^{k}|}$ where $\widehat{|\gamma^{k}|}$ is a size-biased pick from $(|\gamma^{k}_j|, j \ge 1)$. By \cite[Section 8.1]{pitman2003poisson}, conditionally on $J_k$, 
\[
\widehat{|\gamma^{k}|}\eqdist \frac{B^2}{J_k^2 + B^2},
\]
where $B$ is a $N(0,1)$ random variable independent of $J_k$. Combining this with (\ref{eq:pre_lim_n}), we obtain that
\[
\limsup_{n\rightarrow\infty}  \p{\max_{1\le j \le c(\mathrm{F}_n^{k})}\left(\left\|\breve R_{n,\delta}(\rT_{n,j}^k)\right\|_\infty + | \breve Z^{n,\delta}_j|\right) \ge \gamma n^{1/4}} \le \frac{2^8A}{\sigma \gamma^8}\E{\frac{B^2}{J_k^2 + B^2}}.
\]
As $k \rightarrow \infty$, $J_k \convprob \infty$. Therefore, by bounded convergence,
\[
\lim_{k\rightarrow \infty}\limsup_{n\rightarrow \infty} \p{\max_{1\le j \le c(\mathrm{F}_n^{k})}\left(\left\|\breve R_{n,\delta}(\rT_{n,j}^k)\right\|_\infty + | \breve Z^{n,\delta}_j|\right) \ge \gamma n^{1/4}} = 0.\qedhere
\]
\end{proof}

\medskip

Assuming Proposition~\ref{lem:max_tight}, Proposition \ref{prop:tightness} now follows from (\ref{eq:triangleineq}) by taking $\delta \in (0,1/4)$ sufficiently small so that Proposition~\ref{prop:mid_range} holds, and combining that with Propositions~\ref{prop:large_tight} and \ref{prop:tightness_typ}.

\section{The maximum spatial location: proof of Proposition~\ref{lem:max_tight}}\label{sec:technical}

We assume throughout this section that $\mu$ is critical and has finite variance $\sigma^2\in (0,\infty)$, and that [\ref{a1}] and [\ref{a2}] hold.

For $n \ge 1$, let $\Lambda^{(n)}:= (\Lambda_1^{(n)},\Lambda_2^{(n)},\dots, \Lambda^{(n)}_{\widehat D^n_1})$ be the sizes of the subtrees of the root of $\rT_n$, so that $\Lambda_i^{(n)}$ is the size of the subtree rooted at the $i$-th child of the root. We will make extensive use of the fact that, conditionally on $\widehat{D}_1^n$, these are exchangeable random variables (i.e.\ their distribution is invariant under permutations of the labels). To prove Proposition \ref{lem:max_tight} we will make extensive use of the following consequence of Lemma 25 of Haas and Miermont \cite{haas2012scaling} which, roughly speaking, tells us that typically only one subtree of a child of the root is macroscopic and, moreover, the probability of a non-trivial macroscopic split at the root is on the order of $n^{-1/2}$. 

\begin{lem}[Lemma 25 of \cite{haas2012scaling}] \label{lem:haas} 
It holds that
\begin{equation}\label{eq:haas_lem}
\E{1 - \sum_{i=1}^{\widehat D_1^n}\left(\frac{\Lambda_i^{(n)}}{n}\right)^2} = \Theta(n^{-1/2}).
\end{equation}
\end{lem}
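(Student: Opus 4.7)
My plan is to reduce the claim to a standard estimate on the most recent common ancestor (MRCA) of two uniform vertices in a size-conditioned critical Bienaymé tree, and then invoke the local central limit theorem via the Łukasiewicz encoding.

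Since $\sum_i \Lambda_i^{(n)} = n-1$, I would first rewrite
\[
1 - \sum_i \Bigl(\frac{\Lambda_i^{(n)}}{n}\Bigr)^{\!2} \;=\; \frac{2}{n^2}\sum_{i<j}\Lambda_i^{(n)}\Lambda_j^{(n)} + O(n^{-1}).
\]
The expression $\frac{2}{(n-1)^2}\sum_{i<j}\Lambda_i^{(n)}\Lambda_j^{(n)}$ is exactly the conditional probability, given $\rT_n$, that two independent vertices $V_1,V_2$, each sampled uniformly from the $n-1$ non-root vertices of $\rT_n$, lie in distinct root-subtrees---equivalently, that the MRCA of $V_1$ and $V_2$ is the root itself. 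Taking expectations therefore reduces the lemma to establishing
\[
\p{V_1,V_2 \text{ have MRCA at the root of } \rT_n} = \Theta(n^{-1/2}).
\]

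To prove the displayed estimate I would use the Łukasiewicz encoding $W_n$ of $\rT_n$, distributed as a random walk with \iid $\mu$-distributed steps conditioned to first hit $-1$ at time $n$. Via the standard bijection between the walk and the tree, the event that $v_{I_1}$ and $v_{I_2}$ lie in distinct root-subtrees translates into an event concerning the excursions of $W_n$ above its past minimum between times $I_1$ and $I_2$. Using the cycle lemma to decouple the bridge conditioning, the probability of this event reduces to a computation for unconditioned \iid $\mu$-sums, averaged uniformly over $1 \le I_1 < I_2 \le n$; both the $O(n^{-1/2})$ upper bound and the matching $\Omega(n^{-1/2})$ lower bound then follow from the local central limit theorem, available under our standing hypothesis $\V{\xi}=\sigma^2\in(0,\infty)$ combined with the aperiodicity assumption on $\mu$ made in the introduction.

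The main obstacle in a self-contained proof is controlling the local CLT error terms uniformly in the bridge conditioning, especially for the sharp lower bound, which requires ruling out a pathological concentration of the walk's minimum near the endpoints of the subinterval $[I_1,I_2]$. Since the statement is exactly Lemma~25 of Haas and Miermont~\cite{haas2012scaling} specialized to the Markov branching fragmentation driven by a critical finite-variance Bienaymé tree---a setting covered by their hypotheses---the cleanest course is to cite their result after verifying the compatibility of frameworks via Kemperman's formula~\eqref{eq:dn_dist}.
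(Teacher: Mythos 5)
Your proposal matches the paper's treatment: the paper proves nothing here but cites Lemma~25 of Haas and Miermont~\cite{haas2012scaling} directly, which is also your bottom line. Your intermediate reduction is nonetheless a sound and illuminating gloss that the paper omits. Using $\sum_i \Lambda_i^{(n)} = n-1$, you correctly rewrite $1 - \sum_i (\Lambda_i^{(n)}/n)^2 = \frac{2}{n^2}\sum_{i<j}\Lambda_i^{(n)}\Lambda_j^{(n)} + O(n^{-1})$ pathwise, and the sum is, up to a $1+O(n^{-1})$ factor, the conditional probability given $\rT_n$ that two independent uniform non-root vertices land in distinct root-subtrees, i.e.\ that their most recent common ancestor is the root; after taking expectations, the lemma becomes the assertion that this MRCA probability is $\Theta(n^{-1/2})$. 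Your sketch via the \L{}ukasiewicz encoding, the cycle lemma, and the local CLT is a plausible direct route under only the finite-variance and aperiodicity hypotheses, and you are right to flag the uniform error control for the matching lower bound as the delicate point; filling it in would essentially re-derive the Haas--Miermont estimate, so deferring to their lemma is the economical choice and is exactly what the paper does. The one caveat worth adding is that Lemma~25 of~\cite{haas2012scaling} is phrased for Markov branching trees, so when invoking it one should confirm (e.g.\ via Kemperman's formula~\eqref{eq:dn_dist}) that the root splitting law of the size-conditioned Bienaym\'e tree is precisely the one entering their formula, as you suggest.
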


In the proof of Proposition \ref{lem:max_tight}, we encounter terms directly related to the global centering and global finite variance conditions, respectively. The latter is more challenging to control, and is the reason for the third moment condition on the offspring distribution. These terms, and the control we will require on them, are stated in the following technical lemma. Recall the definition of $\widehat D^m$, the size-biased ordering of $D^m = (D_1^m, \dots, D_m^m)$, \iid samples random distribution $\mu$ conditioned to sum to $m-1$. 

The proof of Proposition \ref{lem:max_tight} is inductive, and requires that we control the maximum of~$\breve R_{n,\delta}^k$ when restricted to subtrees of $\rT_n^k$. We henceforth use $m \ge 1$ to denote the number of vertices in the underlying tree, $\rT_m$, and $n \ge 1$ to denote the truncation threshold $n^{1/4 - \delta}$ on the displacements. More specifically, in this section, we will consider branching random walks on $\rT_m$ with displacements $\breve Y_k^{n,\delta}$, $k \ge 1$.

\begin{lem} \label{lem:technical} 
Let $n \ge 1$ and $m \le n$. There exists $B > 0$ such that
\begin{equation} \label{eq:var}
    \E{\sum_{i=1}^{\widehat{D}_1^m} \left(\frac{\Lambda_i^{(m)}}{m}\right)^{2}(\breve Y^{n,\delta}_{\widehat{D}_1^m, i})^2} \le B.
\end{equation}
If in addition $(\mu, \nu)$ satisfies [\ref{a1}] and [\ref{a2}], then there exists $B'> 0$ such that
\begin{equation} \label{eq:mean}
    \left|\E{\sum_{i=1}^{\widehat{D}_1^m} \left(\frac{\Lambda_i^{(m)}}{m}\right)^{2}\breve Y^{n,\delta}_{\widehat{D}_1^m, i}}\right|  \le   \frac{B'n^{1/4 - \delta}}{\sqrt{m}} +\frac{B'}{m^{1/4}}.
\end{equation}
\end{lem}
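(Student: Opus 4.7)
My approach is to rewrite both expectations in the unified form $\E{S_m\cdot h(\widehat D_1^m)}$ via conditional independence and exchangeability, and then to analyse the two cases with different tools. The main obstacle will be estimating the residual first-moment term $\E{\phi(\widehat D_1^m)}$ in \eqref{eq:mean}, which demands a quantitative local CLT combined with a Cauchy--Schwarz tail estimate under the assumption $\E{\xi^3}<\infty$ coming from \eqref{a2}.

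Conditionally on $\widehat D_1^m = k$, the subtree sizes $(\Lambda_1^{(m)}, \ldots, \Lambda_k^{(m)})$ are exchangeable and independent of the displacement vector $\breve Y^{n,\delta}_k$. Exchangeability yields $\E{(\Lambda_i^{(m)}/m)^2 \mid \widehat D_1^m = k} = \frac{1}{k}\E{S_m\mid\widehat D_1^m = k}$, where $S_m := \sum_j (\Lambda_j^{(m)}/m)^2 \le 1$. Combined with independence from the displacements, this gives
\[
\E{\sum_{i=1}^{\widehat D_1^m}(\Lambda_i^{(m)}/m)^2 f(\breve Y^{n,\delta}_{\widehat D_1^m, i})} = \E{S_m\cdot h(\widehat D_1^m)},
\]
where $h(k):=\frac{1}{k}\sum_{i=1}^k\E{f(\breve Y^{n,\delta}_{k,i})}$. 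I take $h(k)=g(k):=\E{(\breve Y^{n,\delta}_{k,U_k})^2}$ for \eqref{eq:var} and $h(k)=\phi(k):=\E{\breve Y^{n,\delta}_{k,U_k}}$ for \eqref{eq:mean}, with $U_k$ uniform on $[k]$.

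For \eqref{eq:var}, I bound $S_m\le 1$ and reduce to showing $\E{g(\widehat D_1^m)}\le B$ uniformly. Since $(\breve Y^{n,\delta}_{k,U_k})^2\le 2Y_{k,U_k}^2+2c^2$ with $c:=\E{Y^{n,\delta}_{\bar\xi,U_{\bar\xi}}}=o(1)$ by Lemma~\ref{lem:moments_truncation}, Kemperman's formula \eqref{eq:dn_dist} combined with the local CLT of Lemma~\ref{lem:local_move} supplies the uniform bound $\p{\widehat D_1^m=k}\le C\bar\mu_k$ for $m\ge m_0$ and all $k$. Hence $\E{g(\widehat D_1^m)}\le 2C\sum_k\bar\mu_k\E{Y_{k,U_k}^2}+O(1)\le 2C\beta^2+O(1)$ by \eqref{a1}. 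The finitely many small $m<m_0$ are treated directly, using $\E{Y_{k,U_k}^2}<\infty$ whenever $\mu_k>0$.

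For \eqref{eq:mean}, I split $S_m=1+(S_m-1)$, so that $\E{S_m\phi(\widehat D_1^m)}=\E{\phi(\widehat D_1^m)}+\E{(S_m-1)\phi(\widehat D_1^m)}$. The second summand is controlled by the truncation bound $|\phi(k)|\le 2n^{1/4-\delta}$ and Lemma~\ref{lem:haas} ($\E{1-S_m}=\Theta(m^{-1/2})$), yielding the contribution $B'n^{1/4-\delta}/\sqrt{m}$. For the first summand, I write $\phi(k)=\psi(k)-c$ with $\psi(k):=\E{Y^{n,\delta}_{k,U_k}}$, and note that global centering in \eqref{a1} gives $\E{\psi(\bar\xi)}=c$. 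Kemperman's formula then recasts the problem as bounding
\[
\E{\phi(\widehat D_1^m)}=\sum_k\bar\mu_k\psi(k)R_m(k),\qquad R_m(k):=\frac{m}{m-1}\frac{\p{S_{m-1}=m-1-k}}{\p{S_m=m-1}}-1.
\]
Under $\E{\xi^3}<\infty$, Lemma~\ref{lem:local_move} yields the uniform estimate $|R_m(k)|\le C(k^2/m\wedge 1)+C/\sqrt{m}$. Combining $|\psi(k)|\le\sqrt{\E{Y_{k,U_k}^2}}$ (Jensen) with a cutoff $K$ and Cauchy--Schwarz in the weighted form $\sum_k\bar\mu_k a_k b_k\le(\sum_k\bar\mu_k a_k^2)^{1/2}(\sum_k\bar\mu_k b_k^2)^{1/2}$ gives
\[
\Big|\sum_k \bar\mu_k\psi(k) R_m(k)\Big|\le C\beta(K^2/m+m^{-1/2})+C\beta\sqrt{\p{\bar\xi>K}}.
\]
Markov's inequality with $\E{\bar\xi^2}=\E{\xi^3}<\infty$ gives $\sqrt{\p{\bar\xi>K}}\le C/K$; optimising at $K=m^{1/3}$ yields $|\E{\phi(\widehat D_1^m)}|=O(m^{-1/3})\le B'/m^{1/4}$, producing the second summand of the claimed bound. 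Small $m$ are again absorbed into the constant $B'$.
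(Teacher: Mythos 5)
Your proof is correct, and it reaches the same bound via the same high-level framework — exchangeability to reduce to $\E{S_m\cdot h(\widehat D_1^m)}$, the uniform bound $\p{\widehat D_1^m=k}\le C\bar\mu_k$, the decomposition $S_m=1+(S_m-1)$, and Lemma~\ref{lem:haas} — but it genuinely diverges from the paper in how the two summands of \eqref{eq:mean} are handled. The paper applies the $L^\infty$ bound $|\phi(k)|\le 2n^{1/4-\delta}$ to the first summand $\E{\phi(\widehat D_1^m)}$, by coupling $\widehat D_1^m$ with $\bar\xi$ at total-variation cost $o(m^{-1/2})$ (Lemma~\ref{lem:mean_no_perm}, built on Lemma~\ref{lem:tv_distance}), and then handles the second summand $\E{(S_m-1)\phi(\widehat D_1^m)}$ by Cauchy--Schwarz plus $\E{(1-S_m)^2}\le\E{1-S_m}=\Theta(m^{-1/2})$, yielding $O(m^{-1/4})$. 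You do the opposite pairing: you spend the $L^\infty$ bound $|\phi|\le 2n^{1/4-\delta}$ on the $(1-S_m)$ summand (giving $O(n^{1/4-\delta}/\sqrt m)$ directly without Cauchy--Schwarz), and then you attack $\E{\phi(\widehat D_1^m)}$ by hand: expanding $R_m(k)=\p{\widehat D_1^m=k}/\bar\mu_k-1$ via Kemperman and the quantitative LCLT, deriving the uniform estimate $|R_m(k)|\le C(k^2/m\wedge 1)+C/\sqrt m$, and optimizing a tail cutoff $K=m^{1/3}$ against $\sqrt{\p{\bar\xi>K}}\le C/K$ from Markov and $\E{\bar\xi^2}=\E{\xi^3}<\infty$. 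Your route gives the sharper, $n$-independent bound $O(m^{-1/3})$ for the first summand (compared with the paper's $O(n^{1/4-\delta}/\sqrt m)$), at the cost of essentially re-deriving a quantitative form of Lemma~\ref{lem:tv_distance} inline; the paper's route is more modular, reusing Lemmas~\ref{lem:mean_no_perm} and~\ref{lem:tv_distance} as black boxes. Both yield the stated $B'n^{1/4-\delta}/\sqrt m+B'/m^{1/4}$. One small slip to tidy up: in the Jensen step your final bound should read $\sqrt{\E{Y^2_{\bar\xi,U_{\bar\xi}}}}=\beta$ rather than $\sqrt{\E{Y^2_{k,U_k}}}$.
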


Condition [\ref{a1}] pertains to the mean and variance of the displacement of a uniform child of a vertex with a size-biased number of offspring, $Y_{\bar\xi, U_{\bar\xi}}$. The displacement from the root of $\rT_m$ to a uniform child is distributed as $Y_{\widehat{D}_1^m, U_{\widehat{D}_1^m}}$ and we have $\widehat D_1^m \convdist \bar\xi$ as $m\rightarrow \infty$. However, in order to use the global centering and global finite variance conditions in the proof of Lemma \ref{lem:technical}, we need something stronger, namely an explicit rate of decay for the total variation distance between the laws of $\bar\xi$ and $\widehat{D}_1^m$. This is provided by the next lemma.

\begin{lem} \label{lem:tv_distance} 
As $m\rightarrow \infty$,
\[
d_{\mathrm{TV}}(\widehat D_1^m, \bar\xi) = \frac{1}{2}\sum_{k=1}^\infty \left|\p{\widehat{D}_1^m = k } - \p{\bar\xi = k}\right| = o(m^{-1/2}).
\]
\end{lem}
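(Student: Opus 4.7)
The plan is to expand Kemperman's formula (\ref{eq:dn_dist}) using a quantitative version of the local central limit theorem. Under assumption [\ref{a2}] we have $\E{\xi^3} < \infty$, and the aperiodic LCLT of Petrov (Theorem 13 of Chapter VII of \cite{petrov}, stated as Theorem \ref{thm:gen_clt} in this paper) then gives
\[
\p{S_n = j} = \frac{1}{\sqrt{2\pi\sigma^2 n}}\exp\left(-\frac{(j-n)^2}{2\sigma^2 n}\right) + o(n^{-1}),
\]
uniformly in $j \in \Z$. Applying this with $(n,j) = (m-1, m-1-k)$ and $(n,j) = (m, m-1)$ and taking the ratio, we obtain
\[
r_m(k) := \frac{m}{m-1} \cdot \frac{\p{S_{m-1} = m-1-k}}{\p{S_m = m-1}} = (1 + O(m^{-1}))\exp\left(-\frac{k^2}{2\sigma^2(m-1)}\right) + o(m^{-1/2}),
\]
uniformly in $k \ge 1$. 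By (\ref{eq:dn_dist}), $\p{\widehat D_1^m = k} = r_m(k)\,\p{\bar\xi = k}$, so the total variation distance equals $\tfrac{1}{2}\sum_{k \ge 1} \p{\bar\xi = k}|r_m(k) - 1|$.

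Using the elementary inequality $|e^{-x}-1| \le \min(x,1)$ for $x \ge 0$, the above expansion yields
\[
|r_m(k) - 1| \le O(m^{-1}) + \min\left(\frac{k^2}{2\sigma^2(m-1)},\, 1\right) + o(m^{-1/2}),
\]
uniformly in $k$. The $O(m^{-1})$ and the uniform $o(m^{-1/2})$ contributions trivially give $o(m^{-1/2})$ after summation against $\p{\bar\xi = k}$. For the middle term, the key observation is that $\E{\bar\xi^2} = \sum_k k^2 \bar\mu_k = \sum_k k^3 \mu_k = \E{\xi^3}$, which is finite by [\ref{a2}]. Hence
\[
\sum_{k \ge 1} \p{\bar\xi = k}\min\left(\frac{k^2}{2\sigma^2(m-1)},\, 1\right) \le \frac{\E{\bar\xi^2}}{2\sigma^2(m-1)} = O(m^{-1}) = o(m^{-1/2}).
\]

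Combining the contributions gives $d_{\mathrm{TV}}(\widehat D_1^m, \bar\xi) = o(m^{-1/2})$, as required. The proof has no serious obstacle: once the correct quantitative LCLT is in hand, everything reduces to bookkeeping plus an application of Markov's inequality. It is worth emphasising (as the authors note in Section \ref{sec:proof_overview}) that it is precisely the need for a uniform $o(n^{-1})$ error in the local CLT that forces the third moment hypothesis in [\ref{a2}]; with only $\E{\xi^2} < \infty$ the analogous estimate would only yield $o(1)$ uniformly, which is insufficient to conclude the $o(m^{-1/2})$ bound here.
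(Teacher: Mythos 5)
Your proof has a genuine gap in the quoted form of the local CLT. You claim
\[
\p{S_n = j} = \frac{1}{\sqrt{2\pi\sigma^2 n}}\exp\left(-\frac{(j-n)^2}{2\sigma^2 n}\right) + o(n^{-1}) \quad\text{uniformly in } j\in\Z,
\]
but this is false. The Petrov theorem (Theorem~\ref{thm:gen_clt}) asserts a uniform $o(n^{-1/2})$ error only \emph{after} one includes the skewness-correction term
\[
\frac{1}{\sqrt{2\pi n}\,\sigma}\,e^{-k^2/(2\sigma^2 n)}\,\frac{\gamma_3}{6\sigma^3\sqrt{n}}\left(\frac{k^3}{\sigma^3 n^{3/2}}-\frac{3k}{\sigma\sqrt{n}}\right),
\]
which you have silently dropped. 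Writing $x = k/(\sigma\sqrt{n})$, this term equals a bounded multiple of $n^{-1}e^{-x^2/2}(x^3-3x)$: it is $O(n^{-1})$ uniformly, but it is $\Theta(n^{-1})$ for $|j-n|\asymp\sqrt{n}$. So the error of the crude Gaussian approximation is $O(n^{-1})$ uniformly, not $o(n^{-1})$. With the correct $O(n^{-1})$ error, your derivation gives $|r_m(k)-1|\le \min(k^2/(2\sigma^2(m-1)),1) + O(m^{-1/2})$ uniformly, and summing against $\p{\bar\xi=k}$ yields only $d_{\mathrm{TV}}(\widehat D_1^m,\bar\xi) = O(m^{-1/2})$, which is not the $o(m^{-1/2})$ claimed. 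Your closing remark about the role of the third moment compounds the issue: what $\E{\xi^3}<\infty$ actually buys is the \emph{explicit} skewness term plus a uniform $o(n^{-1})$ remainder, not a uniform $o(n^{-1})$ error for the plain Gaussian.

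The repair, as in the paper's proof, is a case split. For $k = O(m^{1/4})$ one has $x = k/\sqrt{m-1} = O(m^{-1/4})$, so the Hermite factor $(x^3-3x)$ is $O(m^{-1/4})$ and the skewness term contributes only $O(m^{-3/4}) = o(m^{-1/2})$ to $\sqrt{2\pi(m-1)}\sigma\,\p{S_{m-1}=m-1-k}$; combined with the $o(m^{-1/2})$ Petrov remainder and the expansion $e^{-k^2/(2\sigma^2(m-1))} = 1 - k^2/(2\sigma^2(m-1)) + O(m^{-1})$, this gives $\p{\widehat D_1^m = k} = (1 - k^2/(2\sigma^2(m-1)) + o(m^{-1/2}))\p{\bar\xi = k}$ in the small-$k$ regime. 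Summing against $\p{\bar\xi = k}$ over $k\le m^{1/4}$ then gives $\E{\bar\xi^2}/(2\sigma^2(m-1)) + o(m^{-1/2}) = o(m^{-1/2})$, which is your Markov-inequality observation in the right place. For $k>m^{1/4}$ one simply uses $\p{\widehat D_1^m = k}\le c\,\p{\bar\xi = k}$ (which follows from Kemperman's formula) together with $\p{\bar\xi > m^{1/4}} = o(m^{-1/2})$ (using $\E{\bar\xi^2}=\E{\xi^3}<\infty$). Your underlying instincts — Kemperman plus a quantitative LCLT plus $\E{\bar\xi^2}<\infty$ — are exactly right, but the uniform-in-$k$ shortcut you attempt does not survive contact with the true form of the error term; the case split is essential.
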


\begin{proof}
Let $k \ge 1$, and let $(S_m)_{m\ge 1}$ be a random walk with \iid $\mu$-distributed increments. Recall from (\ref{eq:dn_dist}) that 
\[\p{\widehat{D}_1^m = k} = \left(\frac{m}{m-1}\right)\frac{\p{S_{m-1} = m-1-k}}{\p{S_{m} = m-1}}\p{\bar\xi = k}.\]
Since $\E{\xi} = 1$  and $\E{\xi^3}<\infty$, by Theorem \ref{thm:gen_clt},
\begin{align*}
& \sqrt{2\pi (m-1)} \sigma \p{S_{m-1} = m-1-k} \\
& \quad= e^{-k^2/(2\sigma^2 (m-1))}\left(1 + \frac{1}{\sqrt{m-1}}\frac{\gamma_3}{6\sigma^3}\left(\frac{k^3}{\sigma^3 (m-1)^{3/2}} - \frac{3k}{\sigma \sqrt{m-1}}\right) \right) + o(m^{-1/2}).
\end{align*}
If $k = O(m^{1/4}),$ 
\[
\left|\frac{k^3}{\sigma^3 (m-1)^{3/2}} - \frac{3k}{\sigma \sqrt{m-1}}\right| = O(m^{-1/4}),
\]
and
\[
e^{-k^2/(2\sigma^2 (m-1))} = 1 - \frac{k^2}{2\sigma^2 (m-1)} + O(m^{-1}).
\]
Hence for $k = O(m^{1/4}),$
\[
\sqrt{2\pi (m-1)} \sigma \p{S_{m-1} =m-1 -k} = 1 - \frac{k^2}{2\sigma^2 (m-1)} + o(m^{-1/2}).
\]
It follows that for $k = O(m^{1/4}),$
\[
\left(\hspace{-2pt}\frac{m}{m-1}\hspace{-2pt}\right)\hspace{-2pt}\frac{\p{S_{m-1} \hspace{-1pt}=\hspace{-1pt}m\hspace{-1pt}-\hspace{-1pt}1\hspace{-1pt} -\hspace{-1pt}k}}{\p{S_{m} \hspace{-1pt}=\hspace{-1pt} m-1}} = \frac{1 \hspace{-1pt}-\hspace{-1pt} \frac{k^2}{2\sigma^2(m\hspace{-1pt}-\hspace{-1pt}1)} \hspace{-1pt}+\hspace{-1pt} o(m^{-1/2})}{1 + o(m^{-1/2})} = 1 \hspace{-1pt}-\hspace{-1pt} \frac{k^2}{2\sigma^2 (m\hspace{-1pt}-\hspace{-1pt}1)} + o(m^{-1/2}),
\]
and, consequently,
\begin{equation*}\label{eq:finer_control}
\p{\widehat{D}_1^m = k} = \left(1 - \frac{k^2}{2\sigma^2 (m-1)} + o(m^{-1/2})\right)\p{\bar\xi = k}.
\end{equation*}
Therefore, 
\begin{align*}
&\sum_{k=1}^{\infty}\left|\p{\widehat{D}_1^m = k} - \p{\bar\xi = k}\right|\\
&= \sum_{k=1}^{\lfloor m^{1/4}\rfloor}\left|\p{\widehat{D}_1^m = k} - \p{\bar\xi = k}\right| + \sum_{k=\lfloor m^{1/4}\rfloor +1}^{\infty}\left|\p{\widehat{D}_1^m = k} - \p{\bar\xi = k}\right|\\
&=\sum_{k=1}^{\lfloor m^{1/4}\rfloor}\hspace{-5pt}\left(\frac{k^2}{2\sigma^2 (m-1)} + o(m^{-1/2})\right)\p{\bar\xi = k} + \sum_{k=\lfloor m^{1/4}\rfloor +1}^{\infty}\left|\p{\widehat{D}_1^m = k} - \p{\bar\xi = k}\right|\\
&\le \frac{\E{\xi^3}}{2\sigma^2 (m-1)} + o(m^{-1/2}) + \sum_{k=\lfloor m^{1/4}\rfloor +1}^{\infty}\left(\p{\widehat{D}_1^m = k} + \p{\bar\xi = k}\right)\\
&\le o(m^{-1/2}) + (c+1)\cdot \p{\bar\xi > m^{1/4}},
\end{align*}
where the final inequality follows since $\p{\widehat D_1^m = k} \le c \p{\bar\xi = k}$ for all $k\in [m]$. Since $\E{\xi^3} < \infty$, $\bar\xi$ has a finite second moment. Therefore, $\p{\bar\xi > k} = o(k^{-2})$ as $k \rightarrow \infty$ and so $\p{\bar\xi > m^{1/4}} = o(m^{-1/2})$. The result follows.
\end{proof}

The terms (\ref{eq:var}) and (\ref{eq:mean}) relate to the variance and mean (respectively) of the displacement of a uniform child of the root in branching random walk $(\rT_m, \breve Y^{n,\delta})$. Since this branching random walk is globally centered, it is reasonable to expect that the mean will be small and that the second moment will be bounded. A key technical lemma follows.
\begin{lem}\label{lem:mean_no_perm}
There exists a constant $C > 0$ such that for $m \le n$,
\[
\left|\E{\frac{1}{\widehat{D}_1^m}\sum_{i=1}^{\widehat{D}_1^m}\breve{Y}^{n,\delta}_{\widehat{D}_1^m, i}}\right| \le \frac{Cn^{1/4 - \delta}}{\sqrt{m}}.
\]
\end{lem}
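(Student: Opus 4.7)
The plan is to rewrite the quantity of interest as an integral of a bounded function against the signed measure $\p{\widehat{D}_1^m \in \cdot} - \p{\bar\xi \in \cdot}$, and then apply the total variation bound of Lemma~\ref{lem:tv_distance}. The truncation at level $n^{1/4-\delta}$ is crucial: it provides the uniform bound on the ``test function'' that when paired with the $o(m^{-1/2})$ rate of convergence in total variation produces the claimed bound.

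Concretely, for each $k \ge 1$ define
\[
f_n(k) := \E{\frac{1}{k}\sum_{j=1}^{k} Y^{n,\delta}_{k,j}}.
\]
Because each truncated displacement satisfies $|Y^{n,\delta}_{k,j}| \le n^{1/4-\delta}$ by construction, $|f_n(k)| \le n^{1/4-\delta}$ for all $k$. Conditioning on $\widehat{D}_1^m$, and using that conditionally on $\{\widehat{D}_1^m = k\}$ the displacement vector $(Y^{n,\delta}_{\widehat{D}_1^m, i})_{i=1}^{\widehat{D}_1^m}$ is distributed as $(Y^{n,\delta}_{k,j})_{j=1}^{k}$, I would express both terms as integrals against $f_n$:
\[
\E{\frac{1}{\widehat{D}_1^m}\sum_{i=1}^{\widehat{D}_1^m} Y^{n,\delta}_{\widehat{D}_1^m, i}} = \sum_{k \ge 1} \p{\widehat{D}_1^m = k}\, f_n(k),
\]
while unpacking the definition of $\E{Y^{n,\delta}_{\bar\xi, U_{\bar\xi}}}$ from Section~\ref{sec:main_result} gives
\[
\E{Y^{n,\delta}_{\bar\xi, U_{\bar\xi}}} = \sum_{k\ge 1} \bar\mu_k \cdot \frac{1}{k}\E{\sum_{j=1}^{k} Y^{n,\delta}_{k,j}} = \sum_{k \ge 1}\p{\bar\xi = k}\, f_n(k).
\]

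Since $\breve{Y}^{n,\delta}_{k,i} = Y^{n,\delta}_{k,i} - \E{Y^{n,\delta}_{\bar\xi, U_{\bar\xi}}}$, subtracting the two displays and taking absolute values yields
\[
\left|\E{\frac{1}{\widehat{D}_1^m}\sum_{i=1}^{\widehat{D}_1^m}\breve{Y}^{n,\delta}_{\widehat{D}_1^m, i}}\right| = \left|\sum_{k\ge 1}\bigl(\p{\widehat{D}_1^m = k} - \p{\bar\xi = k}\bigr)f_n(k)\right| \le 2 n^{1/4-\delta}\, d_{\mathrm{TV}}(\widehat{D}_1^m, \bar\xi).
\]
By Lemma~\ref{lem:tv_distance}, $\sqrt{m}\, d_{\mathrm{TV}}(\widehat{D}_1^m, \bar\xi) \to 0$ as $m \to \infty$; since $d_{\mathrm{TV}} \le 1$ always, the sequence $\{\sqrt{m}\,d_{\mathrm{TV}}(\widehat{D}_1^m, \bar\xi)\}_{m \ge 1}$ is uniformly bounded, so there exists $C > 0$ with $d_{\mathrm{TV}}(\widehat{D}_1^m, \bar\xi) \le C/(2\sqrt{m})$ for all $m$. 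The claim follows. There is no substantial obstacle, since the real work is already packaged in Lemma~\ref{lem:tv_distance}; one only needs to verify that conditioning on $\widehat{D}_1^m$ decouples the displacement vector cleanly, and that the truncation forces $\|f_n\|_\infty \le n^{1/4-\delta}$.
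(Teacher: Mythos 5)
Your proof is correct and rests on exactly the same two pillars as the paper's: the uniform bound $|Y^{n,\delta}_{k,j}| \le n^{1/4-\delta}$ on truncated displacements, and the $o(m^{-1/2})$ total variation estimate of Lemma~\ref{lem:tv_distance}. The only difference is that you invoke the variational (test-function) formulation of total variation distance, pairing the bounded function $f_n$ against the signed measure $\p{\widehat D_1^m\in\cdot}-\p{\bar\xi\in\cdot}$, whereas the paper uses the coupling formulation: it picks an optimal coupling of $(\widehat D_1^m,\bar\xi)$, splits on $\{\bar\xi=\widehat D_1^m\}$ versus $\{\bar\xi\ne\widehat D_1^m\}$, and applies global centering $\E{\breve Y_{\bar\xi,U_{\bar\xi}}^{n,\delta}}=0$ to convert the ``diagonal'' term into an off-diagonal one. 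Your route is a touch cleaner: it avoids having to also couple the displacement vectors on the diagonal event, works with the uncentered $Y^{n,\delta}$ so the sup bound is $n^{1/4-\delta}$ rather than $2n^{1/4-\delta}$, and the cancellation from global centering appears automatically as the identity $\E{Y^{n,\delta}_{\bar\xi,U_{\bar\xi}}}=\sum_k\p{\bar\xi=k}f_n(k)$. These are two standard faces of the same total variation argument; both are correct, and yours is marginally more economical.
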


\begin{proof} 
Let $(\widehat D_1^m, \bar\xi)$ be a coupling of the degree of the root of $\rT_m$ and the size-biased distribution of $\mu$. We consider the events $\{\bar\xi = \widehat{D}_1^m\}$ and $\{\bar \xi \neq \widehat{D}_1^m\}$ separately:
\begin{align}\label{eq:1_mean_no_perm}
    \left|\E{\frac{1}{\widehat D_1^m}\sum_{i=1}^{\widehat D_1^m} \breve{Y}^{n,\delta}_{\widehat D_1^m, i}}\right|
    &\le \left|\E{\frac{1}{\bar\xi}\sum_{i=1}^{\bar\xi} \breve{Y}^{n,\delta}_{\bar\xi, i}\I{\bar\xi = \widehat D_1^m}}\right|+ \E{\frac{1}{\widehat D_1^m}\sum_{i=1}^{\widehat D_1^m} |\breve{Y}^{n,\delta}_{\widehat D_1^m, i}|\I{ \bar\xi \neq \widehat D_1^m}}\nonumber\\
    &=\left|\E{\frac{1}{\bar\xi}\sum_{i=1}^{\bar\xi} \breve{Y}^{n,\delta}_{\bar\xi, i}\I{\bar\xi \neq \widehat D_1^m}}\right|+ \E{\frac{1}{\widehat D_1^m}\sum_{i=1}^{\widehat D_1^m} |\breve{Y}^{n,\delta}_{\widehat D_1^m, i}|\I{ \bar\xi \neq \widehat D_1^m}},
\end{align}
where the equality holds since 
\[ 
\E{\frac{1}{\bar\xi}\sum_{i=1}^{\bar\xi} \breve Y_{\bar\xi, i}^{n,\delta}} = \E{\breve Y_{\bar\xi, U_{\bar\xi}}^{n,\delta}}= 0.
\] 
Since $|\breve{Y}^{n,\delta}_{k,j}| \le 2n^{1/4-\delta}$ for all $k \ge 1$  and  $ j \in [k]$, it follows that (\ref{eq:1_mean_no_perm}) is at most \[4n^{1/4 - \delta}\p{\bar\xi \neq \widehat D_1^m}.\] The result follows from Lemma \ref{lem:tv_distance} by taking an optimal coupling of $(\widehat{D}_1^m, \bar\xi).$
\end{proof}

We now proceed to prove Lemma \ref{lem:technical}.

\begin{proof}[Proof of Lemma \ref{lem:technical}]
We first prove (\ref{eq:var}). Note that by exchangeability of $(\Delta_1^{(m)}, \ldots, $ $\Delta_{\widehat{D}_m^1}^{(m)})$ and linearity of conditional expectation 
\begin{align}\label{eq:need_var}
    \E{\sum_{i=1}^{\widehat{D}_1^m}\left(\frac{\Lambda_i^{(m)}}{m}\right)^{2}(\breve Y^{n,\delta}_{\widehat{D}_m^1, i})^2} &= \E{\left(\frac{1}{\widehat D_1^m}\sum_{i=1}^{\widehat D_1^m}(\breve{Y}^{n,\delta}_{\widehat D_1^m,i})^2\right)\left(\sum_{i=1}^{\widehat D_1^m}\left(\frac{\Lambda_i^{(m)}}{m}\right)^{2}\right)}\nonumber\\
    &\le \E{\frac{1}{\widehat D_1^m}\sum_{i=1}^{\widehat D_1^m}(\breve{Y}^{n,\delta}_{\widehat D_1^m,i})^2}\nonumber\\
    &\le c \E{\frac{1}{\bar\xi}\sum_{i=1}^{\bar\xi}(\breve{Y}^{n,\delta}_{\bar\xi,i})^2}\nonumber\\
    &= c\E{(\breve{Y}_{\bar\xi, U_{\bar\xi}}^{n,\delta})^2},
\end{align}
where the second inequality follows since $\p{\widehat D_1^m = k} \le c\p{\bar\xi = k}$. By Lemma \ref{lem:moments_truncation}, (\ref{eq:need_var}) tends to $\beta^2$ as $n\rightarrow \infty$ and hence (\ref{eq:var}) holds.

We now proceed to proving (\ref{eq:mean}). By linearity and the triangle inequality we have 
\begin{align}\label{eq:1_technical}
    &\left|\E{\sum_{i=1}^{\widehat{D}_1^m}\left(\frac{\Lambda_i^{(m)}}{m}\right)^{2}\breve Y^{n,\delta}_{\widehat{D}_m^1, i}}\right|\nonumber\\
    & \qquad \le \left|\E{\frac{1}{\widehat D_1^m}\sum_{i=1}^{\widehat D_1^m}\breve{Y}^{n,\delta}_{\widehat D_1^m, i}}\right| + \left|\E{\left(\frac{1}{\widehat D_1^m}\sum_{i=1}^{\widehat D_1^m}\breve{Y}^{n,\delta}_{\widehat D_1^m,i}\right)\left(1 - \sum_{i=1}^{\widehat D_1^m}\left(\frac{\Lambda_i^{(m)}}{m}\right)^{2}\right)}\right|.
\end{align}
By Lemma \ref{lem:mean_no_perm}, (\ref{eq:1_technical}) is at most
\[ 
\frac{Cn^{1/4 - \delta}}{\sqrt{m}} + \left|\E{\left(\frac{1}{\widehat D_1^m}\sum_{i=1}^{\widehat D_1^m}\breve{Y}^{n,\delta}_{\widehat D_1^m,i}\right)\left(1 - \sum_{i=1}^{\widehat D_1^m}\left(\frac{\Lambda_i^{(m)}}{m}\right)^{2}\right)}\right|.
\]
Applying the Cauchy--Schwarz inequality to the second term yields an upper bound of 
\begin{align}\label{eq:2_technical}
& \frac{Cn^{1/4 - \delta}}{\sqrt{m}} + \E{\left(\frac{1}{\widehat D_1^m}\sum_{i=1}^{\widehat D_1^m}|\breve{Y}^{n,\delta}_{\widehat D_1^m,i}|\right)^2}^{1/2}\E{\left(1 - \sum_{i=1}^{\widehat D_1^m}\left(\frac{\Lambda_i^{(m)}}{m}\right)^{2}\right)^2}^{1/2}\nonumber\\
& \le \frac{Cn^{1/4 - \delta}}{\sqrt{m}}+ \E{\frac{1}{\widehat D_1^m}\sum_{i=1}^{\widehat D_1^m}(\breve{Y}^{n,\delta}_{\widehat D_1^m,i})^2}^{1/2}\E{\left(1 - \sum_{i=1}^{\widehat D_1^m}\left(\frac{\Lambda_i^{(m)}}{m}\right)^{2}\right)^2}^{1/2},
\end{align}
where we have again used the Cauchy--Schwarz inequality on the sum inside the expectation to get the second inequality. Since $\p{\widehat D_1^m = k} \le c\p{\bar\xi = k}$, by the same methods used in (\ref{eq:var}), there exists $c'> 0$ such that (\ref{eq:2_technical}) is at most 
\[
\frac{Cn^{1/4 - \delta}}{\sqrt{m}}+ c' \E{\left(1 - \sum_{i=1}^{\widehat D_1^m}\left(\frac{\Lambda_i^{(m)}}{m}\right)^{2}\right)^2}^{1/2}.
\]
Lastly, since $x^2 \le x$ for all $x\in[0,1]$, we obtain the bound
\begin{equation*}
\left|\E{\sum_{i=1}^{\widehat{D}_1^m}\left(\frac{\Lambda_i^{(m)}}{m}\right)^{2}\breve Y^{n,\delta}_{\widehat{D}_1^m,i}}\right| \le  \frac{Cn^{1/4 - \delta}}{\sqrt{m}} + c' \E{1 - \sum_{i=1}^{\widehat D_1^m}\left(\frac{\Lambda_i^{(m)}}{m}\right)^{2}}^{1/2},
\end{equation*}
and the result follows by Lemma \ref{lem:haas}.
\end{proof}

We now present the proof of Proposition \ref{lem:max_tight}.

\begin{proof}[Proof of Proposition \ref{lem:max_tight}]
For $n \ge 1$ and $m \le n$, let $ \breve R_{m,n,\delta}$ be the spatial process of a branching random walk $\breve\bT_{m,n} = (\rT_m, \breve{Y}^{n,\delta})$ where the displacement vector of a vertex $v\in v(\rT_m)\setminus \partial \rT_m$ with $k$ children is distributed as 
\[
\breve Y_k^{n,\delta} = Y_k^{n,\delta} - \E{Y_{\bar\xi,U_{\bar\xi}}^{n,\delta}}.
\]
Furthermore, let
\[
\breve R_{m,n,\delta}^{+} := \max\left\{0, \max_{0 \le i \le m} \breve R_{m,n,\delta}\right\},
\]
and 
\[ 
\breve R_{m,n,\delta}^{-} := -\min\left\{0, \min_{0 \le i \le m} \breve R_{m,n,\delta}\right\}.
\]

It suffices to prove that there exists $A > 0$ such that for all $m \ge 0$, all $n \ge m$ and all $\gamma > 0$,
\[
\p{ \breve R_{m, n,\delta}^+ > \gamma n^{1/4}} \le \frac{A}{\gamma^8} \quad \text{and} \quad \p{\breve R_{m,n,\delta}^- > \gamma n^{1/4}} \le \frac{A}{\gamma^8},
\]
since Proposition \ref{lem:max_tight} then follows by taking $n = m$. We only prove the tail bound for $\breve R_{m,n,\delta}^+$, as the bound for $\breve R_{m,n,\delta}^-$ then follows by symmetry.
 
Notice that $\breve R_{1,n,\delta}^{+} = 0$ for all $n \ge 0$,  and so the claim holds trivially if $m = 1$. Moreover, at the cost of taking $A > 0$ larger, it is sufficient to prove the result for $\gamma > 0$ sufficiently large. We will proceed by induction on $m \ge 2$, and hence assume that for $1 \le k \le m-1$ and $\gamma > 0$, 
\[
\p{ \breve R_{k,n,\delta}^+ > \gamma n^{1/4}} \le \frac{A}{\gamma^8},
\]
for all $n \ge k$. 
 
Observe that conditionally on $\widehat D^{m}_1$ and $\Lambda^{(m)}$,
\[
\breve R_{m,n,\delta}^+ \eqdist \max\left\{0, \max_{1\le i\le \widehat D_1^{m}} \left\{ \breve R_{\Lambda_i^{(m)}, n, \delta}^+ + \breve Y_{\widehat{D}_1^m, i}^{n,\delta}\right\}\right\}.
\]
For the rest of the proof, we write $\breve Y_i^{n, \delta}$ in place of $\breve Y_{\widehat{D}_1^m,i}^{n, \delta}$ to ease the notation.

Take $u_0\in (0,1)$ such that for all $0 < u < u_0$, $(1-u)^{-8} \le 1 + 8 u + 72 u^2$. Then, taking $\gamma > 2/u_0$ (recall this is possible at the cost of taking $A > 0$ larger), it follows that 
\begin{align*}
    &\p{ \breve R_{m,n,\delta}^+ \le \gamma n^{1/4}}\\
    &= \E{\p{\max_{1\le i \le \widehat D_1^m}\left\{ \breve R_{\Lambda^{(m)}, n, \delta}^+ + \breve Y_i^{n,\delta}\right\} \le \gamma n^{1/4}\bigg|\widehat D_1^m, \Lambda^{(m)}}}\\
    &= \E{\prod_{i=1}^{\widehat D_1^m}\p{ \breve R_{\Lambda_i^{(m)}, n, \delta}^+ \le \gamma n^{1/4} - \breve Y_i^{n,\delta}\bigg| \widehat D_1^m, \Lambda^{(m)}, \breve Y_i^{n,\delta}}},
\end{align*}
where in the second equality we have used the tower law and the branching property. We will bound the right-hand side of the above equality by applying induction to each term in the product. More specifically, taking $n = k = \Lambda_i^{(m)}$ and for the $i$-th term of the product, by the induction hypothesis, we obtain
\begin{align*}
&\E{\prod_{i=1}^{\widehat D_1^m}\p{\breve R_{\Lambda_i^{(m)}, n, \delta} \le \gamma n^{1/4} - \breve Y_i^{n,\delta}\bigg| \widehat D_1^m, \Lambda^{(m)}, \breve Y_i^{n,\delta}}} \\
&\quad= \E{\prod_{i=1}^{\widehat D_1^m}\p{\breve R_{\Lambda_i^{(m)}, n, \delta} \le \left(\frac{\gamma n^{1/4} - \breve Y_i^{n,\delta}}{(\Lambda_i^{(m)})^{1/4}}\right)(\Lambda_i^{(m)})^{1/4}\bigg| \widehat D_1^m, \Lambda^{(m)}, \breve Y_i^{n,\delta}}} \\
&\quad\ge\E{\prod_{i=1}^{\widehat D_1^m} \left(1 - \frac{A (\Lambda_i^{(m)})^{2}}{(\gamma n^{1/4} - \breve Y_i^{n,\delta})^8}\right)_+}.
\end{align*}
Furthermore, since $\prod_{i=1}^k(1-x_i)_+\ge 1 - \sum_{i=1}^kx_i$ for any non-negative sequence $(x_i)_{i \ge 1}$, we may lower bound the above as
\begin{align*}
    \E{\prod_{i=1}^{\widehat D_1^m} \left(1 - \frac{A (\Lambda_i^{(m)})^{2}}{(\gamma n^{1/4} - \breve Y_i^{n,\delta})^8}\right)_+}&\ge1-\frac{A}{\gamma ^8}\E{\sum_{i=1}^{\widehat D_1^m}\left(\frac{\Lambda_i^{(m)}}{n}\right)^{2}\left(1 - \frac{\breve Y_i^{n,\delta}}{\gamma n^{1/4}}\right)^{-8}}\\
    &\ge 1 -\frac{A}{\gamma^8}\E{\sum_{i=1}^{\widehat D_1^m}\left(\frac{\Lambda_i^{(m)}}{m}\right)^{2}\left(1 - \frac{\breve Y_i^{n,\delta}}{\gamma n^{1/4}}\right)^{-8}},
\end{align*}
where the final inequality holds since $m \le n$. Moreover, since $\gamma > 2/u_0$, we have that $|\breve Y_i^{n,\delta}|/(\gamma n^{1/4}) < u_0$ for any $n$, and so $(1 - \breve Y_i^{n,\delta}/(\gamma n^{1/4}))^{-8} \le 1 + 8 \breve Y_i^{n,\delta}/(\gamma n^{1/4}) +  72 (\breve Y_i^{n,\delta})^2/(\gamma ^2\sqrt{n})$. Hence,
\begin{align}
\p{ \breve R_{m,n,\delta}^+ \le \gamma n^{1/4}} &\ge 1-\frac{A}{\gamma^8} + \frac{A}{\gamma^8}\E{1 - \sum_{i=1}^{\widehat D_1^m} \left(\frac{\Lambda_i^{(m)}}{m}\right)^{2}}\label{eq:sizes}\\
&\qquad -\frac{8A }{\gamma^{9 }n^{1/4}}\E{\sum_{i=1}^{\widehat D_1^m}\left(\frac{\Lambda_i^{(m)}}{m}\right)^{2}\breve Y_i^{n,\delta}}\label{eq:for_mean}\\
&\qquad - \frac{72 A}{\gamma^{10}\sqrt{m}}\E{\sum_{i=1}^{\widehat D_1^m}\left(\frac{\Lambda_i^{(m)}}{m}\right)^{2}(\breve Y_i^{n,\delta})^2}\label{eq:for_var},
\end{align}
where we may take the denominator of the final term of the above expression to be $\gamma^{10}\sqrt{m}$ rather than $\gamma^{10}\sqrt{n}$ as  $m \le n$ and the expectation in this term is non-negative.  Applying (\ref{eq:haas_lem}), (\ref{eq:var}), and (\ref{eq:mean}) to bound the expectations in (\ref{eq:sizes}), (\ref{eq:for_var}), and (\ref{eq:for_mean}), respectively, we obtain that there exist constants $B, B', B'' > 0$ such that 
\begin{align*}
\p{ \breve R_{m,n,\delta}^+ \le \gamma n^{1/4}} &\ge 1-\frac{A}{\gamma^8} + \frac{A B''}{\gamma^8 \sqrt{m}}- \frac{8A}{\gamma ^{9}n^{1/4}} \left( \frac{B'n^{1/4 - \delta}}{\sqrt{m}}+\frac{B'}{m^{1/4}}\right) - \frac{72 A B}{\gamma^{10}\sqrt{m}}\\
&\ge  1-\frac{A}{\gamma^8} + \frac{A }{\gamma^8 \sqrt{m}}\left(B'' -\frac{8B'}{\gamma n^\delta} - \frac{8B'}{\gamma} - \frac{72 B}{\gamma^2}\right).
\end{align*}
For $\gamma > 0$ large enough, the final term in parentheses is positive so the whole expression is at least $1 - A/\gamma^8$. The result follows by induction on $m$.
\end{proof}

\section{The hairy tour}\label{sec:hairy_tour}

In this section we prove Theorems \ref{thm:hairy_4} and \ref{thm:hairy_2}. In particular, we show that under assumptions [\ref{a1}] and [\ref{a3}] for a given measure $\pi$ with $\eta \in [0,2)$, we have that $(n^{-1/2}H_n, n^{-1/(4-\eta)}R_n)$ converges in distribution to a generalisation of the hairy tour introduced by Janson and Marckert \cite{discrete_snakes} if $\eta = 0$, and to a process whose second coordinate is a pure jump process if $\eta\in (0,2)$. Recall that by [\ref{a3}], $\pi$ is a Borel measure on $\R^2\setminus \{(0,0)\}$ such that for for any $\varepsilon >0$, both $\pi(\R_+\times (\varepsilon, \infty))< \infty$ and $\pi((\varepsilon,\infty)\times \R_+)< \infty$, and that for all Borel sets $A\subset \R_+^2\setminus \{(0,0)\}$ for which $\pi(\partial A)=0$, 
\[
r^{4-\eta}\p{\frac{1}{r}\left(\max_{1\le i \le \xi} Y_{\xi,i}^+, \max_{1\le i \le \xi}Y^-_{\xi,i}\right)\in A)}\rightarrow \pi(A)
\]
as $r\rightarrow \infty$, where $Y_{k,j}^{+} = Y_{k,j}\vee 0$ and $Y_{k,j}^- = (-Y_{k,j})\vee 0$.
The measure $\pi$ will be the intensity measure for a Poisson point process which drives the second coordinate of the limit.

Recall that $\bT_n = (\rT_n, Y)$ is such that given $\rT_n$, $Y = (Y^{(v)}, v\in v(\rT_n)\setminus \partial \rT_n)$ is a collection of independent random vectors, where if $v\in v(\rT_n)\setminus \partial \rT_n$ has $k$ children then $Y^{(v)}$ has distribution $\nu_k$. Observe that, for fixed $\eta \in [0,2)$, by assumption [\ref{a3}], if the measure $\pi$ has non-zero mass then 
\[
\max_{v\in v(\rT_n)}\|Y^{(v)}\|_\infty = \Theta_\bP(n^{1/(4-\eta)}).
\]
Fix $\gamma > 0$, $\delta \in (0,1/(4-\eta))$, and suppose that $n \ge 1$ is sufficiently large so that $n^{1/(4-\eta) - \delta} \le \gamma n^{1/(4-\eta)}$. As in the proof of tightness for Theorem \ref{thm:main}, and more specifically as in Section \ref{sec:tightness}, in order to prove Theorems \ref{thm:hairy_4} and \ref{thm:hairy_2}, we will need to consider three ``restrictions'' of the branching random walk $\bT_n$. These restrictions are a generalisation of those used in Section \ref{sec:tightness} from the case $\eta = 0$ to that of general $\eta \in [0,2)$; the modified definitions are given below.

We denote the restrictions of $\bT_n$ by $\bT_{n,\delta} = (\rT_n, Y_{n,\delta})$, $\bT_{n,\delta}^\gamma = (\rT_n, Y_{n,\delta}^\gamma)$, and $\bT_n^\gamma = (\rT_n, Y_n^\gamma)$. Again, these branching random walks will respectively capture the ``typical'', ``mid-range'', and ``large'' displacements in $\bT_n$, as follows: 
\begin{enumerate}
\item {\bf (typical displacements):} For all $v\in v(\rT_n)\setminus \partial \rT_n$, \[Y_{n,\delta}^{(v)} = Y^{(v)}\I{\|Y^{(v)}\|_\infty \le n^{1/(4-\eta) - \delta}};\] 
\item  {\bf (mid-range displacements):} For all $v\in v(\rT_n)\setminus \partial \rT_n$, \[Y_{n,\delta}^{\gamma, (v)} = Y^{(v)}\I{n^{1/(4-\eta) - \delta }< \|Y^{(v)}\|_\infty \le \gamma n^{1/(4-\eta)}};\] 
\item {\bf (large displacements):} For all $v\in v(\rT_n)\setminus \partial \rT_n$, \[Y_{n}^{\gamma, (v)} = Y^{(v)}\I{\|Y^{(v)}\|_\infty > \gamma n^{1/(4-\eta)}}.\] 
\end{enumerate}
We note that, informally, taking $\gamma\downarrow 0$ in $\bT_n^{\gamma}$ captures all displacements of the largest order. 
We define $R_{n,\delta}, R_{n,\delta}^\gamma$, and $R_n^\gamma$ to be the functions encoding the spatial locations of the vertices of $\bT_{n,\delta}, \bT_{n,\delta}^\gamma,$ and $\bT_n^\gamma$, respectively.

Before studying the convergence of the functions $R_{n,\delta}$, $R_{n,\delta}^\gamma$, and $R_n^\gamma$, we will prove convergence upon rescaling of the \emph{values} of the large displacements.
For  $v\in v(\rT_n) \setminus \partial \rT_n$, let 
\[
Y^{(v,+)}:=0\vee \max_{j \in [c(v,\rT_n)]} Y^{(v)}_j \quad \text{ and } \quad
Y^{(v,-)}:=0\vee \max_{j \in [c(v,\rT_n)]} (-Y^{(v)}_j)
\]
be the largest positive and negative terms (respectively) in the displacement vector $Y^{(v)}$ from $v$ to its children and, for $v\in \partial \rT_n$, set  $Y^{(v,+)}=Y^{(v,-)}=0$. 
For a finite multiset $S \subset \R^2$, by ``the decreasing ordering of $S$'' we mean the vector $(s_1,\ldots,s_m)$ which lists the elements of $S$ in decreasing order of their largest coordinate, breaking ties in decreasing order of their smallest coordinate.
Let $L_n^{\eta, \gamma}$ be the decreasing ordering of the multiset
\begin{equation} \label{eq:Lndef}
\left\{(Y^{(v,+)}, Y^{(v,-)})\I{\|Y^{(v)}\|_\infty > \gamma n^{1/(4-\eta)}}, ~v\in v(\rT_n)\right\},
\end{equation}
concatenated with an infinite sequence with all entries $(0,0)$. 

\begin{lem}\label{lem:big_jumps_conv}
Fix $\gamma > 0$ and suppose that [\ref{a1}] holds and [\ref{a3}] holds for a given measure $\pi$ with $\eta \in [0,2)$. Then as $n\rightarrow \infty$, 
\[
\frac{L_n^{\eta, \gamma}}{n^{1/(4-\eta)}}\convdist L^{\eta, \gamma}
\]
in $\ell_\infty$, where $L^{\eta, \gamma}$ is the decreasing ordering of the points of a Poisson process on $ \R_{\ge 0}^2$ with intensity $\pi(dx, dy)\I{(x\vee y) > \gamma}$ concatenated with an infinite sequence with all entries $(0,0)$. 
\end{lem}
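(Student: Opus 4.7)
The plan is to pass from a straightforward Poisson convergence in the unconditioned setting (i.i.d.\ degrees) to the size-conditioned setting via the local-limit-theorem transfer employed in Proposition~\ref{prop:large_tight}, and to upgrade vague point-process convergence to $\ell_\infty$ convergence of the ordered sequence via the atomlessness of $\pi$'s projections observed in the remark following [\ref{a3}].

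First I would work with unconditioned i.i.d.\ data: let $\xi_1,\ldots,\xi_n$ be i.i.d.\ samples from $\mu$ and, conditionally on these, let $Y_{\xi_1},\ldots,Y_{\xi_n}$ be independent with $Y_{\xi_i}\sim \nu_{\xi_i}$, and set $Z_i := (\max_{j\in[\xi_i]} Y_{\xi_i,j}^+,\, \max_{j\in[\xi_i]} Y_{\xi_i,j}^-)$. Assumption [\ref{a3}] applied with $r = n^{1/(4-\eta)}$ gives
\[
n\cdot\mathbf{P}\bigl\{Z_1/n^{1/(4-\eta)} \in A\bigr\} \to \pi(A)
\]
for every Borel $A\subset \R_+^2\setminus\{(0,0)\}$ with $\pi(\partial A)=0$, so by the classical Poisson limit theorem for row-wise i.i.d.\ null arrays, $\widetilde{\Xi}_n := \sum_{i=1}^n \delta_{Z_i/n^{1/(4-\eta)}}$ converges vaguely in distribution on $\R_+^2\setminus\{(0,0)\}$ to a Poisson point process $\Xi$ with intensity $\pi$. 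Since $\pi(\{x\vee y>\gamma\})<\infty$ and the marginals of $\pi$ are atomless on $(0,\infty)$, $\Xi$ has almost surely finitely many points in $S_\gamma:=\{(x,y):x\vee y>\gamma\}$, all with pairwise distinct first and second coordinates. The decreasing-rearrangement map from point measures on $S_\gamma$ to $\ell_\infty$ (padded with zeros) is continuous at such configurations, so the continuous mapping theorem yields $\widetilde L_n^{\eta,\gamma}/n^{1/(4-\eta)} \convdist L^{\eta,\gamma}$ in $\ell_\infty$, where $\widetilde L_n^{\eta,\gamma}$ is the unconditional analogue of $L_n^{\eta,\gamma}$ built from $(Z_i)_{i=1}^n$.

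To transfer to the size-conditioned setting, I would follow the proof of Proposition~\ref{prop:large_tight} essentially verbatim. Let $M_n^\gamma := |\{i : \|Y_{\xi_i}\|_\infty > \gamma n^{1/(4-\eta)}\}|$ and $S_n^\gamma := \sum_{i:\, \|Y_{\xi_i}\|_\infty > \gamma n^{1/(4-\eta)}} \xi_i$. Since $\E{\xi^3}<\infty$ yields $\max_i \xi_i = O_\bP(n^{1/3})$ and [\ref{a3}] gives $M_n^\gamma = O_\bP(1)$, for any $\varepsilon>0$ the good event $\{M_n^\gamma \le n^\varepsilon,\ \max_i\xi_i\le n^{1/3}\}$ has unconditional probability $1-o(1)$, and this persists under the conditioning by Markov's inequality combined with $\p{\sum_i\xi_i=n-1}=\Theta(n^{-1/2})$, so also $S_n^\gamma \le n^{1/3+\varepsilon}$ on the good event. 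Letting $\cF_n$ be the $\sigma$-algebra generated by the indicators $(\I{\|Y_{\xi_i}\|_\infty > \gamma n^{1/(4-\eta)}})_{i=1}^n$ together with the $\xi_i$ and $Z_i$ on the ``large'' indices, the remaining $n-M_n^\gamma$ ``small'' entries are conditionally i.i.d.\ copies of $\xi$ given $\{\|Y_\xi\|_\infty\le \gamma n^{1/(4-\eta)}\}$, and
\[
\mathbf{P}\Bigl\{\textstyle\sum_i \xi_i = n-1 \,\Big|\, \cF_n\Bigr\} = \mathbf{P}\Bigl\{\textstyle\sum_{i\text{ small}} \xi_i^n = n-1-S_n^\gamma \,\Big|\, \cF_n\Bigr\}.
\]
On the good event, the quantitative local central limit theorem (Lemma~\ref{lem:local_move}) used at the end of the proof of Proposition~\ref{prop:large_tight} shows that this conditional probability equals $(1+o(1))\p{\sum_i\xi_i=n-1}$ uniformly in the mark configuration, so by Bayes' rule the law of $\widetilde L_n^{\eta,\gamma}$ under $\mathbf{P}(\cdot\mid\sum_i\xi_i=n-1)$, which is precisely the law of $L_n^{\eta,\gamma}$, agrees in the $n\to\infty$ limit with the unconditional law. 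Combined with the previous paragraph this yields the claimed convergence. The hard part will be making the uniformity of the local-limit-theorem ratio precise over the random mark configuration on the good event; this is where the third moment assumption on $\xi$ in [\ref{a3}] is used, just as it was for Proposition~\ref{prop:large_tight}.
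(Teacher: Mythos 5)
Your proposal is correct and follows essentially the same two-step strategy as the paper's proof: first establish the Poisson convergence of the unconditioned point process $\sum_i \delta_{Z_i/n^{1/(4-\eta)}}$ restricted to $\{x\vee y>\gamma\}$ and pass to $\ell_\infty$ by continuity of the ordering at generic configurations, then transfer to the size-conditioned setting via the quantitative local CLT exactly as in the proof of Proposition~\ref{prop:large_tight}. The only cosmetic difference is that you explicitly invoke the atomlessness of $\pi$'s marginals to justify continuity of the rearrangement map, which the paper handles more tersely.
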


\begin{proof}
Let $(\xi_i, i\ge 1) $ be \iid samples from the offspring distribution $\mu$.
Further, for $i \ge 1$, sample $Y_{\xi_i}$ independently and let 
\[
Y^{+}_{\xi_i}:=0\vee \max_{j \in [\xi_i]}  Y_{\xi_i,j} \quad \text{ and } \quad
 Y_{\xi_i}^{-}:=0\vee \max_{j \in [\xi_i]} (- Y_{\xi_i, j}).
\]
By definition, the multiset $\{(Y^{(v,+)}, Y^{(v,-)}), v\in v(\rT_n)\}$ is distributed as $\{( Y^+_{\xi_i},  Y^-_{\xi_i}), i\in [n]\}$ conditioned on the event that $\sum_{i=1}^n\xi_i = n-1$. 

For $n \ge 1$, let $\widetilde L_n^{\eta, \gamma}$ be the decreasing ordering of
\[
\left\{(Y_{\xi_i}^+, Y_{\xi_i}^-)\I{\|Y_{\xi_i}\|_\infty > \gamma n^{1/(4-\eta)}},~i\in[n]\right\},
\]
concatenated with an infinite sequence with all entries $(0,0)$. We will first show that 
\begin{equation}\label{eq:large_jumps_tilde} 
n^{-1/(4-\eta)}\widetilde{L}_n^{\eta, \gamma}\convdist L^{\eta, \gamma}
\end{equation}
in $\ell_\infty$ as $n\rightarrow \infty$. To this end, note that by [\ref{a3}], for any $x,y\ge 0$ such that $x\vee y > \gamma$ and such that $\pi((\{x\}\times [y,\infty)) \cup ([x,\infty)\times \{y\})) =0$, 
\[
n\p{Y_{\xi_i}^+ > xn^{1/(4-\eta)},~Y_{\xi_i}^- > yn^{1/(4-\eta)}} \rightarrow \pi((x,\infty)\times (y,\infty)),
\] as $n\rightarrow \infty$ and, moreover, $\pi((x,\infty)\times(y,\infty)) < \infty$. Therefore, 
\begin{align}\label{eq:tight_num_non_zero}
&\left|\left\{i\in [n]~:~Y_{\xi_i}^+ > xn^{1/(4-\eta)},~ Y_{\xi_i} ^-> yn^{1/(4-\eta)}\right\}\right| \nonumber\\
&\quad\quad\quad\eqdist \mathrm{Binomial}\left(n\p{Y_{\xi_i}^+ > xn^{1/(4-\eta)},~Y_{\xi_i}^-> yn^{1/(4-\eta)}}\right)\nonumber\\
&\quad\quad\quad\convdist \mathrm{Poisson}(\pi((x,\infty)\times (y,\infty))),
\end{align}
and (\ref{eq:large_jumps_tilde}) follows from the fact that a Poisson process on $\R^2$ is determined by its distribution on half-infinite rectangles and the continuity of the function $x,y \mapsto x\vee y, x\wedge y$ that we use to order the multisets.

We now show that the convergence in (\ref{eq:large_jumps_tilde}) still holds when we condition on $\sum_{i=1}^n \xi_i = n-1$. We note that the remainder of this proof is similar to the end of the proof of Proposition \ref{prop:large_tight}.

Let $\widetilde{M}^\gamma_n$ be the number of elements in $\widetilde{L}_n^{\eta, \gamma}$ which are not equal to $(0,0)$. Note that by (\ref{eq:tight_num_non_zero}), the sequence $(\widetilde{M}^\gamma_n)_{n \ge 1}$ is tight. Further, let $\widetilde S^\gamma_n := \sum_{i\in [n]}\xi_i\I{\|Y_{\xi_i}\|_\infty > \gamma n^{1/(4-\eta)}}$. Since $\xi_1,\dots \xi_n$ are \iids, the law of $\sum_{i=1}^n\xi_i$ depends on $\widetilde L^{\eta, \gamma}_n$ solely through $\widetilde M^\gamma_n$ and $\widetilde S^\gamma_n$. To be precise, let $\xi_1^n, \xi_2^n,\dots$ be independent random variables such that for each $i\ge 1$, $\xi_i^n$ is distributed as $\xi_i$ conditional on $\|Y_{\xi_i}\|_\infty < \gamma n^{1/(4-\eta)}$. Then,
\begin{equation}\label{eq:need_to_decouple}
\p{\sum_{i=1}^n \xi_i = k~\bigg|~\widetilde{L}^{\eta,\gamma}_n} = \p{\widetilde{S}^\gamma_n + \sum_{i = 1}^{n-\widetilde{M}^\gamma_n}\xi_i^n = k~\bigg|~\widetilde{S}^\gamma_n,~\widetilde{M}^\gamma_n}.
\end{equation}

Let $F:\ell_\infty \rightarrow \R$ be a bounded measurable function. Then, by analogous arguments to those used to prove (\ref{eq:hairy_refer_2}),
\begin{align*}
\E{F(L_n^{\eta, \gamma})} & =\E{F(\widetilde L_n^{\eta, \gamma})\I{\widetilde M^\gamma_n < n^{\varepsilon},~\widetilde S^\gamma_n < n^{1/3+ \varepsilon}}~\bigg|~\sum_{i=1}^n\xi_i = n-1} + o(1)\\
&= \frac{\E{F(\widetilde L_n^{\eta, \gamma})\I{\sum_{i=1}^n \xi_i = n-1,~\widetilde M^\gamma_n < n^{\varepsilon},~\widetilde S^\gamma_n < n^{1/3+\varepsilon}}}}{\p{\sum_{i=1}^n\xi_i = n-1}} + o(1)\\
&= \frac{\E{\E{F(\widetilde L_n^{\eta, \gamma})\I{\sum_{i=1}^n\xi_i = n-1, ~\widetilde M_n^\gamma < n^\varepsilon, ~\widetilde S_n^\gamma < n^{1/3 + \varepsilon}}~\bigg|~ \widetilde L_n^{\eta, \gamma}}}}{\p{\sum_{i=1}^n \xi_i = n-1}} + o(1)\\
&=\E{F(\widetilde L_n^{\eta,\gamma})}\frac{\p{\sum_i^{n-\widetilde M^\gamma_n}\xi_i^n = n\hspace{-1pt}-\hspace{-1pt} 1\hspace{-1pt} -\hspace{-1pt} \widetilde S^\gamma_n~\bigg|~\widetilde M^\gamma_n \hspace{-1pt}<\hspace{-1pt} n^\varepsilon,\widetilde S^\gamma_n<n^{1/3+\varepsilon}}}{\p{\sum_{i=1}^n\xi_i = n-1}} + o(1),
\end{align*} 
where the last equality holds by (\ref{eq:need_to_decouple}). By a quantitative local limit theorem (see Lemma \ref{lem:local_move} in the appendix), we obtain that as $n\rightarrow \infty$
\[
\frac{\p{\sum_i^{n- m}\xi_i^n = n- 1 -  s}}{\p{\sum_{i=1}^n\xi_i = n-1}}\rightarrow 1,
\]
uniformly over all $m < n^\varepsilon$ and $s < n^{1/3 + \varepsilon}$. It follows that 
\[
\E{F(L_n^{\eta, \gamma})} =\E{F(\widetilde L_n^{\eta,\gamma})} + o(1). 
\]
The result then follows by (\ref{eq:large_jumps_tilde}).
\end{proof}

In the remainder of the section, we continue to use $L^{n,\gamma}$ to refer to a random vector with the distribution given in Lemma~\ref{lem:big_jumps_conv}.

To prove Theorems \ref{thm:hairy_4} and \ref{thm:hairy_2}, we use similar methods to those used to prove Theorem \ref{thm:main}. First, we will prove convergence of the branching random walk restricted to the subtree spanned by $k$ uniform vertices, by showing that the convergence from Proposition \ref{prop:fdds} holds jointly with that in Lemma \ref{lem:big_jumps_conv}, and that the limits are independent. This, in particular, implies the convergence of the random finite-dimensional distributions in Theorems \ref{thm:hairy_4} and \ref{thm:hairy_2}.  The independence is the key issue here, and in order to obtain it, we require adaptations of Proposition~\ref{prop:measure_change_basic} and Lemma~\ref{lem:conv_measure_change_basic} to the setting of $n$-dependent offspring distributions. The required technical results may be found in the appendix.

Following this, using similar techniques to those used in Sections~\ref{sec:tightness} and~\ref{sec:technical} to prove tightness for the discrete snake in Theorem \ref{thm:main}, and applying the aforementioned joint convergence, we will show that a discrete snake comprised solely of the ``typical'' displacements converges to the head of the BSBE on rescaling by $n^{-1/4}$ if $\eta = 0$, and to $0$ on rescaling by $n^{-1/(4-\eta)}$ if $\eta \in (0,2)$. Furthermore, this discrete snake is asymptotically independent of the large displacements. In Section~\ref{sec:large} we show that for $\eta\in [0,2)$ the mid-range displacements make only a vanishing contribution to the head of the discrete snake on the scale of $n^{1/(4-\eta)}$. Next, by a small variant of Lemma \ref{lem:pendant_small}, we deduce that the large displacements appear near the leaves. We apply this result to prove Lemma \ref{lem:compare}, which states that the discrete snake associated with the branching random walk $\bT_n'$ obtained by pruning sub-branching random walks rooted at vertices with large displacements in $\bT_n$ converges upon rescaling by $n^{-1/(4-\eta)}$ to the same limit as that of the ``typical displacement'' discrete snake (with the limit depending on whether $\eta = 0$ or $\eta\in (0,2)$). Theorems \ref{thm:hairy_4} and \ref{thm:hairy_2} then follow by showing that the branching random walk obtained by regrafting these pruned sub-branching random walks to uniform leaves of $\bT_n’$ has the same law as $\bT_n$. 

The following proposition establishes the convergence of the branching random walk restricted to the subtree spanned by $k$ uniform vertices, as well as the its asymptotic independence from the large displacements.

\begin{prop}\label{prop:rffds_hairy}
Fix $\gamma > 0$ and suppose that [\ref{a1}] holds and [\ref{a3}] holds for a given measure $\pi$ with $\eta \in [0,2)$. Fix $k \ge 1$. Then  
\begin{equation*}
\frac{\sigma}{\sqrt{n}}(J_1^n, J_2^n,\dots, J_k^n, A_1^n,\dots, A_k^n) \convdist (J_1, J_2,\dots, J_k, A_1,\dots, A_k)
\end{equation*}
as $n\rightarrow \infty$. Jointly with this convergence, we have that
\begin{equation*} 
(F_1^n, F_2^n, \ldots, F_k^n) \convdist (F_1, F_2, \ldots, F_k),
\end{equation*}
where $F_1, F_2, \ldots, F_k$ are \iid random variables, independent of everything else, such that $\p{F_i = 1} = \p{F_i = 2} = 1/2$ and 
\begin{align*}
\left(\frac{L^n(\fl{tn^{1/2}}\wedge (J_1^n-1))}{n^{1/4}}\right)_{t \ge 0} & \convdist \beta (B_{t \wedge (J_1/\sigma)})_{t \ge 0}, \notag \\
\left(\frac{L^n((J_i^n + \fl{t n^{1/2}}) \wedge (J_{i+1}^n-1))}{n^{1/4}}\right)_{t \ge 0} & \convdist  \beta (B_{A_i/\sigma}+ B_{((J_i/\sigma) + t) \wedge (J_{i+1}/\sigma)} - B_{(J_i/\sigma)})_{t \ge 0} 
\end{align*}
for $1 \le i \le k-1$, in each case for the uniform norm. Moreover, jointly with this convergence,
\[
\frac{L_n^{\eta, \gamma}}{n^{1/(4-\eta)}}\convdist L^{\eta, \gamma},
\]
in $\ell_\infty$, where $L^{\eta, \gamma}$ is independent of all the other limiting random variables. 
\end{prop}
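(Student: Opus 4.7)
The plan is to combine Proposition~\ref{prop:fdds} with Lemma~\ref{lem:big_jumps_conv}; since the marginal convergences are already established, the new content is the joint convergence together with asymptotic independence of the two groups of limits.

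The key observation is that the vertices carrying large displacements,
\[
\mathcal{V}_n^\gamma := \{v \in v(\rT_n) : \|Y^{(v)}\|_\infty > \gamma n^{1/(4-\eta)}\},
\]
form a set of cardinality $O_{\bP}(1)$ (by the proof of Lemma~\ref{lem:big_jumps_conv}), with total degree $o_{\bP}(\sqrt{n})$ (using $\E{\xi^3}<\infty$ together with $\max_{v}c(v,\rT_n)=O_{\bP}(n^{1/3})$ from \cite[Corollary 19.11]{janson2012simply}). Since vertices appear in the bijective construction $B(\Pi_{D^n})$ in a size-biased order (Lemma~\ref{lem:sb_order}), the expected number of vertices from $\mathcal{V}_n^\gamma$ appearing among the first $\lfloor T\sqrt{n}\rfloor$ encountered vertices is of order $\sqrt{n}\cdot n^{1/3}/n = o(1)$, so with high probability none of them is visited during the construction of the subtree $\rT_n^k$. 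Hence, the limits governing $(J_i^n, A_i^n, F_i^n)$ and $L^n$ ought to be asymptotically unaffected by conditioning on $L_n^{\eta,\gamma}$.

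To turn this heuristic into a proof, I would use the extended change of measure of Proposition~\ref{prop:sizebias_swole} (in the appendix) to express the joint law of the first $m$ visited degrees and associated displacements, conditionally on $L_n^{\eta,\gamma}$ together with the positions of $\mathcal{V}_n^\gamma$, in terms of iid samples from the size-biased distribution of $\mu$ truncated to $\|Y_{\bar\xi}\|_\infty \le \gamma n^{1/(4-\eta)}$. An analogue of Lemma~\ref{lem:conv_measure_change_basic} for this conditional change of measure (also in the appendix, and invoking the quantitative local CLT, Lemma~\ref{lem:local_move}) then shows that the corresponding Radon--Nikodym derivative converges to $1$ in probability when $m = \Theta(\sqrt{n})$. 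Under this conditional law, the proofs of Lemma~\ref{lem:conv_good_event}, Proposition~\ref{prop:conv_cuts_attach}, Lemma~\ref{lem:attach_points}, and Proposition~\ref{prop:fdds} go through essentially verbatim; the only substantive modification is that the variance appearing in the Donsker step of Proposition~\ref{prop:conv_degrees_disps} becomes $\V{Y_{\bar\xi,U_{\bar\xi}}\I{\|Y_{\bar\xi}\|_\infty\le \gamma n^{1/(4-\eta)}}}$, which still tends to $\beta^2$ by Lemma~\ref{lem:moments_truncation_move}. Since the conditional limit does not depend on $L_n^{\eta,\gamma}$, asymptotic independence of the limits follows immediately.

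The main obstacle will be verifying the analogue of Lemma~\ref{lem:conv_measure_change_basic} in this conditional setting: one must simultaneously control the effect of truncating the size-biased distribution to $\|Y_{\bar\xi}\|_\infty\le \gamma n^{1/(4-\eta)}$ and the combinatorial cost of pinning down the positions of the finitely many large-displacement vertices among $[n]$. This is precisely where the $\E{\xi^3}<\infty$ hypothesis in~[\ref{a3}] is used, via the uniform error bound provided by Lemma~\ref{lem:local_move}; once this ingredient is in hand, the remaining arguments are routine adaptations of those in Section~\ref{sec:fdds}.
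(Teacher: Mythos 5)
Your proposal is correct and takes essentially the same approach as the paper: condition on the $\sigma$-algebra $\cF_n^{\eta,\gamma}$ generated by the large-displacement data, invoke the extended measure change of Proposition~\ref{prop:sizebias_swole}, show the Radon--Nikodym derivative $\Theta_{\mu^n}^{n,r_n,s_n}$ tends to $1$ via the quantitative local CLT (Lemma~\ref{lem:local_move}), and deduce that the conditional law of the line-breaking data is asymptotically free of $L_n^{\eta,\gamma}$. The only cosmetic difference is in the final step: you would rerun Proposition~\ref{prop:conv_degrees_disps} directly with the truncated size-biased law (whose variance still tends to $\beta^2$ by Lemma~\ref{lem:moments_truncation_move}), whereas the paper instead uses Corollary~\ref{cor:dtv_cond} to replace $\bar\xi_i^n$ by $\bar\xi_i$ at total-variation cost $o(1)$ and then falls back on the untruncated Proposition~\ref{prop:measure_change_basic}; both routes work.
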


\begin{proof}
Fix $k\ge 1$ and $\gamma > 0$ and write 
\begin{align*}
V_n & =  (J_1^n, J_2^n,\dots, J_k^n, A_1^n,\dots, A_k^n, F_1^n, F_2^n, \ldots, F_k^n,\\
& \qquad (L^n(\fl{tn^{1/2}}\wedge (J_1^n-1)))_{t \ge 0}),  (L^n((J_1^n + \fl{t n^{1/2}}) \wedge (J_{2}^n-1)))_{t \ge 0}), \dots, \\
& \hspace{7cm} (L^n((J_{k-1}^n + \fl{t n^{1/2}}) \wedge (J_{k}^n-1)))_{t \ge 0})  
\end{align*}
for the vector containing all variables that, in Proposition~\ref{prop:fdds}, have already been shown to converge jointly under rescaling when we equip the first $3k$ entries with the  Euclidean topology on $\R$, the last $k$ entries with the topology of uniform convergence, and the whole vector with the product topology.  Then, let $g$ be an $\R$-valued bounded continuous function (for this topology), and $h:\ell_\infty\rightarrow \R$ be another bounded continuous function. By Proposition~\ref{prop:fdds} and Lemma~\ref{lem:big_jumps_conv}, it suffices to prove that
\begin{equation}\label{eq:indep_V_n} 
\Big|\E{g(V_n)h(L_n^{\eta, \gamma})}-\E{g(V_n)}\E{h(L_n^{\eta, \gamma})}\Big| \to 0
\end{equation}
as $n\to \infty$. 

Let $(M_n^{\eta, \gamma}, S_n^{\eta, \gamma})$ have the joint distribution of the number of vertices with a large displacement, $\sum_{v\in \rT_n}\I{\|Y^{(v)}\| > \gamma n^{1/(4-\eta)}}$, and the total number of children of such vertices, $\sum_{v\in \rT_n}c(v,\rT_n)\I{\|Y^{(v)}\| > \gamma n^{1/(4-\eta)}}$. Fix $\varepsilon\in (0,1/6)$ and define the good event 
\[
\cG_1=\{M_n^{\eta,\gamma} \le n^\varepsilon,S_n^{\eta, \gamma} \le n^{1/3 + \varepsilon}\}.
\]
By analogous arguments to those used to prove (\ref{eq:hairy_refer_2}), $\cG_1$ occurs with high probability. Now recall that $\sigma n^{-1/2}J^n_k \convdist J_k$ as $n\rightarrow \infty$. Fix $T > 0$ and let $\cG_2$ be the (good) event that $J^n_k\le T\sqrt{n}$. (We observe that by choosing $T$ large we may make $\p{\cG_2}$ as close to 1 as we like, uniformly in $n$ sufficiently large.)  Then, 
\[
\E{g(V_n)h(L_n^{\eta, \gamma})}=\E{g(V_n)h(L_n^{\eta, \gamma})\I{\cG_1\cap \cG_2}} +o(1),
\]
where $o(1)$ is to be understood as an error that tends to $0$ as $n\to \infty$ and then $T\to \infty$.
 
Let $\cF_n^{\eta,\gamma}$ denote the $\sigma$-algebra generated by the degrees and displacement vectors of the vertices $v$ with $\|Y^{(v)}\| > \gamma n^{1/(4-\eta)}$. We see that $\cG_1$ and $L_n^{\eta,\gamma}$ are measurable with respect to $\cF_n^{\eta,\gamma}$, and so 
\[ 
\E{g(V_n)h(L_n^{\eta, \gamma})\I{\cG_1\cap \cG_2 }}=\E{\E{g(V_n)\I{ \cG_2 }\mid \cF_n^{\eta,\gamma}}h(L_n^{\eta, \gamma})\I{\cG_1}}.
\]
Therefore, since $g$ and $h$ are bounded, to prove (\ref{eq:indep_V_n}) it suffices to show that as $n\rightarrow \infty$ and $T \rightarrow \infty$,
\begin{equation} \label{eq:final_conv_V_n}
\Big|\E{g(V_n)\I{ \cG_2}\mid \cF_n^{\eta,\gamma}}\I{\cG_1}-\E{g(V_n)}\Big|\convprob 0.
\end{equation}

To prove (\ref{eq:final_conv_V_n}), we will use the measure change between a size-biased random array and a vector of \iid size-biased random variables which may be found in Proposition~\ref{prop:sizebias_swole} below. To this end, let $\xi^n$ denote a random variable with distribution $\mu$, conditioned not to yield a large displacement vector (i.e.\ conditioned on $\max_{1\le i \le \xi^n} |Y_{\xi^n,i}|\le \gamma n^{1/(4-\eta)}$), and let $\mu^n$ denote the distribution of $\xi^n$. Using similar notation to that in Proposition~\ref{prop:sizebias_swole}, write $r_n$ for the value of  $M_n^{\eta,\gamma}$, $s_n$ for the value of $S_n^{\eta, \gamma}$ and $d_1,\dots, d_{r_n}$ for the degrees of the vertices $v$ with $\|Y^{(v)}\| > \gamma n^{1/(4-\eta)}$. Then, let $\xi^n_{r_n+1},\dots,\xi^n_{n}$ be \iid samples from $\mu^n$ and write $\vec{Z}=(Z_1,\dots, Z_n)=(d_1,\dots, d_{r_n},\xi^n_{r_n+1},\dots,\xi^n_{n})$. Further, conditionally given $\vec{Z}$, let $\Sigma=\Sigma_{\vec{Z}}$ be the random permutation in \eqref{eq:sizebias_def}, so that $(Z_{\Sigma(1)},\dots, Z_{\Sigma(n)})$ is a size-biased random re-ordering of $\vec{Z}$. Also define $\tau_{r_n}(\Sigma)=\min\{j\in [n]:\Sigma(j)\in [r_n]\}$.  Finally, write $N=N_{n,r_n}=|\{i\in \{r_n+1,\dots, n\} :\xi^n_i>0\}|$.  
 
Note that conditionally on $\cF_n^{\eta,\gamma}$, the remaining vertex degrees are distributed as $\xi^n_{r_n+1},\dots,$ $\xi^n_{n}$ conditioned on $\xi^n_{r_n+1}+\dots+\xi^n_{n}=n-1-s_n$. Therefore, 
\begin{align*}
&\E{g(V_n)\I{ \cG_2}\mid \cF_n^{\eta,\gamma}}\\
&\quad=\E{\E{g(V_n)\I{ \cG_2} ~\bigg|~ \xi^n_{r_n+1},\dots,\xi^n_{n},\sum_{i=r_n+1}^n \xi^n_{i}=n-1-s_n, \cF_n^{\eta,\gamma}} \bigg|~ \cF_n^{\eta,\gamma}}.
\end{align*}

By (\ref{eq:christina_lemma}), $\tau_{r_n}(\Sigma)>T\sqrt{n}$ with high probability. Furthermore, by a Chernoff bound, $N\ge T\sqrt{n}$ with high probability. It follows that
\begin{align*}
&\E{\E{g(V_n)\I{ \cG_2} ~\bigg|~ \xi^n_{r_n+1},\dots,\xi^n_{n},\sum_{i=r_n+1}^n \xi^n_{i}=n-1-s_n, \cF_n^{\eta,\gamma}} \bigg|~ \cF_n^{\eta,\gamma}}\\
&=\E{\E{g(V_n)\I{ \cG_2} \I{N\ge T\sqrt{n},\tau_{r_n}(\Sigma)>T\sqrt{n}} \bigg| \xi^n_{r_n+1},...,\xi^n_{n}, \hspace{-5pt} \sum_{i=r_n+1}^n \hspace{-6.3pt}\xi^n_{i}=n-1-s_n, \cF_n^{\eta,\gamma}} \hspace{-2pt}\bigg| \cF_n^{\eta,\gamma}}\\
&\quad+o_\bP(1).
\end{align*} 
Now observe that, on the event $\cG_2$, $\mathrm{T}^k_n$ contains at most $T\sqrt{n}$ vertices, and further on the event $\tau_{r_n}(\Sigma)>T\sqrt{n}$, none of these vertices have a displacement exceeding $\gamma n^{1/(4-\eta)}$. This implies that $g(V_n)\I{ \cG_2} \I{N\ge T\sqrt{n},\tau_{r_n}(\Sigma)>T\sqrt{n}}$ only depends on $\xi^n_{r_n+1},\dots,\xi^n_{n}$ and $ \cF_n^{\eta,\gamma}$ through $Z_{\Sigma(1)},\dots,Z_{\Sigma(\lfloor T\sqrt{n}\rfloor)}$ and $\Sigma(1),\dots,\Sigma(\lfloor T\sqrt{n}\rfloor)$. Therefore, 
\begin{align*}
&\E{g(V_n)\I{ \cG_2}\bigg|~ \cF_n^{\eta,\gamma}}\\
&= \E{\E{g(V_n)\I{ \cG_2} \I{N\ge T\sqrt{n},\tau_{r_n}(\Sigma)>T\sqrt{n}} \bigg| \left(Z_{\Sigma(i)}\right)_{ i \in [\lfloor T\sqrt{N}\rfloor]},\left(\Sigma(i)\right)_{i\in [\lfloor T\sqrt{N}\rfloor]}}\hspace{-2pt}\bigg| \cF_n^{\eta,\gamma}}\\
&\quad +o_\bP(1). \\
&=\E{\E{g(V_n)\I{ \cG_2}  \bigg| \left(Z_{\Sigma(i)}\right)_{ i \in [\lfloor T\sqrt{N}\rfloor]},\left(\Sigma(i)\right)_{i\in [\lfloor T\sqrt{N}\rfloor]}}\I{N\ge T\sqrt{n},\tau_{r_n}(\Sigma)>T\sqrt{n}}\bigg|  \cF_n^{\eta,\gamma}}\\
&\quad +o_\bP(1),
\end{align*}
where the last equality is implied by the fact that the events $N\ge T\sqrt{n}$ and $\tau_{r_n}(\Sigma)>T\sqrt{n}$ are measurable with respect to $Z_{\Sigma(1)},\dots,Z_{\Sigma(\lfloor T\sqrt{n}\rfloor)},\Sigma(1),\dots,\Sigma(\lfloor T\sqrt{n}\rfloor)$. However, observe that $g(V_n)\I{ \cG_2}$ is independent of $\Sigma(1),\dots,\Sigma(\lfloor T\sqrt{n}\rfloor)$ given $Z_{\Sigma(1)},\dots,Z_{\Sigma(\lfloor T\sqrt{n}\rfloor)}$, and so 
\begin{align}\label{eq:to_apply_sb}
&\E{\E{g(V_n)\I{ \cG_2}  \bigg| \left(Z_{\Sigma(i)}\right)_{ i \in [\lfloor T\sqrt{N}\rfloor]},\left(\Sigma(i)\right)_{i\in [\lfloor T\sqrt{N}\rfloor]}}\I{N\ge T\sqrt{n}, \tau_{r_n}(\Sigma)>T\sqrt{n}}\bigg|~  \cF_n^{\eta,\gamma}}\nonumber\\
&\quad = \E{\E{g(V_n)\I{ \cG_2}  \bigg| \left(Z_{\Sigma(i)}\right)_{ i \in [\lfloor T\sqrt{N}\rfloor]}}\I{N\ge T\sqrt{n},\tau_{r_n}(\Sigma)>T\sqrt{n}}\bigg|~
  \cF_n^{\eta,\gamma}}.
\end{align}

We now apply the measure change from Proposition~\ref{prop:sizebias_swole} to obtain that, for $\bar{\xi}^n_1,\bar{\xi}^n_2,\dots$ \iid samples from the size-biased law of $\mu^n$, (\ref{eq:to_apply_sb}) is equal to
\begin{align}\label{eq:applied_mc}
&\E{\E{g(V_n)\I{ \cG_2}  \bigg| \bar{\xi}^n_1,\dots,\bar{\xi}^n_{\lfloor T\sqrt{n}\rfloor}} \Theta_{\mu^n}^{n,r_n,s_n}(\bar{\xi}_1^n, \dots, \bar{\xi}_{\lfloor T\sqrt{n}\rfloor}^n) \mid \cF_n^{\eta,\gamma} },
\end{align}
where the inner conditional expectation of $g(V_n)\I{\mathcal{G}_2}$ is now thought of as a measurable functional of the \iid random variables $\bar{\xi}^n_1,\dots,\bar{\xi}^n_{\lfloor T\sqrt{n}\rfloor}$ in place of $Z_{\Sigma(1)},\dots,Z_{\Sigma(\lfloor T\sqrt{n}\rfloor)}$. 
This implies that 
\begin{align*}
&\E{g(V_n)\I{\cG_2}\mid\cF_n^{\eta,\gamma}}\I{\cG_1}\\
&\quad = \E{\E{g(V_n)\I{\cG_2}\bigg| \bar{\xi}_1^n,\dots, \bar{\xi}^n_{\lfloor T\sqrt{n}\rfloor}}\Theta_{\mu^n}^{n, r_n, s_n}(\bar{\xi}^n_1,\dots, \bar{\xi}_{\lfloor T\sqrt{n}\rfloor}^n)\bigg| \cF_n^{\eta, \gamma}}\I{\cG_1} +o_\bP(1). 
\end{align*} 
By applying Lemma \ref{lem:conv_measure_change_hairy} on $\cG_1$ (which occurs with high probability), 
\[
\Theta_{\mu^n}^{n,r_n,s_n}(\bar{\xi}_1^n, \dots, \bar{\xi}_{\lfloor T\sqrt{n}\rfloor}^n) \convprob 1
\] 
as $n\rightarrow \infty$ and $(\Theta_{\mu^n}^{n,r_n,s_n}(\bar{\xi}_1^n, \dots, \bar{\xi}_{\lfloor T\sqrt{n}\rfloor}^n))_{n\ge 0}$ is uniformly integrable, so (\ref{eq:applied_mc}) is equal to 
\[
\E{\E{g(V_n)\I{ \cG_2}  \bigg| \bar{\xi}^n_1,\dots,\bar{\xi}^n_{\lfloor T\sqrt{n}\rfloor}} \bigg| \cF_n^{\eta,\gamma} } +o_\bP(1). 
\]
Since $\E{g(V_n)\I{ \cG_2}  \bigg| \bar{\xi}^n_1,\dots,\bar{\xi}^n_{\lfloor T\sqrt{n}\rfloor}}$ does not depend on $\cF_n^{\eta,\gamma}$, it follows that
\[
\E{\E{g(V_n)\I{ \cG_2}  \bigg| \bar{\xi}^n_1,\dots,\bar{\xi}^n_{\lfloor T\sqrt{n}\rfloor}} \bigg| \cF_n^{\eta,\gamma} }=\E{\E{g(V_n)\I{ \cG_2}  \bigg| \bar{\xi}^n_1,\dots,\bar{\xi}^n_{\lfloor T\sqrt{n}\rfloor}} }.
\]
By Corollary~\ref{cor:dtv_cond}, the total variation distance between $(\bar{\xi}^n_1,\dots,\bar{\xi}^n_{\lfloor T\sqrt{n} \rfloor})$ and \iid size-biased samples from $\mu$, henceforth denoted by $(\bar{\xi}_1,\dots,\bar{\xi}_{\lfloor T\sqrt{n}\rfloor})$, tends to $0$ as $n\to\infty$. Therefore, since $g$ is bounded,
\[
\E{\E{g(V_n)\I{ \cG_2}  \bigg| \bar{\xi}^n_1,\dots,\bar{\xi}^n_{\lfloor T\sqrt{n}\rfloor}} }=\E{\E{g(V_n)\I{ \cG_2}  \bigg| \bar{\xi}_1,\dots,\bar{\xi}_{\lfloor T\sqrt{n}\rfloor}} }+o(1).
\]
Finally, by Lemma~\ref{lem:conv_measure_change_basic} and Proposition~\ref{prop:measure_change_basic}, this is in turn equal to 
\begin{align*}
\E{\E{g(V_n)\I{ \cG_2}  \bigg| \widehat{D}_1^n,\dots,\widehat{D}_{\lfloor T\sqrt{n}\rfloor}^n}\I{N_n\ge  T\sqrt{n}\rfloor}} +o(1),
\end{align*}
where we recall that $N_{n} = |\{i\in [n]~:~D_i^n > 0\}|$. Again, since the probability of $\cG_2$ and $N_{n}\ge \lfloor T\sqrt{n}\rfloor$ occurring tends to $1$ as $n\to \infty$ and subsequently $T\to \infty$, we see that 
\begin{align*}
\E{\E{g(V_n)\I{ \cG_2}  \bigg| \widehat{D}_1^n,\dots,\widehat{D}_{\lfloor T\sqrt{n}\rfloor}^n}\I{N_n\ge  T\sqrt{n}\rfloor}} 
= \E{g(V_n)}+o(1),
\end{align*}
which proves (\ref{eq:final_conv_V_n}). The result follows.
\end{proof}

\subsection{Typical displacements}\label{sec:trimmed_snake}
Fix $\eta\in [0,2)$ and  $\delta\in(0,1/(10-4\eta))\subset (0,1/(4-\eta))$. In this section we will study the function encoding the spatial locations of the branching random walk $\bT_{n,\delta} = (\rT_n, Y_{n,\delta})$, namely $R_{n,\delta}:[0,n] \rightarrow \R$. 

\begin{prop}\label{prop:trimmed_snake} 
Fix $\gamma > 0$ and suppose that [\ref{a1}] holds and [\ref{a3}] holds for a given measure $\pi$ with $\eta \in [0,2)$. Let $\delta \in (0,1/(10-4\eta))$. If $\eta = 0,$ then 
\[\left(\left(\frac{H_n(nt)}{\sqrt{n}}, \frac{R_{n,\delta}(nt)}{n^{1/4}}\right)_{0 \le t \le 1}, \frac{L^{0, \gamma}_n}{n^{1/4}}\right)\convdist \left(\left(\frac{2}{\sigma}\mathbf{e}_t,\beta\sqrt{\frac{2}{\sigma}} \mathbf{r}_t\right)_{0 \le t\le 1}, L^{0, \gamma}\right),\]
as $n\rightarrow \infty$, in $\bC([0,1],\R^2)\times \ell_\infty$. Furthermore, $L^{0,\gamma}$ is independent of $((\mathbf{e}_t, \mathbf{r}_t))_{0 \le t \le 1}$.

If $\eta\in (0,2)$, then 
\[
\left(\left(\frac{H_n(nt)}{\sqrt{n}}, \frac{R_{n,\delta}(nt)}{n^{1/(4-\eta)}}\right)_{0 \le t \le 1}, \frac{L^{\eta, \gamma}_n}{n^{1/(4-\eta)}}\right)\convdist \left(\left(\frac{2}{\sigma}\mathbf{e}_t,0\right)_{0 \le t\le 1}, L^{\eta, \gamma}\right),
\]
in $\bC([0,1],\R^2)\times\ell_\infty$, where $L^{\eta,\gamma}$ is independent of $(\mathbf{e}_t)_{0 \le t \le 1}$. 
\end{prop}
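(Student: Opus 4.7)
The plan is to mimic the scheme used for Theorem~\ref{thm:main}: first establish joint convergence of the random finite-dimensional distributions of $(H_n(n\cdot)/\sqrt{n}, R_{n,\delta}(n\cdot)/n^{1/(4-\eta)})$ together with $L_n^{\eta,\gamma}/n^{1/(4-\eta)}$, with the required asymptotic independence, and then upgrade to uniform convergence on $[0,1]$ via tightness using Proposition~\ref{prop:rfdds+rtightness}.

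For the first step, I would observe that on the subtree $\rT_n^k$ spanned by the root and a uniform $k$-set of vertices, the values of $R_n$ and $R_{n,\delta}$ coincide with probability tending to $1$ as $n\to\infty$. Indeed, an adaptation of Lemma~\ref{lem:few_hairs} with truncation threshold $n^{1/(4-\eta)-\delta}$ (using the tail bound coming from $[\ref{a3}]$) shows that $\cB := \{v \in v(\rT_n) : \|Y^{(v)}\|_\infty > n^{1/(4-\eta)-\delta}\}$ has $|\cB| = o_\bP(n^{1/12})$. Applying Lemma~\ref{lem:no_ganging_up_d_tree} to $\cB \cup \{1,\ldots,k\}$ then yields that with high probability no element of $\cB$ is an ancestor of any of $1,\ldots,k$; on this event, $R_n$ and $R_{n,\delta}$ agree at the vertices $1,\ldots,k$. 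The joint convergence of random finite-dimensional distributions and the asymptotic independence of $L^{\eta,\gamma}$ from the tree-coded limits then follow from Proposition~\ref{prop:rffds_hairy}. When $\eta = 0$ the limiting rfdd of the snake coordinate is that of $\beta\sqrt{2/\sigma}\,\mathbf{r}$; when $\eta\in(0,2)$ the rescaling by $n^{1/(4-\eta)} \gg n^{1/4}$ drives it to $0$.

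For the second step, I would decompose
\[
R_{n,\delta}(t) = \breve{R}_{n,\delta}(t) + \E{Y^{n,\delta}_{\bar\xi,U_{\bar\xi}}}\, H_n(t),
\]
where $\breve{R}_{n,\delta}$ is the snake of the globally-centered truncated branching random walk. The appendix generalisation of Lemma~\ref{lem:moments_truncation} (Lemma~\ref{lem:moments_truncation_move}) gives $|\E{Y^{n,\delta}_{\bar\xi,U_{\bar\xi}}}|\cdot \|H_n\|_\infty = o_\bP(n^{1/4})$, which is also $o_\bP(n^{1/(4-\eta)})$. For the centered part, Proposition~\ref{lem:max_tight} (adapted to scale $n^{1/(4-\eta)}$, which is unproblematic because $|\breve Y^{n,\delta}|/(\gamma n^{1/(4-\eta)}) \le 2n^{-\delta}/\gamma$ eventually falls below the threshold $u_0$ used in its proof) yields $\|\breve R_{n,\delta}\|_\infty = O_\bP(n^{1/(4-\eta)})$, and the pendant-subtree reduction of Section~\ref{sec:tightness} then delivers tightness of $R_{n,\delta}/n^{1/(4-\eta)}$ in $\mathbf{C}([0,1],\R)$. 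Combined with the rfdd convergence from the first step, this gives uniform convergence to $\beta\sqrt{2/\sigma}\,\mathbf{r}$ when $\eta = 0$ and to the zero function when $\eta\in(0,2)$.

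The main obstacle is securing the asymptotic independence of $L^{\eta,\gamma}$ from $\mathbf{e}$ (and, when $\eta=0$, from $\mathbf{r}$) in the limit. This is the content of Proposition~\ref{prop:rffds_hairy}, whose proof relies on an $n$-dependent size-biased measure change (Proposition~\ref{prop:sizebias_swole} in the appendix) showing that, conditionally on the large-displacement vertices and their displacement vectors, the remainder of the tree is asymptotically distributed as an unconditioned size-conditioned $(\mu^n,\nu)$-branching random walk of the appropriate size. Once this independence is established, the tightness step is a reasonably direct adaptation of the machinery of Sections~\ref{sec:tightness} and~\ref{sec:technical}, the truncation at $n^{1/(4-\eta)-\delta}$ making assumption $[\ref{a2}]$ trivially satisfied on the typical scale.
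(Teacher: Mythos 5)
Your proposal follows essentially the same approach as the paper's proof: rfdd convergence (with the required independence of $L^{\eta,\gamma}$) deduced from Proposition~\ref{prop:rffds_hairy}, and tightness obtained by re-centering $R_{n,\delta} = \breve{R}_{n,\delta} + \E{Y^{n,\delta}_{\bar\xi,U_{\bar\xi}}}H_n$, invoking Lemma~\ref{lem:moments_truncation_move} to control the drift, and verifying that the inductive argument behind Proposition~\ref{lem:max_tight} carries over at scale $n^{1/(4-\eta)}$, with the pendant-subtree reduction of Section~\ref{sec:tightness} supplying the rest. One small correction: the drift $|\E{Y^{n,\delta}_{\bar\xi,U_{\bar\xi}}}|\cdot\|H_n\|_\infty$ is \emph{not} $o_\bP(n^{1/4})$ for all $\eta\in[0,2)$ and admissible $\delta$ (for $\eta$ near $2$ the exponent from Lemma~\ref{lem:moments_truncation_move} exceeds $1/4$); what the constraint $\delta < 1/(10-4\eta)$ is calibrated to give, and what the argument actually uses, is the weaker bound $o_\bP(n^{1/(4-\eta)})$.
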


\begin{proof} 
The convergence of the random finite-dimensional distributions follows from Proposition~\ref{prop:rffds_hairy} exactly as Corollary~\ref{cor:4.2} follows from Proposition~\ref{prop:fdds}, but now with the additional independence from $L^{\eta,\gamma}$. 

We will obtain tightness (now on the scale of $n^{1/(4 - \eta)}$) via arguments very similar to those in Section~\ref{sec:tightness}, where we replace the truncations with those defined in Section \ref{sec:hairy_tour}. In particular, the key point is that we must show the analogue of Proposition~\ref{prop:tightness_typ}, which states that
\[
\lim_{k \to \infty} \limsup_{n \to \infty} \p{\max_{0 \le i \le k} \sup_{s,t \in [U_{(i)}^{n,k}-1, U_{(i+1)}^{n,k}-1]} |R_{n,\delta}(s) - R_{n,\delta}(t)| > \gamma n^{1/(4-\eta)}} = 0.
\]

Fix $\delta \in (0,1/(10 - 4\eta)).$ For all $n\ge 1$ and $k \ge 1$ let $Y_k^{n,\delta}\in \R^k$ be such that 
\[
Y_k^{n,\delta} = (Y_{k,1}^{n,\delta},\dots, Y_{k,k}^{n,\delta}) 
:= \begin{cases}
(Y_{k,1},\dots, Y_{k,k}) & \text{if } \max_{1\le j \le k}|Y_{k,j}| \le n^{1/(4-\eta) - \delta},\\
(0,\dots, 0) & \text{else.}
\end{cases}
\]
As discussed in Section \ref{sec:technical}, the displacements of the branching random walk $\bT_{n,\delta}$ are not necessarily globally centered and so may not satisfy [\ref{a1}]. Thus to prove the result, we will need to instead consider the re-centered branching random walk $(\rT_n, Y_{n,\delta}^*)$ where conditionally on $\rT_n$, the entries of $Y_{n,\delta}^* = (Y_{n,\delta}^{*,(v)}, ~v\in v(\rT_n)\setminus \partial \rT_n)$ are independent random vectors, such that if $v\in v(\rT_n)\setminus \partial \rT_n$ has $k$ children then $Y^{*,(v)}_{n,\delta}$ has the same distribution as \[ Y_k^{n,\delta} - \E{Y_{\bar\xi, U_{\bar\xi}}^{n,\delta}}.\] The function $R_{n,\delta}^*:[0,n]\rightarrow \R$ encoding the spatial locations of $(\rT_n, Y_{n,\delta}^*)$ is such that for all $t\in [0,n]$, \begin{equation}\label{eq:rec_trim}
R_{n,\delta}^*(t) \eqdist R_{n,\delta}(t) - \E{Y_{\bar\xi, U_{\bar\xi}}^{n,\delta}}\cdot H_n(t).
\end{equation}

By Lemma~\ref{lem:moments_truncation_move},
\[
\left|\E{Y_{\bar\xi, U_{\bar\xi}}^{n,\delta}}\right|=O\left((n^{1/(4-\eta) - \delta})^{1-2(4-\eta)/3}\right).
\]
Since $(n^{-1/2}H_n(nt))_{0 \le t \le 1} \convdist \frac{2}{\sigma}(\mathbf{e}_t)_{0 \le t\le 1}$ as $n\rightarrow \infty$ in $\mathbf{C}([0,1], \R)$, it then follows that 
\begin{equation}\label{eq:drift_conv}
\frac{\|H_n\|_\infty}{n^{1/(4-\eta)}}\E{Y_{\bar\xi, U_{\bar\xi}}^{n,\delta}} \convprob 0
\end{equation}
as long as $\delta > 0$ satisfies
\[
\left(\frac{1}{4-\eta} - \delta\right)\left(1 - \frac{2(4-\eta)}{3}\right) < \frac{1}{4-\eta} - \frac{1}{2}.
\]
Rearranging, this is equivalent to requiring that $\delta < (10 - 4\eta)^{-1}.$ For these values of $\delta$, we then have
\[
\sup_{t \in [0,1]}|R_{n,\delta}^*(t) - R_{n,\delta}(t)| \convdist 0,
\]
and so there is no asymptotic cost in doing this re-centering. Arguing again exactly as in Section~\ref{sec:tightness}, it is sufficient to prove the analogue of Lemma~\ref{lem:max_tight}, which states that there exists $A > 0$ such that for any $\gamma > 0$, $\delta \in (0,1/(4-\eta))$ and $n \ge 1$ we have
\[
\p{ \|R_{n,\delta}^*\|_{\infty} > \gamma n^{1/(4-\eta)}} \le \frac{A}{\gamma^8}.
\]
It is straightforward to verify that the proof of Lemma~\ref{lem:max_tight} given in Section~\ref{sec:technical} generalises immediately to this setting, on replacing $n^{1/4}$ by $n^{1/(4-\eta)}$.
\end{proof}

\subsection{Mid-range and large displacements}\label{sec:large}
We will adapt the proof of Proposition \ref{prop:mid_range} to the case where [\ref{a3}] holds instead of [\ref{a2}]. The proof of Proposition \ref{prop:mid_range} uses Lemma \ref{lem:a_no_ganging_up_d_tree}  to show that, with high probability, there are no vertices with a mid-range or large displacement that are ancestrally related. To apply that lemma, it is sufficient to bound both the maximum degree in the tree and the number of vertices with a mid-range or large displacement, with high probability. The required bound on the maximal degree follows from the assumption that $\E{\xi^3}<\infty$. Therefore, for the adaptation, we need to obtain the same control on the number of mid-range displacements under [\ref{a3}] as we obtained under [\ref{a2}] in Lemma~\ref{lem:few_hairs}. 

\begin{lem}
Suppose that [\ref{a3}] holds for a given measure $\pi$ and $\eta\in [0,2)$. For $\delta > 0$ sufficiently small,
\begin{equation*}
\left|\left\{ v\in v(\rT_n)\setminus \partial \rT_n \text{ such that } \|Y^{(v)}\|_\infty >  n^{1/(4-\eta) - \delta}\right\}\right| = o_\bP(n^{1/12}).
\end{equation*}
\end{lem}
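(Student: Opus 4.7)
The plan is to mirror the proof of Lemma~\ref{lem:few_hairs}, replacing the tail bound obtained there from [\ref{a2}] by a tail bound extracted from [\ref{a3}].

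First I would show that $\p{\|Y_\xi\|_\infty > r} = O(r^{-(4-\eta)})$ as $r \to \infty$. To see this, take $A = \{(x,y)\in \R_+^2\setminus\{(0,0)\} : x\vee y > 1\}$. The boundary $\partial A = (\{1\}\times[0,1])\cup([0,1]\times\{1\})$ has $\pi$-measure zero by the remark following [\ref{a3}] (the projections of $\pi$ have no atoms at positive values). Applying [\ref{a3}] to this $A$ and noting that $\pi(A) < \infty$ (since $\pi(\{(x,y):x\vee y>1\}) \le \pi(\R_+\times(1,\infty)) + \pi((1,\infty)\times\R_+) < \infty$), we obtain
\[
r^{4-\eta}\p{\|Y_\xi\|_\infty > r} = r^{4-\eta}\p{r^{-1}(Y^+_\xi, Y^-_\xi) \in A} \to \pi(A),
\]
so there exists $C>0$ such that $\p{\|Y_\xi\|_\infty > r} \le Cr^{-(4-\eta)}$ for all $r$ large enough.

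Next, let $\xi_1,\dots,\xi_n$ be \iid $\mu$-distributed with independent displacement vectors $Y_{\xi_i}$, and set
\[
A_n := \left|\{i\in[n] : \|Y_{\xi_i}\|_\infty > n^{1/(4-\eta)-\delta}\}\right|.
\]
Taking $r = n^{1/(4-\eta)-\delta}$, the bound above gives $\p{\|Y_{\xi_i}\|_\infty > r} \le Cn^{-1+(4-\eta)\delta}$ for $n$ sufficiently large. Hence $A_n$ is stochastically dominated by $\mathrm{Bin}(n, Cn^{-1+(4-\eta)\delta})$, which has mean $Cn^{(4-\eta)\delta}$.

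Now choose $\delta \in (0, 1/(12(4-\eta)))$, so that $(4-\eta)\delta < 1/12$. Then for any $\varepsilon > 0$, a standard Chernoff bound gives
\[
\p{A_n > \varepsilon n^{1/12}} \le \p{\mathrm{Bin}(n, Cn^{-1+(4-\eta)\delta}) > \varepsilon n^{1/12}} \le \exp\bigl(-c\, n^{(4-\eta)\delta}\bigr)
\]
for some $c > 0$ and all $n$ large enough, since $\varepsilon n^{1/12}$ exceeds twice the mean for large $n$.

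Finally, to pass from the unconditioned setting to $\rT_n$, observe that $\p{\sum_{i=1}^n \xi_i = n-1} = \Theta(n^{-1/2})$ by the local central limit theorem, so
\[
\p{\left|\{v\in v(\rT_n)\setminus \partial\rT_n : \|Y^{(v)}\|_\infty > n^{1/(4-\eta)-\delta}\}\right| > \varepsilon n^{1/12}} \le \frac{\p{A_n > \varepsilon n^{1/12}}}{\p{\sum_{i=1}^n \xi_i = n-1}},
\]
which is $O(n^{1/2}\exp(-cn^{(4-\eta)\delta})) = o(1)$. Since $\varepsilon>0$ is arbitrary, the claim follows.

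The step that required a moment of thought is the extraction of the tail bound for $\|Y_\xi\|_\infty$ from [\ref{a3}], specifically verifying the $\pi$-null-boundary hypothesis via the remark following [\ref{a3}]; everything after that is a routine Chernoff/local-limit argument.
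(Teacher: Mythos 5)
Your proof is correct and takes essentially the same route as the paper: reduce to a tail bound $\p{\|Y_\xi\|_\infty > r} = O(r^{-(4-\eta)})$ via [\ref{a3}], dominate the count by a $\mathrm{Bin}(n, Cn^{-1+(4-\eta)\delta})$, apply a Chernoff bound for $\delta<1/(12(4-\eta))$, and absorb the conditioning on $\sum_i\xi_i=n-1$ at an $O(\sqrt{n})$ cost. The only (cosmetic) difference is in how the tail bound is extracted: you apply [\ref{a3}] directly to $A=\{x\vee y>1\}$ and invoke the remark after [\ref{a3}] to verify $\pi(\partial A)=0$, whereas the paper sidesteps the remark by picking some $x\in(0,1)$ with $\pi(\{x\}\times\R_+)=\pi(\R_+\times\{x\})=0$ and using $\p{\|Y_\xi\|_\infty>r}\le\p{\|Y_\xi\|_\infty>xr}$; both are fine.
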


\begin{proof}
Let $\xi_1,\dots, \xi_n$ be \iid with distribution $\mu$. Let $x\in (0,1)$ be such that $\pi(\{x\}\times \R_+)=\pi(\R_+\times \{x\})=0$. Then, by [\ref{a3}],
\begin{align*} 
n^{1-(4-\eta)\delta} \p{\|Y_{\xi_1}\|_\infty >  n^{1/(4-\eta) - \delta}}
&\le n^{1-(4-\eta)\delta} \p{\|Y_{\xi_1}\|_\infty >  x n^{1/(4-\eta) - \delta}}\\
&\to \pi\big(((x,\infty)\times \R_+) \cup( \R_+\times (x,\infty))\big) <\infty. 
\end{align*}
This in particular implies that there exists $C > 0$ such that $\p{\|Y_{\xi_1}\|_\infty >  n^{1/(4-\eta) - \delta}}\le Cn^{-1+(4-\eta)\delta}$ for all $n\ge 1$. It follows that 
\[ 
A_n := \left|\left\{i\in [n]~:~ \|Y_{\xi_i}\|_\infty >  n^{1/(4-\eta) - \delta}\right\}\right| \preceq_{st} \mathrm{Bin}\left(n, Cn^{-1 + (4-\eta)\delta}\right).
\]
By a Chernoff bound, this implies that for $\delta \in (0, (12(4-\eta))^{-1})$, and $n \ge 1$ sufficiently large, for any $\varepsilon >0$,
\begin{align*}
\p{ A_n > \varepsilon n^{1/12}}&\le \p{\mathrm{Bin}\left(n, Cn^{-1+(4-\eta)\delta}\right) > \varepsilon n^{1/12}} \\
& = \p{\mathrm{Bin}\left(n, Cn^{-1+(4-\eta)\delta}\right) \hspace{-1pt}>\hspace{-1pt} Cn^{(4-\eta)\delta}\left(1 + \left(\frac{\varepsilon}{C}n^{1/12 - (4-\eta)\delta} - 1\right)\right)}\\
& = O\left(\exp(-n^{(4-\eta)\delta})\right),
\end{align*}
so 
\begin{equation*}
\p{ A_n > \varepsilon n^{1/12}~\bigg|~\sum_{i=1}^n\xi_i = n-1} 
= O\left(n^{1/2}\exp(-n^{(4-\eta)\delta})\right)=o(1). \qedhere
\end{equation*}
\end{proof}

We now obtain the following result on the mid-range displacements under [\ref{a3}] with a proof that is analogous to that of Proposition~\ref{prop:mid_range}; we omit the details.

\begin{prop}\label{prop:mid_range_hairy}
Fix $\gamma > 0$ and suppose that [\ref{a3}] holds for a given measure $\pi$ and $\eta\in [0,2)$. For $\delta > 0$ sufficiently small, as $n\rightarrow \infty$,
\[
\p{\|R_{n,\delta}^\gamma\|_\infty > \gamma n^{1/(4-\eta)}} = o(1).
\]
\end{prop}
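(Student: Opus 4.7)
The plan is to follow the template of the proof of Proposition \ref{prop:mid_range} essentially verbatim, replacing the scale $n^{1/4}$ by $n^{1/(4-\eta)}$ throughout, and replacing Lemma \ref{lem:few_hairs} (which used assumption [\ref{a2}]) by the lemma that was just proved (which uses assumption [\ref{a3}]). The key observation is that the argument has two independent ingredients: (i) a bound on the number of vertices producing a mid-range displacement, which comes from tail control of $\|Y_\xi\|_\infty$, and (ii) a bound on the maximum degree of $\rT_n$, which comes solely from $\E{\xi^3}<\infty$ (via \cite[Corollary 19.11]{janson2012simply}). Both (i) and (ii) are available in the current setting.

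First I would state and prove the analogue of Proposition \ref{prop:ancestral_rel}: with high probability, there are no two ancestrally related vertices $u \prec v$ in $\rT_n$ such that $\|Y^{(u)}\|_\infty \wedge \|Y^{(v)}\|_\infty > n^{1/(4-\eta)-\delta}$. The proof proceeds exactly as in Proposition \ref{prop:ancestral_rel}: condition on the degree sequence $D^n$, let $\cB$ denote the set of indices $i\in[n]$ with $\|Y_{D_i^n}\|_\infty > n^{1/(4-\eta)-\delta}$, and decompose the event according to whether $|\cB| \le s n^{1/12}$, $\max_i D_i^n \le T n^{1/3}$, and whether the height of $\rT_n$ exceeds $t\sqrt{n}$. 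The first bound is provided by the lemma proved just above; the second by $\E{\xi^3}<\infty$; the third is standard tightness of rescaled heights. On the good event, Lemma \ref{lem:no_ganging_up_d_tree} with $K=sn^{1/12}$, $\Delta=Tn^{1/3}$ and $b=t\sqrt{n}$ gives the bound
\[
s n^{1/12}\Bigl(1 - \bigl(1 - \tfrac{sTn^{-7/12}}{1 - n^{-1} - tTn^{-1/6}}\bigr)^{t\sqrt{n}}\Bigr) \le \frac{s^2 T t}{1-n^{-1} - tTn^{-1/6}},
\]
which is made arbitrarily small by choosing $s$ small.

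Second, I would prove the analogue of Lemma \ref{lem:pendant_small}: the number of vertices $v$ of $\rT_n$ with $\|Y^{(v)}\|_\infty \le n^{1/(4-\eta)-\delta}$ but possessing an ancestor $u\prec v$ with $\|Y^{(u)}\|_\infty > n^{1/(4-\eta)-\delta}$ is $o_\bP(n)$. By exchangeability, this reduces to showing that the probability that a fixed vertex (say vertex $1$) is a descendant of some vertex with $\|Y^{(u)}\|_\infty > n^{1/(4-\eta)-\delta}$ is $o(1)$; adding vertex $1$ to the set $\cB$ in the previous argument, the bound $|\cB| = o_\bP(n^{1/12})$ is preserved, and the proof carries over verbatim.

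From these two statements the proposition is immediate: with high probability no two vertices with displacement exceeding $n^{1/(4-\eta)-\delta}$ lie on a common root-to-leaf path, so for every vertex $v$ of $\rT_n$ the ancestral sum $R_{n,\delta}^\gamma(v)$ contains at most one nonzero contribution, and that single contribution is bounded in absolute value by $\gamma n^{1/(4-\eta)}$ by the definition of the mid-range truncation. Hence with high probability $\|R_{n,\delta}^\gamma\|_\infty \le \gamma n^{1/(4-\eta)}$, which is what we wanted. There is no serious obstacle in this adaptation; the only point to check carefully is the exponent $\delta \in (0,(12(4-\eta))^{-1})$ used in the lemma above does not clash with the earlier constraint $\delta \in (0, 1/(10-4\eta))$ coming from the recentering argument in Proposition \ref{prop:trimmed_snake}, but both conditions only require $\delta$ to be small, so we may simply take $\delta$ smaller than the minimum of the two thresholds.
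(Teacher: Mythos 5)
Your proposal is correct and fills in precisely the details the paper omits, following the same template (analogue of Proposition~\ref{prop:ancestral_rel} via Lemma~\ref{lem:no_ganging_up_d_tree} and the modified version of Lemma~\ref{lem:few_hairs}, with the max-degree bound coming from $\E{\xi^3}<\infty$). The only minor quibble: the analogue of Lemma~\ref{lem:pendant_small} you include is not actually used in your final step -- the conclusion follows from the Proposition~\ref{prop:ancestral_rel} analogue alone, since at most one mid-range displacement lies on each root-to-leaf path and each such displacement is bounded in modulus by $\gamma n^{1/(4-\eta)}$ by the definition of the truncation.
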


In the remainder of this section we will study the function encoding the spatial locations of the ``large-displacement'' branching random walk $\bT_n^\gamma = (\rT_n, Y_n^\gamma)$, namely $R_{n}^\gamma:[0,n] \rightarrow \R$.

Let $\Xi$ be a Poisson process on $[0,1]\times \R_+^2\setminus \{(0,0)\}$ with intensity $dt\otimes \pi (dx,dy)$, and let $\Xi^\gamma$ be the restriction of $\Xi$ to $[0,1]\times (\R_+^2\setminus([0,\gamma]\times [0,\gamma]))$. Also, recall the definition of the function $U$ from just before Theorem~\ref{thm:hairy_4}.
 
\begin{prop}\label{prop:conv_spiky_snake} 
Fix $\gamma >0$ and suppose that [\ref{a1}] holds and that [\ref{a3}] holds for a given measure $\pi$ and $\eta \in [0,2)$. Let $\delta \in \left(0, \frac{1}{64}\wedge \frac{1}{10 - 4\eta}\right)$. 

If $\eta = 0$ then as $n\rightarrow \infty$, 
\begin{equation*}
\left(\left(\frac{H_n(nt)}{\sqrt{n}}, \frac{R_{n,\delta}(nt)}{n^{1/4}}\right)_{0 \le t \le 1}, U\left(\frac{R_n^\gamma}{n^{1/4}}, \emptyset\right)\right)\convdist \left(\left(\frac{2}{\sigma}\mathbf{e}_t, \beta\sqrt{\frac{2}{\sigma}} \mathbf{r}_t\right)_{0 \le t \le 1}, U(0, \Xi^\gamma)\right)
\end{equation*}
with convergence in the first coordinate in $\bC([0,1],\R^2)$, and convergence in the second coordinate with respect to the Hausdorff topology on non-empty compact subsets. Furthermore, $U(0,\Xi^\gamma)$ is independent of $(\mathbf{e}_t, \mathbf{r}_t,~ 0 \le t \le 1)$. 

If $\eta \in (0,2)$ then as $n \to \infty$,
\begin{equation*}
\left(\left(\frac{H_n(nt)}{\sqrt{n}}, \frac{R_{n,\delta}(nt)}{n^{1/(4-\eta)}}\right)_{0 \le t \le 1}, U\left(\frac{R_n^\gamma}{n^{1/(4-\eta)}}, \emptyset\right)\right)\convdist \left(\left(\frac{2}{\sigma}\mathbf{e}_t, 0\right)_{0 \le t \le 1}, U(0, \Xi^\gamma)\right)
\end{equation*}
with convergence in the first coordinate in in $\bC([0,1],\R^2)$, and convergence in the second coordinate with respect to the Hausdorff topology on non-empty compact subsets. Furthermore, $U(0,\Xi^\gamma)$ is independent of $(\mathbf{e}_t, 0 \le t \le 1)$. 
\end{prop}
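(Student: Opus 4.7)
The plan is to upgrade the convergence from Proposition \ref{prop:trimmed_snake} --- which already delivers joint convergence of $(H_n/\sqrt{n}, R_{n,\delta}/n^{1/(4-\eta)})$ together with the multiset $L_n^{\eta,\gamma}/n^{1/(4-\eta)}$ of rescaled magnitudes of large displacements, with the limiting magnitudes independent of the first-coordinate limit --- by additionally recording the depth-first positions of the large-displacement vertices. For each vertex $v$ with $\|Y^{(v)}\|_\infty > \gamma n^{1/(4-\eta)}$, let $\tau_v$ denote its DFS index, and consider the point process $\Xi_n^\gamma$ on $[0,1]\times(\R_+^2\setminus[0,\gamma]^2)$ given by the triples $(\tau_v/n, Y^{(v,+)}/n^{1/(4-\eta)}, Y^{(v,-)}/n^{1/(4-\eta)})$. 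By [\ref{a3}], the number of large-displacement vertices is $O_\bP(1)$, so only boundedly many marked vertices need be controlled. Conditional on the multiset of (degree, displacement vector) pairs attached to the large-displacement vertices, these pairs are assigned to vertices of matching degree in $\rT_n$ by a uniform random matching; combining an Aldous-type exchangeability argument with the measure-change framework developed for Proposition \ref{prop:rffds_hairy} then shows that the rescaled positions $\tau_v/n$ converge to independent $\mathrm{Uniform}([0,1])$ random variables, jointly with, and independent of, the limits of the height and typical-displacement processes. This delivers $\Xi_n^\gamma \convdist \Xi^\gamma$ with the required independence.

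Next, I localise the contribution of each large-displacement vertex. The analog of Lemma \ref{lem:pendant_small} under assumption [\ref{a3}], which one derives from the analog of Proposition \ref{prop:ancestral_rel} via Lemma \ref{lem:no_ganging_up_d_tree}, shows both that the total number of vertices descended from any large-displacement vertex is $o_\bP(n)$, and that with high probability no two large-displacement vertices are ancestrally related. In particular, each subtree $\rT_n(u)$ rooted at a large-displacement vertex $u$ satisfies $|\rT_n(u)|/n \to 0$ in probability. On the resulting high-probability event, $R_n^\gamma$ takes the value $0$ both immediately before and immediately after the DFS exploration of $\rT_n(u)$; during that exploration it takes successively the values $Y^{(u)}_1, Y^{(u)}_2, \ldots, Y^{(u)}_{c(u,\rT_n)}$, with each value held constant throughout the DFS of the corresponding child's subtree and with linear interpolation between consecutive plateaus (as well as at the entry and exit of the excursion). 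Since $R_n^\gamma$ begins and ends the excursion at $0$ and attains both $Y^{(u,+)}$ and $-Y^{(u,-)}$, the vertical range of the graph of $R_n^\gamma$ over $u$'s excursion is precisely $[-Y^{(u,-)}, Y^{(u,+)}]$.

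To conclude, after rescaling, the excursion around $u$ spans a horizontal interval of length $|\rT_n(u)|/n = o_\bP(1)$ adjacent to $t_u := \tau_u/n$, and covers the vertical interval $[-Y^{(u,-)}/n^{1/(4-\eta)}, Y^{(u,+)}/n^{1/(4-\eta)}]$; by the first paragraph these data converge jointly to $\{t_u\}\times[-y_u, x_u]$ for the corresponding point $(t_u, x_u, y_u)\in\Xi^\gamma$. Outside all excursions, $R_n^\gamma\equiv 0$, so the baseline $[0,1]\times\{0\}$ is correctly captured. Taking the union over the finitely many large-displacement vertices yields the claimed Hausdorff convergence $U(R_n^\gamma/n^{1/(4-\eta)},\emptyset) \convdist U(0,\Xi^\gamma)$, jointly with and independent of the limit coming from Proposition \ref{prop:trimmed_snake}. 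I expect the main obstacle to lie in the independence assertion of the first paragraph: although the exchangeability itself is intuitive, propagating it through the conditioning on the remainder of the tree structure --- so as to obtain joint convergence of positions with $R_{n,\delta}$ rather than merely marginal convergence --- requires a measure-change analysis parallel to the one carried out in Proposition \ref{prop:rffds_hairy}. Once this is in place, the remaining Hausdorff-distance analysis is routine given that only boundedly many excursions contribute.
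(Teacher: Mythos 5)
Your localisation argument (second and third paragraphs) correctly describes the shape of $R_n^\gamma$ around a large-displacement vertex $u$ on the high-probability event that no two large-displacement vertices are ancestrally related, and the reduction to the analogue of Lemma~\ref{lem:pendant_small} to control the excursion widths matches the paper's logic. The genuine divergence from the paper's proof is in your first paragraph, and you have yourself flagged where the difficulty lies without resolving it. You propose to read off the positions $\tau_v/n$ of large-displacement vertices directly from exchangeability of the displacement-vector assignment in $\rT_n$, and then invoke ``an Aldous-type exchangeability argument with the measure-change framework developed for Proposition~\ref{prop:rffds_hairy}'' to conclude they converge to iid $\mathrm{Uniform}([0,1])$ jointly with, and independently of, the height and typical-displacement processes. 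That is precisely the hard step, and it is asserted rather than proved. The measure change of Proposition~\ref{prop:rffds_hairy} lives on the $\sqrt{n}$ timescale of the line-breaking construction and only yields information about the first $\Theta(\sqrt{n})$ size-biased degrees (e.g.\ that $\tau_{r_n}(\Sigma)=\omega_{\mathbf P}(\sqrt{n})$); it does not by itself control depth-first positions on the $\Theta(n)$ scale, nor does it obviously decouple those positions from the limiting excursion $(\mathbf{e},\mathbf{r})$. Your ``uniform matching among vertices of matching degree'' step also requires that degree-$d$ vertices of $\rT_n$ have asymptotically uniform DFS positions, which is a nontrivial assertion when the large-displacement vertex has large (possibly $n$-dependent) degree.

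The paper sidesteps all of this via the pruning/grafting framework. It conditions on $f_{\tau_n}(\bT_n)=(\bT_n',\bF_n^{\mathrm{pr}})$ and uses Lemma~\ref{lem:uniform_grafting}, which gives an exact, finite-$n$ statement: conditionally on $f_{\tau_n}(\bT_n)$, the pruned subtrees (and hence the large-displacement vertices) are grafted at \emph{uniformly random leaves} of $\bT_n'$. Since Lemma~\ref{lem:compare} already provides joint convergence of $(H_n',R_n')$ and $L_n^{\eta,\gamma}$ (with the limits independent), the positions of the graft points are $M_n$ iid uniform leaves of $\rT_n'$; the only remaining ingredient is that the fraction of leaves among the first $\lfloor nt\rfloor$ vertices of $\rT_n'$ in depth-first order converges to $\mu_0 t$, which follows from the Łukasiewicz law of large numbers and $|\rT_n\setminus\rT_n'|=o_{\mathbf P}(n)$. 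This buys you the independence for free as a consequence of the exact conditional law, rather than as the conclusion of a delicate exchangeability/measure-change argument. To repair your proof you should either carry out that measure-change argument on the $n$ scale (which would be substantially longer than the existing one) or switch to the pruning/grafting decomposition of Definitions~\ref{dfn:extract}--\ref{def:map} and Lemma~\ref{lem:uniform_grafting}.
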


We first prove Theorems \ref{thm:hairy_4} and \ref{thm:hairy_2} assuming Proposition \ref{prop:conv_spiky_snake}.

\begin{proof}[Proof of Theorems \ref{thm:hairy_4} and \ref{thm:hairy_2} assuming Proposition \ref{prop:conv_spiky_snake}]

For $\gamma$ and $\delta$ as in Proposition~\ref{prop:conv_spiky_snake},
\begin{equation*}
\left( \frac{R_n(nt)}{n^{1/(4-\eta)}} \right)_{0 \le t \le 1} = \left(\frac{R_{n,\delta}(nt)}{n^{1/(4-\eta)}} + \frac{R_n^\gamma(nt)}{n^{1/(4-\eta)}} \right)_{0 \le t \le 1}+ \left(\frac{R_{n,\delta}^\gamma(nt)}{n^{1/(4-\eta)}}\right)_{0 \le t \le 1}.
\end{equation*}
By Proposition \ref{prop:conv_spiky_snake}, as $n\rightarrow \infty$, $U(n^{-1/(4-\eta)}R_n^\gamma(n\ \!\cdot), \emptyset) \convdist U(0, \Xi^\gamma)$ with respect to the Hausdorff topology on non-empty compact subsets, jointly with convergence 
\[
\left(\frac{R_{n,\delta}(nt)}{n^{1/(4-\eta)}}\right)_{0 \le t \le 1} \convdist 
\begin{cases}
\beta\sqrt{\frac{2}{\sigma}}\mathbf{r} & \text{ if } \eta = 0, \\
0 & \text{ if } \eta \in (0,2)
\end{cases}
\]
in $\mathbf{C}([0,1],\R^2)$ where, for $\eta = 0$, $U(0, \Xi^\gamma)$ and $(\mathbf{r}_t)_{0 \le t \le 1}$ are independent. Therefore,
\[
U\left(\frac{R_{n,\delta}(n\cdot)}{n^{1/(4-\eta)}} + \frac{R_n^\gamma(n\cdot)}{n^{1/(4-\eta)}}, \emptyset\right)
\convdist 
\begin{cases}
U\left(\beta\sqrt{\frac{2}{\sigma}}\mathbf{r}, \Xi^{\gamma}\right) & \text{ if } \eta = 0,\\
U(0, \Xi^{\gamma}) & \text{ if } \eta\in (0,2).
\end{cases}
\]
Note that $U(0,\Xi)$ is a compact set by our assumptions on $\pi$, and that $U(0,\Xi^\gamma) \convas U(0,\Xi)$ in the Hausdorff sense as $\gamma \downarrow 0$. We have
\[
d_{\mathrm{H}}\left(U \left(\frac{R_n(n\cdot)}{n^{1/(4-\eta)}}, \emptyset\right), U\left(\frac{R_{n,\delta}(n\cdot)}{n^{1/(4-\eta)}} + \frac{R_n^\gamma(n\cdot)}{n^{1/(4-\eta)}}, \emptyset\right) \right) \le n^{-1/(4-\eta)} \|R_{n,\delta}^{\gamma}\|_{\infty}
\]
and, by Proposition~\ref{prop:mid_range_hairy},
\[
\lim_{\gamma \to 0} \limsup_{n \to \infty} \p{\|R_{n,\delta}^{\gamma}\|_{\infty} > \gamma n^{1/(4-\eta)}} = 0.
\]
We may now apply the principle of accompanying laws \cite[Theorem 3.2]{billingsley2013convergence} in order to obtain that
\[
U\left(\frac{R_{n}(n\cdot)}{n^{1/(4-\eta)}}, \emptyset\right)
\convdist 
\begin{cases}
U\left(\beta\sqrt{\frac{2}{\sigma}}\mathbf{r}, \Xi\right) & \text{ if } \eta = 0,\\
U(0, \Xi) & \text{ if } \eta\in (0,2),
\end{cases}
\]
which yields Theorems \ref{thm:hairy_4} and \ref{thm:hairy_2}.
\end{proof}

\medskip

The remainder of this section is devoted to the proof of Proposition \ref{prop:conv_spiky_snake}. We will need a notion of pruning and grafting of branching random walks. We refer to Figure \ref{fig:prune_graft} as a visual aid in understanding the following three definitions.

\begin{dfn}[Pruning branching random walks]\label{dfn:extract}
Let $\rT = (T, Y)$ be a branching random walk with displacements $Y = (Y^{(v)}, v\in v(T)\setminus \partial T)$. Let $v\in v(T)$ and $T^{(v)}$ be the subtree of $T$ rooted at $v$. The sub-branching random walk of $\rT$ rooted at $v$ is the branching random walk $\rT^{(v)} = (T^{(v)}, Y')$ with displacements $Y' = (Y'^{(u)}, u\in v(T^{(v)})\setminus \partial T^{(v)})$.
Also, $\rT^{\uparrow v}$ is  the branching random walk obtained by removing all descendants of $v$ from $T$. More generally, for $\mathrm{v} = (v_1,\dots,v_k)$ a sequence of distinct vertices in $v(T)$ such that no two vertices in $\mathrm{v}$ are ancestrally related, we set $\rT^\mathrm{v} = (\rT^{(v)}, v\in \mathrm{v}),$ and define $\rT^{\uparrow \mathrm{v}}$ inductively as $\rT^{\uparrow\mathrm{v}} = (\rT^{\uparrow(v_1,\dots, v_{k-1})})^{\uparrow v_k}.$
\end{dfn}  

\begin{dfn}[Grafting branching random walks]\label{dfn:graft}
For branching random walks $\rT=(T,Y)$ and $\rT'=(T', Y')$, and for a leaf $l\in\partial T$, let $\rT\oplus_l\rT'=(T\oplus_l T, Y\oplus_l Y')$ be the branching random walk defined by setting $T\oplus_l T'=T\cup lT'$ and, for $v\in v(T\oplus_l T')\setminus \partial ( T\oplus_l T')$, setting 
\[
(Y \oplus_l Y')^{(v)}=
\begin{cases} 
Y^{(v)} &\text{ if }v\in v(T)\setminus \partial T, \\
Y'^{(u)} &\text{ if }v = lu \text{ for some } u\in  v(T')\setminus \partial T'.
\end{cases} 
\]
More generally, for branching random walks $\rT, \rT^1, \dots, \rT^k$  and distinct leaves $l_1,\dots, l_k \in \partial T$, define $\rT\oplus_{l_1,\dots,l_k}(\rT^1,\dots,\rT^k)$ recursively over $k$ as 
\[
\rT\oplus_{l_1,\dots,l_k}(\rT^1,\dots,\rT^k)=(\rT\oplus_{l_1,\dots,l_{k-1}}(\rT^1,\dots,\rT^{k-1}))\oplus_{l_k} \rT^k.
\]
\end{dfn}

The previous definitions imply that for a branching random walk $\rT = (T,Y)$ and $v\in v(T)$, 
\[\rT^{\uparrow v}\oplus_v \rT^{(v)} =\rT,\] and, more generally, for a sequence of distinct vertices $\mathrm{v}=(v_1,\dots,v_k)$ of $T$ such that no two vertices in $\mathrm{v}$ are ancestrally related in $T$, that $\rT^{\uparrow\mathrm{v}}\oplus_\mathrm{v} \rT^\mathrm{v} =\rT.$

We next use the above definitions to define a map that prunes the sub-branching random walks of branching random walks that are rooted at ancestrally minimal vertices $v$ with $\|Y^{(v)}\|_\infty \ge \tau$. See Figure \ref{fig:prune_graft} for an illustration of the coming definition.
 
\begin{dfn}\label{def:map}
For a branching random walk $\rT = (T, Y)$ and for $\tau>0$, let $\mathrm{v}_\tau=( v_1, \dots, v_m)$ be the set of vertices $v\in v(T)$ such that $\|Y^{(v)}\|_\infty > \tau$ and for all ancestors $u \preceq v$, $\|Y^{(u)}\| \le \tau$, listed in depth-first order. Define a map $f_\tau$ by 
\[
\rT \stackrel{f_{\tau}}{\longmapsto} (\rT^{\uparrow\mathrm{v}_\tau}, \{\rT^{(v_1)},\dots,\rT^{(v_m)}\}),
\]
where the second coordinate is a multiset with elements $\rT^{(v_1)},\dots,\rT^{(v_m)}$ which are the branching random walks rooted at the vertices $v_1,\dots, v_m$.
\end{dfn}

For $\tau \ge 0$, let 
\[
v^{\tau}(\bT_n) := \left\{v\in v(\rT_n)\setminus \partial \rT_n ~:~\|Y^{(v)}\|_\infty > \tau, \text{ and } \|Y^{(u)}\|_\infty \le \tau ~\forall u\prec v\right\}.
\]
We will apply $f_\tau$ to $\bT_n$, and then study the law of $\bT_n$ conditional on $f_\tau(\bT_n)$. Observe that, given $f_\tau(\bT_n)$, $\bT_n$ is determined by $\mathrm{v}^\tau(\bT_n)$. We will show that conditional on $f_\tau(\bT_n)$,~$\mathrm{v}^\tau(\bT_n)$ is distributed as a uniformly random subset of leaves in $(f_{\tau}(\bT_n))_1$. We make this formal in the next lemma. 

\begin{lem}\label{lem:uniform_grafting}
Let $\tau > 0$ and write $f_\tau(\bT_n)=(\bT_n',\{\bT_n^1,\dots,\bT_n^m\})$. Fix $m \ge 1$ and let $\Sigma \in_\cU \cS_m$, where $\cS_m$ is the symmetric group of order $m$. Further, let $(\cL_1, \dots, \cL_m)$ be a uniformly random vector of leaves in $\bT'_n$ listed in depth-first order. 
Then, given $f_\tau(\bT_n)$, $\bT_n$ is equal in distribution to
\[
\bT_n'\oplus_{\cL_1,\dots,\cL_m}(\bT_n^{\Sigma(1)},\dots, \bT_n^{\Sigma(m)}).
\]
\end{lem}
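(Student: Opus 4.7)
The strategy is to exploit the fact that $\bT_n$ has a product-form law: for any labeled ordered rooted tree $T$ with $|T|=n$ and any displacement family $y=(y^{(v)},v\in v(T)\setminus \partial T)$,
\[
\p{\rT_n = T,\, Y \in dy} = \frac{1}{\p{|\rT|=n}}\prod_{v\in v(T)}\mu_{c(v,T)} \prod_{v\in v(T)\setminus \partial T} \nu_{c(v,T)}(dy^{(v)}).
\]
In particular, this weight depends only on the \emph{multiset} $\{(c(v,T), y^{(v)}): v \in v(T)\}$ (with leaves contributing trivially), which is the key structural fact.

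First, I would describe the preimage $S:=f_\tau^{-1}(\bT_n',\{\bT_n^1,\dots,\bT_n^m\})$ explicitly. Any element of $S$ is obtained by choosing an $m$-subset $\{\ell_1,\dots,\ell_m\}$ of leaves of $\bT_n'$, listed in depth-first order, and a bijection $\varphi$ from this list to the multiset, then forming $\bT_n'\oplus_{\ell_1,\dots,\ell_m}(\varphi(\ell_1),\dots,\varphi(\ell_m))$. I would verify that this lies in $S$: every internal vertex of $\bT_n'$ has $\|Y\|_\infty\le \tau$ by construction, so each $\ell_j$ becomes an ancestrally minimal vertex of the grafted tree carrying a displacement vector of $\ell_\infty$-norm exceeding $\tau$, and the sub-branching random walk rooted at $\ell_j$ equals $\varphi(\ell_j)$; hence $f_\tau$ returns $(\bT_n',\{\bT_n^1,\dots,\bT_n^m\})$. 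Conversely, any element of $S$ arises in this way.

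Next, I would show that both the conditional law of $\bT_n$ given $f_\tau(\bT_n)=(\bT_n',\{\bT_n^1,\dots,\bT_n^m\})$ and the construction $\bT_n'\oplus_{\cL_1,\dots,\cL_m}(\bT_n^{\Sigma(1)},\dots,\bT_n^{\Sigma(m)})$ are uniform distributions on $S$. For the first claim, moving a sub-branching random walk from one leaf of $\bT_n'$ to another merely relocates the $(\text{degree},\text{displacement vector})$ pair at its root without changing any other such pair, leaving the multiset in the displayed formula above invariant; by the product form of the density, the weight is therefore constant on $S$. For the second claim, let $N$ be the number of leaves of $\bT_n'$ and let $k_1,\dots,k_j$ denote the multiplicities of the distinct elements of the multiset $\{\bT_n^1,\dots,\bT_n^m\}$. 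The construction samples uniformly from $\binom{N}{m}\cdot m!$ outcomes $((\cL_1,\dots,\cL_m),\Sigma)$; each element of $S$ is produced by exactly $\prod_i k_i!$ of these outcomes (from permutations of equal copies of sub-branching random walks), and $|S|=\binom{N}{m}\cdot m!/\prod_i k_i!$, so the construction yields the uniform law on $S$.

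The main technical obstacle I anticipate is the invariance-of-multiset claim: making it airtight requires a careful reading of the interaction between Definitions~\ref{dfn:extract}--\ref{dfn:graft} and the Ulam--Harris labeling, and in particular keeping track of the fact that some of the leaves $\cL_j$ we graft onto may coincide with vertices of the original $\mathrm{v}^\tau(\bT_n)$, in which case the corresponding element of $S$ may include the original configuration of $\bT_n$.
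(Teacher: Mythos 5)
Your proposal is correct and essentially reproduces the paper's own proof: you describe the fibre $S=f_\tau^{-1}(\bT_n',\{\bT_n^1,\dots,\bT_n^m\})$ as all leaf-and-permutation graftings, observe that the multiset of (degree, displacement-vector) pairs is invariant across $S$ so the conditional law is uniform on $S$, and note that the construction $\bT_n'\oplus_{\cL_1,\dots,\cL_m}(\bT_n^{\Sigma(1)},\dots,\bT_n^{\Sigma(m)})$ is also uniform on $S$ because every element of $S$ arises from the same number, $\prod_i k_i!$, of (leaf-set, $\Sigma$) pairs. (Incidentally, your count $\prod_i k_i!$ is the right one; the paper misstates it as the multinomial $m!/\prod_i m_i!$, but since the point in both arguments is only that the count is independent of the target tree, this does not affect the conclusion.)
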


\begin{figure}[h!]
    \centering
    \tikz[grow =up,  nodes={circle,draw}, minimum size = 
    1.2cm, inner sep = 0pt, scale = 0.6, transform shape]
  \node {0}
    child {node[above right] {	-5}
    child {node[right] {5}
    child {node {7}}
    child {node {-1}}
    child {node {	3}
    }
    }}
    child {node[above] {	-1}}
    child {node[above left]{	 1}
    child {node {9}
    child {node {	 4}}
    child {node {7}}}
    child {node {5}
    child {node[ left] {7}}}}
    ;
\end{figure}
\vspace{-0.7cm}
\begin{figure}[h!]
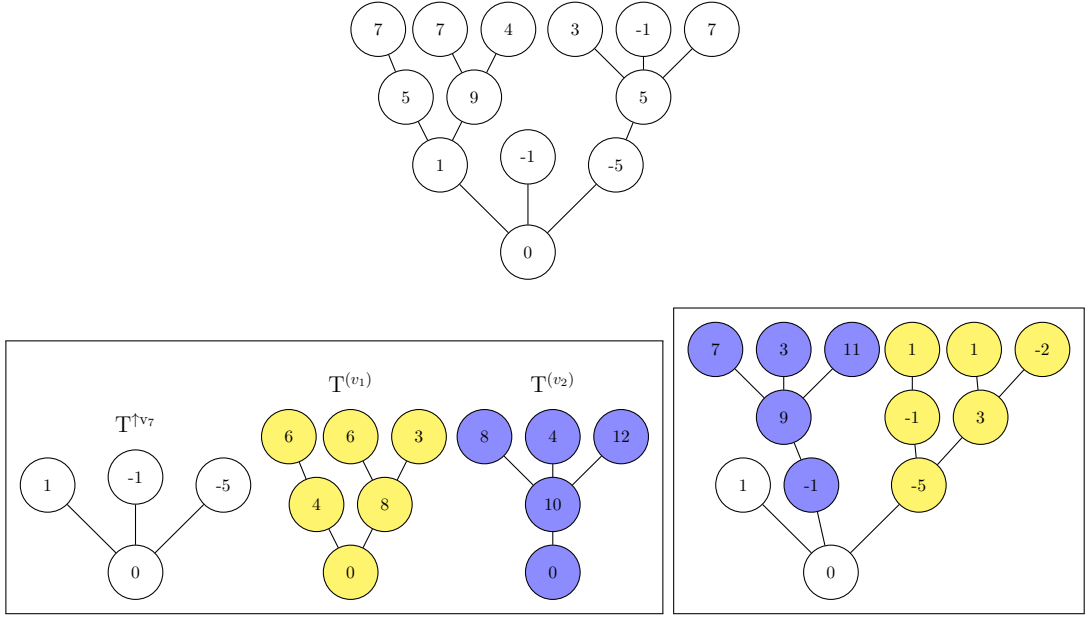
    \centering
       \tmpframe{\tikz[grow =up,  nodes={circle,draw}, minimum size = 
    1.2cm, inner sep = 0pt, scale = 0.6, transform shape]
  \node (A) {0}
    child {node[above right] {-5}}
    child {node[above, label = above: {\Large $\rT^{\uparrow\mathrm{v}_7}$}] {	-1}}
    child {node[above left] {	 1}
    }
    ;

    \tikz[grow=up, nodes={circle,draw}, minimum size=1.2cm, inner sep = 0pt, scale = 0.6, transform shape]
    \node[fill = yellow!70!white]{0}
    child {node[fill = yellow!70!white] {8}
    child {node [fill = yellow!70!white] {3}}
    child {node [fill = yellow!70!white, label=above:{\Large $\rT^{(v_1)}$}] {6}}}
    child {node[fill = yellow!70!white] {4}
    child {node [left, fill = yellow!70!white] {6}}}
    ;
     \tikz[grow=up, nodes={circle,draw}, minimum size=1.2cm, inner sep = 0pt, scale = 0.6, transform shape]
    \node[fill = blue!45!white]{0}
    child {node[fill = blue!45!white] {10}
    child {node[fill = blue!45!white] {12}}
    child {node[fill = blue!45!white, label=above:{\Large $\rT^{(v_2)}$}] {4}}
    child {node[fill = blue!45!white] {8}}
    }
    ;}
   \tmpframe{\tikz[grow =up,  nodes={circle,draw}, minimum size = 
    1.2cm, inner sep = 0pt, scale = 0.6, transform shape]
  \node {0}
    child {node[above right,  fill = yellow!70!white] {-5}
    child {node[right, fill = yellow!70!white]{3}
    child {node [right, fill = yellow!70!white] {-2}}
    child {node [right, fill = yellow!70!white] {1}}}
    child {node[right,  fill = yellow!70!white]{-1}
    child {node [fill = yellow!70!white] {1}}}}
    child {node[above left,fill = blue!45!white] {-1}
    child {node[left, fill = blue!45!white] {9}
    child {node[fill = blue!45!white] {11}}
    child {node[fill = blue!45!white] {3}}
    child {node[fill = blue!45!white] {7}}}}
    child {node[above left] {	 1}}
    ;}
\caption{  On top, a spatial tree. We denote the associated branching walk by $\rT$.  On the bottom left, we depict $f_7(\rT)=(\rT^{\uparrow\mathrm{v}_7},\{\rT^{(v_1)},\rT^{(v_2)}\})$, which is obtained from $\rT$ by pruning the sub-branching walks of $\rT$ that have a displacement with absolute value exceeding $7$ in their first generation. On the bottom right is a spatial tree obtained by grafting the branching walks $(f_7(\rT))_2$ to leaves of $(f_7(\rT))_1$.
$\rT^{(v_1)}$ and $\rT^{(v_2)}$ to leaves of $\rT^{\uparrow\mathrm{v}_7}$.
}\label{fig:prune_graft}
    \end{figure}

\begin{proof} 
Let $(\mathrm{t}',\{\mathrm{t}^1,\dots,\mathrm{t}^m\})$ be in the support of $f_\tau (\bT_n)$. We will first show that
\begin{equation}\label{eq:inv}
f^{-1}_\tau(\mathrm{t}',\{\mathrm{t}^1,\dots,\mathrm{t}^m\})=\left\{ \mathrm{t}'\oplus_{l_1,\dots,l_m}(\mathrm{t}^{\pi(1)},\dots, \mathrm{t}^{\pi(m)}): (l_1,\dots ,l_m)\text{ leaves in }\mathrm{t}'; \pi \in \cS_m\right\},
\end{equation}
where on the right-hand side, $(l_1,\dots,l_m)$ are listed in depth-first order.
Following this, we will show that the law of $\bT_n$ conditional on its degrees and displacement vectors assigns equal mass to all elements of the right-hand set in (\ref{eq:inv}), and that each element of the right-hand set corresponds to the same number of sets of leaves $(l_1,\dots , l_m)$ listed in depth-first order and permutations $\pi$. 

For the inclusion of the left-hand set in the right-hand set, observe that if for some spatial tree $\mathrm{t}$ it holds that $f_\tau(\mathrm{t})=(\mathrm{t}',\{\mathrm{t}^1,\dots,\mathrm{t}^m\})$ then $(l_1,\dots, l_m)$, the minimal vertices in $\mathrm{t}$ that have a displacement vector with sup-norm lower bounded by $\tau$ listed in depth-first order, are leaves in $\mathrm{t}'$. Thus there is some $\pi\in \cS_m$ such that for all $i=1,\dots,m$, $\mathrm{t}^{\pi(i)}=\mathrm{t}^{(l_i)}$. This implies that $\mathrm{t}=\mathrm{t}'\oplus_{l_1,\dots,l_m}(\mathrm{t}^{\pi(1)},\dots, \mathrm{t}^{\pi(m)})$. 

For the other inclusion, it is straightforward to see that for leaves $(l_1,\dots , l_m)$ in $\mathrm{t}'$, listed in depth-first order, and $ \pi \in \cS_m$, it holds that $f_\tau(\mathrm{t}'\oplus_{l_1,\dots,l_m}(\mathrm{t}^{\pi(1)},\dots, \mathrm{t}^{\pi(m)})=(\mathrm{t}',\{\mathrm{t}^1,\dots,\mathrm{t}^m\})$. 

We now show that the law of $\bT_n$ conditional on its degrees and displacement vectors assigns equal mass to all elements of the right-hand set. This follows from the observation that, conditional on its degrees and displacement vectors, $\bT_n$ is uniform on all branching random walks with those degrees and displacement vectors. Each element in 
\begin{equation}\label{eq:set_of_interest}
\left\{ \mathrm{t}'\oplus_{l_1,\dots,l_m}(\mathrm{t}^{\pi(1)},\dots, \mathrm{t}^{\pi(m)}):(l_1,\dots , l_m) \text{ leaves in }\mathrm{t}'; \pi \in \cS_m\right\}
\end{equation} 
with $l_1,\dots, l_m$ listed in depth-first order has the same degrees and displacement vectors. 

Finally, we show that each element of (\ref{eq:set_of_interest}) corresponds to the same number of sets of leaves $(l_1, \dots,  l_m)$ and permutations $\pi\in \cS_m$. To this end, note that every vertex in a spatial tree $\mathrm{t}$ with a displacement vector whose sup-norm is at least $\tau$ has a non-zero number of children, so for each $\mathrm{t}$ in the set (\ref{eq:set_of_interest}), we can recognise $(l_1,\dots ,l_m)$ as the vertices $v$ that are leaves in $\mathrm{t}'$ and not leaves in $\mathrm{t}$; thus, the choice of $(l_1,\dots , l_m)$ is unique. Moreover, if the multiset $\{\mathrm{t}^1,\dots,\mathrm{t}^m\}$ contains $j$ different spatial trees with multiplicities $m_1,\dots, m_j$ respectively, then $\mathrm{t}$ corresponds to $m!/(m_1!\dots m_j!)$ different permutations $\pi$. This number does not depend on $\mathrm{t}$, and the statement follows.  
\end{proof}

For $n \ge 1$, let $\tau_n = n^{1/(4-\eta)-\delta}$. Further, let $\bT_n' = (\rT_n', Y')$ denote the first coordinate of $f_{\tau_n}(\bT_n)$, and $\bF^{\mathrm{pr}}_n = (\bT_n^{(v)})_{v\in v^{\tau_n}(\rT_n)}$ denote the second coordinate of $f_{\tau_n}(\bT_n)$, where we assume that the trees in $\bF^{\mathrm{pr}}_n$ are ordered according to the depth-first order of their roots in $\rT_n$.  We require one further lemma to prove Proposition \ref{prop:conv_spiky_snake}.

\begin{lem}\label{lem:compare} 
Fix $\gamma > 0$. Suppose that [\ref{a1}] holds and that [\ref{a3}] holds for a given measure $\pi$ and $\eta\in [0,2)$. For $n\ge 1$, let $H_n'$ be the height function of $\rT_n'$ and $R_n'$ be the function encoding the spatial locations of $\bT_n'$. Extend their domains to $[0,n]$ by setting $H_n'(t) = R_n'(t) = 0$ for all $t > |\rT_n'|$. If $\eta = 0$, then as $n\rightarrow\infty$,
\begin{equation}\label{eq:skorokhod_T'}
\left(\left(\frac{H'_n( nt )}{\sqrt{n}}, \frac{R'_{n}( nt )}{n^{1/4}}\right)_{0 \le t \le 1}, \frac{L_n^{0,\gamma}}{n^{1/4}}, \right) \convdist \left(\left(\frac{2}{\sigma}\mathbf{e}_t, \beta \sqrt{\frac{2}{\sigma}}\mathbf{r}_t\right)_{0 \le t \le 1}, L^{0,\gamma}\right),
\end{equation}
and if $\eta\in (0,2)$, then 
\begin{equation}\label{eq:skorokhod_T''}
\left(\left(\frac{H'_n( nt )}{\sqrt{n}}, \frac{R'_{n}( nt )}{n^{1/(4-\eta)}}\right)_{0 \le t \le 1}, \frac{L_n^{\eta,\gamma}}{n^{1/(4-\eta)}}, \right) \convdist \left(\left(\frac{2}{\sigma}\mathbf{e}_t, 0\right)_{0 \le t \le 1}, L^{\eta,\gamma}\right),
\end{equation}
with convergence in the first coordinate in $\bC([0,1],\R^2)$ endowed with the topology of uniform convergence, and the convergence in the second coordinate in $\ell_\infty$. 
\end{lem}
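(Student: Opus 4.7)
The plan is to compare $(H_n',R_n')$ to $(H_n, R_{n,\delta})$ via a depth-first re-indexing, and then invoke Proposition~\ref{prop:trimmed_snake}. First I would observe that every displacement vector in $\bT_n'=(\rT_n',Y')$ has sup-norm at most $\tau_n:=n^{1/(4-\eta)-\delta}$: for $v\in v(\rT_n')\setminus\partial\rT_n'$, if $\|Y^{(v)}\|_\infty>\tau_n$ then all children of $v$ would have been pruned, contradicting $v\notin\partial\rT_n'$. Consequently, for every $v\in v(\rT_n')$ the depth of $v$ in $\rT_n'$ equals its depth in $\rT_n$, and its spatial location in $\bT_n'$ coincides with its location in both $\bT_n$ and $\bT_{n,\delta}$ (since the entire root-to-$v$ path lies in $\rT_n'$ and only non-leaves of $\rT_n'$ contribute displacements, each of which is typical). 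Letting $\phi:\{1,\ldots,|\rT_n'|\}\to [n]$ denote the strictly increasing map sending the $i$-th vertex of $\rT_n'$ (in depth-first order) to its rank in the depth-first order of $\rT_n$, this yields $H_n'(i-1)=H_n(\phi(i)-1)$ and $R_n'(i-1)=R_{n,\delta}(\phi(i)-1)$ for all $i\le |\rT_n'|$.

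The next step is to establish that the total size of the pruned subtrees is $o_\bP(n)$, equivalently that $|\rT_n'|/n\convprob 1$. By exchangeability this reduces to showing that a uniformly chosen vertex of $\rT_n$ is a descendant of some element of $\mathrm{v}_{\tau_n}$ with probability tending to $0$, which I would prove by a direct adaptation of Lemma~\ref{lem:pendant_small} with threshold $\tau_n$ in place of $n^{1/4-\delta}$. The required bound on the number of vertices of $\rT_n$ with $\|Y^{(v)}\|_\infty>\tau_n$ under [\ref{a3}] was established in Section~\ref{sec:large} (immediately before Proposition~\ref{prop:mid_range_hairy}), and the ancestral-pair control is supplied by Lemma~\ref{lem:no_ganging_up_d_tree} in exactly the same way as in Proposition~\ref{prop:ancestral_rel}. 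Since $\phi$ is non-decreasing and $\phi(i)-i$ counts the pruned vertices preceding the $i$-th vertex of $\rT_n'$ in depth-first order, this immediately gives $\sup_{i\le |\rT_n'|}|\phi(i)-i|\le n-|\rT_n'|=o_\bP(n)$. The hypothesis $\delta<1/64$ in the lemma is imposed precisely to make this adapted version go through.

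Finally, by Skorokhod's representation theorem I would pass to a space on which the convergence of Proposition~\ref{prop:trimmed_snake} holds almost surely, with continuous limits $h:=(2/\sigma)\mathbf{e}$ and $r:=\beta\sqrt{2/\sigma}\,\mathbf{r}$ (if $\eta=0$) or $r:=0$ (if $\eta\in(0,2)$), both vanishing at $t=1$. For $t\in[0,|\rT_n'|/n]$, the identity $H_n'(\lfloor nt\rfloor)=H_n(\phi(\lfloor nt\rfloor+1)-1)$ combined with the estimate $|\phi(\lfloor nt\rfloor+1)/n-t|\le (n-|\rT_n'|+1)/n\to 0$ a.s.\ and the uniform continuity of $h$ will give $\sup_{t\in[0,|\rT_n'|/n]}|H_n'(nt)/\sqrt{n}-h(t)|\to 0$ almost surely (the sub-unit interpolation error being negligible on the scale $\sqrt{n}$). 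For $t\in(|\rT_n'|/n,1]$ the extension sets $H_n'(nt)=0$, while $|h(t)|=|h(t)-h(1)|\to 0$ uniformly since $|\rT_n'|/n\to 1$ and $h$ is continuous; the two estimates combine to give uniform convergence on all of $[0,1]$. Applying the same argument to $R_n'/n^{1/(4-\eta)}$, noting that the interpolation error is $o(n^{1/(4-\eta)})$ because every surviving displacement has modulus at most $n^{1/(4-\eta)-\delta}$, will yield \eqref{eq:skorokhod_T'} and \eqref{eq:skorokhod_T''}. Because $H_n'$ and $R_n'$ are deterministic functionals of the data appearing in Proposition~\ref{prop:trimmed_snake}, the joint convergence with $L_n^{\eta,\gamma}/n^{1/(4-\eta)}$ and the asserted independence of the limits carry over directly from that proposition.

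The hard part will be the adapted proof of $|\rT_n\setminus \rT_n'|/n=o_\bP(1)$ under [\ref{a3}]; everything else is continuous-mapping/Skorokhod-representation bookkeeping which leverages the fact that $\phi$ is uniformly close to the identity on the scale of $n$ together with the fact that the limit processes vanish at time $1$.
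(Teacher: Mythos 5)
Your proposal is correct and follows essentially the same route as the paper: both proofs rest on the re-indexing identity (your $\phi$ is the paper's $P_n$), the estimate $n-|\rT_n'|=o_\bP(n)$ obtained by adapting Lemma~\ref{lem:pendant_small}, and the uniform continuity of the limit processes combined with Proposition~\ref{prop:trimmed_snake}. The only cosmetic difference is that you invoke Skorokhod representation to pass to an almost sure coupling, whereas the paper works directly with explicit probability bounds on the modulus-of-continuity event $B_n$; these are equivalent formulations of the same argument.
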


\begin{proof}
We prove (\ref{eq:skorokhod_T'}). The proof of (\ref{eq:skorokhod_T''}) then follows by identical arguments. By Proposition~\ref{prop:trimmed_snake} it suffices to prove that as $n\rightarrow \infty$,
\begin{equation}\label{eq:encoding_T'}
\sup_{1\le j \le n}\left\{ n^{-1/2}|H_n(j)-H'_n(j)|\, \vee \, n^{-1/4}|R_{n,\delta}(j)-R'_n(j)| \right\} \convprob 0.
\end{equation}

We also prove (\ref{eq:encoding_T'}) using Proposition \ref{prop:trimmed_snake}. Fix $\varepsilon>0$. The sample paths of both $\mathbf{e}$ and~$ \mathbf{r}$ are almost surely continuous so, since $[0,1]$ is compact, they are in fact almost surely uniformly continuous. This implies that there exists $\rho>0$ so that  
\[
\p{\sup_{0\leq s <t \leq 1, |s-t|<\rho}\left|\frac{2}{\sigma}\mathbf{e}_s-\frac{2}{\sigma}\mathbf{e}_t\right|\vee \left|\beta\sqrt{\frac{2}{\sigma}} \mathbf{r}_s-\beta \sqrt{\frac{2}{\sigma}}\mathbf{r}_t\right|>\varepsilon/2}<\varepsilon/2. 
\]
Then, the convergence in Proposition \ref{prop:trimmed_snake} implies that for $n$ sufficiently large, the probability that the event
\begin{equation*}\label{eq:bad_event}
B_n := \left\{\sup_{0\leq k <\ell \leq n, |k-\ell|<\rho n}\left\{ \frac{\left|H_n(k)-H_n(\ell)\right|}{n^{1/2}}\vee \frac{\left|R_{n,\delta}(k)-R_{n,\delta}(\ell)\right|}{n^{1/4}}\right\}\ge\varepsilon\right\}
\end{equation*}
occurs is less than $\varepsilon$. 

Next, let 
\[
v^*(\rT_n) := \left\{v\in v(\rT_n)\setminus \partial\rT_n:\|Y^{(v)}\|_\infty \le n^{1/4 - \delta}, \exists ~ u\prec v \text{ with } \|Y^{(u)}\|_\infty > n^{1/4-\delta}\right\}.
\] 
By identical methods as those used to prove Lemma~\ref{lem:pendant_small}, it can be seen that $v^*(\rT_n) = o_\bP(n)$ and so for $n$ sufficiently large, $\p{|\rT_n'| \le n- \rho n} \le \varepsilon.$ 

Now suppose that neither of the (unlikely, bad) events $\{|\rT_n'| \le n - \rho n\}$ or $B_n$ hold. 
Observe that $H'_n$ and $R'_n$ can respectively be obtained from $H_n$ and ${R}_{n,\delta}$ by ``skipping'' all the vertices in $v^*(\rT_n)$. To be precise, for $1\leq k \leq |\rT_n'|$, let $P_n(k)$ be the position of the $k$-th vertex that is not in $v^*(\rT_n)$ in the depth-first order of $\rT_n$. Then, 
\[
(H'_n(k),R'_n(k)) =
\begin{cases}
(H_n(P_n(k)),R_{n,\delta}(P_n(k))) & \text{for }k=1,\dots,|\rT_n'|,\\
(0,0)&\text{for }k>|\rT_n'|.
\end{cases}
\]
By our assumption that $n - |\rT_n'| <\rho n$, we have $|P_n(k)-k|<\rho n $ for all $k$; by our assumption that 
\[
\sup_{0\leq k <\ell \leq n, |k-\ell|<\rho n}\left\{ \frac{\left|H_n(k)-H_n(\ell)\right|}{n^{1/2}}\vee \frac{\left|R_{n,\delta}(k)-R_{n,\delta}(\ell)\right|}{n^{1/4}}\right\}<\varepsilon,
\]
we then also have
\begin{equation}\label{eq:for_the_end?}
\sup_{0\leq k \leq n }\left\{\frac{|H_n(k)-H'_n(k)|}{\sqrt{n}}\vee\frac{|R_{n,\delta}(k)-R'_n(k)|}{n^{1/4}}\right\}<\varepsilon.
\end{equation}
Since $\varepsilon > 0$ was arbitrary, the result follows.
\end{proof}

With Lemma \ref{lem:compare} in hand, we proceed to proving Proposition \ref{prop:conv_spiky_snake}. In the proof, the pair $(\bT_n', \bF_n^{\mathrm{pr}})$ is  as in Lemma~\ref{lem:compare}.
Observe that by Lemma \ref{lem:uniform_grafting}, given  $f_{\tau_n}(\bT_n)$, we can obtain an object with the same law as $\bT_n$ by grafting the branching random walks in $\bF^\mathrm{pr}_n$ at uniformly random leaves of the first coordinate of $f_{\tau_n}(\bT_n)$. 
\begin{proof}[Proof of Proposition \ref{prop:conv_spiky_snake}]
Let $n \ge 1$ be large enough so that $n^{1/(4-\eta) - \delta} < \gamma n^{1/(4-\eta)}$. Then if $v\in v(\rT_n)\setminus \partial \rT_n$ is such that $\|Y^{(v)}\|_\infty > \gamma n^{1/(4-\eta)}$, it also holds that $\|Y^{(v)}\|_\infty > n^{1/(4-\eta) - \delta}$. The proof of Proposition \ref{prop:ancestral_rel} can be adapted so that under [\ref{a3}] for a given measure $\pi$ and $\eta\in [0,2)$, for $\delta > 0$ sufficiently small, as $n\rightarrow \infty$
\[\p{\exists u, v\in \bT_n, u\prec v, \text{ such that } \|Y^{(u)}\|_\infty \wedge \|Y^{(v)}\|_\infty > n^{1/(4-\eta) - \delta}} = o(1).\] If follows that at the cost of throwing away an event of asymptotically vanishing probability, we may work on the event that there are no ancestrally related vertices $u,v\in v(\rT_n)$ such that both $ \|Y^{(v)}\|_\infty > n^{1/(4-\eta) - \delta}$ and $\|Y^{(u)}\|_\infty> n^{1/(4-\eta) - \delta}$.

By Skorokhod's representation theorem, we may work on a probability space where the convergence in Lemma \ref{lem:compare} holds almost surely. 

We now use Lemma~\ref{lem:uniform_grafting} to study the asymptotic law of $R^\gamma_{n}$ conditional on $(\bT_n', \bF_n^{\mathrm{pr}})$. Lemma~\ref{lem:uniform_grafting} implies that given $(\bT_n', \bF^{\mathrm{pr}})$, we can obtain an object with the law of $\bT_n$ by grafting each of the branching random walks in $\bF_n^{\mathrm{pr}}$ onto uniformly random leaves in  $\bT'_n$. In fact, in order to obtain the (conditional) law of $R^\gamma_{n}$ we only need to sample the positions of the vertices in $v\in v(\rT_n)\setminus \partial \rT_n$ whose displacement vectors $Y^{(v)}$ satisfy that $\|Y^{(v)}\|_\infty \ge \gamma n^{1/(4-\eta)}$, since the trees of $\bF_n^{\mathrm{pr}}$ attach to these vertices in exchangeable random order. We denote the branching random walks in $\bF_n^{\mathrm{pr}}$ by $\bT^{(v_1)},\dots,\bT^{(v_{M_n})}$ (ordered according to the depth-first order of their roots $v_1,\dots, v_{M_n}\in \rT_n$).  By symmetry we may assume that for $1\le j \le M_n$, the largest and smallest displacement at the root of $\bT^{(v_j)}$ (i.e., $Y^{(v_j, +)}$ and $Y^{(v_j, -)}$) are described by the $j$-th entry of $L^{\eta,\gamma}_n$. 

We claim that as $n\rightarrow \infty$, $M_n\convdist M$ for some finite, random variable $M$. Indeed, as $n\rightarrow \infty$, $n^{-1/(4-\eta)}L_n^{\eta,\gamma}\convas L^{\eta,\gamma}$. Furthermore, since $\gamma> 0$, almost surely $L^{\eta,\gamma}$ has finitely many non-zero terms and each non-zero entry of $n^{-1/(4-\eta)} L_n^{\eta,\gamma}$ is at $\ell^\infty$ distance at least~$\gamma$ from $(0,0)$, there are finite random variables $M$ and $N$ such that $L_n^{\eta,\gamma}$ has $M$ non-zero terms for all $n>N$ large enough; i.e., the number of vertices $v\in v(\rT_n)\setminus \partial \rT_n$ such that $\|Y^{(v)}\|_\infty \ge \gamma n^{1/(4-\eta)}$ is equal to $M$. 

Now, for $k \ge 1$, let $\cL'_n(k)$ denote the number of leaves in $\rT_n'$ which are among the first $k$ vertices in the depth-first order of the vertices of $\rT_n$.  Then $\cL_n'(k)$ is bounded from above by the number of down-steps of the Łukasiewicz path $W_n(k)$ of $\rT_n$ by time $k$. It is bounded from below by this same number minus $|T_n\setminus T'_n|$ which is $o(n)$ by \eqref{eq:skorokhod_T'}.

Therefore, by Lemma \ref{lem:serflln}, as $n\rightarrow \infty$,
\[
\left(\frac{\cL'_n( \lfloor nt\rfloor )}{n}\right)_{0\le t \le 1}\convprob \left(\mu_0 t\right)_{0\le t \le 1}, 
\]
so that the positions of $M_n$ uniform leaves in $\rT'_n$ in depth-first order converge upon rescaling by $n^{-1}$ to $M$ independent uniform samples from $[0,1]$, which we denote by $U_1,\dots, U_M$ respectively. For all $1 \le j \le M_n$, we graft $\bT^{(v_j)}$  (which has size $o(n)$ since $\rT'_n$ has size $n-o(n)$ by \eqref{eq:skorokhod_T'}) onto the $j$-th such leaf of $\bT_n'$, using the operation in Definition \ref{dfn:graft}.

The branching random walk $\bT^{(v_j)}$ contains exactly one vertex (namely the root) with displacement vector $\|Y^{(v)}\|_\infty > \gamma n^{1/(4-\eta)}$ (since we assumed that such vertices are not ancestrally related) and the largest and smallest displacements of this vertex are given by~$L^{\eta,\gamma}_n(j)$. Therefore, asymptotically,  $n^{-1/(4-\eta)}R^\gamma_{n}$  will contain a line segment from $(U_j,-Y_j^-)$ to $(U_j,-Y_j^+)$. This implies that if $\eta = 0$
\[ 
\left(\left(\frac{H'_n( nt)}{\sqrt{n}},\frac{R'_{n}( n t )}{n^{1/4}}\right)_{ 0 \le t \le 1} , U\left(\frac{R^\gamma_{n}}{n^{1/4}},\emptyset\right) \right)\convdist \left(\left(\frac{2}{\sigma}\mathbf{e}_t, \beta\sqrt{\frac{2}{\sigma}} \mathbf{r}_t\right)_{0 \le t \le 1}, U(0,\Xi^\gamma)\right), 
\]
and if $\eta\in (0,2)$
\[ 
\left(\left(\frac{H'_n( nt)}{\sqrt{n}},\frac{R'_{n}( n t )}{n^{1/(4-\eta)}}\right)_{ 0 \le t \le 1} , U\left(\frac{R^\gamma_{n}}{n^{1/(4-\eta)}},\emptyset\right) \right)\convdist \left(\left(\frac{2}{\sigma}\mathbf{e}_t, 0\right)_{0 \le t \le 1}, U(0,\Xi^\gamma)\right).
\]
The result then follows from (\ref{eq:encoding_T'}). 
\end{proof}

\begin{appendix}
\section*{Standard results and remaining proofs}\label{sec:appendix}

\subsection{Standard results} 

In this section we provide standard results which we use throughout this work without proof. 
We start by stating a functional strong law of large numbers for sums of \iid non-negative random variables that we use at multiple points in  proofs of convergence of  finite-dimensional distributions. 

\begin{lem}\label{lem:serflln}
Let $X_1,X_2,\dots$ be \iid random variables with $X_1\ge 0$ almost surely and $\E{X_1}=\mu<\infty$. Then, for any $a_n\uparrow\infty$,
\[
\left(\frac{1}{a_n}\sum_{i=1}^{\lfloor a_n t \rfloor} X_i, t\ge 0\right)\convas \left(\mu t , t\ge 0\right),
\]
uniformly on compact sets as $n\to \infty$.
\end{lem}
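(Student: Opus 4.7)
The plan is to combine the classical Kolmogorov strong law of large numbers with a monotonicity argument that upgrades pointwise convergence to uniform convergence on compacts, using that the limit $t \mapsto \mu t$ is continuous.

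First I would write $S_n(t) := \tfrac{1}{a_n}\sum_{i=1}^{\lfloor a_n t\rfloor} X_i$ and establish pointwise almost sure convergence on a countable dense set. For each fixed $t \ge 0$, the ordinary SLLN applied to $\tfrac{1}{\lfloor a_n t\rfloor}\sum_{i=1}^{\lfloor a_n t\rfloor}X_i$ (valid because $a_n t \to \infty$ when $t>0$; the case $t=0$ is trivial) combined with $\lfloor a_n t\rfloor / a_n \to t$ gives $S_n(t) \convas \mu t$. Intersecting the countably many null sets indexed by $q \in \mathbb{Q}_{\ge 0}$ yields an event of full probability on which $S_n(q) \to \mu q$ simultaneously for every rational $q\ge 0$.

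Next I would upgrade to uniform convergence on any compact interval $[0,T]$ by a standard Dini-style argument exploiting two key features: the hypothesis $X_i \ge 0$ makes $t \mapsto S_n(t)$ non-decreasing for every $n$, and the limit $t \mapsto \mu t$ is continuous and non-decreasing. Fix $\varepsilon > 0$ and a finite partition $0 = q_0 < q_1 < \cdots < q_k = T$ of rationals with $\mu(q_{j+1}-q_j) < \varepsilon$. For any $t \in [q_j, q_{j+1}]$, monotonicity of $S_n$ and of $\mu t$ gives
\[
S_n(q_j) - \mu q_{j+1} \;\le\; S_n(t) - \mu t \;\le\; S_n(q_{j+1}) - \mu q_j,
\]
so $|S_n(t) - \mu t| \le \max_{0\le j \le k}|S_n(q_j) - \mu q_j| + \varepsilon$. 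Taking $\sup_{t\in[0,T]}$ and then $\limsup_{n\to\infty}$, the maximum on the right vanishes almost surely since it involves only finitely many rationals, leaving $\limsup_n\sup_{t\in[0,T]}|S_n(t)-\mu t| \le \varepsilon$ a.s. Letting $\varepsilon \downarrow 0$ along a countable sequence yields the desired uniform a.s. convergence on $[0,T]$, and since $T$ was arbitrary, convergence is uniform on compact sets.

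There is no real obstacle here; the only subtle point is making sure we are allowed to swap the order of quantifiers (``for all $q$, almost surely'' vs. ``almost surely, for all $q$''), which is handled by restricting to rationals so that the union of exceptional null sets remains null. The non-negativity hypothesis enters precisely to guarantee the monotonicity of $S_n$ that drives the Dini-type sandwich bound.
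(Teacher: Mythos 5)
Your proof is correct, and the approach is the standard one. The paper lists Lemma~\ref{lem:serflln} under ``Standard results'' in Appendix~A and supplies no proof, so there is nothing to compare against; your SLLN-plus-monotonicity argument is the usual way to upgrade pointwise a.s.\ convergence to uniform a.s.\ convergence on compacts when the prelimit processes are nondecreasing and the limit is continuous (the only cosmetic adjustment needed is that the right endpoint $T$ of the compact interval need not itself be rational, so one should take the last partition point $q_k \ge T$ rational rather than $q_k = T$).
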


The next result is a generalised local central limit theorem, from Theorem 13, Chapter~VII of Petrov \cite{petrov}, which we use to prove tightness in Theorem \ref{thm:main}.

\begin{thm}[Theorem 13, Chapter VII of Petrov \cite{petrov}]\label{thm:gen_clt} 
Let $(X_n)_{n \ge 1}$ be a sequence of \iid integer-valued random variables. Suppose that $\E{X_1} = 0$, $\V{X_1} = \sigma^2 > 0$, $\E{|X_1|^3} < \infty$, and the maximal span of the distribution of $X_1$ is equal to $1$. Let $S_n = \sum_{i=1}^nX_i.$ Then,
\[
\sqrt{2\pi n\sigma}\p{S_n = k} = e^{-k^2/(2\sigma^2 n)}\left(1 + \frac{1}{\sqrt{n}}\frac{\gamma_3}{6\sigma^3}\left(\frac{k^3}{\sigma^3n^{3/2}} - \frac{3k}{\sigma\sqrt{n}}\right)\right) + o(n^{-1/2}),
\]
uniformly in $k\in \Z$, where $\gamma_3$ is the third central moment of $X_1.$
\end{thm}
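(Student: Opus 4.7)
The plan is to prove this via the standard Fourier (characteristic function) route for the local central limit theorem, following the classical Edgeworth expansion argument. Since $X_1$ is integer-valued with maximal span $1$, the Fourier inversion formula for lattice variables gives
\[
\mathbf{P}\{S_n=k\} = \frac{1}{2\pi}\int_{-\pi}^{\pi} e^{-itk}\phi(t)^n\,dt,
\]
where $\phi(t) = \mathbf{E}[e^{itX_1}]$. Rescaling $t = s/(\sigma\sqrt{n})$, the target quantity $\sqrt{2\pi n}\,\sigma\,\mathbf{P}\{S_n=k\}$ becomes $(2\pi)^{-1/2}$ times an integral of $e^{-isk/(\sigma\sqrt{n})}\phi(s/(\sigma\sqrt{n}))^n$ over $|s|\le \pi\sigma\sqrt{n}$.

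First I would split this rescaled integral into a central region $|s|\le \delta\sqrt{n}$ and a tail region $|s|>\delta\sqrt{n}$, for some fixed small $\delta>0$. For the tail, the lattice condition (maximal span $1$) implies that $|\phi(t)| \le q < 1$ on any compact subset of $(-\pi,\pi)$ bounded away from $0$, so $|\phi(t)|^n$ decays exponentially in $n$ uniformly in $t$, and the tail contribution is $o(n^{-1/2})$ uniformly in $k$. For the central region, I would use that $\mathbf{E}[|X_1|^3]<\infty$ to Taylor-expand
\[
\log \phi(t) = -\frac{\sigma^2 t^2}{2} - i\frac{\gamma_3 t^3}{6} + o(t^3)
\]
as $t\to 0$, so that after rescaling,
\[
n\log \phi(s/(\sigma\sqrt{n})) = -\frac{s^2}{2} - i\frac{\gamma_3}{6\sigma^3\sqrt{n}}\,s^3 + o(n^{-1/2})
\]
uniformly for $|s|\le \delta\sqrt{n}$.

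The next step is to exponentiate, using $e^{-s^2/2+u} = e^{-s^2/2}(1+u) + O(u^2 e^{-s^2/2})$ with $u = -i\gamma_3 s^3/(6\sigma^3\sqrt{n})$; the $u^2$ remainder integrates to $O(1/n)$ against the Gaussian, and the genuine $o(1/\sqrt{n})$ error from the expansion of $\log\phi$, once multiplied by $e^{-s^2/2}$, integrates to $o(1/\sqrt{n})$. This reduces the computation to
\[
\frac{1}{\sqrt{2\pi}}\int_{\mathbb{R}} e^{-isk/(\sigma\sqrt{n})} e^{-s^2/2}\left(1-\frac{i\gamma_3}{6\sigma^3\sqrt{n}}\,s^3\right)ds + o(n^{-1/2}),
\]
after extending the integration range (which costs only a Gaussian-tail $o(n^{-1/2})$ term). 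The first piece is just the Gaussian $e^{-k^2/(2\sigma^2 n)}$; the second piece is computed by noting that $s^3 e^{-s^2/2}$ is (up to sign) the third derivative of $e^{-s^2/2}$, so its Fourier transform against $e^{-isx}$ with $x=k/(\sigma\sqrt{n})$ produces the Hermite-type polynomial factor $(x^3-3x) = k^3/(\sigma^3 n^{3/2}) - 3k/(\sigma\sqrt{n})$ multiplied by $e^{-k^2/(2\sigma^2 n)}$.

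The main obstacle will be to make all the error terms genuinely \emph{uniform in $k\in\mathbb{Z}$}. In the central region this requires carefully controlling the remainder in the Taylor expansion of $\log\phi$ in terms of the third absolute moment (Esseen's lemma / the smoothing inequality), and in the tail it requires a uniform lower bound on $1-|\phi(t)|$ away from $0$, both of which are standard under the given hypotheses. The Hermite-polynomial bookkeeping and the reduction of the $o(n^{-1/2})$ error require some care but no new ideas beyond what already appears in the classical proof of the Edgeworth expansion for lattice distributions.
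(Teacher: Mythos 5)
The paper does not prove this statement: it is cited directly as Theorem 13 of Chapter VII of Petrov's book \cite{petrov}, so there is no paper proof to compare against. Your proposal follows the standard characteristic-function / Edgeworth route, which is indeed the proof in Petrov, so the approach is the right one.

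One imprecision worth flagging. You write that
\[
n\log\phi(s/(\sigma\sqrt n)) = -\tfrac{s^2}{2} - i\tfrac{\gamma_3}{6\sigma^3\sqrt n}s^3 + o(n^{-1/2})
\quad\text{uniformly for } |s|\le\delta\sqrt n,
\]
but this is not true uniformly: the remainder from the third-order Taylor expansion of $\log\phi$ is $o(t^3)$, and after rescaling it is $o(|s|^3 n^{-1/2})$, which is only $o(n^{-1/2})$ when $|s|$ is bounded. Similarly, your step $e^{-s^2/2+u}=e^{-s^2/2}(1+u)+O(u^2e^{-s^2/2})$ with $u=-i\gamma_3 s^3/(6\sigma^3\sqrt n)$ requires $|u|$ bounded, which fails for $|s|$ near $\delta\sqrt n$; the correct remainder bound is $O(|u|^2e^{|u|}e^{-s^2/2})$, and one has to take $\delta$ small enough that $|u|\le s^2/4$ (say) on the whole central region so that $e^{|u|}e^{-s^2/2}\le e^{-s^2/4}$, after which $|u|^2e^{-s^2/4}$ integrates to $O(n^{-1})$ and the genuine Taylor remainder, weighted by $e^{-s^2/4}$, integrates to $o(n^{-1/2})$. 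You gesture at this in your final paragraph ("make all the error terms genuinely uniform"), but the specific issue is not uniformity in $k$ (which is automatic since $|e^{-isk/(\sigma\sqrt n)}|=1$); it is that the pointwise expansions are not uniform in $s$ over the central region and must be replaced by integrated bounds against the Gaussian. With that repair, the argument is the classical one and goes through.
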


The last result is a quantitative local central limit theorem proved in \cite[Lemma 5.5]{louigiserteigor} for $k = 1$, which we use in the proof of Theorems \ref{thm:hairy_4} and \ref{thm:hairy_2}. The generalisation to $k \ge 1$ is standard.

\begin{lem}\label{lem:local_move}
Fix $\eta, \beta >0$, $0<\gamma<1/2$ and $k\in \N$. Then, there exist constants $C=C(\eta, \beta, \gamma, k)$ and $M=M(\eta, \beta, \gamma, k)$ so that for all random variables $X$ on  $\Z_{\ge 0}$ that satisfy the following conditions:
\begin{enumerate}
\item the greatest common divisor of the support of $X$ is $1$;
\item $\p{X=0}>\gamma$ and $\p{X=k}>\gamma$;
\item $\E{X^2}<\eta$ and
\item $\E{X^3}<\beta $,
\end{enumerate}  
it holds that for all $m > M$
\[
\sup_{\ell\in \Z} \left|\sqrt{m}\p{ \sum_{i=1}^m X_i=\ell}-\phi\left( \frac{\ell-m\E{X}}{\sqrt{\V{X}m}}\right) \right|\leq \frac{C}{\sqrt{m}},
\]
where $X_1,X_2,\dots$, are \iid copies of $X$, and $\phi(t)=e^{-t^2/2}$ is the standard normal density.
\end{lem}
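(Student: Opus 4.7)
The plan is the classical one: proceed via Fourier inversion. Writing $\varphi(t) = \E{e^{itX}}$ and $S_m = X_1+\dots+X_m$, we have
\[
\p{S_m=\ell} \;=\; \frac{1}{2\pi}\int_{-\pi}^{\pi}\varphi(t)^m\,e^{-i\ell t}\,dt,
\]
and the Gaussian density with mean $m\E{X}$ and variance $m\V{X}$ admits an analogous representation, now with integration over all of $\R$. Subtracting the two, performing the change of variables $s = t\sqrt{\V{X}m}$ and collecting the Gaussian tail (which is super-polynomially small), the task reduces to controlling
\[
\int_{-\pi\sqrt{\V{X}m}}^{\pi\sqrt{\V{X}m}} \Big(\varphi\bigl(s/\sqrt{\V{X}m}\bigr)^m - e^{-s^2/2 + i\bar\mu s}\Big)\,e^{-i\bar\ell s}\,ds,
\]
where $\bar\mu$ and $\bar\ell$ are the rescaled mean and target. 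We split this integral into a central window $|s|\le A$ with, say, $A = A(m) = m^{1/6}$, and the complementary annulus.

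On the central window, we use the third-moment bound (4) to Taylor expand the cumulant $\psi(t) = \log\varphi(t) = i\E{X}t - \V{X}t^2/2 + R(t)$ with $|R(t)|\le C_1(\beta)\,|t|^3$. Noting that assumption (2) forces $\V{X}\ge c_0(\gamma,k)>0$ (since $X$ takes two distinct values $0$ and $k$ each with probability at least $\gamma$) and that $\E{X}\le\sqrt{\eta}$, we obtain $\varphi(s/\sqrt{\V{X}m})^m = e^{-s^2/2 + i\bar\mu s}\bigl(1 + O(\beta\,m^{-1/2}|s|^3)\bigr)$, uniformly on $|s|\le A$, with constants depending only on $(\eta,\beta,\gamma,k)$. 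The resulting integrand difference is dominated pointwise by $C_2\,m^{-1/2}|s|^3 e^{-s^2/4}$, which integrates to $O(m^{-1/2})$ uniformly in $\ell$ after multiplication by the normalizing factor on the left-hand side.

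The delicate step — and the expected main obstacle — is to show that the contribution of the complementary annulus is at most $O(m^{-1/2})$. Using the identity $|\varphi(t)|^2 = \E{\cos(t(X-X'))}$ with $X'$ an independent copy of $X$ and applying (2) to the atoms at $0$ and $k$ yields
\[
|\varphi(t)|^2 \le 1 - 2\gamma^2\bigl(1-\cos(kt)\bigr).
\]
This is enough to glue continuously onto the central Taylor bound and to control $|\varphi|$ quantitatively on any interval $[\delta,\pi/k]$. On $[\pi/k,\pi]$ the inequality degenerates at points $t \in (2\pi\Z)/k$, and one must invoke the gcd assumption (1): there must exist a further atom $a$ of the support with $\gcd(a,k)<k$. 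The plan here is to combine (1) with the second-moment bound (3) — which forbids mass from escaping to infinity — to extract such an $a$ with $\p{X=a}\ge c(\eta,\gamma,k)>0$, refining the cosine inequality and giving a uniform bound $|\varphi(t)|\le 1-c'(\eta,\gamma,k)$ on all of $[\delta,\pi]$. The annular integral is then bounded by $(1-c')^m$, exponentially small and $o(m^{-1/2})$ for $m\ge M$.

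Putting the two estimates together gives the claimed bound. The $k=1$ case handled in \cite[Lemma 5.5]{louigiserteigor} avoids the above complication since then the inequality $|\varphi(t)|^2\le 1 - 2\gamma^2(1-\cos t)$ is already uniformly strong on the whole of $[\delta,\pi]$; the generalization to $k\ge 1$ is "standard" precisely because the Taylor-expansion portion is unchanged and only the large-$|t|$ argument needs the additional input from (1) and (3). I expect the latter, rather than any probabilistic or combinatorial step, to be the technically most involved part of the proof.
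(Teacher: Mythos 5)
Your Fourier-inversion skeleton is the right one, and the central-window analysis is sound: the atoms at $0$ and $k$ give $\V{X}\ge \gamma k^2/2>0$, the cumulant expansion with a third-moment remainder produces the $O(m^{-1/2}|s|^3 e^{-s^2/4})$ integrand, and the Gaussian tail is harmless. The genuine gap is in the step you yourself flag as delicate: from hypotheses (1)--(4) you cannot ``extract $a$ with $\gcd(a,k)<k$ and $\p{X=a}\ge c(\eta,\gamma,k)>0$.'' Condition (1) guarantees the \emph{existence} of such an atom (else the gcd of the support would be $k$), but puts no lower bound on its mass, and the second-moment bound only caps the tail from above via Markov's inequality ($\p{X>T}\le \eta/T^2$); it says nothing about any individual atom having non-negligible probability.

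In fact, for $k\ge 2$ the quantifier structure you are trying to prove --- constants uniform over all $X$ satisfying (1)--(4) --- simply fails, which is why your attempted extraction cannot be made to work. Take $k=2$, $\gamma=0.3$, $\eta=3$, $\beta=5$, and for $\epsilon\in(0,0.1)$ let $X_\epsilon$ put mass $1/2-\epsilon$, $\epsilon$, $1/2$ on $0$, $1$, $2$ respectively. Conditions (1)--(4) hold uniformly in $\epsilon$, yet $|\varphi_\epsilon(\pi)|=1-2\epsilon$, so the annular bound degenerates as $\epsilon\downarrow 0$. Concretely, given any $m$, choose $\epsilon=m^{-2}$; then $\p{S_m\text{ odd}}=\tfrac12\bigl(1-(1-2\epsilon)^m\bigr)=O(1/m)$, so for odd $\ell$ near $m\E{X_\epsilon}$ one has $\sqrt{m}\,\p{S_m=\ell}=O(m^{-1/2})$ while $\phi\bigl((\ell-m\E{X_\epsilon})/\sqrt{m\V{X_\epsilon}}\bigr)=\Theta(1)$, making the supremum $\Theta(1)$. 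No $C,M$ depending only on $(\eta,\beta,\gamma,k)$ can work. (The paper gives no proof of this lemma --- it cites \cite[Lemma 5.5]{louigiserteigor} for $k=1$ and asserts the generalisation is standard --- and its downstream use is safe because the $\xi^n$ there converge in total variation to a fixed $\mu$. The honest version of ``standard'' replaces condition (2) by a quantitative aperiodicity hypothesis: finitely many atoms $k_1,\dots,k_r$ with $\gcd(k_1,\dots,k_r)=1$, each satisfying $\p{X=k_i}>\gamma$. Under that hypothesis the pairwise cosine inequalities give $|\varphi(t)|^2\le 1-2\gamma^2\max_i\bigl(1-\cos(k_it)\bigr)$, which is bounded away from $1$ uniformly on $[\delta,\pi]$, and the rest of your argument goes through verbatim.)
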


\subsection{Supporting results from the introduction}\label{app:intro}

\begin{proof}[Proof of Lemma~\ref{lem:noatoms}]
We argue by contradiction. Fix $T>0$. Without loss of generality, assume that $\pi(\{x\}\times \R_+)=\delta>0$ for some $x>0$. We show that this implies that $\pi((x/2,\infty)\times \R_+)> T$, which contradicts the requirement that $\pi((x/2,\infty)\times \R_+)<\infty$ because $T>0$ was chosen arbitrarily. Fix $0<\varepsilon<x/4$ small enough that $\delta\lfloor \tfrac{ x}{8\varepsilon}\rfloor>T$ and $\pi(\{x-\varepsilon,x+\varepsilon\}\times \R_+)=0$. Define $A_0=(x-\varepsilon, x+\varepsilon)$, so that by [\ref{a3}],
 \[
 r^{4-\eta}\p{\frac{1}{r}\max_{1\le i \le \xi}Y^+_{\xi,i}\in A_0}\to \pi(A_0\times \R_+) \ge  \delta \quad \text{ as $r\to \infty$.}
 \]
Then, letting $J= \lfloor \tfrac{ x}{8\varepsilon} \rfloor -1$, for $j\in\{1,\dots, J\}$, we can find $\theta_j\in (0,1]$ such that $A_j:=\theta_j(x-\varepsilon,x+\varepsilon)\subset (x-(2j+1)\varepsilon,x-(2j-1)\varepsilon)$ and $\pi(\{\theta_j(x-\varepsilon),\theta_j(x+\varepsilon)\}\times \R_+)=0$. By definition, $A_0,\ldots,A_J$ are pairwise disjoint, and by our choice for $J$, $\cup_{0\le j\le J}A_j\subset (x/2,x+\varepsilon)$, so $\pi((x/2,\infty)\times \R_+)\ge \sum_{0\le j \le J}\pi(A_j\times \R_+)$. Moreover, 
setting $r=\theta_j s$ in the above limit shows that
\[
s^{4-\eta}\p{\frac{1}{s}\max_{1\le i \le \xi}Y^+_{\xi,i}\in  \theta_j(x-\epsilon, x+\epsilon)} \to \theta_j^{\eta-4} \pi(A_0\times \R_+) \ge \delta\text{ as $s\to \infty$}.
\]
But [\ref{a3}] implies that 
\[
s^{4-\eta}\p{\frac{1}{s}\max_{1\le i \le \xi}Y^+_{\xi,i}\in  \theta_j(x-\epsilon, x+\epsilon)} \to \pi(A_j\times \R_+),
\]
so 
$\pi((x/2,\infty)\times \R_+)\ge (J+1)\delta>T$, which implies the claim. 
\end{proof}

\begin{proof}[Proof of Proposition \ref{prop:big_displacement}]
To ease notation, we write $n/2$ instead of $\lfloor n/2 \rfloor$ throughout the proof. 

First observe that, by assumption, there exist $\varepsilon,\delta>0$ such that, for $\xi_1,\dots, \xi_{n}$ \iid samples from $\mu$, 
\[ 
\limsup_{n\to \infty}\p{\max_{1\le i \le n} \max_{1\le j\le \xi_i}|Y_{\xi_i,j}|>\delta n^{1/4}}>\varepsilon.
\]
By the central limit theorem, we may pick $K$ large enough that 
\[
\liminf_{n\to \infty}\p{n/2-Kn^{1/2}\le \sum_{i=1}^{n/2} \xi_i \le n/2+Kn^{1/2}}>1-\varepsilon/2, 
\]
so that by a union bound
\[
\limsup_{n\to \infty}\p{\max_{1\le i \le n} \max_{1\le j\le \xi_i}|Y_{\xi_i,j}|>\delta n^{1/4},~ n/2-Kn^{1/2}\le \sum_{i=1}^{n/2} \xi_i \le n/2+Kn^{1/2} }>\varepsilon/2.
\]
Denote the event inside the probability by $\cE_n$. We see that
\begin{align*} 
& \p{\max_{v \in v(\rT_n) \setminus \partial \rT_n} \max_{j \ge 1}|Y_{j}^{(v)}|>\delta n^{1/4}} = \p{\max_{1\le i \le n} \max_{1\le j\le D^n_i}|Y_{D_i^n,j}|>\delta n^{1/4}}\\
&\quad \ge \p{\max_{1\le i \le n/2} \max_{1\le j\le D_i^n}|Y_{D_i^n,j}|>\delta n^{1/4},~ n/2-Kn^{1/2}\le \sum_{i=1}^{n/2} D_i^n \le n/2+Kn^{1/2} } \\
&\quad =\frac{\p{\cE_n  \cap \{\sum_{i=1}^n \xi_i=n-1\} }}{\p{\sum_{i=1}^n \xi_i=n-1} }\\
&\quad = \frac{\E{\I{\cE_n} \p{\sum_{i=n/2+1}^n \xi_i=n-1-\sum_{i=1}^{n/2}\xi_i\mid \xi_1,\dots, \xi_{n/2}, Y_{\xi_1},\dots, Y_{\xi_n/2}} }}{\p{\sum_{i=1}^n \xi_i=n-1} }\\
&\quad \ge \p{\cE_n}\frac{\min_{n/2-1-Kn^{1/2}\le m \le n/2-1+Kn^{1/2}}\p{\sum_{i=n/2+1}^n \xi_i=m }}{\p{\sum_{i=1}^n \xi_i=n-1 }}.
\end{align*}
By the local central limit theorem, there exist constants $c,C>0$ such that 
\begin{align*}
&\liminf_{n\to \infty} n^{1/2}\min_{n/2-1-Kn^{1/2}\le m \le n/2-1+Kn^{1/2}} \p{\sum_{i=n/2+1}^n \xi_i=m }>c \\
\text{and} \quad &\limsup_{n\to \infty} n^{1/2}\p{\sum_{i=1}^n \xi_i=n-1 }<C.
\end{align*}
Thus, as claimed 
\[
\limsup_{n\to \infty}\p{\max_{1\le i \le n} \max_{1\le j\le D^n_i}|Y_{D_i^n,j}|>\delta n^{1/4}}\ge \frac{\varepsilon c}{2 C}>0. \qedhere
\]
\end{proof}

\subsection{Measure change}\label{app:measure_change}

For $n \ge 1$ let $\cS_n$ denote the set of permutations of $[n]$. For $(k_1,\ldots,k_n) \in \N^n$, let $\Sigma = \Sigma_{(k_1,\dots,k_n)}$ be the random permutation of $[n]$ with law given by 
\[
\p{\Sigma = \sigma} = \prod_{i=1}^n \frac{k_{\sigma(i)}}{\sum_{j=i}^n k_{\sigma(j)}},\quad \mbox{ for }\sigma\in \cS_n.
\]
We call $(k_{\Sigma(1)},\dots k_{\Sigma(n)})$ the \emph{size-biased random re-ordering} of $(k_1,\dots, k_n).$ It will be convenient to extend this definition to vectors $(k_1,\dots, k_n)$ that contain $0$-valued entries. We start with a size-biased random re-ordering of the non-zero entries of $(k_1,\dots, k_n)$ and then append to this the correct number of zeroes. Formally, if $(k_1,\dots,k_n) \in \Z_{\ge 0}^n$ has $N \ge 0$ non-zero entries, let $\Sigma_{(k_1,\dots, k_n)}$ be the random permutation of $[n]$ with 
\begin{equation}\label{eq:sizebias_def}
\p{\Sigma_{(k_1,\dots, k_n)} = \sigma} = \frac{1}{(n-N)!}\prod_{i=1}^N\frac{k_{\sigma(i)}}{\sum_{j=1}^Nk_{\sigma(j)}},
\end{equation}
for $\sigma\in \cS_n$, and still refer to $(k_{\Sigma(1)},\dots k_{\Sigma(n)})$ as the size-biased random re-ordering of $(k_1,\dots, k_n)$.

For a permutation $\sigma \in \cS_n$ and $r \in \{0,1,\ldots,n\}$ define 
\[
\tau_r(\sigma)
=
\begin{cases}
    \min\{j \in [n]: \sigma(j) \in [r]\} & \mbox{ if }r \in [n], \\
    n+1 & \mbox{ if }r=0\, .
\end{cases}
\]

As discussed in Section \ref{sec:fdds}, the proof of Theorem \ref{thm:main} relies on establishing a change of measure, (\ref{eq:basic_meas_change}), which relates the size-biased random re-ordering of the positive entries of the degree sequence of $\rT_n$, and \iid samples from the offspring distribution. The proofs of Theorems \ref{thm:hairy_4} and \ref{thm:hairy_2} rely on establishing a similar change of measure, which is a generalisation of (\ref{eq:basic_meas_change}) to the situation where instead of an \iid sequence, the first $r$ elements are non-zero and are fixed in advance; the whole sequence is conditioned to have sum $n-1$; and we consider the first $m$ elements of the size-biased random reordering of the sequence. Specifically, let $m,n,r,s\in \Z_{\ge 0}$ with $m,r,s <n$, and $\mu$ be a distribution on $\Z_{\ge 0}$. For $k_1,\dots,k_m\in\N$, we define 
\begin{align} \label{eq:meas_change}
&\Theta_\mu(k_1,\dots,k_m) = \Theta_\mu^{n,r,s}(k_1,\dots,k_m)\nonumber\\
& = \frac{\p{X_{m+1}+\dots + X_{n-r} = n - 1 - s - \sum_{i=1}^mk_i}}{\p{X_1+\dots + X_{n-r} = n - 1 - s}}\cdot \left(\E{X_1}\right)^m\cdot \prod_{i=1}^m\frac{n-r-i+1}{n-1-\sum_{j=1}^{i-1}k_j}
\end{align} 
if $k_1+\dots + k_m \le n - 1 - s$, and otherwise $\Theta_\mu(k_1,\dots, k_m) = 0$, where $(X_i, i \ge 1)$ are \iid random variables with distribution~$\mu$. We note that when $r = s = 0$, and $\mu$ is a critical offspring distribution, we recover (\ref{eq:basic_meas_change}).

\begin{prop}\label{prop:sizebias_swole}
Fix $n,r,s \in \Z_{\ge0}$ with $r,s < n$, and $d_1,\ldots,d_r \in \N$ with $\sum_{i=1}^r d_i=s$. Let $\mu$ be a distribution on $\Z_{\ge 0}$ and $(X_i,i \ge 1)$ be \iid random variables with distribution~$\mu$.  Further, let 
\[
N=N_{n,r}=|\{i \in \{r+1,\ldots,n\}: X_i > 0\}|.
\] 
Let $\vec{Z}=(Z_1,\ldots,Z_n)=(d_1,\ldots,d_r,X_{r+1},\ldots,X_n)$, and conditionally given $\vec{Z}$, let $\Sigma=\Sigma_{\vec{Z}}$ be given by \eqref{eq:sizebias_def}. Finally, let $(\overline{X}_i,i \in [n])$ be \iid samples from the size-biased distribution of $X_1$. Suppose that $\E{X_1}<\infty$. Then for any $m \in [n-r]$ and any function $f:\N^m \to \R$, if $\p{X_{r+1}+\ldots+X_n=n-1-s)}>0$, then
\begin{align}
\begin{split} \label{eq:fzhat}
&\E{f\left( Z_{\Sigma(1)},\ldots , Z_{\Sigma(m)}\right)\I{N \ge m}\I{\tau_r(\Sigma) > m}~\bigg|~X_{r+1}+\ldots+X_n = n-1-s}\\
& \qquad = \E{ f(\overline{X}_1,\ldots,\overline{X}_m)\Theta_\mu(\overline{X}_1,\ldots,\overline{X}_m)}\, ,
\end{split}
\end{align}
where $\Theta_\mu(\overline X_1,\dots \overline X_m) = \Theta_\mu^{n,r,s}(\overline X_1,\dots \overline X_m)$ is as in (\ref{eq:meas_change}). 
\end{prop}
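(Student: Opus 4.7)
The proof is a direct computation from the defining formula \eqref{eq:sizebias_def} for the size-biased reordering; the work is almost entirely bookkeeping. My plan is to evaluate both sides by decomposing according to the value of $(Z_{\Sigma(1)},\ldots,Z_{\Sigma(m)})$ and matching them term by term.

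First I would write $P:=\p{X_{r+1}+\ldots+X_n=n-1-s}$ and rewrite the conditional expectation on the left as $P^{-1}$ times an unconditional one. Conditional on $\vec Z$, the event $\{N\ge m\}\cap\{\tau_r(\Sigma)>m\}$ forces $(\Sigma(1),\ldots,\Sigma(m))$ to be an ordered tuple of distinct indices $(i_1,\ldots,i_m)$ in $\{r+1,\ldots,n\}$ at which $X$ is positive. By \eqref{eq:sizebias_def}, the conditional probability of picking a particular such tuple equals $\prod_{j=1}^m X_{i_j}/S_j$, where $S_j$ is the sum of the positive entries of $\vec Z$ still available at step $j$; summing the remaining positions of $\Sigma$ contributes a factor $1$.

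Next, on $\{\sum_{\ell=r+1}^n X_\ell=n-1-s\}$ the total sum of the entries of $\vec Z$ equals $n-1$, so $S_j$ collapses to $n-1-\sum_{k<j}X_{i_k}$. Since $X_{r+1},\ldots,X_n$ are iid and the summand is symmetric in $i_1,\ldots,i_m$, I would replace $(i_1,\ldots,i_m)$ by $(r+1,\ldots,r+m)$ at the cost of the combinatorial factor $\prod_{i=1}^m(n-r-i+1)$, and then factor out $\p{X_{r+m+1}+\ldots+X_n=n-1-s-\sum_k X_{r+k}}$ by independence, finally reindexing this probability as $\p{X_{m+1}+\ldots+X_{n-r}=n-1-s-\sum_k X_{r+k}}$.

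Finally I would expand the right-hand side using $\p{\overline X_j=k}=k\mu_k/\E{X_1}$ together with the explicit formula \eqref{eq:meas_change} for $\Theta_\mu$; the $\E{X_1}^m$ cancels the $m$ size-biasing factors, the product $\prod_i(n-r-i+1)$ reappears, and the $P$ in the denominator of $\Theta_\mu$ absorbs the normalisation, matching the two sides term by term. There is no real obstacle beyond bookkeeping; the one point requiring care is that the sums in \eqref{eq:sizebias_def} run over positive entries of $\vec Z$ only, but on the conditioning event these coincide with sums over all entries, which is precisely why $n-1$ enters the formula for $\Theta_\mu$.
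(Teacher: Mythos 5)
Your proposal is correct and reaches the same final matching as the paper, but the route is genuinely more direct. The paper proceeds in two stages: it first assumes $\mu_0 = 0$ (so every $X_i$ is positive and $N = n$ deterministically), works out the full joint law of $(\vec{Z}_\Sigma,\Sigma^{-1}[r])$ in display \eqref{eq:append_labeled}, specialises $f$ to a product of indicators, recognises a recursive structure in the residual sum, and only then handles $\mu_0 > 0$ by conditioning on the number $N$ of positive entries and summing over its values. You instead compute the left-hand side in one pass: you observe that on $\{N\ge m\}\cap\{\tau_r(\Sigma)>m\}$ the only tuples $(\Sigma(1),\ldots,\Sigma(m))$ with non-zero conditional probability are ordered $m$-tuples of distinct indices in $\{r+1,\ldots,n\}$ at positive entries, that the per-tuple conditional probability is $\prod_j X_{i_j}/(n-1-\sum_{k<j}X_{i_k})$ on the conditioning event, and that exchangeability of $X_{r+1},\ldots,X_n$ lets you collapse the sum over tuples to a single term times $\prod_{i=1}^m(n-r-i+1)$. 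The implicit indicator $\I{X_{r+j}>0,\,j\in[m]}$ makes $\I{N\ge m}$ automatic and simultaneously restricts the sum to $k_j\ge 1$, which is why the size-biased law (supported on $\N$) appears naturally on the right without any separate case for $\mu_0>0$.

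What the paper's route buys is the explicit joint law of $(\vec{Z}_\Sigma,\Sigma^{-1}[r])$, which is a self-contained fact one could reuse; the case split also makes the role of $N$ fully explicit. What your route buys is brevity and uniformity — it avoids the auxiliary Binomial summation over $N$ entirely. Two small points worth making explicit when writing it up: (i) "symmetric in $i_1,\ldots,i_m$" should really be phrased as "the \emph{expectation} of the summand is invariant under relabelling the tuple, by exchangeability of $(X_{r+1},\ldots,X_n)$," since the summand itself is order-dependent; and (ii) the claim that summing over the unseen positions of $\Sigma$ contributes $1$ requires noting that the remaining product over positions $m+1,\ldots,N$ telescopes to $1$ and the $(n-N)!$ orderings of the zero entries cancel the $1/(n-N)!$ in \eqref{eq:sizebias_def}. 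Neither point is a gap, just places where a referee would ask you to spell things out.
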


We observe that when $r \neq 0$, $\tau_r(\Sigma) > m$ implies that $N \ge m$ because all positive entries occur before zero-valued entries in the size-biased random reordering. However, when $r = 0$, the former event is vacuously true for all $m\in [n]$, but we still enforce that $N \ge m$ in \eqref{eq:fzhat}. It follows that Proposition~\ref{prop:measure_change_basic} is the special case when $r=0$, $s = 0$ and $X_1, X_2,\dots$ are \iid samples from the offspring distribution $\mu$. 

\begin{proof} 
In this proof, for $n \ge 1$, and $r \ge 1$, we let 
\[
[n]_r = \{(n_1,\dots, n_r)\in \{1,\dots, n\}^r~:~ n_i \neq n_j \text{ for all } i\neq j\}.
\]
Furthermore, for a set $A$ we write $A_r$ for the set of ordered sequences $(s_1,\ldots,s_r)$ of $r$ distinct elements of $A$. We also let $\mu_i=\p{X_1=i}$ for $i \in \Z_{\ge 0}$.

We first prove the proposition assuming that $\mu_0= 0$; we will later generalise this by conditioning on the number of non-zero entries of $\vec{Z}$ and sampling a size-biased re-ordering of only these entries. When $\mu_0=0$, we have $\p{N=n}=1$, so the indicator $\I{N \ge m}$ in \eqref{eq:fzhat} equals $1$ and may be ignored.

For $\sigma \in S_n$, we write $\vec{Z}_{\sigma}=(Z_{\sigma(1)},\dots, Z_{\sigma(n)})$ and $\sigma^{-1}[r]=(\sigma^{-1}(1),\dots,\sigma^{-1}(r))$. Observe that for $m \in [n-r]$, we have the equality of events
\[
\{\tau_r(\Sigma)>m\}=\left\{\Sigma^{-1}[r]\in ([n]\backslash[m])_r\right\}\, .
\]
It is thus useful to determine the law of $(\vec{Z}_{\Sigma}, \Sigma^{-1}[r])$. Note that for any 
$\vec{k}=(k_1,\dots, k_n)\in \N^n$ and $\vec{j}=(j_1,\dots,j_r)\in [n]_r$, if $(\vec{k},\vec{j})$ is in the support 
of $(\vec{Z}_{\Sigma}, \Sigma^{-1}[r])$ then $k_{j_i}=d_i$ for each $i\in [r]$.
For such $(\vec{k}, \vec{j})$, 
\begin{align}\label{eq:append_perms}
\p{\vec{Z}_{\Sigma}=\vec{k}, \Sigma^{-1}[r]=\vec{j} } 
& = \sum_{\sigma\in \cS_n: \sigma^{-1}[r]=\vec{j}}\p{\vec{Z}_{\sigma}=\vec{k}, \Sigma = \sigma}\nonumber\\
& = \prod_{i=1}^{n}\frac{k_i}{\sum_{j=i}^{n}k_j}\cdot \sum_{\sigma\in \cS_n: \sigma^{-1}[r]=\vec{j}}\p{\vec{Z}_{\sigma}=\vec{k}}.
\end{align}
Since we fixed $\sigma^{-1}[r]$, the sum (\ref{eq:append_perms}) ranges over exactly $(n-r)!$ elements of $\cS_{n}$ and each term of the sum is equal to
\[
\prod_{j \in [n]\setminus \{j_1,\ldots,j_r\}}\mu_{k_j}.
\]
Hence, for any $\vec{k}\in \N^n$ and $\vec{j}\in [n]_r$, 
\begin{equation}\label{eq:append_labeled}
\p{\vec{Z}_{\Sigma}=\vec{k}, \Sigma^{-1}[r]=\vec{j} } = (n-r)!\left(\prod_{i=1}^r\I{k_{j_i}=d_i}\right) \left(\prod_{j \in [n]\setminus \{j_1,\ldots,j_r\}} \mu_{k_j}\right)\left(\prod_{i=1}^{n}\frac{k_i}{\sum_{j=i}^{n}k_j}\right).
\end{equation}
Now, fix $m\in [n-r]$  and $k_1,\dots,k_m\in \N$. Note that it suffices to prove \eqref{eq:fzhat} when $f:\N^m \rightarrow \R$ has the form
\begin{equation}\label{eq:specialf}
f(z_1,\dots, z_m) = \prod_{i=1}^m\I{ z_i = k_i},
\end{equation}
so we now restrict our attention to this case. Since 
\[
\sum_{i \in [n]} Z_{\Sigma(i)} = \sum_{i=1}^r d_i + 
\sum_{i=r+1}^n X_i
= s+\sum_{i=r+1}^n X_i\, ,
\]
for any $k_1,\ldots,k_m \in \N$, by summing over the possible values of $Z_{\Sigma(m+1)},\dots,Z_{\Sigma(n)}$, we can use \eqref{eq:append_labeled} to find that
\begin{align}\label{eq:append_last_line}
    &\p{(Z_{\Sigma(1)},\dots,Z_{\Sigma(m)})=(k_1,\dots,k_m), \tau_r(\Sigma)>m, \sum_{i=r+1}^nX_i=n-1-s }\\
    &= \sum_{(k_{m+1},\dots, k_{n})\in \N^{n-m} }\sum_{\vec{j}\in ([n]\backslash[m])_r }\I{\sum_{i=1}^nk_i=n-1}\p{\vec{Z}_\Sigma=(k_1,\dots,k_n), \Sigma^{-1}[r]=\vec{j}}\nonumber\\
    &=(n-r)!\left(\prod_{i=1}^m\mu_{k_i}\right)\left(\prod_{i=1}^m\frac{k_i}{n-1-\sum_{j=1}^{i-1} k_i}\right) \nonumber\\
    & \cdot \hspace{-4pt}\sum_{\substack{(k_{m+1},\dots,k_n)\in \N^{n-m}\\\vec{j}\in ([n]\backslash[m])_r }} \!\! \I{\sum_{i=1}^n k_i=n-1}\left(\prod_{i=1}^r\I{k_{j_i}=d_i}\right)\left(\prod_{i\in ([n]\setminus [m])\setminus\{j_1,\dots,j_r\}} \hspace{-0.5cm}\mu_{k_i}\right)\!\left(\prod_{i=m+1}^{n}\frac{k_i}{\sum_{j=i}^{n}k_{j}}\right). \nonumber
\end{align}
Using that $k\mu_k = \p{\overline X_1 = k}\E{X_1}$ for all $k\in \N$, writing $n'=n-m$, and re-indexing the above sum, this yields that \eqref{eq:append_last_line} is equal to 
\begin{align}
    &\p{(\overline X_1,\dots, \overline X_{m}) = (k_1,\dots, k_{m})}\E{X_1}^m \left(\prod_{i=1}^{m}\frac{n-r-i+1}{n-1 - \sum_{j=1}^{i-1}k_j}\right) (n'-r)!\nonumber\\
    &\quad \cdot\hspace{-0.5cm}\sum_{\substack{(k'_1,\dots,k'_{n'})\in \N^{n'}\\\vec{j}\in [n']_r }}  \I{\sum_{i=1}^{n'} k'_i=n-1-\sum_{i=1}^m k_i} \left(\prod_{i=1}^r\I{k'_{j_i}=d_i}\right)\left(\prod_{i\in [n']\setminus\{j_1,\dots,j_r\}}\mu_{ k'_i} \right)
    \cdot \prod_{i\in [n']}\frac{k'_i}{\sum_{j=i}^{n'}k'_{j}}.\nonumber
\end{align}
Now, define $\vec{Z}'=(d_1,\dots ,d_r,X_{r+1},\dots,X_{n-m})$ and, conditionally given $\vec{Z}'$, let $\Sigma'=\Sigma_{\vec{Z}'}$ be given by \eqref{eq:sizebias_def}. Applying \eqref{eq:append_labeled} to $\vec{Z}'$ and $\Sigma'$, we thus find that   \eqref{eq:append_last_line} equals 
\begin{align*}
    &\p{(\overline X_1,\dots, \overline X_{m}) = (k_1,\dots, k_{m})}  \prod_{i=1}^{m}\left(\frac{n-1-i+1}{n-1 - \sum_{j=1}^{i-1}k_j}\E{X_1}\right)\\
    &\quad \cdot\sum_{\substack{(k'_1,\dots,k'_{n'})\in \N^{n'}\\\vec{j}\in [n']_r }}  \I{\sum_{i=1}^{n'} k'_i=n-1-\sum_{i=1}^m k_i} \p{\vec{Z}'_{\Sigma'}=\vec{k}, (\Sigma')^{-1}[r]=\vec{j} } \\
    &=\p{(\overline X_1,\dots, \overline X_{m}) = (k_1,\dots, k_{m})} \prod_{i=1}^{m}\left(\frac{n-1-i+1}{n-1 - \sum_{j=1}^{i-1}k_j}\E{X_1}\right)\\
    &\quad \cdot\p{\sum_{i=1}^{n'} Z'_{\Sigma'(i)}=n-1-\sum_{i=1}^m k_i} . 
\end{align*}
Finally, since the sum of the entries of  $\vec{Z}'_{\Sigma'}$ is unaffected by the random reordering and is the same as $s+\sum_{i=1}^{n'-r+m}X_i=s+\sum_{i=1}^{n-(r+m)}X_i$, we deduce that \eqref{eq:append_last_line} equals  
\begin{align*}
    &\p{(\overline X_1,\dots, \overline X_{m}) = (k_1,\dots, k_{m})}  \prod_{i=1}^{m}\left(\frac{n-1-i+1}{n-1 - \sum_{j=1}^{i-1}k_j}\E{X_1}\right)\\
    &\qquad\cdot\p{\sum_{i=1}^{n-(r+m)}X_i = n-1-s-\sum_{i=1}^{m}k_i}.
\end{align*}
Dividing the above expression by $\p{\sum_{i=1}^{n-r}X_i = n-1-s}$ yields the statement when $\mu_0 = 0$, in the special case that $f$ has the form given in  \eqref{eq:specialf}, and thus for general $f$. 

For the general case with $\mu_0 > 0$, we let $p = 1 - \mu_0.$ Further, we let $\mathbf{X}_1, \mathbf{X}_2,\dots$ be \iid copies of $X_1$ conditioned to be positive. Notice that $\E{\mathbf{X}_1} = p^{-1}\E{X_1}$ and that the size-biased distributions of $\mathbf{X}_1$ and of $X_1$ are identical. We let $\widehat{\mathbf{X}}_1,\widehat{\mathbf{X}}_2,\dots$ denote \iid samples from the size-biased distribution of $\mathbf{X}_1$. Finally, fix $m' \ge 0$ and define 
\[
\vec{\mathbf{Z}}'=(\mathbf{Z}'_1,\dots,\mathbf{Z}'_{m'+r})=(d_1,\dots, d_r, \mathbf{X}_{r+1},\dots, \mathbf{X}_{r+m'})\, , 
\] 
and conditionally given $\vec{\mathbf{Z}}'$, let $\Sigma'=\Sigma_{\vec{\mathbf{Z}}'}$ be given by \eqref{eq:sizebias_def}.

Now fix $m\in [n-r]$ with $m\le m'$. For $k_1,\dots, k_{m}\in \N$, we have that 
\begin{align}\label{eq:append_one}
    &\p{(Z_{\Sigma(1)},\dots,Z_{\Sigma(m)})=(k_1,\dots,k_m), \tau_r(\Sigma)>m, \sum_{i=r+1}^nX_i=n-1-s ~\bigg|~N = m'}\nonumber\\
    &=\p{(\mathbf{Z}'_{\Sigma'(1)},\dots,\mathbf{Z}'_{\Sigma'(m)}) = (k_1,\dots, k_{m}),~ \tau_r(\Sigma') > m,~\sum_{i=r+1}^{m'}\mathbf{X}_i = n - 1 - s}.
\end{align}
By the proof of the case where $\mu_0 = 0$, if $k_1+\dots+ k_m \le n - 1 - s$ this is equal to 
\begin{align*}
    &\p{(\overline{\mathbf{X}}_1, \dots, \overline{\mathbf{X}}_m) = (k_1,\dots, k_m)}\\
    &\quad\cdot \p{\sum_{i=m+1}^{m'}\mathbf{X}_i = n - 1 - s - \sum_{i=1}^{m}k_i} \prod_{i=1}^m\left(\frac{m'-i+1}{n - 1 -\sum_{j=1}^{i-1}k_j}\E{\mathbf{X}_1}\right)\, ,
\end{align*}
and otherwise is equal to $0$. For the remainder of the proof we may thus assume that $k_1 + \dots + k_m \le n - 1 - s$. Since $\overline{\mathbf{X}}_1 \eqdist \overline X_1$ and $\E{\mathbf{X}_1} = p^{-1}\E{X_1}$ this is in turn equal to 
\begin{align}\label{eq:append_two}
    &\p{(\overline{X}_1, \dots, \overline{X}_m) = (k_1,\dots, k_m)}\nonumber\\
    &\quad\cdot \frac{1}{p^m}\p{\sum_{i=m+1}^{m'}\mathbf{X}_i = n - 1 - s - \sum_{i=1}^{m}k_i}\prod_{i=1}^m\left(\frac{m'-i+1}{n - 1 -\sum_{j=1}^{i-1}k_j}\E{X_1}\right).
\end{align}
It then follows from (\ref{eq:append_one}) and (\ref{eq:append_two}) that
\begin{align}\label{eq:append_three}
    &\p{(Z_{\Sigma(1)},\dots,Z_{\Sigma(m)})=(k_1,\dots,k_m) , N\ge m, \tau_r(\Sigma)>m, \sum_{i=r+1}^nX_i=n-1-s} \nonumber\\
    &=\p{(\overline{X}_1, \dots, \overline{X}_m) = (k_1,\dots, k_m)}\nonumber\\
    &\cdot \sum_{m'= m}^{n-r}\frac{\p{N = m'}}{p^m}\p{\sum_{i=m+1}^{m'}\mathbf{X}_i = n - 1 - s - \sum_{i=1}^{m}k_i}\prod_{i=1}^m\left(\frac{m'-i+1}{n - 1 -\sum_{j=1}^{i-1}k_j}\E{X_1}\right).
\end{align}
Notice now that $N \eqdist \mathrm{Binomial}(n-r, p)$. So using the change of variable $\ell = m' - m$ and letting $M$ be a Binomial$(n-(r+m), p)$, by routine algebra we obtain that \eqref{eq:append_three} equals 
\begin{align*}
    &\sum_{\ell = 0}^{n-(r+m)}\p{M = \ell}\hspace{-3.5pt}\p{\sum_{i=m' - \ell + 1}^{m+\ell}\hspace{-8pt}\mathbf{X}_i = n - 1 - s - \sum_{i=1}^{m}k_i} \cdot\prod_{i=1}^m\left(\frac{n-r-i+1}{n-1-\sum_{j=1}^{i-1}k_i}\E{X_1}\right)\\
    &=\sum_{\ell = 0}^{n-(r+m)}\p{M = \ell}\p{\sum_{i=1}^{\ell}\mathbf{X}_i = n - 1 - s - \sum_{i=1}^{m}k_i} \cdot\prod_{i=1}^m\left(\frac{n-r-i+1}{n-1-\sum_{j=1}^{i-1}k_i}\E{X_1}\right)\\
    &= \p{\sum_{i=1}^{M}\mathbf{X}_i = n - 1 - s - \sum_{i=1}^m k_i}\prod_{i=1}^m\left(\frac{n-r-i+1}{n-1-\sum_{j=1}^{i-1}k_i}\E{X_1}\right).
\end{align*}
Since $\sum_{i=1}^{M}\mathbf{X}_i \eqdist \sum_{i=1}^{n-(r+m)}X_i$, dividing the above expression (which is equal to (\ref{eq:append_three})) by $\p{\sum_{i=1}^{n-r}X_i = n-1-s}$ yields the result for the special case that $f$ has the form given in  \eqref{eq:specialf}, and thus for general $f$. 
\end{proof}

The next proposition gives conditions under which the change of measure $\Theta^{n,r,s}_\mu$ appearing in (\ref{eq:meas_change}) is asymptotically unimportant in the specific case when $m = \Theta(\sqrt{n})$ and $(X_i, i \ge 1)$ are \iid samples from the offspring distribution $\mu$ conditioned to yield a displacement vector such that $\max_{1\le j \le X_i}|Y_{X_i,j}| \le \gamma n^{1/(4-\eta)}$.  This then allows us to use the measure change in the proofs of Theorems \ref{thm:hairy_4} and \ref{thm:hairy_2}.  

\begin{lem}\label{lem:conv_measure_change_hairy}

Let $\mu$ be a critical offspring distribution with variance $\sigma^2\in (0,\infty)$, and let $\nu = (\nu_k)_{k \ge 1}$ be such that [\ref{a1}] holds and [\ref{a3}] holds for a given measure $\pi$ with $\eta\in [0,2)$.  
Fix $\gamma > 0$. Let $\xi$ denote a random variable with distribution $\mu$, and for $n \ge 1$ let $\xi^{n}$ be distributed as $\xi$, conditioned to not yield a displacement vector with $\max_{1\le i \le \xi^n}|Y_{\xi^{n}, i}| > \gamma n^{1/(4-\eta)}$. Further, let $\mu^n$ denote the distribution of $\xi^n$, and let $\bar\xi_1^{n}, \bar\xi_2^{n},\dots$ be \iid samples from the size-biased law of $\xi^n$. 

Finally, fix $\varepsilon \in (0,1/6)$ and let $(r_n)_{n\ge 1}$ and $(s_n)_{n\ge 1}$ be sequences such that for all $n\ge 1$, $r_n < n^\varepsilon$, $s_n < n^{1/3+ \varepsilon}$ and $n-1-s_n$ is in the support of $\sum_{i=r_n+1}^{n}\xi_i^{n}$.

Suppose that $m =\Theta(\sqrt{n})$. Then as $n \rightarrow\infty$,
\begin{equation} \label{eq:conv_measure_hairy}
\Theta^{n,r_n, s_n}_{\mu^n}(\bar \xi_1^{n},\dots, \bar \xi_m^{n})\convprob 1,
\end{equation}
and $(\Theta^{n,r_n, s_n}_{\mu^n}(\bar\xi_1^{n},\dots, \bar\xi_m^{n}))_{n\ge 1}$ is a uniformly integrable sequence of random variables.
\end{lem}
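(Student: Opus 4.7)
The plan is to closely follow the strategy of Lemma \ref{lem:conv_measure_change_basic}, now accounting for the fact that $\mu^n$ depends on $n$ and is not necessarily critical. By a subsubsequence argument I may assume $m/\sqrt n \to t$ for some $t > 0$. Writing $A_n := \{\max_{1\le i \le \xi}|Y_{\xi,i}| > \gamma n^{1/(4-\eta)}\}$, assumption [\ref{a3}] gives $p_n := \p{A_n} = O(n^{-1})$. Using Cauchy--Schwarz together with $\E{\xi^2} < \infty$ and H\"older's inequality with $\E{\xi^3}<\infty$, one obtains $\mu_n := \E{X_1^n} = 1 + O(n^{-1/2})$, $\Sigma_n^2 := \V{X_1^n} = \sigma^2 + O(n^{-1/3})$, and $\E{(X_1^n)^3} \to \E{\xi^3} < \infty$; in particular $\E{\bar\xi_1^n} \to 1 + \sigma^2$. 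Since $d_{\mathrm{TV}}(\mu^n, \mu) = O(n^{-1})$, the hypotheses of the quantitative local limit theorem Lemma \ref{lem:local_move} hold for $\mu^n$ uniformly in $n$ for $n$ sufficiently large.

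The second step is to estimate the probability ratio in \eqref{eq:meas_change}. Let $S_k := X_1^n+\dots+X_k^n$, and set
\[
D_n := n-1-s_n-(n-r_n)\mu_n = n(1-\mu_n) + r_n\mu_n - 1 - s_n,
\]
and $D_n' := D_n + m\mu_n - \sum_{i=1}^m \bar\xi_i^n$. Since $r_n = o(\sqrt n)$ and $s_n = o(\sqrt n)$, $D_n = O(\sqrt n)$; by Lemma \ref{lem:serflln} applied to $(\bar\xi_j^n)$, $\sum_{i=1}^m \bar\xi_i^n = m(1+\sigma^2) + o_\bP(\sqrt n)$, so $D_n' = O_\bP(\sqrt n)$. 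Applying Lemma \ref{lem:local_move} to both numerator and denominator of the probability ratio in \eqref{eq:meas_change} and expanding the Gaussian exponent yields, after routine algebra,
\[
\frac{\p{S_{n-r_n-m} = n-1-s_n-\sum_{i=1}^m\bar\xi_i^n}}{\p{S_{n-r_n} = n-1-s_n}} = \exp\!\left(-\frac{(D_n'-D_n)^2}{2(n-r_n-m)\Sigma_n^2} - \frac{D_n(D_n'-D_n)}{(n-r_n-m)\Sigma_n^2}\right)(1+o_\bP(1)).
\]
The first term in the exponent converges in probability to $-t^2\sigma^2/2$, since $D_n'-D_n = -m\sigma^2 + o_\bP(\sqrt n)$ and $(n-r_n-m)\Sigma_n^2 = n\sigma^2(1+o(1))$. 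The second term, using $D_n = n(1-\mu_n) + o(\sqrt n)$, equals $m(1-\mu_n) + o_\bP(1) = -m(\mu_n-1)+o_\bP(1)$; this is exactly cancelled by the factor $\mu_n^m = \exp(m(\mu_n-1))(1+o(1))$ appearing in $\Theta_{\mu^n}^{n,r_n,s_n}$. Finally, a Taylor expansion as in \eqref{eq:use_in_ms} shows that $\prod_{i=1}^m \frac{n-r_n-i+1}{n-1-\sum_{j=1}^{i-1}\bar\xi_j^n} \convprob \exp(t^2\sigma^2/2)$: the contributions involving $r_n = o(\sqrt n)$ and $\bar\xi_j^n - 1 - \sigma^2$ are $o_\bP(1)$, while the dominant $\sigma^2(i-1)$ term gives $\sigma^2 m(m-1)/(2n) \to t^2\sigma^2/2$. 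Multiplying the three factors yields $\Theta_{\mu^n}^{n,r_n,s_n}(\bar\xi_1^n,\dots,\bar\xi_m^n) \convprob 1$.

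For uniform integrability, Proposition \ref{prop:sizebias_swole} applied with $f \equiv 1$ gives
\[
\E{\Theta_{\mu^n}^{n,r_n,s_n}(\bar\xi_1^n,\dots,\bar\xi_m^n)} = \p{N \ge m,\ \tau_{r_n}(\Sigma) > m \,\Big|\, \sum_{i=r_n+1}^n X_i^n = n-1-s_n}.
\]
Arguing as in Lemma \ref{lem:conv_good_event}, using $r_n, s_n = o(\sqrt n)$ to bound the contribution of the first $r_n$ fixed entries to the size-biased pick and a Chernoff estimate for $\p{N \ge m}$ (together with the uniform local limit theorem to remove the conditioning), this probability tends to $1$. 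Combined with the convergence in probability established above, the generalised Scheff\'e lemma \cite[Theorem 5.12]{kallenberg} yields the asserted uniform integrability. The main obstacle is the careful accounting in the middle step: because $\mu^n$ is not critical, the denominator probability features a deviation $D_n$ of order $\sqrt n$ from the mean, which produces a cross term in the Gaussian exponent that is \emph{not} $o(1)$; this cross term must be identified precisely and shown to cancel with the factor $(\E{X_1^n})^m$ in the definition of $\Theta_{\mu^n}^{n,r_n,s_n}$, so that the same limit $\exp(-t^2\sigma^2/2)$ is obtained as in the critical case.
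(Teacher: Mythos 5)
Your proposal is broadly the same strategy as the paper's, but there is an important discrepancy in the moment estimate for $\mu_n := \E{X_1^n}$, and it reshapes the argument. You state $\mu_n = 1 + O(n^{-1/2})$, attributing this to Cauchy--Schwarz and H\"older; but H\"older with $\E{\xi^3}<\infty$ and $\p{A_n}=O(n^{-1})$ in fact gives $\E{\xi\I{A_n}} \le \E{\xi^3}^{1/3}\p{A_n}^{2/3}=O(n^{-2/3})$, hence $|\mu_n - 1| = O(n^{-2/3})$ — this is exactly equation (\ref{eq:precise_hairy_moment}) of Lemma~\ref{lem:moments_gamma_hairy_new} in the paper. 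With that sharper rate, $n(1-\mu_n)=O(n^{1/3})$, so $D_n = O(n^{1/3+\varepsilon}) = o(\sqrt n)$ (using $\varepsilon < 1/6$), the denominator's Gaussian exponent $D_n^2/(2n\Sigma_n^2)$ is $o(1)$, the cross term $m D_n / n$ is $o(1)$, and $\mu_n^m = \exp(m(\mu_n-1))(1+o(1)) = 1+o(1)$. In other words, with the correct moment bound \emph{both} the cross term and $(\E{X_1^n})^m$ are individually negligible, and no cancellation is required; the paper simply absorbs the centering discrepancy into the $o(1)$ of (\ref{eq:hairy_num_dem}).

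Your cancellation argument, while unnecessary given the available estimate, is nonetheless correct as written: if one only had $\mu_n = 1 + O(n^{-1/2})$, then $D_n$ can be $\Theta(\sqrt n)$, the cross term is $m(1-\mu_n) + o_\bP(1)$ which is $O(1)$, and this does cancel to $o(1)$ against $\log(\mu_n^m) = m(\mu_n -1) + O(m(\mu_n-1)^2) = m(\mu_n -1) + o(1)$. The quadratic-in-$D_n$ error from $1/(n-r_n) - 1/(n-r_n-m)$ is also $o(1)$ as you claim. So the proof goes through, but it is more delicate than it needs to be: the paper establishes the finer rate precisely so that the change of measure reduces to the same computation as in Lemma~\ref{lem:conv_measure_change_basic}, rather than requiring the reader to verify an exact cancellation between two $\Theta(1)$ corrections. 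You should either sharpen your moment bound to $O(n^{-2/3})$ (at which point your ``main obstacle'' disappears) or clearly flag that your argument is deliberately carried out under the weaker hypothesis.

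Two minor points. First, when you apply a law of large numbers to $\sum_{i=1}^m \bar\xi_i^n$, you implicitly need uniformity in $n$ because the law of $\bar\xi_i^n$ varies with $n$; this is where the paper invokes Corollary~\ref{cor:dtv_cond} to transfer to \iid copies of $\bar\xi$ at total variation cost $O(n^{-1/6})$, which you should make explicit. Second, for uniform integrability your outline (Proposition~\ref{prop:sizebias_swole} with $f\equiv1$, Chernoff for $N$, $\tau_{r_n}(\Sigma) = \omega_\bP(\sqrt n)$ using $s_n = o(\sqrt n)$, then generalised Scheff\'e) matches the paper's approach.
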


The proof of Lemma \ref{lem:conv_measure_change_hairy} is very similar to that of Lemma \ref{lem:conv_measure_change_basic}. However, in this case instead of the standard local central limit theorem, we will require a quantitative local central limit theorem in order to get uniform estimates on local probabilities for the family of random variables $\{\xi^{n}, n \ge 1\}$. 

\begin{lem}\label{lem:local_applied_move}
Let $\mu$ be a critical offspring distribution with variance $\sigma^2\in (0,\infty)$, and let $\nu = (\nu_k)_{k \ge 1}$ be such that [\ref{a1}] holds and [\ref{a3}] holds for a given measure $\pi$ with $\eta\in [0,2)$. Let $\gamma > 0$. Further, let $\xi$ denote a random variable with distribution $\mu$ and for $n \ge 1$ let $\xi^n=(\xi^{n}_i, i \ge 1)$ be \iid copies of $\xi$ each conditioned to satisfy $\{\max_{1\le i \le \xi_{j}^n}|Y_{\xi_j^{n}, i}| \le \gamma n^{1/(4-\eta)}\}$.
Then there exist $C,N >0$ and $M$ such that for all $m,n>N$,
\[
\sup_{k\in \Z} \left|\sqrt{m}\p{\sum_{i=1}^m\xi^{n}_i=k}-\phi\left( \frac{k-m\E{\xi^{n}_1}}{\sqrt{\V{\xi^{n}_1}m}}\right) \right|\leq \frac{C}{\sqrt{m}},
\]
where $\phi(t)=e^{-t^2/2}$ is the standard normal density.
\end{lem}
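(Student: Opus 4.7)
The plan is to verify that the hypotheses of Lemma \ref{lem:local_move} hold for the \iid copies of $\xi^n$ with parameters $(\eta_0,\beta_0,\gamma_0,k_0)$ that do not depend on $n$, once $n$ is large enough. Granted this, the desired bound follows by directly applying Lemma \ref{lem:local_move}.

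First I would observe that [\ref{a3}] implies that the conditioning event is asymptotically trivial. Indeed, applying [\ref{a3}] to a bounded Borel set of the form $A_\gamma = \{(x,y)\in\R_+^2:x\vee y>\gamma\}$ (choosing $\gamma$ so that $\pi(\partial A_\gamma)=0$, which is possible since by the remark following [\ref{a3}] the marginals of $\pi$ are atomless) gives
\[
p_n := \p{\max_{1\le i\le \xi}|Y_{\xi,i}|>\gamma n^{1/(4-\eta)}} = O(n^{-1}).
\]
Consequently $\p{\xi^n=k}\to\mu_k$ for every fixed $k\ge 0$, and by dominated convergence (using the fact that $\E{\xi^3}<\infty$ by [\ref{a3}]) $\E{(\xi^n)^r}\to\E{\xi^r}$ for $r\in\{1,2,3\}$.

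Next I would exhibit the uniform parameters. Since $\mu$ is critical with finite variance and is not a point mass at $1$, we have $\mu_0>0$; since the support of $\mu$ has greatest common divisor $1$, there exist finitely many values $k_1,\dots,k_j\in\mathrm{supp}(\mu)$ with $\gcd(k_1,\dots,k_j)=1$. Pick any $k_0\in\{k_1,\dots,k_j\}$ with $\mu_{k_0}>0$ and set
\[
\gamma_0=\tfrac12\min\bigl(\mu_0,\mu_{k_1},\dots,\mu_{k_j}\bigr),\qquad \eta_0=2\E{\xi^2},\qquad \beta_0=2\E{\xi^3}.
\]
For all $n$ sufficiently large, the convergences above give $\p{\xi^n=0}>\gamma_0$, $\p{\xi^n=k_0}>\gamma_0$, and $\p{\xi^n=k_i}>0$ for $i=1,\dots,j$, so that $\gcd(\mathrm{supp}(\xi^n))=1$; moreover $\E{(\xi^n)^2}<\eta_0$ and $\E{(\xi^n)^3}<\beta_0$.

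Finally, for $n$ above a suitable threshold $N$, the random variable $\xi^n_1$ satisfies all four hypotheses of Lemma \ref{lem:local_move} with the fixed parameters $(\eta_0,\beta_0,\gamma_0,k_0)$. Lemma \ref{lem:local_move} then supplies constants $C$ and $M$, depending only on these parameters and hence on $\mu$, $\nu$ and $\gamma$, such that the stated bound holds for all $m>M$ and $n>N$. The only real content of the argument is the uniform verification of the hypotheses; there is no serious obstacle, since [\ref{a3}] immediately yields the required moment and pointwise convergence of $\xi^n$ to $\xi$.
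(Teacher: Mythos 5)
Your proposal is correct and takes essentially the same approach as the paper, which also reduces to verifying the hypotheses of Lemma~\ref{lem:local_move} with $n$-independent parameters for $n$ large (the paper factors this into Lemmas~\ref{lem:gcd_move} and \ref{lem:moments_gamma_hairy_new}, both resting on the observation that [\ref{a3}] forces the conditioning event to have probability $1-O(n^{-1})$). Two trivial slips: the set $A_\gamma$ you use is not bounded (it only needs finite $\pi$-measure, which [\ref{a3}] guarantees), and $k_0$ must be chosen to be a nonzero element of the support since Lemma~\ref{lem:local_move} requires $k\in\N$.
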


This lemma is immediate from Lemma \ref{lem:local_move} as soon as we show that the family $\{\xi^n,~n\ge 1\}$ satisfies the conditions of that lemma. This is verified in Lemmas \ref{lem:gcd_move} and \ref{lem:moments_gamma_hairy_new}. 

\begin{lem}\label{lem:gcd_move}
For all $n$ sufficiently large, the support of $\xi^{n}$ has greatest common divisor~$1$. 
\end{lem}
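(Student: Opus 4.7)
The plan is to reduce the statement to a simple finite combinatorial fact using the standing hypothesis (stated in the footnote of the introduction) that the support of $\mu$ itself has greatest common divisor $1$.

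Let $S=\{k\ge 0: \mu_k>0\}$ denote the support of $\mu$. Since $\gcd(S)=1$, we may pick a finite subset $\{k_1,\dots,k_r\}\subset S$ with $\gcd(k_1,\dots,k_r)=1$. I will show that for all $n$ large enough, each $k_i$ lies in the support of $\xi^n$, which immediately gives $\gcd(\text{supp}(\xi^n))=1$.

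By the definition of $\xi^n$, we have $k\in \text{supp}(\xi^n)$ if and only if
\[
\mathbf{P}\bigl\{\xi=k,\ \max_{1\le i\le k}|Y_{k,i}|\le \gamma n^{1/(4-\eta)}\bigr\}>0,
\]
i.e.\ iff $k\in S$ and $\mathbf{P}\{\max_{1\le i\le k}|Y_{k,i}|\le \gamma n^{1/(4-\eta)}\}>0$. For each fixed $k$ the random vector $Y_k=(Y_{k,1},\ldots,Y_{k,k})$ is almost surely finite, so $\mathbf{P}\{\max_i|Y_{k,i}|\le M\}\to 1$ as $M\to\infty$. Since $\gamma n^{1/(4-\eta)}\to\infty$, for each $i\in\{1,\dots,r\}$ there exists $N_i$ with $\mathbf{P}\{\max_j|Y_{k_i,j}|\le \gamma n^{1/(4-\eta)}\}>0$ for all $n\ge N_i$.

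Taking $N=\max_{i\le r}N_i$, we see that $\{k_1,\dots,k_r\}\subset \text{supp}(\xi^n)$ whenever $n\ge N$, and hence $\gcd(\text{supp}(\xi^n))$ divides $\gcd(k_1,\dots,k_r)=1$ for all such $n$, completing the proof. There is no real obstacle here; the only subtlety is remembering that the finiteness of the $Y_{k,j}$ is automatic (they are real-valued random variables), so that the truncation event has positive probability for every sufficiently large $n$, regardless of the precise tail behaviour in [\ref{a3}].
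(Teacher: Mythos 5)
Your proof is correct and follows essentially the same route as the paper's: pick a finite subset of the support of $\mu$ with gcd $1$, then argue that for $n$ large enough the truncation cannot remove any of those values from the support of $\xi^n$. The one place you are more careful than the paper is in spelling out why the truncation event $\{\max_{1\le j\le k}|Y_{k,j}|\le \gamma n^{1/(4-\eta)}\}$ has positive probability for each fixed small $k$ once $n$ is large, namely because the $Y_{k,j}$ are real-valued (hence a.s.\ finite) so $\mathbf{P}\{\max_j|Y_{k,j}|\le M\}\to 1$ as $M\to\infty$; the paper compresses this into the remark that $\gamma n^{1/(4-\eta)}>M$ for $n$ sufficiently large, which glosses over the fact that the threshold must dominate the displacement magnitudes rather than the offspring sizes themselves. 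Your version fills in that small gap.
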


\begin{proof}
By assumption, the support of $\xi$ has greatest common divisor $1$, so we can find an $M$ such that the greatest common divisor of the support of $\xi$ restricted to $\{0,\dots, M\}$ is~$1$. Since $\gamma n^{1/(4-\eta)} > M$ for $n$ sufficiently large, the result follows.
\end{proof}

\begin{lem}\label{lem:moments_gamma_hairy_new} 
As $n\rightarrow \infty$,
\begin{equation}\label{eq:conv_hairy_moment}
\E{\left(\xi^{n}\right)^j} \rightarrow \E{\xi^j} \quad \text{ for } j = 1,2,3
\end{equation}
and 
\begin{equation}\label{eq:precise_hairy_moment}
\left|\E{\xi^{n}} - 1\right| = O(n^{-2/3}).
\end{equation}
\end{lem}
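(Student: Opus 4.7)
The plan is to work directly with the conditional distribution. Write $p_n := \p{\|Y_\xi\|_\infty \le \gamma n^{1/(4-\eta)}}$, so that for every $k \in \Z_{\ge 0}$,
\[
\p{\xi^n = k} = \frac{\mu_k \, \p{\|Y_k\|_\infty \le \gamma n^{1/(4-\eta)} \mid \xi = k}}{p_n},
\]
and for any nonnegative measurable $g:\Z_{\ge 0}\to\R_{\ge 0}$, $\E{g(\xi^n)} = p_n^{-1}\E{g(\xi)\I{\|Y_\xi\|_\infty \le \gamma n^{1/(4-\eta)}}}$.  The first step is to observe that by [\ref{a3}] applied to the set $A = \{(x,y):x\vee y > 1\}$ (whose boundary is a $\pi$-null set since $\pi$ has no atoms on lines $\{x = c\}$ or $\{y = c\}$ for $c>0$), we have $r^{4-\eta}\p{\|Y_\xi\|_\infty > r} \to \pi(A) < \infty$ as $r\to\infty$, and consequently
\[
1 - p_n = \p{\|Y_\xi\|_\infty > \gamma n^{1/(4-\eta)}} = O(n^{-1}),
\]
so in particular $p_n \to 1$.

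For \eqref{eq:conv_hairy_moment}, since $\E{\xi^3}<\infty$ by [\ref{a3}], for $j\in\{1,2,3\}$ we have $\xi^j\I{\|Y_\xi\|_\infty \le \gamma n^{1/(4-\eta)}} \to \xi^j$ almost surely and is dominated by the integrable random variable $\xi^3 \vee 1$. Dominated convergence yields $\E{\xi^j \I{\|Y_\xi\|_\infty \le \gamma n^{1/(4-\eta)}}} \to \E{\xi^j}$, and dividing by $p_n\to 1$ gives \eqref{eq:conv_hairy_moment}.

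For \eqref{eq:precise_hairy_moment}, since $\E{\xi}=1$,
\[
\E{\xi^n} - 1 = \frac{\E{\xi \I{\|Y_\xi\|_\infty \le \gamma n^{1/(4-\eta)}}} - p_n}{p_n} = \frac{(1 - p_n) - \E{\xi\,\I{\|Y_\xi\|_\infty > \gamma n^{1/(4-\eta)}}}}{p_n}.
\]
The numerator's first term is $O(n^{-1})$ by the estimate above. The main point is the second term: by Hölder's inequality with exponents $3$ and $3/2$,
\[
\E{\xi \,\I{\|Y_\xi\|_\infty > \gamma n^{1/(4-\eta)}}} \le \E{\xi^3}^{1/3}\p{\|Y_\xi\|_\infty > \gamma n^{1/(4-\eta)}}^{2/3} = O(n^{-2/3}).
\]
Since $p_n\to 1$, the bound $|\E{\xi^n}-1| = O(n^{-2/3})$ follows.

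There is no real obstacle here; the one subtlety worth noting is that the naive Cauchy--Schwarz bound would give only $O(n^{-1/2})$ on the tail expectation, which would be insufficient for \eqref{eq:precise_hairy_moment}. Using Hölder with the third moment — which is available under [\ref{a3}] — is precisely what yields the exponent $2/3$ required downstream in the proof of Lemma \ref{lem:conv_measure_change_hairy} via the quantitative local limit theorem of Lemma \ref{lem:local_applied_move}.
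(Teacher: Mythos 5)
Your proof is correct. For \eqref{eq:conv_hairy_moment} your dominated-convergence argument matches the paper's two-sided squeeze in substance. For \eqref{eq:precise_hairy_moment} you take a slightly different route to the key estimate $\E{\xi\,\I{\|Y_\xi\|_\infty > \gamma n^{1/(4-\eta)}}} = O(n^{-2/3})$: you apply H\"older with exponents $(3, 3/2)$ directly to the indicator of the large-displacement event, whereas the paper splits on the threshold $\{\xi \le n^{1/3}\}$, bounding one piece by $n^{1/3}\,\p{\|Y_\xi\|_\infty > \gamma n^{1/(4-\eta)}} = O(n^{-2/3})$ and the other by $\E{\xi\,\I{\xi > n^{1/3}}} \le n^{-2/3}\E{\xi^3}$. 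Both arguments trade the finite third moment against the $O(n^{-1})$ tail of the displacement to obtain the exponent $2/3$, and your version is, if anything, slightly cleaner. You are also more explicit than the paper about tracking the normalising factor $p_n = \p{\|Y_\xi\|_\infty \le \gamma n^{1/(4-\eta)}}$ and about the identity $\E{\xi^n} - 1 = p_n^{-1}\left((1-p_n) - \E{\xi\,\I{\|Y_\xi\|_\infty > \gamma n^{1/(4-\eta)}}}\right)$, which the paper leaves implicit; this is a genuine improvement in clarity, not a change of approach.
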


\begin{proof}
For  $j \in \{1,2,3\}$ we have
\begin{align*}
\E{\left(\xi^{n}\right)^j} 
& = \sum_{k = 1}^\infty k^j\p{\xi^{n} = k}\\
& \le \sum_{k=1}^\infty k^j \frac{\p{\xi = k}}{\p{\max_{1\le i \le \xi}|Y_{\xi, i}| \le \gamma n^{1/(4-\eta)}}} = \left(1 + O\left(\frac{1}{n}\right)\right)\E{\xi^j},
\end{align*}
where the final equality follows by assumption [\ref{a3}]. By the bounded convergence theorem, as $n\rightarrow \infty$, 
\[
\E{\left(\xi^{n}\right)^j} \ge \E{\xi^j} - \E{\xi^j\I{\max_{1\le i \le \xi}|Y_{\xi, i}| > \gamma n^{1/(4-\eta)}}} \rightarrow \E{\xi^j},
\]
where we have used assumption [\ref{a3}] again. (\ref{eq:conv_hairy_moment}) follows. 

To get the more precise lower bound for $j = 1$ in (\ref{eq:precise_hairy_moment}), observe that 
\[
\E{\xi\I{\max_{1\le i \le \xi}|Y_{\xi,i}| > \gamma n^{1/(4-\eta)}}} \le n^{1/3}\p{\max_{1\le i \le \xi}|Y_{\xi, i}| > \gamma n^{1/(4-\eta)}} + \E{\xi\I{\xi > n^{1/3}}}.
\]
The first term on the right-hand side of this inequality is $O(n^{-2/3})$ by [\ref{a3}]. Also, $\E{\xi^3}<\infty$ and so the second term is also $O(n^{-2/3})$, thus establishing (\ref{eq:precise_hairy_moment}).
\end{proof}

The last tool that we need to prove Lemma \ref{lem:conv_measure_change_hairy}  is an upper bound on the total variation distance between $\bar\xi_1^{n}$ and $\bar\xi$ where $\bar\xi$, a sample from the size-biased law of $\xi$.  

\begin{lem}\label{lem:bound_dTV_move}
Let $X$ be a random variable taking values in  $\N$ such that $\E{X} \ge 0$ and $\E{X^3}<\infty$. Let $(\cE_n)_{n\ge 1}$ be a sequence of events with  $\p{\cE_n} = 1-O(1/n)$. Let $X_n$ be distributed as $X$ conditional on $\cE_n$. Let $\overline{X}_n$ have the size-biased law of $X_n$ and let $\overline{X}$ have the size-biased law of $X$. Then,
\[
d_\mathrm{TV}(\overline X_n, X) = \frac{1}{2}\sum_{k=1}^\infty \left|\p{\overline X_n = k} - \p{\overline X = k}\right| =O(n^{-2/3}).
\]
\end{lem}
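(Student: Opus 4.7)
The plan is to reduce the total variation bound to a quantitative estimate on $\E{X\I{\cE_n^c}}$, which we will control via H\"older's inequality using the assumed third-moment bound. Note that, by a slight abuse of notation (which we read as a typo), the claim is really about $d_{\mathrm{TV}}(\overline X_n,\overline X)$, i.e.\ the sum displayed on the right-hand side.

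First, I would write out the size-biased densities explicitly. We have $\p{\overline X=k}=k\p{X=k}/\E{X}$, and, since $\p{X_n=k}=\p{X=k,\cE_n}/\p{\cE_n}$ and $\E{X_n}=\E{X\I{\cE_n}}/\p{\cE_n}$, we get $\p{\overline X_n=k}=k\p{X=k,\cE_n}/\E{X\I{\cE_n}}$. Writing $\p{X=k,\cE_n}=\p{X=k}-\p{X=k,\cE_n^c}$ and rearranging yields
\[
\p{\overline X_n=k}-\p{\overline X=k}
=k\p{X=k}\,\frac{\E{X\I{\cE_n^c}}}{\E{X\I{\cE_n}}\,\E{X}}
-\frac{k\p{X=k,\cE_n^c}}{\E{X\I{\cE_n}}}\, .
\]

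Next, I would sum $|\cdot|$ over $k$ using the triangle inequality. The first term contributes $\E{X\I{\cE_n^c}}/\E{X\I{\cE_n}}$ (using $\sum_k k\p{X=k}=\E{X}$), and the second contributes $\E{X\I{\cE_n^c}}/\E{X\I{\cE_n}}$ as well. Hence
\[
\sum_{k\ge1}\bigl|\p{\overline X_n=k}-\p{\overline X=k}\bigr|\le \frac{2\E{X\I{\cE_n^c}}}{\E{X\I{\cE_n}}}\, .
\]
Since $\E{X}<\infty$ (implied by $\E{X^3}<\infty$) and $\p{\cE_n}\to 1$, dominated convergence gives $\E{X\I{\cE_n}}\to\E{X}>0$, so the denominator is bounded below by a positive constant for all $n$ sufficiently large.

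Finally, I would bound the numerator by H\"older's inequality with exponents $3$ and $3/2$:
\[
\E{X\I{\cE_n^c}}\le \E{X^3}^{1/3}\,\p{\cE_n^c}^{2/3}=O\bigl(n^{-2/3}\bigr),
\]
which gives the claimed $O(n^{-2/3})$ bound on the total variation distance. There is no real obstacle here; the only thing to watch is the slight asymmetry in the formulas for the two size-biased laws (they share $k\p{X=k}$ in the numerator but have different normalising constants and an extra $\cE_n^c$ correction), which is exactly why telescoping gives the clean factor $\E{X\I{\cE_n^c}}$ that H\"older's inequality then controls.
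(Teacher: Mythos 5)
Your proof is correct and follows essentially the same strategy as the paper's: both reduce the problem to showing $\E{X\I{\cE_n^c}} = O(n^{-2/3})$ and then exploit this quantity for both the numerator mismatch and the normalizing-constant mismatch in the size-biased densities. The only real difference is that you apply H\"older's inequality with exponents $(3,3/2)$ directly to $\E{X\I{\cE_n^c}}$, whereas the paper first truncates at level $n^{1/3}$ and applies H\"older to the tail; your route is marginally cleaner but buys nothing substantive.
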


\begin{proof}
By definition, 
\begin{align}
\begin{split} \label{eq:size_bias_def_move}
&\p{\overline{X}_n=k}=\frac{k \p{X=k, \cE_n}}{\E{X\I{\cE_n}}}, \text{ and } ~\p{\overline{X}=k}=\frac{k \p{X=k}}{\E{X}}.
\end{split}
\end{align}
Since $\E{X^3} < \infty$ we have that $\p{X > n^{1/3}} = o(n^{-1})$ as $n\rightarrow \infty$ and so Hölder's inequality yields that
\[
\E{X\I{X > n^{1/3}}} \le \E{X^3}^{1/3}\p{X > n^{1/3}}^{2/3} = o(n^{-2/3}).
\]
Next, 
\[
{\E{X\I{\cE^c_n}}}\le n^{1/3}\p{\cE^c_n}+\E{X\I{X>n^{1/3}}}=O(n^{-2/3}),
\]
so that $\E{X\mathbf{1}_{\cE_n}} = \E{X} + O(n^{-2/3})$ and the difference between the denominators in~\eqref{eq:size_bias_def_move} is $O(n^{-2/3})$. It follows that
\begin{align*}
     &\sum_{k=1}^\infty \left|\p{\overline X_n = k} - \p{\overline X = k}\right| \\ 
     &\quad= \sum_{k=1}^\infty \left|
      \frac{k\p{X = k,~ \cE_n}}{\E{X\I{\cE^c_n}}}
      - \frac{k\p{X = k}}{\E{X}}
     \right|
     \\
    &\quad\le \frac{1}{\E{X\I{\cE_n}}}\left(\sum_{k=1}^\infty k\p{X = k, \cE_n^c} \right) + O(n^{-2/3})\\
    &\quad\le \frac{1}{\E{X\I{\cE_n}}}\left(\sum_{k=1}^{n^{1/3}} k\p{X = k, \cE_n^c} + \E{X\I{X> n^{1/3}}}\right)+ O(n^{-2/3})\\
    &\quad\le \frac{n^{1/3}\p{\cE_n^c}}{\E{X\I{\cE_n}}} + \frac{\E{X\I{X > n^{1/3}}}}{\E{X\I{\cE_n}}} + O(n^{-2/3}).
\end{align*}
The first term in the last line is also is $O(n^{-2/3})$ since $\p{\cE_n^c} = O(1/n)$. 
\end{proof}

This lemma has the following corollary. 

\begin{cor}\label{cor:dtv_cond}
Let $\mu$ be a critical offspring distribution with variance $\sigma^2\in (0,\infty)$, and let $\nu = (\nu_k)_{k \ge 1}$ be such that [\ref{a1}] holds and [\ref{a3}] holds for a given measure $\pi$ with $\eta\in [0,2)$.  
Fix $\gamma > 0$. Let $\xi$ denote a random variable with distribution $\mu$ and let $\bar\xi_1, \bar\xi_2,\dots$ be \iid samples from the size-biased law of $\xi$. For $n \ge 1$ let $\xi^{n}$ be distributed as $\xi$, conditioned to not yield a displacement vector with $\max_{1\le i \le \xi^n}|Y_{\xi^{n}, i}| > \gamma n^{1/(4-\eta)}$. Further, let $\mu^n$ denote the distribution of $\xi^n$, and let $\bar\xi_1^{n}, \bar\xi_2^{n},\dots$ be \iid samples from the size-biased law of $\xi^n$. 
Then for $m = \Theta(\sqrt{n})$,
\[
d_{\mathrm TV}((\bar\xi_1^{n},\dots, \bar\xi_m^{n}),(\bar\xi_1,\dots, \bar\xi_m))=O(n^{-1/6}).
\]
\end{cor}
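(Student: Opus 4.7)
The plan is to apply Lemma~\ref{lem:bound_dTV_move} with $X = \xi$ and the conditioning event $\cE_n = \{\max_{1 \le i \le \xi} |Y_{\xi,i}| \le \gamma n^{1/(4-\eta)}\}$, and then use the tensorisation of the total variation distance.

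First I would verify the hypotheses of Lemma~\ref{lem:bound_dTV_move}. Assumption [\ref{a3}] gives $\E{\xi^3} < \infty$. To check $\p{\cE_n^c} = O(1/n)$, I would take the Borel set
\[
A = \left\{(x,y) \in \R_+^2 \setminus \{(0,0)\} : x \vee y > \gamma\right\},
\]
which has $\pi$-measure
\[
\pi(A) \le \pi((\gamma,\infty)\times \R_+) + \pi(\R_+\times(\gamma,\infty)) < \infty
\]
by the standing assumption on $\pi$ in [\ref{a3}]. The remark following [\ref{a3}] shows that the projections of $\pi$ have no atoms at positive reals, so $\pi(\partial A) = 0$. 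Applying the scaling limit in [\ref{a3}] with $r = n^{1/(4-\eta)}$ (so that $n = r^{4-\eta}$) yields
\[
n\cdot \p{\cE_n^c} = n\cdot \p{\frac{1}{n^{1/(4-\eta)}}\left(\max_{1 \le i \le \xi} Y^+_{\xi,i},\ \max_{1 \le i \le \xi} Y^-_{\xi,i}\right) \in A} \longrightarrow \pi(A) < \infty,
\]
so $\p{\cE_n^c} = O(n^{-1})$ as required. Lemma~\ref{lem:bound_dTV_move} therefore gives $d_{\mathrm{TV}}(\bar\xi^n_1, \bar\xi) = O(n^{-2/3})$.

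To pass to the product measure I would invoke the elementary subadditivity bound: for independent sequences $(X_1,\dots,X_m)$ and $(Y_1,\dots,Y_m)$,
\[
d_{\mathrm{TV}}\bigl((X_1,\dots,X_m),(Y_1,\dots,Y_m)\bigr) \le \sum_{i=1}^m d_{\mathrm{TV}}(X_i, Y_i).
\]
Since the $\bar\xi^n_i$ are \iid and the $\bar\xi_i$ are \iid, and $m = \Theta(\sqrt{n})$, this yields
\[
d_{\mathrm{TV}}\bigl((\bar\xi^n_1,\dots,\bar\xi^n_m),(\bar\xi_1,\dots,\bar\xi_m)\bigr) \le m\cdot d_{\mathrm{TV}}(\bar\xi^n_1,\bar\xi) = \Theta(\sqrt{n})\cdot O(n^{-2/3}) = O(n^{-1/6}),
\]
which is the desired bound. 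There is no genuine obstacle here; the corollary is a direct combination of Lemma~\ref{lem:bound_dTV_move} with tensorisation, and the only thing to verify carefully is the input $\p{\cE_n^c} = O(1/n)$, which is immediate from the tail hypothesis [\ref{a3}] once one chooses the set $A$ above and checks $\pi(\partial A) = 0$.
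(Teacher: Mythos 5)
Your proof is correct and takes essentially the same route as the paper: apply Lemma~\ref{lem:bound_dTV_move} with $X=\xi$ and the conditioning event coming from the large-displacement tail to get $d_{\mathrm{TV}}(\bar\xi_1^n,\bar\xi_1)=O(n^{-2/3})$, then tensorise over $m=\Theta(\sqrt{n})$ independent coordinates. Your verification that $\p{\cE_n^c}=O(1/n)$ via the explicit choice of $A=\{x\vee y>\gamma\}$ and the no-atoms remark after [\ref{a3}] is more detailed than the paper's one-line appeal to [\ref{a3}], but the underlying argument is identical.
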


\begin{proof}
By [\ref{a3}], $\bar\xi_1^{n}$ is obtained from $\bar\xi_1$ by conditioning on an event which occurs with probability $1 - O(1/n)$. Therefore, by Lemma \ref{lem:bound_dTV_move}, the total variation distance between $\bar\xi^{n}_1$ and $\bar\xi_1$ is $O(n^{-2/3})$. Since $m = \Theta(\sqrt{n})$, the conclusion follows.
\end{proof}

We now prove Lemma \ref{lem:conv_measure_change_hairy}. Since the proof is very similar to that of Lemma \ref{lem:conv_measure_change_basic} we will be brief. 

\begin{proof}[Proof of Lemma \ref{lem:conv_measure_change_hairy}]
As in the proof of Lemma \ref{lem:conv_measure_change_basic}, we may assume that there exists $t > 0$ such that $m/\sqrt{n}\rightarrow t$ as $n \rightarrow \infty.$

Suppose that $k_1,\dots, k_m\in \Z_{\ge 0}$. Then by almost identical techniques to those used to prove (\ref{eq:basic_meas_need}) (replacing the local central limit theorem by Lemma \ref{lem:local_applied_move}), we obtain that 
\begin{align}
    \label{eq:hairy_num_dem}
    &\frac{\p{\sum_{i=(r_n +m)+1}^n\xi_i^{n} = n - 1 - s_n - \sum_{i=1}^mk_i}}{\p{\sum_{i=r_n +1}^n \xi_i^{n} = n - 1 - s_n}}\nonumber\\
    &= \exp\left(-\left(\left(\frac{1 + s_n - r_n + m\sigma^2 + \sum_{i=1}^m(k_i - (1+\sigma^2))}{\sqrt{2\sigma^2(n - (r_n +m))}}\right)^2 +o(1)\right)\right)+ o(1).
\end{align}

Recall that, for $i\in [m]$,  $\bar\xi_i$ is  sample from the size-biased distribution of $\xi$. We claim that instead of substituting $\bar\xi_1^{n},\dots, \bar\xi^{n}_m$ in the place of $k_1,\dots,k_m$ we can substitute $\bar\xi_1,\dots,\bar\xi_m$.  Indeed, by Corollary~\ref{cor:dtv_cond}, the total variation distance between $\bar\xi_1^{n},\dots, \bar\xi^{n}_m$ and $\bar\xi_1,\dots,\bar\xi_m$ tends to $0$ as $n\rightarrow \infty$. Therefore, by (\ref{eq:conv_gen}), we obtain that (\ref{eq:hairy_num_dem}) tends to $\exp(-(t^2\sigma^2)/2)$ in probability as $n\rightarrow\infty$. This convergence is analogous to (\ref{eq:conv_gen}) in the proof of Lemma \ref{lem:conv_measure_change_basic}. 

It remains to establish an analogue of (\ref{eq:not_hairy_prod}), i.e.,
\begin{equation}\label{eq:hairy_prod}
\prod_{i=1}^m\left(\frac{n - r_n - i+1}{n - 1 - \sum_{j=1}^{i-1}\bar\xi^n_j}\E{\xi^{n}}\right)=\E{\xi^{n}}^m\prod_{i=1}^m\left(\frac{n - r_n - i+1}{n - 1 - \sum_{j=1}^{i-1}\bar\xi_j}\right)\convprob \exp\left(\frac{t^2\sigma^2}{2}\right),
\end{equation}
as $n\rightarrow \infty$. By Lemma \ref{lem:moments_gamma_hairy_new}, 
\[
\E{\xi^{n}} = \E{\xi} + O(n^{-2/3}) = 1 + O(n^{-2/3}),
\] 
and so, since $m = (1+o(1))t\sqrt{n}$, we obtain that $\E{\xi^{n}}^m = 1 + o(1).$ Therefore (\ref{eq:hairy_prod}) follows from (\ref{eq:not_hairy_prod}).

We now prove uniform integrability of the family $(\Theta_{\mu^n}^{n, r_n, s_n}(\bar\xi^{n},\dots, \bar\xi^{n}_m))_{n \ge 1}$. Again, by the generalised Scheffé lemma \cite[Theorem 5.12]{kallenberg}, since $\Theta^{n, r_n, s_n}_{\mu^n}(\bar\xi^{n}_1,\dots, \bar\xi_m^{n})\convprob 1$ it suffices to show that $\E{\Theta_{\mu^n}^{n, r_n, s_n}(\bar\xi_1^{n},\dots, \bar\xi_m^{n})}\rightarrow 1$ as $n\rightarrow \infty$. By Proposition \ref{prop:sizebias_swole} with $f \equiv 1$,
\begin{equation}\label{eq:unif_int_gen}
\E{\Theta_{\mu^n}^{n, r_n, s_n}(\bar\xi_1^{n},\dots, \bar\xi_m^{n})} = \p{ N\ge m~\tau_{r_n}(\Sigma) >m ~\bigg|~ \sum_{i=r_n+1}^{n}\xi_i^{n} = n -1-s_n},
\end{equation}
where $\Sigma = \Sigma_{\vec{Z}}$ with $\vec{Z} = (Z_1,\dots, Z_n) = (d_1,\dots, d_{r_n}, \xi^{n}_{r_n+1},\dots, \xi^{n}_n)$ such that $d_1 = s_n$, and $d_2,\dots d_{r_n}=0$. (Indeed, any fixed choice of $d_1,\dots, d_{r_n}$ with $\sum_{i=1}^{r_n}d_i = s_n$ would suffice.) To see that the probability on the right-hand side of (\ref{eq:unif_int_gen}) tends to $1$ as $n\rightarrow \infty$, first note that 
$N \eqdist$ Binomial$(n-r_n, 1- \mu_0)$ where $r_n < n^\varepsilon$. So even after conditioning on the event $\{\sum_{i=r_n+1}^{n}\xi^n_i = n - 1 - s_n\}$, which occurs with probability $O(n^{-1/2})$, there are $(1+o_\bP(1))(n-r_n)(1-\mu_0)$ non-zero entries of $(\xi^n_{r_n+1},\dots, \xi^n_n)$. Therefore, to prove uniform integrability it remains to show that $\tau_{r_n}(\Sigma) = \omega_\bP(\sqrt{n}).$ 
     
To see this, observe that for any $k\in [n]$, 
\[
\p{\tau_{r_n}(\Sigma) = k+1 ~|~(Z_{\Sigma(1)},\dots, Z_{\Sigma(k)}),~ \tau_{r_n}(\Sigma) \ge k} = \frac{s_n}{\sum_{i = k+1}^nZ_{\Sigma(i)}}.
\]
Since $\vec{Z}$ contains $(1+o_\bP(1))(n-r_n)(1-\mu_0) + 1$ positive entries, this denominator is $(1+o_\bP(1))(n-r_n)(1-\mu_0) + 1$ uniformly over all $k \le m = O(\sqrt{n}),$ and all labeled random reorderings of $\vec{Z}.$ Moreover, since $s_n = o(\sqrt{n})$ by assumption, 
\[
\p{\tau_{r_n}(\Sigma) = k+1~|~ \tau_{r_n}(\Sigma) \ge k} = o(n^{-1/2}),
\]
uniformly across all $k \le m$. The claim follows by summing these probabilities over $k\le m$, since by the above $\p{\tau_{r_n}(\Sigma) > k} = (1-o(n^{-1/2}))^k$, and in particular for $T > 0,$ \begin{equation}\label{eq:christina_lemma}
\p{\tau_{r_n}(\Sigma) > T\sqrt{n}} = (1-o(n^{-1/2}))^{T\sqrt{n}},
\end{equation} 
which tends to $1$ as $n\rightarrow \infty$.
\end{proof}

\subsection{Backstage at the hairy tour}

To control the restrictions of the discrete snake introduced in the proofs of Theorems \ref{thm:main}, \ref{thm:hairy_4} and \ref{thm:hairy_2} we require a couple of technical lemmas. The first of these results shows that if we truncate the displacements of the discrete snake by $n^{1/(4-\eta) - \delta}$, then the global moments agree with assumption [\ref{a1}] in the limit.

Fix $\eta\in (0,2]$, and $\delta\in (0,1/(4-\eta))$. For $n \ge 1$, and $k \ge 1$ let 
\[
Y_k^{n,\delta} = (Y_{k,1}^{n,\delta},\dots, Y_{k,k}^{n,\delta}) = 
\begin{cases}
(Y_{k,1},\dots, Y_{k,k}) & \text{if } \max_{1\le j \le k}|Y_{k,j}| \le n^{1/(4-\eta) - \delta}, \\
0 &\text{else}.
\end{cases}
\]

\begin{lem}\label{lem:moments_truncation_move}
As $n\to \infty$, it holds that 
\[
\left|\E{Y_{\bar\xi, U_{\bar\xi}}^{n,\delta}}\right|=O\left((n^{1/(4-\eta) - \delta})^{1-2(4-\eta)/3}\right)\quad\text{ and }\quad\va \left(Y_{\bar\xi, U_{\bar\xi}}^{n,\delta}\right)\to \beta^2.
\]
\end{lem}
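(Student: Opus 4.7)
The plan is to set $r = n^{1/(4-\eta)-\delta}$ and to handle both statements by a truncation-plus-tail argument. Since $Y^{n,\delta}_{k}$ is obtained from $Y_k$ by zeroing the entire vector when $\|Y_k\|_\infty > r$, and [\ref{a1}] gives $\E{Y_{\bar\xi, U_{\bar\xi}}} = 0$, one has the identity
\[
\E{Y^{n,\delta}_{\bar\xi, U_{\bar\xi}}} = -\E{Y_{\bar\xi, U_{\bar\xi}} \I{\|Y_{\bar\xi}\|_\infty > r}}.
\]
Using the standard size-biasing identity $\E{g(Y_{\bar\xi, U_{\bar\xi}}, \|Y_{\bar\xi}\|_\infty)} = \E{\sum_{j=1}^\xi g(Y_{\xi, j}, \|Y_\xi\|_\infty)}$, I will rewrite this in terms of $\xi$ and bound the modulus crudely by $\E{\xi\, \|Y_\xi\|_\infty \I{\|Y_\xi\|_\infty > r}}$.

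For the mean bound I then apply H\"older's inequality with exponents $3$ and $3/2$ to obtain
\[
\E{\xi\, \|Y_\xi\|_\infty \I{\|Y_\xi\|_\infty > r}} \le \E{\xi^3}^{1/3}\, \E{\|Y_\xi\|_\infty^{3/2} \I{\|Y_\xi\|_\infty > r}}^{2/3},
\]
relying on the finite third moment of $\xi$ furnished by [\ref{a3}]. Assumption [\ref{a3}] also gives $\p{\|Y_\xi\|_\infty > s} = O(s^{-(4-\eta)})$; writing the second factor as $\int_0^\infty \p{\|Y_\xi\|_\infty^{3/2} \I{\|Y_\xi\|_\infty > r} > u}\,du$, splitting at $u = r^{3/2}$, and integrating then yields $\E{\|Y_\xi\|_\infty^{3/2} \I{\|Y_\xi\|_\infty > r}} = O(r^{3/2-(4-\eta)})$; the tail integral converges because $\eta < 2 < 5/2$. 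Raising to the $2/3$ power gives the desired exponent $1 - 2(4-\eta)/3$.

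For the variance, I write $\va(Y^{n,\delta}_{\bar\xi, U_{\bar\xi}}) = \E{(Y^{n,\delta}_{\bar\xi, U_{\bar\xi}})^2} - (\E{Y^{n,\delta}_{\bar\xi, U_{\bar\xi}}})^2$; the squared mean tends to $0$ by the bound just obtained, since $\eta < 2$ forces the exponent $1 - 2(4-\eta)/3$ to be strictly negative and $r \to \infty$. The second moment equals $\E{Y_{\bar\xi, U_{\bar\xi}}^2 \I{\|Y_{\bar\xi}\|_\infty \le r}}$, and since $\bar\xi$ is a.s.\ finite (as $\E{\bar\xi}=\sigma^2+1<\infty$), the vector $Y_{\bar\xi}$ has a.s.\ finitely many entries, so $\|Y_{\bar\xi}\|_\infty$ is a.s.\ finite and the indicator converges almost surely to $1$ as $r \to \infty$. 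Dominated convergence with dominating function $Y_{\bar\xi, U_{\bar\xi}}^2$ (integrable since $\E{Y_{\bar\xi,U_{\bar\xi}}^2} = \beta^2 < \infty$ by [\ref{a1}]) then gives $\E{(Y^{n,\delta}_{\bar\xi, U_{\bar\xi}})^2} \to \beta^2$.

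The main obstacle is the bookkeeping for the mean: one must choose H\"older exponents that simultaneously exploit $\E{\xi^3} < \infty$ and the $(4-\eta)$-regularly varying tail of $\|Y_\xi\|_\infty$, and then verify that the resulting exponent matches the slightly awkward quantity $1 - 2(4-\eta)/3$ stated in the lemma. The variance statement is, by contrast, a routine dominated-convergence argument that borrows the vanishing of the mean for free from the first part.
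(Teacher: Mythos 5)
Your proof is correct and takes essentially the same route as the paper: both exploit global centering to reduce to the tail term $\E{Y_{\bar\xi,U_{\bar\xi}}\I{\|Y_{\bar\xi}\|_\infty > r}}$, use the size-biasing relation $\p{\bar\xi=k}=k\mu_k$ to pass to $\xi$, apply H\"older with exponents $(3,3/2)$ to combine $\E{\xi^3}<\infty$ with the tail condition in [\ref{a3}], and finish by tail integration; the variance then follows by dominated convergence exactly as you describe. The only cosmetic difference is the order in which H\"older and tail integration are applied — the paper first establishes the tail bound $\p{\|Y_{\bar\xi}\|_\infty>y}\le c\,y^{-2(4-\eta)/3}$ via H\"older on the indicator and then tail-integrates $|Y_{\bar\xi,U_{\bar\xi}}|$ directly, whereas you first pass to $\E{\xi\,\|Y_\xi\|_\infty\I{\|Y_\xi\|_\infty>r}}$ and apply H\"older there, then tail-integrate $\|Y_\xi\|_\infty^{3/2}$ — but the exponents, the reliance on $\eta<2$, and the resulting bound $O(r^{1-2(4-\eta)/3})$ are identical.
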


\begin{proof}
First, observe that by H{\"o}lder's inequality, there exists a constant $c > 0$ such that 
\[\p{\max_{1\le i \le \bar\xi}|Y_{\bar\xi,i}| > y} \le \E{\xi^3}^{1/3}\p{\max_{1\le i\le \xi}|Y_{\xi,i}| > y}^{2/3} \le c y^{-2(4-\eta)/3}.\]
Then, by global centering,
\begin{align*}
    \left|\E{Y_{\bar\xi, U_{\bar\xi}}^{n,\delta}}\right| &= \left|\E{ Y_{\bar\xi, U_{\bar\xi}}\I{\max_{1\le i \le \bar\xi}|Y_{\bar\xi,i}| >n^{1/(4-\eta) - \delta}}}\right|\\
    &\le \int_0^\infty \p{\left|Y_{\bar\xi, U_{\bar\xi}}\I{\max_{1\le i \le \bar\xi}|Y_{\bar\xi,i}| >  n^{1/(4-\eta)-\delta}}\right| > y}dy\\
    &\le n^{1/(4-\eta) - \delta}\p{\max_{1\le i \le \bar\xi}|Y_{\bar\xi,i}| > n^{1/(4-\eta) - \delta}} \\
    &+ \int_{n^{1/(4-\eta) - \delta}}^\infty \p{ \max_{1\le i \le \bar\xi}|Y_{\bar\xi,i}| > y}dy\\
    &\le c(n^{1/(4-\eta) - \delta})^{1-2(4-\eta)/3} - \frac{c}{2(4-\eta)/3 - 1}\left[y^{-2(4-\eta)/3 + 1 }\right]_{n^{1/(4-\eta) - \delta}}^\infty\\
    &= O\left((n^{1/(4-\eta) - \delta})^{1-2(4-\eta)/3}\right),
\end{align*}
as claimed. 

As for the variance,
\begin{align*}
    \va \left(Y^{n,\delta}_{\bar\xi, U_{\bar\xi}}\right) &=\E{\left(Y^{n,\delta}_{\bar\xi,U_{\bar\xi}}\right)^2} - \left(\E{Y^{n,\delta}_{\bar\xi,U_{\bar\xi}}}\right)^2\\
    &= \E{Y^2_{\bar\xi, U_{\bar\xi}}\I{\max_{1\le i \le \bar\xi}|Y_{\bar\xi,i}| \le n^{1/(4-\eta) - \delta}}} -\E{Y_{\bar\xi, U_{\bar\xi}}\I{\max_{1\le i \le \bar\xi}|Y_{\bar\xi,i}| \le n^{1/(4-\eta) - \delta}}}^2\\
    &\rightarrow \E{Y_{\bar\xi, U_{\bar\xi}}^2}=\beta^2,
\end{align*}
as $n\rightarrow \infty$, by dominated convergence and the result for the mean.
\end{proof}

The above lemma pertains to snakes where the displacements which are above $n^{1/(4-\eta) - \delta}$ are all set to $0$. The next lemma in this section will help us to understand the asymptotics of the head of the discrete snake where displacements which are \emph{below} $n^{1/(4-\eta) - \delta}$ are set to $0$. More specifically, we present a tail bound for the size of a set of marked vertices in random trees, which we apply in Proposition \ref{prop:ancestral_rel} where the marked vertices pertain to vertices $v\in v(\rT_n)\setminus \partial \rT_n$ for which $\|Y^{(v)}\|_\infty  > n^{1/(4-\eta)-\delta}$.  

\begin{lem}\label{lem:a_no_ganging_up_d_tree}
Let $\bd=(d_1,\dots,d_{n})$ be a degree sequence, fix $\cB\subset[n]$ and write $K = |\cB|$, and $\Delta = \max_{1 \le i \le n}d_i$. Let $B_{\bd}$ be the smallest distance between two vertices in $\cB$ that are ancestrally related in $T_\bd = B(\Pi_\bd)$ (with $B_{\bd}=\infty$ if no vertices in $\cB$ are ancestrally related). Then, for any $b\ge 0$ 
\[
\p{B_{\bd}\leq b}\leq K\left(1-\left(1-\frac{K\Delta}{n-1-b\Delta}\right)^b\right).
\]
\end{lem}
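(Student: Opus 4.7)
The plan is to bound $\p{B_\bd \le b}$ by a union bound over $u \in \cB$, combined with an iterative revelation of the ancestors of $u$ in $T_\bd = B(\Pi_\bd)$. Writing $A_k(u)$ for the $k$-th ancestor of $u$ in $T_\bd$ (left undefined when $u$ has fewer than $k$ ancestors), any pair of ancestrally-related vertices of $\cB$ at distance at most $b$ will be captured when the deeper of the two is considered, so
\[
\p{B_\bd \le b} \le \sum_{u \in \cB} \p{\exists k \in [b] \text{ such that } A_k(u) \text{ exists and } A_k(u) \in \cB}.
\]
It therefore suffices to bound each summand by $1 - (1-K\Delta/(n-1-b\Delta))^b$, since summing over $u \in \cB$ will then yield the stated bound.

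For fixed $u \in \cB$, I will reveal $A_1(u), A_2(u), \ldots$ sequentially and establish the following single-step bound: for every $k \le b$ and every admissible history $(a_1,\dots,a_{k-1})$,
\[
\p{A_k(u) \in \cB \mid A_1(u)=a_1,\dots,A_{k-1}(u)=a_{k-1},\, A_k(u) \text{ exists}} \le \frac{K\Delta}{n-1-b\Delta}.
\]
Iterating this bound, the conditional probability that none of the first $b$ ancestors lies in $\cB$ is at least $(1-K\Delta/(n-1-b\Delta))^b$, which yields the per-$u$ bound.

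The technical heart of the argument is the single-step bound. Under the bijection $B:\cP_\bd\to\cL_\bd$, each edge of $T_\bd$ is in bijective correspondence with a pair in $\Pi_\bd$; in particular, the edge from $A_{k-1}(u)$ to its parent $A_k(u)$ corresponds to a pair of $\Pi_\bd$ whose first coordinate is $A_k(u)$. I expect to show, by an exchangeability argument, that conditional on the revealed ancestors $\{u,a_1,\dots,a_{k-1}\}$, this pair is uniformly distributed (weighted by degrees) among the pairs of $\Pi_\bd$ whose first coordinate lies in $[n]\setminus\{u,a_1,\dots,a_{k-1}\}$. Since the revealed set has at most $b$ elements, each of degree at most $\Delta$, the pairs with first coordinate in the revealed set number at most $b\Delta$; hence at least $n-1-b\Delta$ pairs remain available, of which at most $K\Delta$ have first coordinate in $\cB$, giving the required bound.

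The main obstacle will be rigorously justifying this conditional uniformity claim. This amounts to a combinatorial enumeration of the permutations in $\cP_\bd$ consistent with a prescribed ancestral path $(u,a_1,\dots,a_{k-1})$ and a prescribed $k$-th ancestor $a_k$, and verifying that the resulting count depends on $a_k$ only through its degree $d_{a_k}$. I would carry this out by an induction on $k$ in the same spirit as the proof of Lemma \ref{lem:sb_order}, organising permutations $\pi\in\cP_\bd$ according to where the pairs corresponding to the revealed ancestral edges sit within $\pi$ and then summing over the arrangements of the remaining unused pairs.
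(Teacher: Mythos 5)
Your proposal is correct and takes essentially the same approach as the paper: a union bound over $u\in\cB$, followed by an iterative revelation of the ancestors of $u$, with the single-step bound coming from the observation that the parent of the $(k-1)$-th revealed ancestor corresponds to a degree-weighted sample from the unrevealed vertices. Your plan to justify the conditional uniformity claim by ``organising permutations $\pi\in\cP_\bd$ according to where the pairs corresponding to the revealed ancestral edges sit within $\pi$'' is precisely what the paper does: it constructs a filtration $\cF_k$ recording both the revealed ancestors $p^0,\dots,p^k$ \emph{and} the positions in $\Pi_\bd$ of all entries $(p^i,c)$ with $i\le k$, after which the remaining entries form a uniformly random permutation, so the relevant conditional probability can be read off directly rather than by induction on $k$.
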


\begin{proof}
It suffices to show the statement for integer $b$ since for general $b$, $\p{B_\bd \le b}=\p{B_\bd \le \lfloor b \rfloor }$ and the upper bound is increasing in $b$. 

Fix a degree sequence and a set $\cB$. Without loss of generality, assume that $\cB=\{n-K+1,\dots, n\}$. 

For $v\in [n]$, let $p(v)$ be the parent of $v$ in $\rT_{\mathbf{d}}$ (with $p(v)=v$ if $v$ is the root of $\rT_{\mathbf{d}}$). Also set $p^0(v)=v$ and recursively for $k\ge 1$ define the $k$-th ancestor of $v$ as $p^k(v)=p(p^{k-1}(v))$. 

We will show that
\begin{equation} \label{eq:distance_large} 
\p{\{p^{1}(n), \dots, p^{b}(n)\}\cap \{n-K+1,\dots, n-1\}=\emptyset }\geq \left( 1-\frac{K\Delta}{n-1 - b\Delta}\right)^b,
\end{equation}
after which the statement follows by symmetry and the union bound. 

We will prove \eqref{eq:distance_large} by induction. To ease notation, write $p^k=p^k(n)$ for $k\ge 0$. For $(i,c)$ such that $i\in[n],c\in [d_i]$ write $\Pi_\bd^{-1}(i,c)$ for the position of $(i,c)$ in $\Pi_\bd$. 

We will define a sequence of $\sigma$-algebras $(\cF_k)_{k  \ge 0}$ such that for each $k\ge 1$, $\cF_k$ is the $\sigma$-algebra generated by the first $k$ ancestors of $n$ and the positions of their corresponding entries in $\Pi_\bd$. Let, $\cF_0=\sigma(\Pi_\bd^{-1}(n,c): c\in [d_n])$ contain the information on the position of vertex $n$ in $\Pi_\bd$.  

If $d_n=0$ and $\{\Pi_\bd^{-1}(n,c): c\in [d_n]\}=\emptyset$ then $n$ is the final vertex in the final path of the line-breaking construction, and the last entry of $\Pi_\bd$ gives its parent. Thus, in this case, we reveal $\Pi_{\bd}(n-1)$ and we have $\Pi_{\bd}(n-1)=(p^1,c')$ for some $c'\in [d_{p^1}]$. Then, we reveal all other entries of the form $(p^1,c), c\in [d_{p^1}]$ in $\Pi_{\bd}$ and this yields $\cF_1$. 
    
If $d_n> 0$, then set $m_0=\min\{\Pi_\bd^{-1}(n,c): c\in [d_n]\}$. If $m_0=1$ then $n$ is the root of $\bT_\bd$ so $p^\ell = n$ for all $\ell \ge 1$, and so we let $\cF_\ell=\cF_0$ for all $\ell\ge 1$.  Otherwise, the entry before the first occurrence of an entry of the form $(n,c), c\in [d_n]$ in $\Pi_\bd$ gives the parent of $n$ so then we obtain $\cF_1$ as follows. We reveal $\Pi_{\bd}(m_0-1)$. In that case $\Pi_{\bd}(m_0-1)=(p^1,c')$ for some $c'\in [d_{p^1}]$.  Secondly, we reveal all other entries of the form $(p^1,c), c\in [d_{p^1}]$ in $\Pi_{\bd}$ and this yields $\cF_1$. 
    
For $k \ge 1$, given $\cF_k$, let 
\[
m_k=\min\{\Pi^{-1}_\bd(p^{k},c):c\in[d_{p^{k}}]\}.
\]
If $m_k=1$ then $p^k$ is the root of $\bT_\bd$ so $p^\ell=p^{k}$ and we take $\cF_\ell=\cF_k$ for all $\ell\ge k$. If $m_k>1$, we obtain $\cF_{k+1}$ as follows. First, we reveal $\Pi_{\bd}(m_k-1)$. In that case $\Pi_{\bd}(m_k-1)=(p^{k+1},c')$ for some $c'\in [d_{p^{k+1}}]$. Secondly, we reveal all other entries of the form $(p^{k+1},c), c\in [d_{p^{k+1}}]$ in $\Pi_{\bd}$ and this yields $\cF_{k+1}$.
    
Now, observe that, for $k\ge 0$, given $\cF_k$, the unrevealed entries of $\Pi_\bd$ occur in an order given by a uniformly random permutation. So given $\cF_k$ if $m_k>1$ the $k$-th ancestor of $n$ is the first coordinate of a uniformly random sample from 
\[
\{(i,c): i\in [n]\backslash \{p^0,\dots, p^k\}, c\in [d_i]\}
\]
and 
\begin{align*} 
&\probC{p^{k+1}\in \{n-K+1,\dots,n-1\} }{\cF_k, \{p^{1}, \dots, p^{k}\}\cap \{n-K+1,\dots,n-1\} =\emptyset }\\
&\qquad=\frac{d_{n-K+1}+\dots+d_{n-1} }{n-1-\sum_{i=0}^k d_{p^i}} \leq \frac{K\Delta}{n-1-(k+1)\Delta}.
\end{align*}
If $m_k=1$ then the conditional probability above is $0$ so the inequality also holds.
    
Therefore, we see inductively that 
\begin{align*} 
\p{\{p^{1}, \dots, p^{b}\}\cap \{n-K+1,\dots,n-1\}=\emptyset}
&\geq \prod_{k=1}^{b} \left(1-\frac{K\Delta}{n-1-k\Delta}\right)\\
&\geq \left(1-\frac{K\Delta}{n-1-b\Delta}\right)^b. 
\end{align*}
\end{proof}

\end{appendix}

%
%

\begin{funding}
This work was partially supported by the Marie Sk\l{}odowska–Curie RISE grant RandNET (Randomness and Learning in Networks), grant agreement ID 101007705. LAB acknowledges the financial support of NSERC and of the Canada Research Chairs program. SD acknowledges the financial support of the CogniGron research center and the Ubbo Emmius Funds (University of Groningen). Her research was also partially supported by the Marie Sk\l{}odowska-Curie grant GraPhTra (Universality in phase transitions in random graphs), grant agreement ID 101211705. RM’s work is supported by the EPSRC Centre for Doctoral Training in the Mathematics of Random Systems: Analysis, Modelling, and Simulation (EP/S023925/1). The final part of the work was carried out while LAB, SD and CG were in residence at the Simons Laufer Mathematical Sciences Institute in Berkeley, California, during the Spring 2025 semester. They gratefully acknowledge the financial support of the National Science Foundation under Grant No.\ DMS-1928930.
\end{funding}



\bibliographystyle{imsart-number} 
\bibliography{snakes}       


\end{document}